\newtheorem{lem}{Lemma}[section]
\newtheorem{theo}[lem]{Theorem}
\newtheorem{cor}[lem]{Corollary}
\newtheorem{rem}[lem]{Remark}
\newtheorem{ass}[lem]{Assumption}
\newcommand{\ol}{\overline}
\def\To{\longrightarrow}
\def\ot{\leftarrow}
\def\wto{\rightharpoonup}
\def\emb{\hookrightarrow}
\def\impl{\Rightarrow}
\def\qimpl{\quad\impl\quad}
\newcommand{\qtext}[1]{\quad\text{#1}\quad}
\def\reals{\mathbb{R}}
\def\om{\Omega}
\def\ga{\Gamma}
\def\gat{\ga_{\!\tau}}
\def\gan{\ga_{\!\nu}}
\def\calH{\mathcal{H}}
\def\calA{\mathcal{A}}
\def\sfL{\mathsf{L}}
\def\sfH{\mathsf{H}}
\def\sfC{\mathsf{C}}
\newcommand{\Harm}[2]{\calH^{#1}_{\mathsf{#2}}}
\renewcommand{\L}[1]{\sfL^{#1}}
\renewcommand{\H}[2]{\sfH^{#1}_{#2}}
\newcommand{\C}[2]{\sfC^{#1}_{#2}}
\newcommand{\eps}{\varepsilon}
\DeclareMathOperator{\p}{\partial}
\DeclareMathOperator{\A}{A}
\DeclareMathOperator{\cA}{\calA}
\DeclareMathOperator{\M}{M}
\DeclareMathOperator{\opgrad}{grad}
\DeclareMathOperator{\oprot}{curl}
\DeclareMathOperator{\opdiv}{div}
\DeclareMathOperator{\opd}{d}
\DeclareMathOperator{\opde}{\delta}
\newcommand{\grad}[1]{\opgrad_{#1}}
\newcommand{\rot}[1]{\oprot_{#1}}
\newcommand{\rotv}[1]{\vec\oprot_{#1}}
\renewcommand{\div}[1]{\opdiv_{#1}}
\newcommand{\ed}[1]{\opd_{#1}}
\newcommand{\cd}[1]{\opde_{#1}}
\newcommand{\norm}[1]{|#1|}
\newcommand{\bnorm}[1]{\big|#1\big|}
\newcommand{\scp}[2]{\langle#1,#2\rangle}
\newcommand{\bscp}[2]{\big\langle#1,#2\big\rangle}
\DeclareMathOperator{\stiff}{K}
\DeclareMathOperator{\mass}{M}
\def\pone{{\sf P1}}
\def\rt{{\sf RT}}
\def\ned{{\sf N}}
\newcommand{\bk}{k}
\title[Poincar\'e-Friedrichs Type Constants]
{Poincar\'e-Friedrichs Type Constants for Operators Involving\\ 
grad, curl, and div: Theory and Numerical Experiments}
\author{Dirk Pauly}
\address{Fakult\"at f\"ur Mathematik,
Universit\"at Duisburg-Essen, Campus Essen, Germany}
\email[Dirk Pauly]{dirk.pauly@uni-due.de}
\author{Jan Valdman}
\address{Institute of mathematics, Faculty of Science, University of South Bohemia, \v{C}esk\'e Bud\v{e}jovice 
\&  Department of Decision-Making Theory, Institute of Information Theory and Automation, Prague, Czech Republic}
\email[Jan Valdman]{jvaldman@prf.jcu.cz}
\keywords{Friedrichs constants, Poincar\'e constants, Maxwell constants, Dirichlet eigenvalues, Neumann eigenvalues, Maxwell eigenvalues, mixed boundary conditions}
\subjclass{}
\date{\today; {\it Corresponding Author}: Dirk Pauly}
\thanks{}
\begin{document}


\def\titlerepude{\sf Poincar\'e-Friedrichs Type Constants for Operators Involving\\ 
grad, curl, and div: Theory and Numerical Experiments}
\def\authorrepude{Dirk Pauly and Jan Valdman}
\def\daterepdue{\today}
\def\reportudemathyesno{no}
\def\reportudemathnumber{SM-UDE-820}
\def\reportudemathyear{2019}
\def\reportudematheingang{\daterepdue}
\newcommand{\preprintudemath}[5]{
\thispagestyle{empty}
\begin{center}\normalsize SCHRIFTENREIHE DER FAKULT\"AT F\"UR MATHEMATIK\end{center}
\vspace*{5mm}
\begin{center}#1\end{center}
\vspace*{5mm}
\begin{center}by\end{center}
\vspace*{0mm}
\begin{center}#2\end{center}
\vspace*{5mm}
\normalsize 
\begin{center}#3\hspace{69mm}#4\end{center}
\newpage
\thispagestyle{empty}
\vspace*{210mm}
Received: #5
\newpage
\addtocounter{page}{-2}
\normalsize
}
\ifthenelse{\equal{\reportudemathyesno}{yes}}
{\preprintudemath{\titlerepude}{\authorrepude}{\reportudemathnumber}{\reportudemathyear}{\reportudematheingang}}
{}

\begin{abstract}
We give some theoretical as well as computational results
on Laplace and Maxwell constants, i.e.,
on the smallest constants $c_{n}>0$ arising in estimates of the form
$$\norm{u}_{\L{2}(\om)}\leq c_{0}\norm{\grad{}u}_{\L{2}(\om)},\qquad
\norm{E}_{\L{2}(\om)}\leq c_{1}\norm{\rot{}E}_{\L{2}(\om)},\qquad
\norm{H}_{\L{2}(\om)}\leq c_{2}\norm{\div{}H}_{\L{2}(\om)}.$$
Besides the classical de Rham complex we investigate 
the complex of elasticity and the complex related to the biharmonic equation 
and general relativity as well
using the general functional analytical concept of Hilbert complexes.
We consider mixed boundary conditions and bounded Lipschitz domains of arbitrary topology.
Our numerical aspects are presented by examples for the de Rham complex
in 2D and 3D which not only confirm our theoretical findings
but also indicate some interesting conjectures. 
\end{abstract}


\maketitle
\tableofcontents


\section{Introduction}

We present some theoretical results as well as some computations on Laplace and Maxwell constants 
for bounded Lipschitz domains $\om$ with mixed boundary conditions
defined on boundary parts $\gat$ and $\gan$ of the boundary $\ga$.
While a lot of our theoretical findings hold for domains $\om$ in arbitrary dimensions,
we restrict our numerical experiments to the 2D and 3D cases.
Moreover, we verify various theoretical results established in the last few years
in \cite{paulymaxconst0,paulymaxconst1,paulymaxconst2,paulymaxconst3}. 
There is a recent interest in these eigenvalues, see, e.g., 
\cite{boffigastaldi2019,boffigastaldirodriguezsebestova2019,costabeldauge2019a}
and related contributions 
\cite{boffikikuchischoeberl2006,buffahoustonperugia2007,costabelremmaxlip,costabelcoercbilinMax,
costabeldaugemaxwelllameeigenvaluespolyhedra,
filonovdirneulapeigen,payneweinbergerpoincareconvex},
but little results for mixed boundary conditions
are known in the literature, except for, e.g., \cite{paulymaxconst2,paulymaxconst3}.
In 3D these constants are the best possible real numbers
$c_{0,\gat}$, $c_{1,\gat}$, $c_{2,\gat}>0$ in the estimates
\begin{align*}
\forall\,u&\in D(\grad{\gat})\cap R(\div{\gan})
&
\norm{u}_{\L{2}(\om)}&\leq c_{0,\gat}\norm{\grad{}u}_{\L{2}(\om)},\\
\forall\,E&\in D(\rot{\gat})\cap R(\rot{\gan})
&
\norm{E}_{\L{2}(\om)}&\leq c_{1,\gat}\norm{\rot{}E}_{\L{2}(\om)},\\
\forall\,H&\in D(\div{\gat})\cap R(\grad{\gan})
&
\norm{H}_{\L{2}(\om)}&\leq c_{2,\gat}\norm{\div{}H}_{\L{2}(\om)},
\end{align*}
which are often called Poincar\'e-Friedrichs type constants,
cf. Section \ref{seclapmax3D} for notations.
More precisely, we have
\begin{align*}
\frac{1}{c_{0,\gat}}
&=\inf_{0\neq u\in D(\grad{\gat})\cap R(\div{\gan})}\frac{\norm{\grad{}u}_{\L{2}(\om)}}{\norm{u}_{\L{2}(\om)}}
&
&\text{(Friedrichs/Poincar\'e constants)},\\
\frac{1}{c_{1,\gat}}
&=\inf_{0\neq E\in D(\rot{\gat})\cap R(\rot{\gan})}\frac{\norm{\rot{}E}_{\L{2}(\om)}}{\norm{E}_{\L{2}(\om)}}
&
&\text{(rotation/Maxwell constants)},\\
\frac{1}{c_{2,\gat}}
&=\inf_{0\neq H\in D(\div{\gat})\cap R(\grad{\gan})}\frac{\norm{\div{}H}_{\L{2}(\om)}}{\norm{H}_{\L{2}(\om)}}
&
&\text{(divergence constants)}.
\end{align*}
We also point out the strong connection 
to the well known de Rham complex with mixed boundary conditions.

It turns out that in the \underline{3D} case, cf. Theorem \ref{stateoftheartconstest}, 
the estimates and equations
$$c_{0,\ga}\leq c_{0,\gat}=c_{2,\gan},\qquad
c_{1,\gat}=c_{1,\gan},\qquad
c_{0,\ga}\leq\min\big\{c_{0,\emptyset},\frac{\text{\rm diam}(\om)}{\pi}\big\}$$
always hold and that in convex domains we even have
$$c_{1,\ga}=c_{1,\emptyset}\leq c_{0,\emptyset}\leq\frac{\text{\rm diam}(\om)}{\pi}.$$
Here, 
$$c_{\mathsf{F}}:=c_{0,\ga},\qquad
c_{\mathsf{P}}:=c_{0,\emptyset}$$
are the classical Friedrichs and Poincar\'e constants,
respectively, and the constants 
$$c_{\mathsf{M,t}}:=c_{1,\ga}=c_{1,\emptyset}=:c_{\mathsf{M,n}}$$
are often called tangential (electric)
and normal (magnetic) Maxwell constants, respectively. 
All these constants relate to minimal positive eigenvalues
of certain Laplace and Maxwell operators. More precisely, 
$$\lambda_{0,\gat}=\frac{1}{c_{0,\gat}},\qquad 
\lambda_{1,\gat}=\frac{1}{c_{1,\gat}},\qquad 
\lambda_{2,\gat}=\frac{1}{c_{2,\gat}}$$ 
are the smallest positive eigenvalues of the first order matrix operators
$$\begin{bmatrix}0&-\div{\gan}\\\grad{\gat}&0\end{bmatrix},\qquad
\begin{bmatrix}0&\rot{\gan}\\\rot{\gat}&0\end{bmatrix},\qquad
\begin{bmatrix}0&-\grad{\gan}\\\div{\gat}&0\end{bmatrix},$$
respectively, and 
\begin{equation} 
\label{eigenvalues_to_compute}
\lambda_{0,\gat}^2,\qquad 
\lambda_{1,\gat}^2,\qquad 
\lambda_{2,\gat}^2
\end{equation}
are the smallest positive eigenvalues of the second order operators
\begin{equation}
\label{operators_to_discretize}
-\div{\gan}\grad{\gat},\qquad
\rot{\gan}\rot{\gat},\qquad
-\grad{\gan}\div{\gat},
\end{equation}
respectively. 
In \underline{2D}, cf. Corollary \ref{stateoftheartconstestcor2D}, we will see that 
$$c_{0,\ga}\leq c_{0,\gat}=c_{1,\gan}=c_{2,\gan},\qquad
c_{0,\ga}\leq\min\big\{c_{0,\emptyset},\frac{\text{\rm diam}(\om)}{\pi}\big\}.$$
Generally, in \underline{ND}, cf. Theorem \ref{stateoftheartconstesttheoND}, we have
$$c_{0,\ga}\leq c_{0,\gat}=c_{N-1,\gan},\qquad
c_{q,\gat}=c_{N-q-1,\gan},\qquad
c_{0,\ga}\leq\min\big\{c_{0,\emptyset},\frac{\text{\rm diam}(\om)}{\pi}\big\}$$
and in convex domains
$$c_{q,\ga}=c_{N-q-1,\emptyset}\leq c_{0,\emptyset}\leq\frac{\text{\rm diam}(\om)}{\pi}.$$
Here, $q=0,\dots,N-1$ and the differential operators $\grad{}$, $\rot{}$, and $\div{}$
are simply replaced by the exterior derivative $\ed{q}$
acting on the rank $q$ of the respective differential form.
So far, all findings are related to the ND de Rham complex.
We will present more examples and results for the 3D elasticity complex
as well as for the 3D biharmonic complex.

In a series of numerical tests we discretize the operators \eqref{operators_to_discretize} 
by the finite element method and compute upper bounds for the eigenvalues
\eqref{eigenvalues_to_compute} from generalized eigenvalue systems 
$$K u = \lambda^2 \, M u $$
with discretized stiffness and mass matrices $K$ and $M$, respectively. 
There are also recent interests in guaranteed lower bounds, cf.
\cite{zbMATH06342370,zbMATH06261586,zbMATH06296642}.
In a search for the smallest positive eigenvalue $\lambda^2$ we exploit a projection into the range of $K$ 
for smaller size problems or the nested iteration technique for large size problems. 
The latter theoretical results are confirmed by computations in 2D
for the unit square and the unit L-shape domain as well as in 3D
for the unit cube and the unit Fichera corner domain. 
Note that all three constants 
$c_{\ell,\gat}$, $\ell=1,2,3$,
grow proportionally to the ``radius'' of the domain. More precisely, e.g., it holds
$$c_{\ell}(r\cdot\om,r\cdot\gat)=r\cdot c_{\ell}(\om,\gat)=r\cdot c_{\ell,\gat},$$
where 
$$r\cdot\om:=\{r\cdot x\,:\,x\in\om\},\qquad
r\cdot\gat:=\{r\cdot x\,:\,x\in\gat\}$$
for some bounded domain $\om\subset\reals^{3}$
being star-shaped with respect to the origin.
Moreover, we performed some monotonicity tests (with respect to boundary conditions) 
which are just partially guaranteed by our theoretical findings. To our surprise 
we found (numerically) much stronger inequalities in \eqref{PVconjPFM}, 
see also the related Figure \ref{monoconstants3DucFc},
giving rise to some interesting conjectures.

\section{Theoretical Results}

We shall summarise some basic results from functional analysis 
and apply those to the classical operators of vector analysis.

\subsection{Functional Analysis ToolBox}

We start with collecting and citing some results from
\cite{paulyapostfirstordergen,paulydivcurl,paulyzulehnerbiharmonic,paulymaxconst2,paulymaxconst3}
about the so-called functional analysis toolbox (fa-toolbox).

\subsubsection{Preliminaries}

Let $\A:D(\A)\subset\H{}{0}\to\H{}{1}$ be a densely defined and closed linear operator
with domain of definition $D(\A)$ on two Hilbert spaces $\H{}{0}$ and $\H{}{1}$. 
Then the adjoint $\A^{*}:D(\A^{*})\subset\H{}{1}\to\H{}{0}$
is well defined and characterised by
$$\forall\,x\in D(\A)\quad
\forall\,y\in D(\A^{*})\qquad
\scp{\A x}{y}_{\H{}{1}}=\scp{x}{\A^{*} y}_{\H{}{0}}.$$
$\A$ and $\A^{*}$ are both densely defined and closed, but typically unbounded.
Often $(\A,\A^{*})$ is called a dual pair as $(\A^{*})^{*}=\ol{\A}=\A$.
The projection theorem shows
\begin{align}
\label{helm}
\H{}{0}=N(\A)\oplus_{\H{}{0}}\ol{R(\A^{*})},\quad
\H{}{1}=N(\A^{*})\oplus_{\H{}{1}}\ol{R(\A)},
\end{align}
often called Helmholtz/Hodge/Weyl decompositions,
where we introduce the notation $N$ for the kernel (or null space)
and $R$ for the range of a linear operator.
These orthogonal decompositions reduce the operators $\A$ and $\A^{*}$,
leading to the injective operators $\cA:=\A|_{\ol{R(\A^{*})}}$ and $\cA^{*}:=\A^{*}|_{\ol{R(\A)}}$, i.e.
\begin{align*}
\cA:D(\cA)\subset\ol{R(\A^{*})}&\to\ol{R(\A)},
&
D(\cA)&=D(\cA)\cap\ol{R(\A^{*})},\\
\cA^{*}:D(\cA^{*})\subset\ol{R(\A)}&\to\ol{R(\A^{*})},
&
D(\cA^{*})&=D(\cA^{*})\cap\ol{R(\A)}.
\end{align*}
Note that 
$$\ol{R(\A^{*})}=N(\A)^{\bot_{\H{}{0}}},\quad
\ol{R(\A)}=N(\A^{*})^{\bot_{\H{}{1}}}$$
and that $\cA$ and $\cA^{*}$ are indeed adjoint to each other, i.e.,
$(\cA,\cA^{*})$ is a dual pair as well. Then the inverse operators 
$$\cA^{-1}:R(\A)\to D(\cA),\quad
(\cA^{*})^{-1}:R(\A^{*})\to D(\cA^{*})$$
are well defined and bijective, but possibly unbounded.
Furthermore, by \eqref{helm} we have the refined Helmholtz type decompositions
\begin{align}
\label{helmdecoAcA}
D(\A)=N(\A)\oplus_{\H{}{0}}D(\cA),\quad
D(\A^{*})=N(\A^{*})\oplus_{\H{}{1}}D(\cA^{*})
\end{align}
and thus we obtain for the ranges
$$R(\A)=R(\cA),\quad
R(\A^{*})=R(\cA^{*}).$$

\subsubsection{Basic Results}

The following result is a well known and direct consequence of
the closed graph theorem and the closed range theorem.

\begin{lem}[fa-toolbox lemma 1]
\label{fatbl1}
The following assertions are equivalent:
\begin{itemize}
\item[\bf(i)] 
$\exists\,c_{\A}\,\,\in(0,\infty)$ \quad 
$\forall\,x\in D(\cA)$ \qquad\,
$\norm{x}_{\H{}{0}}\leq c_{\A}\norm{\A x}_{\H{}{1}}$
\item[\bf(i${}^{*}$)] 
$\exists\,c_{\A^{*}}\in(0,\infty)$ \quad 
$\forall\,y\in D(\cA^{*})$ \qquad
$\norm{y}_{\H{}{1}}\leq c_{\A^{*}}\norm{\A^{*} y}_{\H{}{0}}$
\item[\bf(ii)] 
$R(\A)=R(\cA)$ is closed in $\H{}{1}$.
\item[\bf(ii${}^{*}$)] 
$R(\A^{*})=R(\cA^{*})$ is closed in $\H{}{0}$.
\item[\bf(iii)] 
$\cA^{-1}:R(\A)\to D(\cA)$ is bounded by $c_{\A}$.
\item[\bf(iii${}^{*}$)] 
$(\cA^{*})^{-1}:R(\A^{*})\to D(\cA^{*})$ is bounded by $c_{\A^{*}}$.
\item[\bf(iv)] 
$\cA:D(\cA)\subset R(\A^{*})\to R(\A)$ is bijective with continuous inverse.
\item[\bf(iv${}^{*}$)] 
$\cA^{*}:D(\cA^{*})\subset R(\A)\to R(\A^{*})$ is bijective with continuous inverse.
\item[\bf(v)] 
$\cA:D(\cA)\to R(\A)$ is a topological isomorphism.
\item[\bf(v${}^{*}$)] 
$\cA^{*}:D(\cA^{*})\to R(\A^{*})$ is a topological isomorphism.
\end{itemize}
\end{lem}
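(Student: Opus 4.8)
The plan is to prove the equivalence of all ten assertions by organizing them into a cycle of implications together with the obvious symmetry between the starred and unstarred statements. The key observation is that, by the abstract setup, $(\cA,\cA^{*})$ is itself a dual pair of injective, densely defined, closed operators acting between the reduced spaces $\ol{R(\A^{*})}$ and $\ol{R(\A)}$; hence every statement about $\A$ has a mirror image about $\A^{*}$, and it suffices to prove the non-starred chain and then invoke this duality. I would first record the trivial reformulations: statements (i), (iii), (iv), and (v) are merely different ways of saying that $\cA$ is injective with a bounded inverse on its range, so the real content lies in connecting this to the closedness of $R(\A)=R(\cA)$ in (ii).

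First I would show (ii) $\Rightarrow$ (i). Assuming $R(\cA)$ is closed, the operator $\cA\colon D(\cA)\subset\ol{R(\A^{*})}\to R(\cA)$ is a closed, injective linear bijection onto the Hilbert space $R(\cA)$ (closed subspaces of Hilbert spaces being complete). Since $\cA$ is closed, its inverse $\cA^{-1}$ is a closed operator defined on all of $R(\cA)$, and the closed graph theorem then forces $\cA^{-1}$ to be bounded, say by some constant $c_{\A}\in(0,\infty)$. Reading this bound back gives exactly the estimate $\norm{x}_{\H{}{0}}\leq c_{\A}\norm{\cA x}_{\H{}{1}}=c_{\A}\norm{\A x}_{\H{}{1}}$ for all $x\in D(\cA)$, which is (i), and simultaneously yields (iii), (iv), and (v) since boundedness of the inverse on the closed range is precisely what those statements assert.

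Conversely, I would show (i) $\Rightarrow$ (ii), which is where the closed range theorem enters. Suppose the Poincaré-type estimate holds. Take a sequence $\A x_{n}=\cA x_{n}\to y$ in $\H{}{1}$ with $x_{n}\in D(\cA)$; applying the estimate to the differences $x_{n}-x_{m}$ shows $\norm{x_{n}-x_{m}}_{\H{}{0}}\leq c_{\A}\norm{\cA x_{n}-\cA x_{m}}_{\H{}{1}}$, so $(x_{n})$ is Cauchy and converges to some $x\in\ol{R(\A^{*})}$. Because $\cA$ is closed, $x\in D(\cA)$ and $\cA x=y$, whence $y\in R(\cA)=R(\A)$. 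This proves $R(\A)$ is closed, i.e. (ii). With (i) $\Leftrightarrow$ (ii) and the equivalences (i) $\Leftrightarrow$ (iii) $\Leftrightarrow$ (iv) $\Leftrightarrow$ (v) established by unwinding definitions, the entire non-starred block is equivalent.

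Finally I would close the loop to the starred statements using the closed range theorem of Banach, which asserts that $R(\A)$ is closed in $\H{}{1}$ if and only if $R(\A^{*})$ is closed in $\H{}{0}$; this gives (ii) $\Leftrightarrow$ (ii${}^{*}$) directly, and then the mirror-image arguments applied to the dual pair $(\cA^{*},\cA)$ yield the equivalence of (i${}^{*}$), (iii${}^{*}$), (iv${}^{*}$), (v${}^{*}$) with (ii${}^{*}$). The main obstacle is not any single deep step but rather handling the bookkeeping cleanly: one must be careful that the constants $c_{\A}$ and $c_{\A^{*}}$ need not coincide, that the estimates are stated on the reduced domains $D(\cA)$ and $D(\cA^{*})$ rather than on all of $D(\A)$, $D(\A^{*})$, and that the identifications $R(\A)=R(\cA)$ and $\ol{R(\A^{*})}=N(\A)^{\bot}$ from the preliminaries are invoked at the right places so that the two invocations of the closed graph theorem and the single invocation of the closed range theorem fit together without circularity.
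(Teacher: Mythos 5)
Your argument is correct and follows exactly the route the paper indicates: the paper gives no written proof of Lemma~\ref{fatbl1}, stating only that it is ``a well known and direct consequence of the closed graph theorem and the closed range theorem,'' and your chain --- (ii)~$\Rightarrow$~(i) via the closed graph theorem applied to $\cA^{-1}$ on the closed range, (i)~$\Rightarrow$~(ii) via the Cauchy-sequence argument and closedness of $\cA$, (ii)~$\Leftrightarrow$~(ii${}^{*}$) via the closed range theorem, and duality for the starred items --- is precisely that intended argument. The bookkeeping points you flag (reduced domains, the identification $\ol{R(\A^{*})}=N(\A)^{\bot_{\H{}{0}}}$, and that the constants need not a priori coincide, which is only settled later in Lemma~\ref{lemconstants}) are handled appropriately.
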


The latter inequalities will be called Poincar\'e-Friedrichs type estimates.
Note that in (iv) and (iv${}^{*}$) we consider $\cA$ and $\cA^{*}$
as unbounded linear operators, whereas in (v) and (v${}^{*}$) 
we consider $\cA$ and $\cA^{*}$ as bounded linear operators.

\begin{lem}[fa-toolbox lemma 2]
\label{fatbl2}
The following assertions are equivalent:
\begin{itemize}
\item[\bf(i)]
$D(\cA)\emb\H{}{0}$ is compact.
\item[\bf(i${}^{*}$)]
$D(\cA^{*})\emb\H{}{1}$ is compact.
\item[\bf(ii)]
$\cA^{-1}:R(\A)\to R(\A^{*})$ is compact.
\item[\bf(ii${}^{*}$)]
$(\cA^{*})^{-1}:R(\A^{*})\to R(\A)$ is compact.
\end{itemize}
Moreover: Each of these assumptions imply
the assertions of Lemma \ref{fatbl1} (and of Lemma \ref{fatbl2}).
\end{lem}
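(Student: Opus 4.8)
\section*{Proof proposal}

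The plan is to prove the chain of equivalences $\mathbf{(i)}\Leftrightarrow\mathbf{(ii)}\Leftrightarrow\mathbf{(ii^{*})}\Leftrightarrow\mathbf{(i^{*})}$ together with the ``moreover'' statement, which I would establish first since it is the conceptual core. Concretely, I would begin by showing that $\mathbf{(i)}$ implies assertion (i) of Lemma \ref{fatbl1}, i.e. the Poincar\'e--Friedrichs estimate on $D(\cA)$; by the equivalences already recorded in Lemma \ref{fatbl1} this yields all of its assertions (in particular boundedness of $\cA^{-1}$), and the analogous argument applied to the dual pair $(\A^{*},\A)$ handles $\mathbf{(i^{*})}$. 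The argument is the classical Rellich-type contradiction: were the estimate false, there would exist a sequence $(x_n)$ in $D(\cA)$ with $\norm{x_n}_{\H{}{0}}=1$ and $\norm{\A x_n}_{\H{}{1}}\To0$. Then $(x_n)$ is bounded in the graph norm of $D(\cA)$, so by $\mathbf{(i)}$ a subsequence converges in $\H{}{0}$ to some $x$ with $\norm{x}_{\H{}{0}}=1$. Since $\A$ is closed and $\A x_n\To0$, the limit satisfies $x\in D(\A)$ and $\A x=0$, hence $x\in N(\A)\cap\ol{R(\A^{*})}=N(\A)\cap N(\A)^{\bot_{\H{}{0}}}=\{0\}$, contradicting $\norm{x}_{\H{}{0}}=1$.

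Next I would establish $\mathbf{(i)}\Leftrightarrow\mathbf{(ii)}$ by a factorisation argument. For $\mathbf{(i)}\Rightarrow\mathbf{(ii)}$, having secured boundedness of $\cA^{-1}$ into the graph-norm space (via the previous step and Lemma \ref{fatbl1}), the inverse factors as
$$\cA^{-1}:R(\A)\To D(\cA)\emb\H{}{0},$$
a bounded map followed by a compact embedding, so the composition is compact; since its image lies in $\ol{R(\A^{*})}=R(\A^{*})$, which is closed by Lemma \ref{fatbl1}, the map $\cA^{-1}:R(\A)\To R(\A^{*})$ is compact. Conversely, for $\mathbf{(ii)}\Rightarrow\mathbf{(i)}$, take a sequence $(x_n)$ bounded in the graph norm of $D(\cA)$; then $(\cA x_n)$ is bounded in $R(\A)$, and by compactness of $\cA^{-1}$ the sequence $x_n=\cA^{-1}(\cA x_n)$ admits a subsequence converging in $\H{}{0}$, which is precisely compactness of $D(\cA)\emb\H{}{0}$. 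Replacing $(\A,\A^{*})$ by $(\A^{*},\A)$ gives $\mathbf{(i^{*})}\Leftrightarrow\mathbf{(ii^{*})}$ by the same reasoning.

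Finally I would bridge the unstarred and starred versions via Schauder's theorem. Assuming $\mathbf{(ii)}$, the operator $\cA^{-1}$ is in particular bounded, so by Lemma \ref{fatbl1} $(\cA^{*})^{-1}$ is bounded as well, and one verifies the identity $(\cA^{-1})^{*}=(\cA^{*})^{-1}$ for the mutually adjoint reduced operators $\cA,\cA^{*}$ acting between $\ol{R(\A^{*})}$ and $\ol{R(\A)}$. Since the Hilbert-space adjoint of a compact operator is compact, $\cA^{-1}$ compact forces $(\cA^{*})^{-1}=(\cA^{-1})^{*}$ compact, giving $\mathbf{(ii)}\Rightarrow\mathbf{(ii^{*})}$; the symmetric argument yields $\mathbf{(ii^{*})}\Rightarrow\mathbf{(ii)}$, which closes the cycle $\mathbf{(i)}\Leftrightarrow\mathbf{(ii)}\Leftrightarrow\mathbf{(ii^{*})}\Leftrightarrow\mathbf{(i^{*})}$. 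The ``moreover'' assertion then follows at once, since any of the four compactness properties implies boundedness of the relevant inverse and hence all of Lemma \ref{fatbl1}.

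The main obstacle I anticipate is the clean verification of the adjoint identity $(\cA^{-1})^{*}=(\cA^{*})^{-1}$, which requires that boundedness of both inverses be in place \emph{before} invoking Schauder's theorem; this is exactly why I would extract the Poincar\'e--Friedrichs estimate through Lemma \ref{fatbl1} at the outset, rather than attempting to play the four compactness statements off against one another directly. The remaining steps are routine once the graph-norm bookkeeping for the reduced operators $\cA,\cA^{*}$ is set up carefully.
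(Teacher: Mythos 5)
Your argument is correct. The paper itself does not prove Lemma \ref{fatbl2} --- it is cited from the fa-toolbox references --- but your route (compactness of the embedding $\Rightarrow$ Poincar\'e--Friedrichs estimate by the Rellich-type contradiction using closedness of $\cA$ and $D(\cA)\subset N(\A)^{\bot_{\H{}{0}}}$; then (i)$\Leftrightarrow$(ii) by factoring $\cA^{-1}$ through the graph-norm space; then (ii)$\Leftrightarrow$(ii${}^{*}$) via $(\cA^{-1})^{*}=(\cA^{*})^{-1}$ and Schauder) is exactly the standard proof used in those sources, and you correctly order the steps so that the closedness of the ranges and the boundedness of both inverses are available before the adjoint identity is invoked.
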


\begin{rem}[sufficient assumptions for the fa-toolbox]
\label{fatbr}
\mbox{}
\begin{itemize}
\item[\bf(i)]
If $R(\A)$ is closed, then the assertions of Lemma \ref{fatbl1} hold.
\item[\bf(ii)]
If $D(\cA)\emb\H{}{0}$ is compact,
then the assertions of Lemma \ref{fatbl1} and Lemma \ref{fatbl2} hold.
In particular, the Poincar\'e-Friedrichs type estimates hold,
all ranges are closed and the inverse operators are compact.
\end{itemize}
\end{rem}

\subsubsection{Constants, Spectra, and Eigenvalues}
\label{seccstgen}

Let us introduce the ``best'' constants $c_{\A}$, $c_{\A^{*}}$
by utilising the Rayleigh quotients 
\begin{align}
\label{lambdaAdef}
\frac{1}{c_{\A}}
:=\inf_{0\neq x\in D(\cA)}\frac{\norm{\A x}_{\H{}{1}}}{\norm{x}_{\H{}{0}}},\quad
\frac{1}{c_{\A^{*}}}
:=\inf_{0\neq y\in D(\cA^{*})}\frac{\norm{\A^{*} y}_{\H{}{0}}}{\norm{y}_{\H{}{1}}}.
\end{align}
Then $0<c_{\A},c_{\A^{*}}\leq\infty$ and we refer to $c_{\A}$ and $c_{\A^{*}}$
as Poincar\'e-Friedrichs type constants.
From now on, we assume that we always deal with these best constants.

\begin{lem}[constant lemma]
\label{lemconstants}
The Poincar\'e-Friedrichs type constants coincide, i.e., $c_{\A}=c_{\A^{*}}$.
\end{lem}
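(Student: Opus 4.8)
The plan is to reduce the equality of the two best constants to the elementary fact that a bounded linear operator and its Hilbert-space adjoint have the same operator norm, applied to the inverses of the reduced operators $\cA$ and $\cA^{*}$.

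First I would dispose of the degenerate case. By the equivalences collected in Lemma~\ref{fatbl1}, the existence of a finite constant in (i) is equivalent to the existence of a finite constant in (i${}^{*}$), both being equivalent to closedness of the ranges in (ii)/(ii${}^{*}$). In terms of the Rayleigh quotients \eqref{lambdaAdef} this means that either both infima are strictly positive, or both vanish. In the latter situation $c_{\A}=c_{\A^{*}}=\infty$ and the claim is trivial, so it suffices to treat the case of closed ranges, where both best constants are finite.

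Assume then that $R(\A)$, and hence $R(\A^{*})$, is closed. By Lemma~\ref{fatbl1}~(iii) and (iii${}^{*}$) the inverses $\cA^{-1}$ and $(\cA^{*})^{-1}$ are everywhere defined and bounded on $R(\A)=\ol{R(\A)}$ and $R(\A^{*})=\ol{R(\A^{*})}$, respectively, and by the definition of the best constants their operator norms are exactly $\norm{\cA^{-1}}=c_{\A}$ and $\norm{(\cA^{*})^{-1}}=c_{\A^{*}}$. The key step is then to identify these two bounded operators as mutual adjoints, i.e.\ to prove $(\cA^{-1})^{*}=(\cA^{*})^{-1}$ as operators between the closed subspaces $\ol{R(\A)}$ and $\ol{R(\A^{*})}$. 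For this I would take arbitrary $u\in R(\A)$ and $v\in R(\A^{*})$, set $x:=\cA^{-1}u\in D(\cA)$ and $y:=(\cA^{*})^{-1}v\in D(\cA^{*})$ so that $\cA x=u$ and $\cA^{*}y=v$, and use that $(\cA,\cA^{*})$ is a dual pair to compute
\[
\scp{\cA^{-1}u}{v}_{\H{}{0}}=\scp{x}{\cA^{*}y}_{\H{}{0}}=\scp{\cA x}{y}_{\H{}{1}}=\scp{u}{(\cA^{*})^{-1}v}_{\H{}{1}}.
\]
As $u$ and $v$ exhaust the respective full spaces, this establishes $(\cA^{-1})^{*}=(\cA^{*})^{-1}$.

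Finally, invoking the classical identity that any bounded operator and its adjoint have equal norm yields
\[
c_{\A}=\norm{\cA^{-1}}=\norm{(\cA^{-1})^{*}}=\norm{(\cA^{*})^{-1}}=c_{\A^{*}},
\]
which is the assertion. The only genuinely substantive point is the adjoint identity for the inverses; everything else is bookkeeping on top of the dual-pair structure recorded in the preliminaries. I do not expect a real obstacle here, since the reduced operators $\cA$ and $\cA^{*}$ were constructed precisely to be injective operators adjoint to one another on the orthogonal complements of their kernels, which is exactly what makes the inner-product computation go through cleanly.
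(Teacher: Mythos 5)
Your argument is correct. The degenerate case is handled properly via the equivalence of (i) and (i${}^{*}$) in Lemma~\ref{fatbl1}, the identification $\norm{\cA^{-1}}=c_{\A}$ is exactly the sup/inf computation that also appears in the paper's appendix (in the proof of Lemma~\ref{lemconstev}~(viii)), and the inner-product identity
$\scp{\cA^{-1}u}{v}_{\H{}{0}}=\scp{\A x}{y}_{\H{}{1}}=\scp{u}{(\cA^{*})^{-1}v}_{\H{}{1}}$
is a legitimate use of the dual-pair relation on $D(\A)\times D(\A^{*})$, so $(\cA^{-1})^{*}=(\cA^{*})^{-1}$ and the norm equality for adjoints finishes the job.

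The paper itself does not reprove this lemma; it is imported from the cited fa-toolbox references, where the standard argument is a direct duality computation rather than your adjoint-of-the-inverse route: given the estimate $\norm{x}_{\H{}{0}}\leq c_{\A}\norm{\A x}_{\H{}{1}}$ on $D(\cA)$ and the resulting closedness of $R(\A)$, one takes $y\in D(\cA^{*})\subset R(\A)=R(\cA)$, writes $y=\A x$ with $x\in D(\cA)$, and computes
$\norm{y}_{\H{}{1}}^2=\scp{\A x}{y}_{\H{}{1}}=\scp{x}{\A^{*}y}_{\H{}{0}}\leq c_{\A}\norm{\A x}_{\H{}{1}}\norm{\A^{*}y}_{\H{}{0}}$,
which after cancelling $\norm{y}_{\H{}{1}}=\norm{\A x}_{\H{}{1}}$ gives $c_{\A^{*}}\leq c_{\A}$, with the reverse inequality by symmetry. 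Your version packages the same Cauchy--Schwarz/duality content into the operator-theoretic statement $\norm{T}=\norm{T^{*}}$; what it buys is a one-line conclusion and a clean link to Lemma~\ref{lemconstev}~(viii), at the cost of first having to verify $(\cA^{-1})^{*}=(\cA^{*})^{-1}$, which is the only substantive step and which you do correctly. Either route is acceptable; yours is perhaps slightly less elementary but more structural.
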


In the case that $R(\A)$ is closed, we shall denote
$$\lambda_{\A}:=\frac{1}{c_{\A}}=\frac{1}{c_{\A^{*}}}>0.$$

Let us emphasise that
\begin{align}
\label{saops}
\A^{*}\A,\quad
\A\A^{*},\quad
|\A|,\quad
|\A^{*}|,\quad
\begin{bmatrix}\A^{*}\A&0\\0&\A\A^{*}\end{bmatrix},\quad
\begin{bmatrix}\A\A^{*}&0\\0&\A^{*}\A\end{bmatrix},\quad
\begin{bmatrix}0&\A^{*}\\\A&0\end{bmatrix},\quad
\begin{bmatrix}0&\A\\\A^{*}&0\end{bmatrix}
\end{align}
are self-adjoint, see Appendix \ref{appproofs}, and have essentially 
- except of $0$ and taking square roots -
the same spectra contained in $\reals$.
Moreover, the first four operators are non-negative.
The same holds true for the reduced operators $\cA$ and $\cA^{*}$.
We will give more details in the next lemma.

\begin{lem}[constant and eigenvalue lemma]
\label{lemconstev}
Let $D(\cA)\emb\H{}{0}$ be compact. 
Then the operators in \eqref{saops} have pure and discrete point spectra
with no accumulation point in $\reals$. Moreover:
\begin{itemize}
\item[\bf(i)]
$\lambda_{\A}$ is the smallest positive eigenvalue of $\begin{bmatrix}0&\A^{*}\\\A&0\end{bmatrix}$
and of $\begin{bmatrix}0&\A\\\A^{*}&0\end{bmatrix}$.
\item[\bf(ii)]
$\lambda_{\A}^2$ is the smallest positive eigenvalue of $\A^{*}\A$ and of $\A\A^{*}$.
\item[\bf(iii)]
$\lambda_{\A}^2$ is the smallest positive eigenvalue of $\begin{bmatrix}\A^{*}\A&0\\0&\A\A^{*}\end{bmatrix}$
and of $\begin{bmatrix}\A\A^{*}&0\\0&\A^{*}\A\end{bmatrix}$.
\item[\bf(iv)] 
$\displaystyle
\sigma(\A^{*}\A)\setminus\{0\}
=\sigma(\A\A^{*})\setminus\{0\}
=\sigma\Big(\begin{bmatrix}\A^{*}\A&0\\0&\A\A^{*}\end{bmatrix}\Big)\setminus\{0\}
=\sigma\Big(\begin{bmatrix}\A\A^{*}&0\\0&\A^{*}\A\end{bmatrix}\Big)\setminus\{0\}
>0$
\item[\bf(v)] 
$\displaystyle
\sigma\Big(\begin{bmatrix}0&\A^{*}\\\A&0\end{bmatrix}\Big)\setminus\{0\}
=\sigma\Big(\begin{bmatrix}0&\A\\\A^{*}&0\end{bmatrix}\Big)\setminus\{0\}
=\pm\sqrt{\sigma(\A^{*}\A)\setminus\{0\}}$
\item[\bf(vi)] 
$\sigma(\A^{*}\A)\setminus\{0\}=\sigma(\cA^{*}\cA)$ 
and corresponding results hold for all other spectra in {\bf(iv)}.
\item[\bf(vii)] 
$\displaystyle
\sigma\Big(\begin{bmatrix}0&\A^{*}\\\A&0\end{bmatrix}\Big)\setminus\{0\}
=\sigma\Big(\begin{bmatrix}0&\cA^{*}\\\cA&0\end{bmatrix}\Big)$ 
and corresponding results hold for all other spectra in {\bf(v)}.
\item[\bf(viii)]
$\bnorm{\cA^{-1}}_{R(\A),R(\A^{*})}
=\bnorm{(\cA^{*})^{-1}}_{R(\A^{*}),R(\A)}
=c_{\A}$
\item[\bf(viii${}^{*}$)]
$\bnorm{\cA^{-1}}_{R(\A),D(\A)}
=\bnorm{(\cA^{*})^{-1}}_{R(\A^{*}),D(\A^{*})}
=(c_{\A}^2+1)^{1/2}$
\item[\bf(ix)]
$\bnorm{(\cA^{*}\cA)^{-1}}_{R(\A^{*}),R(\A^{*})}
=\bnorm{(\cA\cA^{*})^{-1}}_{R(\A),R(\A)}
=c_{\A}^2$
\item[\bf(x)] 
$N(\A)=N(\A^{*}\A)=N(\A\A^{*}\A)=\dots$ and $N(\A^{*})=N(\A\A^{*})=N(\A^{*}\A\A^{*})=\dots$.
\item[\bf(xi)] 
$R(\A)=R(\A\A^{*})=R(\A\A^{*}\A)=\dots$ and $R(\A^{*})=R(\A^{*}\A)=R(\A^{*}\A\A^{*})=\dots$
and the same holds for the operators $\cA$ and $\cA^{*}$.
\end{itemize}
\end{lem}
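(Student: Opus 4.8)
The plan is to reduce the entire statement to the spectral theory of a single compact, self-adjoint, non-negative operator and then to transport the resulting eigen-data back and forth across the Helmholtz decompositions \eqref{helm}. First I would record what the compactness hypothesis provides: since $D(\cA)\emb\H{}{0}$ is compact, Lemma \ref{fatbl2} together with Lemma \ref{fatbl1} shows that $R(\A)=R(\cA)$ and $R(\A^{*})=R(\cA^{*})$ are closed, that the Poincar\'e-Friedrichs estimates hold, and that $\cA^{-1}\colon R(\A)\to R(\A^{*})$ as well as its adjoint $(\cA^{*})^{-1}\colon R(\A^{*})\to R(\A)$ are compact.

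The heart of the argument is the operator $(\cA^{*}\cA)^{-1}=\cA^{-1}(\cA^{*})^{-1}$ on the Hilbert space $R(\A^{*})$. It is compact (a composition of compact maps), self-adjoint (as $\cA^{-1}=\big((\cA^{*})^{-1}\big)^{*}$), and injective, hence positive. The spectral theorem for compact self-adjoint operators then yields a complete orthonormal eigensystem $(x_{n})$ in $R(\A^{*})$ with eigenvalues $\mu_{n}>0$, $\mu_{n}\to0$. Writing $\lambda_{n}^{2}:=\mu_{n}^{-1}$ shows that $\cA^{*}\cA$ has pure discrete point spectrum $\{\lambda_{n}^{2}\}\subset(0,\infty)$ with no finite accumulation point. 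Setting $y_{n}:=\cA x_{n}/\norm{\cA x_{n}}_{\H{}{1}}$ and using $\norm{\cA x_{n}}_{\H{}{1}}^{2}=\scp{\cA^{*}\cA x_{n}}{x_{n}}_{\H{}{0}}=\lambda_{n}^{2}$ produces an orthonormal eigensystem of $\cA\cA^{*}$ with the same eigenvalues. Since $N(\A)=N(\A^{*}\A)$ (because $\A^{*}\A x=0$ forces $\norm{\A x}^{2}=\scp{\A^{*}\A x}{x}=0$) and $\A^{*}\A$ agrees with $\cA^{*}\cA$ on $R(\A^{*})$ while vanishing on $N(\A)$, the only difference between the full and reduced spectra is a possible extra $0$; this gives (vi) and the spectral coincidences in (iv).

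Next I would pass to the block operators. The eigenvalue equation for $\begin{bmatrix}0&\A^{*}\\\A&0\end{bmatrix}$ reads $\A^{*}y=\lambda x$ and $\A x=\lambda y$; squaring decouples it into $\A^{*}\A x=\lambda^{2}x$ and $\A\A^{*}y=\lambda^{2}y$, so the nonzero eigenvalues are exactly $\pm\lambda_{n}$, which is (v) and, after restriction, (vii). Self-adjointness of all eight operators in \eqref{saops} (Appendix \ref{appproofs}) makes the spectra real and, via the compact resolvents, pure and discrete, completing the first assertion of the lemma and (iii). The constant is then identified through \eqref{lambdaAdef} and Lemma \ref{lemconstants}: the Rayleigh quotient $\norm{\cA x}^{2}/\norm{x}^{2}=\scp{\cA^{*}\cA x}{x}/\norm{x}^{2}$ attains its infimum $\lambda_{1}^{2}=\min_{n}\lambda_{n}^{2}$ on the first eigenvector, whence $\lambda_{\A}=\lambda_{1}$ is the smallest singular value; this yields (i) and (ii). The norm identities (viii), (viii${}^{*}$), (ix) are immediate consequences: $\norm{\cA^{-1}}=\big(\inf\norm{\cA x}/\norm{x}\big)^{-1}=\lambda_{\A}^{-1}=c_{\A}$, the graph-norm version picks up the extra $+1$ from $\A\cA^{-1}=\mathrm{id}$, and $\norm{(\cA^{*}\cA)^{-1}}$ is the reciprocal $\lambda_{\A}^{-2}=c_{\A}^{2}$ of the smallest eigenvalue. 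Finally, the kernel and range chains (x), (xi) follow by the same testing argument as above together with the orthogonality $R(\A^{*})=N(\A)^{\bot}$ from \eqref{helm}, proved first for the length-two compositions and then by induction.

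The main obstacle I anticipate is not any single estimate but the careful bookkeeping between the injective reduced operators $\cA,\cA^{*}$, which carry the clean singular-value structure, and the full operators $\A,\A^{*}$, which additionally carry the kernels: one must peel off exactly one $0$ from each spectrum in the right place, match the two eigensystems $(x_{n})$ and $(y_{n})$ consistently across multiplicities, and ensure that the self-adjointness underpinning the spectral theorem genuinely holds for every operator in \eqref{saops} before the discreteness and eigenvalue claims may be invoked.
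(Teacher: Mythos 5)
Your proposal is correct, but it reaches the central conclusions by a genuinely different route from the one taken in Appendix \ref{appproofs}. You diagonalise the compact, self-adjoint, positive operator $(\cA^{*}\cA)^{-1}=\cA^{-1}(\cA^{*})^{-1}$ on $R(\A^{*})$ via the spectral theorem, obtaining the complete orthonormal eigensystem and the discrete eigenvalues $\lambda_{n}^{2}\to\infty$ in one stroke, and then transport this eigen-data to $\cA\cA^{*}$ (through $y_{n}=\cA x_{n}/\norm{\cA x_{n}}_{\H{}{1}}$), to the block operators (by squaring and decoupling the eigenvalue equations), and to the full operators $\A^{*}\A$, $\A\A^{*}$ (by adjoining the kernels). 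The paper instead proves only selected assertions and, for the key point that the infimum in \eqref{lambdaAdef} is attained and that $\lambda_{\A}^{2}$ is genuinely the smallest positive eigenvalue of $\A^{*}\A$, runs a direct variational argument: a normalised minimising sequence in $D(\cA)$, weak compactness plus the compact embedding to extract a strongly $\H{}{0}$-convergent subsequence, and a Cauchy--Schwarz type inequality for the non-negative quadratic form $\norm{\A\varphi}_{\H{}{1}}^{2}-\lambda_{\A}^{2}\norm{\varphi}_{\H{}{0}}^{2}$ to pass to the limit in the Euler--Lagrange equation; the eigenvalue correspondences, norm identities and kernel/range chains are then handled essentially as you do. Your route buys the completeness of the eigensystem and the global discreteness claim directly, at the cost of invoking the spectral theorem; the paper's route is more elementary and self-contained for the existence of the first eigenvector, but leaves the full spectral statement largely implicit. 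One small caution on wording: ``via the compact resolvents'' applies only to the reduced operators --- $\A^{*}\A$ itself need not have compact resolvent when $N(\A)$ is infinite-dimensional --- but your own bookkeeping (splitting off $N(\A)$, on which the operator vanishes, and working with $\cA^{*}\cA$ on $R(\A^{*})$) already supplies the correct argument, so this is a matter of phrasing rather than substance.
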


For a proof see Appendix \ref{appproofs}.

\begin{rem}[variational formulations]
\label{remconstev}
By Lemma \ref{lemconstev} the infima in \eqref{lambdaAdef} are minima,
provided that $D(\cA)\emb\H{}{0}$ is compact. 
In particular, the respective minimisers $x_{\A}$ and $y_{\A}$
are the eigenvectors to the eigenvalue $\lambda_{\A}^2$, i.e, 
$$\inf_{0\neq x\in D(\cA)}\frac{\norm{\A x}_{\H{}{1}}}{\norm{x}_{\H{}{0}}}
=\norm{\A x_{\A}}_{\H{}{1}}
=\lambda_{\A}
=\norm{\A^{*}y_{\A}}_{\H{}{0}}
=\inf_{0\neq y\in D(\cA^{*})}\frac{\norm{\A^{*} y}_{\H{}{0}}}{\norm{y}_{\H{}{1}}},$$
where we assume without loss of generality $\norm{x_{\A}}_{\H{}{0}}=\norm{y_{\A}}_{\H{}{1}}=1$.
Moreover,
\begin{align*}
(\A^{*}\A-\lambda_{\A}^2)x_{\A}&=0,
&
x_{\A}&\in D(\A^{*}\A)\cap R(\A^{*})=D(\cA^{*}\cA)\subset D(\cA),\\
(\A\A^{*}-\lambda_{\A}^2)y_{\A}&=0,
&
y_{\A}&\in D(\A\A^{*})\cap R(\A)=D(\cA\cA^{*})\subset D(\cA^{*}),
\end{align*}
and the eigenvectors satisfy the variational formulations
\begin{align*}
\forall\,\phi&\in D(\A)
&
\scp{\A x_{\A}}{\A\phi}_{\H{}{1}}
&=\lambda_{\A}^2\scp{x_{\A}}{\phi}_{\H{}{0}},\\
\forall\,\psi&\in D(\A^{*})
&
\scp{\A^{*}y_{\A}}{\A^{*}\psi}_{\H{}{0}}
&=\lambda_{\A}^2\scp{y_{\A}}{\psi}_{\H{}{1}}.
\end{align*}
\end{rem}

\subsubsection{Complex Structure Results}

Now, let 
$$\A_{0}\!:\!D(\A_{0})\subset\H{}{0}\to\H{}{1},\quad
\A_{1}\!:\!D(\A_{1})\subset\H{}{1}\to\H{}{2}$$
be two densely defined and closed linear operators 
on three Hilbert spaces $\H{}{0}$, $\H{}{1}$, and $\H{}{2}$ with adjoints 
$$\A_{0}^{*}\!:\!D(\A_{0}^{*})\subset\H{}{1}\to\H{}{0},\quad
\A_{1}^{*}\!:\!D(\A_{1}^{*})\subset\H{}{2}\to\H{}{1}$$
as well as reduced operators $\cA_{0}$, $\cA_{0}^{*}$, and $\cA_{1}$, $\cA_{1}^{*}$.
Furthermore, we assume the complex property (also called sequence property)
of $\A_{0}$ and $\A_{1}$, that is $\A_{1}\A_{0}=0$, i.e.,
\begin{align}
\label{sequenceprop}
R(\A_{0})\subset N(\A_{1}),
\end{align}
which is equivalent to $\A_{0}^{*}\A_{1}^{*}=0$, i.e., $R(\A_{1}^{*})\subset N(\A_{0}^{*})$.
Recall that
$$R(\A_{0})=R(\cA_{0}),\quad
R(\A_{0}^{*})=R(\cA_{0}^{*}),\quad
R(\A_{1})=R(\cA_{1}),\quad
R(\A_{1}^{*})=R(\cA_{1}^{*}).$$
From the Helmholtz type decompositions \eqref{helm} for $\A=\A_{0}$ and $\A=\A_{1}$ we get in particular
\begin{align}
\label{helmappclone}
\H{}{1}=\ol{R(\A_{0})}\oplus_{\H{}{1}}N(\A_{0}^{*}),\quad
\H{}{1}=\ol{R(\A_{1}^{*})}\oplus_{\H{}{1}}N(\A_{1}).
\end{align}
Introducing the cohomology group 
$$N_{0,1}:=N(\A_{1})\cap N(\A_{0}^{*}),$$
we obtain the refined Helmholtz type decompositions 
\begin{align}
\label{helmrefinedone}
\begin{aligned}
N(\A_{1})&=\ol{R(\A_{0})}\oplus_{\H{}{1}}N_{0,1},
&
N(\A_{0}^{*})&=\ol{R(\A_{1}^{*})}\oplus_{\H{}{1}}N_{0,1},\\
D(\A_{1})&=\ol{R(\A_{0})}\oplus_{\H{}{1}}\big(D(\A_{1})\cap N(\A_{0}^{*})\big),\quad
&
D(\A_{0}^{*})&=\ol{R(\A_{1}^{*})}\oplus_{\H{}{1}}\big(D(\A_{0}^{*})\cap N(\A_{1})\big),
\end{aligned}
\end{align}
and therefore
\begin{align}
\label{helmrefinedtwo}
\H{}{1}&=\ol{R(\A_{0})}\oplus_{\H{}{1}}N_{0,1}\oplus_{\H{}{1}}\ol{R(\A_{1}^{*})}.
\end{align}
Let us remark that the first line of \eqref{helmrefinedone} can also be written as
$$\ol{R(\A_{0})}=N(\A_{1})\cap N_{0,1}^{\bot_{\H{}{1}}},\quad
\ol{R(\A_{1}^{*})}=N(\A_{0}^{*})\cap N_{0,1}^{\bot_{\H{}{1}}}.$$
Note that \eqref{helmrefinedtwo} can be further refined and specialised, e.g., to
\begin{align}
\begin{split}
\label{helmrefinedthree}
D(\A_{1})&=\ol{R(\A_{0})}\oplus_{\H{}{1}}N_{0,1}\oplus_{\H{}{1}}D(\cA_{1}),\\
D(\A_{0}^{*})&=D(\cA_{0}^{*})\oplus_{\H{}{1}}N_{0,1}\oplus_{\H{}{1}}\ol{R(\A_{1}^{*})},\\
D(\A_{1})\cap D(\A_{0}^{*})&=D(\cA_{0}^{*})\oplus_{\H{}{1}}N_{0,1}\oplus_{\H{}{1}}D(\cA_{1}).
\end{split}
\end{align}
We observe
\begin{align*}
D(\cA_{1})
&=D(\A_{1})\cap\ol{R(\A_{1}^{*})}
\subset D(\A_{1})\cap N(\A_{0}^{*})
\subset D(\A_{1})\cap D(\A_{0}^{*}),\\
D(\cA_{0}^{*})
&=D(\A_{0}^{*})\cap\ol{R(\A_{0})}
\subset D(\A_{0}^{*})\cap N(\A_{1})
\subset D(\A_{0}^{*})\cap D(\A_{1}),
\end{align*}
and using the refined Helmholtz type decompositions \eqref{helmrefinedtwo} and \eqref{helmrefinedthree}
as well as the results of Lemma \ref{fatbl2} we immediately see:

\begin{lem}[fa-toolbox lemma 3]
\label{compemblem}
The following assertions are equivalent: 
\begin{itemize}
\item[\bf(i)]
$D(\cA_{0})\emb\H{}{0}$, $D(\cA_{1})\emb\H{}{1}$,
and $N_{0,1}\emb\H{}{1}$ are compact.
\item[\bf(ii)]
$D(\A_{1})\cap D(\A_{0}^{*})\emb\H{}{1}$ is compact.
\end{itemize}
In this case, the cohomology group $N_{0,1}$ has finite dimension.
\end{lem}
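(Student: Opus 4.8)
My plan is to build everything on the refined decomposition in the last line of \eqref{helmrefinedthree},
$$D(\A_{1})\cap D(\A_{0}^{*})=D(\cA_{0}^{*})\oplus_{\H{}{1}}N_{0,1}\oplus_{\H{}{1}}D(\cA_{1}),$$
and to convert the statements about $\cA_{0}^{*}$ back to $\cA_{0}$ via Lemma \ref{fatbl2}. The crucial preliminary observation is that this decomposition is orthogonal not only in $\H{}{1}$ but also in the graph norm of $D(\A_{1})\cap D(\A_{0}^{*})$. Indeed, $D(\cA_{0}^{*})\subset\ol{R(\A_{0})}\subset N(\A_{1})$, so $\A_{1}$ vanishes on the first summand; dually $D(\cA_{1})\subset\ol{R(\A_{1}^{*})}\subset N(\A_{0}^{*})$, so $\A_{0}^{*}$ vanishes on the third; and on $N_{0,1}$ both operators vanish. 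Hence, writing the graph inner product as $\scp{u}{v}_{\H{}{1}}+\scp{\A_{1}u}{\A_{1}v}_{\H{}{2}}+\scp{\A_{0}^{*}u}{\A_{0}^{*}v}_{\H{}{0}}$, all cross terms between distinct summands vanish, and the graph norm restricts to the $\cA_{0}^{*}$-graph norm, the plain $\H{}{1}$-norm, and the $\cA_{1}$-graph norm on the three pieces, respectively.

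Granting this, the implication (ii)$\,\Rightarrow\,$(i) would be immediate. Each summand is a closed subspace of $D(\A_{1})\cap D(\A_{0}^{*})$ carrying the restriction of its graph norm, so the inclusion of each summand into $\H{}{1}$ factors as a continuous embedding followed by the compact embedding $D(\A_{1})\cap D(\A_{0}^{*})\emb\H{}{1}$, and is therefore compact. This gives compactness of $N_{0,1}\emb\H{}{1}$ and of $D(\cA_{1})\emb\H{}{1}$ at once, and of $D(\cA_{0}^{*})\emb\H{}{1}$; applying the equivalence (i)$\,\Leftrightarrow\,$(i${}^{*}$) of Lemma \ref{fatbl2} to the dual pair $(\A_{0},\A_{0}^{*})$ turns the last one into compactness of $D(\cA_{0})\emb\H{}{0}$.

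For (i)$\,\Rightarrow\,$(ii) I would argue with sequences. First, Lemma \ref{fatbl2} replaces compactness of $D(\cA_{0})\emb\H{}{0}$ by that of $D(\cA_{0}^{*})\emb\H{}{1}$, so all three summand embeddings are compact. Now take any sequence $(u_{n})$ bounded in the graph norm of $D(\A_{1})\cap D(\A_{0}^{*})$ and split it as $u_{n}=v_{n}+w_{n}+z_{n}$ along the decomposition. By the graph-norm orthogonality established above, the three component sequences are bounded in $D(\cA_{0}^{*})$, in $N_{0,1}$, and in $D(\cA_{1})$, respectively. Extracting successively, I obtain a common subsequence along which $v_{n}$, $w_{n}$, $z_{n}$ all converge in $\H{}{1}$, whence $u_{n}$ converges in $\H{}{1}$; this is exactly compactness of $D(\A_{1})\cap D(\A_{0}^{*})\emb\H{}{1}$.

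Finally, the finite-dimensionality of $N_{0,1}$ drops out for free: since $N_{0,1}=N(\A_{1})\cap N(\A_{0}^{*})$ is closed in $\H{}{1}$ and the embedding $N_{0,1}\emb\H{}{1}$ is just the identity on this Hilbert space, its compactness forces $\dim N_{0,1}<\infty$. The only step that really needs care is the graph-norm orthogonality of the decomposition; once that is in hand, both implications reduce to a routine extraction of subsequences together with the already recorded equivalences of Lemma \ref{fatbl2}.
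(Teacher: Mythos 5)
Your proof is correct and takes essentially the same route the paper indicates: the $\H{}{1}$-orthogonal decomposition $D(\A_{1})\cap D(\A_{0}^{*})=D(\cA_{0}^{*})\oplus_{\H{}{1}}N_{0,1}\oplus_{\H{}{1}}D(\cA_{1})$ from \eqref{helmrefinedthree} combined with the equivalence (i)$\Leftrightarrow$(i${}^{*}$) of Lemma \ref{fatbl2} for the pair $(\A_{0},\A_{0}^{*})$. Your explicit verification that the decomposition is orthogonal also for the graph norm of $D(\A_{1})\cap D(\A_{0}^{*})$ is precisely the detail the paper leaves implicit in its ``we immediately see'', and the Riesz argument for $\dim N_{0,1}<\infty$ is the standard conclusion.
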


We summarise:

\begin{theo}[fa-toolbox theorem]
\label{compembtheo}
Let the ranges $R(\A_{0})$ and $R(\A_{1})$ be closed.
Then all ranges $R(\A_{0})$, $R(\A_{0}^{*})$, and $R(\A_{1})$, $R(\A_{1}^{*})$ are closed, and
the corresponding Poincar\'e-Friedrichs type estimates hold, i.e.
there exists positive constants $c_{\A_{0}},c_{\A_{1}}$ such that
\begin{align*}
\forall\,z&\in D(\cA_{0})=D(\A_{0})\cap R(\A_{0}^{*})
&
\norm{z}_{\H{}{0}}&\leq c_{\A_{0}}\norm{\A_{0} z}_{\H{}{1}},\\
\forall\,x&\in D(\cA_{0}^{*})=D(\A_{0}^{*})\cap R(\A_{0})=D(\A_{0}^{*})\cap N(\A_{1})\cap N_{0,1}^{\bot_{\H{}{1}}}
&
\norm{x}_{\H{}{1}}&\leq c_{\A_{0}}\norm{\A_{0}^{*} x}_{\H{}{0}},\\
\forall\,x&\in D(\cA_{1})=D(\A_{1})\cap R(\A_{1}^{*})=D(\A_{1})\cap N(\A_{0}^{*})\cap N_{0,1}^{\bot_{\H{}{1}}}
&
\norm{x}_{\H{}{1}}&\leq c_{\A_{1}}\norm{\A_{1} x}_{\H{}{2}},\\
\forall\,y&\in D(\cA_{1}^{*})=D(\A_{1}^{*})\cap R(\A_{1})
&
\norm{y}_{\H{}{2}}&\leq c_{\A_{1}}\norm{\A_{1}^{*} y}_{\H{}{1}},
\end{align*}
and
$$\forall\,x\in D(\A_{1})\cap D(\A_{0}^{*})\cap N_{0,1}^{\bot_{\H{}{1}}}\qquad
\norm{x}_{\H{}{1}}^2
\leq c_{\A_{1}}^2\norm{\A_{1} x}_{\H{}{2}}^2
+c_{\A_{0}}^2\norm{\A_{0}^{*} x}_{\H{}{0}}^2.$$
Moreover, all refined Helmholtz type decompositions \eqref{helmrefinedone}-\eqref{helmrefinedthree}
hold with closed ranges, in particular, e.g.,
$$\H{}{1}=R(\A_{0})\oplus_{\H{}{1}}N_{0,1}\oplus_{\H{}{1}}R(\A_{1}^{*}).$$
\end{theo}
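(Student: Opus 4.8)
The plan is to reduce the statement to the single-operator fa-toolbox lemma (Lemma \ref{fatbl1}) applied separately to $\A_0$ and to $\A_1$, and then to glue the two resulting estimates by means of the refined Helmholtz decomposition \eqref{helmrefinedtwo}. First I would invoke Lemma \ref{fatbl1} with $\A=\A_0$: since $R(\A_0)$ is closed, assertion (ii) holds, hence so do (ii${}^{*}$), (i) and (i${}^{*}$). This already yields that $R(\A_0^{*})=R(\cA_0^{*})$ is closed and that the Poincar\'e-Friedrichs estimates for $\cA_0$ and $\cA_0^{*}$ hold, with a common constant $c_{\A_0}=c_{\A_0^{*}}$ by the constant lemma (Lemma \ref{lemconstants}). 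Repeating the argument with $\A=\A_1$ gives closedness of $R(\A_1^{*})=R(\cA_1^{*})$ and, again with a common constant $c_{\A_1}$, the two estimates for $\cA_1$ and $\cA_1^{*}$. Thus all four ranges $R(\A_0),R(\A_0^{*}),R(\A_1),R(\A_1^{*})$ are closed.

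Next I would match the domains to the form displayed in the statement. Because all ranges are now closed, the closures in the refined Helmholtz decompositions \eqref{helmrefinedone} may be dropped, giving $R(\A_0)=N(\A_1)\cap N_{0,1}^{\bot}$ and $R(\A_1^{*})=N(\A_0^{*})\cap N_{0,1}^{\bot}$. Substituting these into $D(\cA_0^{*})=D(\A_0^{*})\cap\ol{R(\A_0)}$ and $D(\cA_1)=D(\A_1)\cap\ol{R(\A_1^{*})}$ reproduces exactly the descriptions $D(\cA_0^{*})=D(\A_0^{*})\cap N(\A_1)\cap N_{0,1}^{\bot}$ and $D(\cA_1)=D(\A_1)\cap N(\A_0^{*})\cap N_{0,1}^{\bot}$, while $D(\cA_0)$ and $D(\cA_1^{*})$ are immediate from the definitions of the reduced operators. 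This reconciles the four estimates with the domains exactly as written.

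The crucial step is the combined estimate. I would fix $x\in D(\A_1)\cap D(\A_0^{*})\cap N_{0,1}^{\bot}$ and use the now closed-range decomposition $\H{}{1}=R(\A_0)\oplus_{\H{}{1}}N_{0,1}\oplus_{\H{}{1}}R(\A_1^{*})$ from \eqref{helmrefinedtwo}; since $x\bot N_{0,1}$, this writes $x=u+v$ with $u\in R(\A_0)$ and $v\in R(\A_1^{*})$. The sequence property \eqref{sequenceprop} gives $u\in R(\A_0)\subset N(\A_1)\subset D(\A_1)$ and $v\in R(\A_1^{*})\subset N(\A_0^{*})\subset D(\A_0^{*})$, so that $v=x-u\in D(\A_1)\cap\ol{R(\A_1^{*})}=D(\cA_1)$ and $u=x-v\in D(\A_0^{*})\cap\ol{R(\A_0)}=D(\cA_0^{*})$, together with $\A_1 x=\A_1 v$ and $\A_0^{*}x=\A_0^{*}u$. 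Applying the two Poincar\'e-Friedrichs estimates to $v$ and $u$ and using the orthogonality $\norm{x}_{\H{}{1}}^2=\norm{u}_{\H{}{1}}^2+\norm{v}_{\H{}{1}}^2$ then gives $\norm{x}_{\H{}{1}}^2\leq c_{\A_1}^2\norm{\A_1 x}_{\H{}{2}}^2+c_{\A_0}^2\norm{\A_0^{*}x}_{\H{}{0}}^2$.

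Finally, the refined Helmholtz decompositions \eqref{helmrefinedone}--\eqref{helmrefinedthree} hold verbatim, and with all ranges closed the closure bars disappear throughout, in particular $\H{}{1}=R(\A_0)\oplus_{\H{}{1}}N_{0,1}\oplus_{\H{}{1}}R(\A_1^{*})$. I expect no deep obstacle, as the heavy lifting---the equivalence of closed range with the Poincar\'e-Friedrichs estimate---is supplied by Lemma \ref{fatbl1}. The one point requiring care is the bookkeeping in the combined estimate: verifying that the Helmholtz components $u$ and $v$ of $x$ genuinely land in the reduced domains $D(\cA_0^{*})$ and $D(\cA_1)$ and that the operator actions restrict as claimed, which is precisely where the sequence property $\A_1\A_0=0$ (equivalently $\A_0^{*}\A_1^{*}=0$) enters.
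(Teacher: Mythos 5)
Your proposal is correct and follows exactly the route the paper intends: the theorem is stated as a summary whose ingredients are Lemma \ref{fatbl1} (closed range $\Leftrightarrow$ Poincar\'e-Friedrichs estimate) applied to $\A_{0}$ and $\A_{1}$ separately, Lemma \ref{lemconstants} for the common constants, and the refined Helmholtz decompositions \eqref{helmrefinedone}--\eqref{helmrefinedthree} with the closure bars dropped. Your treatment of the combined estimate, splitting $x=u+v$ via \eqref{helmrefinedtwo} and checking through the complex property that $u\in D(\cA_{0}^{*})$, $v\in D(\cA_{1})$ with $\A_{0}^{*}x=\A_{0}^{*}u$ and $\A_{1}x=\A_{1}v$, is precisely the decomposition recorded in the third line of \eqref{helmrefinedthree}, so nothing is missing.
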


\begin{rem}
\label{compembremone}
Let us define $c_{\A_{0},\A_{1}}>0$ by 
$$\frac{1}{c_{\A_{0},\A_{1}}^2}
:=\inf
\frac{\norm{\A_{1} x}_{\H{}{2}}^2+\norm{\A_{0}^{*} x}_{\H{}{0}}^2}
{\norm{x}_{\H{}{0}}^2},$$
where the infimum is taken over all 
$0\neq x\in D(\A_{1})\cap D(\A_{0}^{*})\cap N_{0,1}^{\bot_{\H{}{1}}}$.
Assuming - as mentioned above - that we always take the best constants, 
we obtain by Theorem \ref{compembtheo}
$$c_{\A_{0},\A_{1}}=\max\{c_{\A_{0}},c_{\A_{1}}\}.$$
This can be seen as follows: Theorem \ref{compembtheo} shows
$c_{\A_{0},\A_{1}}\leq\max\{c_{\A_{0}},c_{\A_{1}}\}$.
On the other hand, for
$$x\in D(\cA_{1})
=D(\A_{1})\cap R(\A_{1}^{*})
=D(\A_{1})\cap N(\A_{0}^{*})\cap N_{0,1}^{\bot_{\H{}{1}}}$$
we have $\norm{x}_{\H{}{1}}\leq c_{\A_{0},\A_{1}}\norm{\A_{1} x}_{\H{}{2}}$
and hence $c_{\A_{1}}\leq c_{\A_{0},\A_{1}}$. Analogously we get $c_{\A_{0}}\leq c_{\A_{0},\A_{1}}$.
\end{rem}

\begin{rem}
\label{compembremtwo}
If $D(\A_{1})\cap D(\A_{0}^{*})\emb\H{}{1}$ is compact, then
$D(\cA_{0})\emb\H{}{0}$, $D(\cA_{1})\emb\H{}{1}$,
and $D(\cA_{0}^{*})\emb\H{}{1}$, $D(\cA_{1}^{*})\emb\H{}{2}$ are compact,
as well as $\dim N_{0,1}<\infty$.
Hence all ranges are closed, see Remark \ref{fatbr},
and all assertions of Theorem \ref{compembtheo} hold.
\end{rem}

In other words, the primal and dual complex, i.e.,
\begin{align}
\begin{split}
\label{genpdcomplex}
\begin{CD}
D(\A_{0}) @> \A_{0} >>
D(\A_{1}) @> \A_{1} >>
\H{}{2},
\end{CD}\\
\begin{CD}
\H{}{0} @< \A_{0}^{*} <<
D(\A_{0}^{*}) @< \A_{1}^{*} <<
D(\A_{1}^{*}),
\end{CD}
\end{split}
\end{align}
is a Hilbert complex of closed and densely defined linear operators.
The additional assumption that the ranges $R(\A_{0})$ and $R(\A_{1})$ are closed
(and so also the ranges $R(\A_{0}^{*})$ and $R(\A_{1}^{*})$)
is equivalent to the closedness of the Hilbert complex.
Moreover, the complex is exact if and only if $N_{0,1}=\{0\}$.
The complex is called compact, if 
\begin{align}
\label{cptembcrucial}
D(\A_{1})\cap D(\A_{0}^{*})\emb\H{}{1}
\end{align} 
is compact. Remark \ref{compembremtwo} shows that \eqref{cptembcrucial}
is the crucial assumption for the complex \eqref{genpdcomplex}.

Finally, we present some results for the (unbounded linear) operator
$$\A_{0}\A_{0}^{*}+\A_{1}^{*}\A_{1}:
D(\A_{0}\A_{0}^{*}+\A_{1}^{*}\A_{1})\subset\H{}{1}\to\H{}{1}$$
with
$D(\A_{0}\A_{0}^{*}+\A_{1}^{*}\A_{1})
:=\big\{x\in D(\A_{1})\cap D(\A_{0}^{*})\,:\,\A_{1}x\in D(\A_{1}^{*})\,\wedge\,\A_{0}^{*}x\in D(\A_{0})\big\}$.

\begin{lem}[constant and eigenvalue lemma]
\label{lemconstevcomplex}
Let $D(\A_{1})\cap D(\A_{0}^{*})\emb\H{}{1}$ be compact. Then:
\begin{itemize}
\item[\bf(i)]
$\A_{0}^{*}\A_{0}$, $\A_{0}\A_{0}^{*}$, $\A_{1}^{*}\A_{1}$, $\A_{1}\A_{1}^{*}$, 
and $\A_{0}\A_{0}^{*}+\A_{1}^{*}\A_{1}$
are self-adjoint and have pure and discrete point spectra
with no accumulation point in $\reals$.
\item[\bf(ii)]
The results of Lemma \ref{lemconstev} hold for $\A_{0}$ and $\A_{1}$, in particular
$\sigma(\A_{0}^{*}\A_{0})\setminus\{0\}=\sigma(\A_{0}\A_{0}^{*})\setminus\{0\}$
and $\sigma(\A_{1}^{*}\A_{1})\setminus\{0\}=\sigma(\A_{1}\A_{1}^{*})\setminus\{0\}$
as well as $N(\A_{0}\A_{0}^{*})=N(\A_{0}^{*})$ and $N(\A_{1}^{*}\A_{1})=N(\A_{1})$.
\item[\bf(iii)] 
$N(\A_{0}\A_{0}^{*}+\A_{1}^{*}\A_{1})=N_{0,1}$
and $R(\A_{0}\A_{0}^{*}+\A_{1}^{*}\A_{1})=N_{0,1}^{\bot_{\H{}{1}}}$,
in particular the range is closed.
\item[\bf(iv)] 
$\A_{0}\A_{0}^{*}+\A_{1}^{*}\A_{1}:
D(\A_{0}\A_{0}^{*}+\A_{1}^{*}\A_{1})\cap N_{0,1}^{\bot_{\H{}{1}}}\subset N_{0,1}^{\bot_{\H{}{1}}}
\to N_{0,1}^{\bot_{\H{}{1}}}$ is bijective with compact inverse.
\item[\bf(iv${}^{*}$)] 
$\A_{0}\A_{0}^{*}+\A_{1}^{*}\A_{1}:
D(\A_{0}\A_{0}^{*}+\A_{1}^{*}\A_{1})\cap N_{0,1}^{\bot_{\H{}{1}}}\to N_{0,1}^{\bot_{\H{}{1}}}$
is a topological isomorphism.
\end{itemize}
Moreover, the spectrum of $\A_{0}\A_{0}^{*}+\A_{1}^{*}\A_{1}$
is given by the spectra of $\A_{0}\A_{0}^{*}$ and $\A_{1}^{*}\A_{1}$, i.e.,
\begin{itemize}
\item[\bf(v)]
$\sigma(\A_{0}\A_{0}^{*}+\A_{1}^{*}\A_{1})\setminus\{0\}
=\big(\sigma(\A_{0}^{*}\A_{0})\setminus\{0\}\big)
\cup\big(\sigma(\A_{1}^{*}\A_{1})\setminus\{0\}\big)$.
\item[\bf(v${}^{*}$)] 
In particular, the smallest positive eigenvalue of $\A_{0}\A_{0}^{*}+\A_{1}^{*}\A_{1}$ 
is given by $\min\{\lambda_{\A_{0}}^2,\lambda_{\A_{1}}^2\}$.
\end{itemize}
\end{lem}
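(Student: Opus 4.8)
The plan is to reduce the Hodge-type Laplacian $\A_0\A_0^*+\A_1^*\A_1$ to a block-diagonal operator with respect to the refined Helmholtz decomposition \eqref{helmrefinedtwo}, after which every assertion follows from the single-operator theory already at hand. First I would invoke Remark \ref{compembremtwo}: compactness of $D(\A_1)\cap D(\A_0^*)\emb\H{}{1}$ forces each of $D(\cA_0)\emb\H{}{0}$, $D(\cA_0^*)\emb\H{}{1}$, $D(\cA_1)\emb\H{}{1}$, $D(\cA_1^*)\emb\H{}{2}$ to be compact, makes all four ranges closed, and gives $\dim N_{0,1}<\infty$. Applying Lemma \ref{lemconstev} once to $\A_0$ and once to $\A_1$ then yields the parts of (i) and (ii) concerning $\A_0^*\A_0$, $\A_0\A_0^*$, $\A_1^*\A_1$, $\A_1\A_1^*$: self-adjointness, pure discrete point spectra, and the kernel identities $N(\A_0\A_0^*)=N(\A_0^*)$ and $N(\A_1^*\A_1)=N(\A_1)$.

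The heart of the argument is the block decomposition. Writing any $x\in\H{}{1}$ as $x=x_R+x_h+x_{R^*}$ along $\H{}{1}=\ol{R(\A_0)}\oplus_{\H{}{1}}N_{0,1}\oplus_{\H{}{1}}\ol{R(\A_1^*)}$, I would use \eqref{helmrefinedthree} to show that $x\in D(\A_1)\cap D(\A_0^*)$ holds if and only if $x_R\in D(\cA_0^*)$, $x_h\in N_{0,1}$, and $x_{R^*}\in D(\cA_1)$, using that $\ol{R(\A_0)}\oplus_{\H{}{1}}N_{0,1}=N(\A_1)$ and $N_{0,1}\oplus_{\H{}{1}}\ol{R(\A_1^*)}=N(\A_0^*)$. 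Since $\A_0^*$ annihilates $x_h$ and $x_{R^*}$ while $\A_1$ annihilates $x_R$ and $x_h$, the defining domain conditions $\A_0^*x\in D(\A_0)$ and $\A_1 x\in D(\A_1^*)$ reduce to $x_R\in D(\cA_0\cA_0^*)$ and $x_{R^*}\in D(\cA_1^*\cA_1)$, and on this domain
$$(\A_0\A_0^*+\A_1^*\A_1)x=\cA_0\cA_0^*x_R+\cA_1^*\cA_1x_{R^*},$$
the two summands lying in $\ol{R(\A_0)}$ and $\ol{R(\A_1^*)}$ respectively, while the $N_{0,1}$-component is killed. Thus, relative to \eqref{helmrefinedtwo}, the operator is the orthogonal sum of $\cA_0\cA_0^*$ on $\ol{R(\A_0)}$, the zero operator on $N_{0,1}$, and $\cA_1^*\cA_1$ on $\ol{R(\A_1^*)}$.

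From this block form the remaining claims are immediate. Self-adjointness in (i) follows since each reduced block $\cA_0\cA_0^*$ and $\cA_1^*\cA_1$ is self-adjoint by Lemma \ref{lemconstev} and $N_{0,1}$ is a finite-dimensional reducing subspace. Each block is injective with bounded inverse $(\cA_0^*)^{-1}\cA_0^{-1}$, respectively $\cA_1^{-1}(\cA_1^*)^{-1}$, which is compact because $\cA_0^{-1}$ and $\cA_1^{-1}$ are compact by Lemma \ref{fatbl2}; hence the kernel of the full operator is exactly where both blocks vanish, namely $N_{0,1}$, giving (iii), and the reduced operator on $N_{0,1}^{\bot_{\H{}{1}}}=\ol{R(\A_0)}\oplus_{\H{}{1}}\ol{R(\A_1^*)}$ is bijective with compact inverse, giving (iv) and (iv$^*$). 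Finally, the spectrum of an orthogonal block sum is the union of the blocks' spectra; combining this with $\sigma(\cA_0\cA_0^*)=\sigma(\A_0\A_0^*)\setminus\{0\}=\sigma(\A_0^*\A_0)\setminus\{0\}$ from Lemma \ref{lemconstev}(vi) and the analogous identity for $\A_1$ yields (v), and taking the minimum of the smallest positive eigenvalues $\lambda_{\A_0}^2$, $\lambda_{\A_1}^2$ of the two blocks (Lemma \ref{lemconstev}(ii)) yields (v$^*$).

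I expect the main obstacle to be the rigorous verification that the operator domain $D(\A_0\A_0^*+\A_1^*\A_1)$ splits compatibly with the Helmholtz decomposition, that is, that membership in the domain is equivalent to the three componentwise conditions on $x_R$, $x_h$, $x_{R^*}$ above. This is precisely where the complex property $\A_1\A_0=0$ (through the inclusions $\ol{R(\A_0)}\subset N(\A_1)$ and $\ol{R(\A_1^*)}\subset N(\A_0^*)$) together with the closedness of all ranges are indispensable; without them the cross terms $\A_1 x_R$ and $\A_0^*x_{R^*}$ need not vanish, and the clean block structure on which everything else rests would break down.
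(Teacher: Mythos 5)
Your proposal is correct; the domain-splitting step you flag as the main obstacle does go through exactly as you describe, since \eqref{helmrefinedthree} gives $D(\A_{1})\cap D(\A_{0}^{*})=D(\cA_{0}^{*})\oplus_{\H{}{1}}N_{0,1}\oplus_{\H{}{1}}D(\cA_{1})$, the cross terms vanish by the complex property, and $\A_{0}^{*}x_{R}=\cA_{0}^{*}x_{R}\in R(\A_{0}^{*})$ lies in $D(\A_{0})$ if and only if it lies in $D(\cA_{0})$. The difference from the paper is one of organization rather than substance: you establish the orthogonal-sum decomposition $\cA_{0}\cA_{0}^{*}\oplus 0\oplus\cA_{1}^{*}\cA_{1}$ once, up front, and then read off every item, whereas the paper proves the items ad hoc --- self-adjointness in (i) by testing the adjoint relation against elements of $D(\cA_{0}\cA_{0}^{*})$ and $D(\cA_{1}^{*}\cA_{1})$ separately (which is the same decomposition used implicitly), the spectral identity (v) by a direct eigenvector manipulation (setting $y:=\A_{1}^{*}\A_{1}x$ and showing $\A_{1}^{*}\A_{1}y=\lambda y$, with the case $y=0$ handled via $\A_{0}\A_{0}^{*}$), and (iii) by the quadratic-form identity $0=\norm{\A_{0}^{*}x}^{2}+\norm{\A_{1}x}^{2}$ plus closedness of the range from Remark \ref{fatbr}. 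Your route buys a uniform treatment in which (iv), (iv${}^{*}$), and the compactness of the inverse become transparent consequences of the block form and of Lemma \ref{fatbl2}; the paper's route avoids having to verify the compatibility of $D(\A_{0}\A_{0}^{*}+\A_{1}^{*}\A_{1})$ with the Helmholtz decomposition in full, at the cost of repeating the decomposition argument inside each item. Both rest on the same ingredients, so I would accept your proof as a valid, somewhat cleaner alternative.
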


For a proof see Appendix \ref{appproofs}.

\begin{rem}[Helmholtz decomposition]
\label{lemconstevcomplexremhelm}
$\A_{0}\A_{0}^{*}+\A_{1}^{*}\A_{1}$ provides the Helmholtz decomposition from Theorem \ref{compembtheo}.
To see this, let us denote the orthonormal projector onto the cohomology group $N_{0,1}$ by
$\pi_{N_{0,1}}:\H{}{1}\to N_{0,1}$. Then, for $x\in\H{}{1}$ we have
$(1-\pi_{N_{0,1}})x\in N_{0,1}^{\bot_{\H{}{1}}}$ and
\begin{align*}
x&=\pi_{N_{0,1}}x+(1-\pi_{N_{0,1}})x\\
&=\pi_{N_{0,1}}x+(\A_{0}\A_{0}^{*}+\A_{1}^{*}\A_{1})(\A_{0}\A_{0}^{*}+\A_{1}^{*}\A_{1})^{-1}(1-\pi_{N_{0,1}})x
\in N_{0,1}\oplus_{\H{}{1}}R(\A_{0})\oplus_{\H{}{1}}R(\A_{1}^{*}).
\end{align*}
\end{rem}

\subsection{Laplace and Maxwell Constants in 3D}
\label{seclapmax3D}

Now, we specialise to linear acoustics and electromagnetics in 3D,
i.e., to the classical operators of the 3D-de Rham complex, cf. \eqref{genpdcomplex},
\begin{align}
\begin{split}
\label{derhamcomplex}
\begin{CD}
\H{}{\gat}(\grad{},\om) @> \A_{0}=\grad{\gat} >>
\H{}{\gat}(\rot{},\om) @> \A_{1}=\rot{\gat} >>
\H{}{\gat}(\div{},\om) @> \A_{2}=\div{\gat} >>
\L{2}(\om),
\end{CD}\\
\begin{CD}
\L{2}(\om) @< \A_{0}^{*}=-\div{\gan} <<
\H{}{\gan}(\div{},\om) @< \A_{1}^{*}=\rot{\gan} <<
\H{}{\gan}(\rot{},\om) @< \A_{2}^{*}=-\grad{\gan} <<
\H{}{\gan}(\grad{},\om),
\end{CD}
\end{split}
\end{align}
and apply the fa-toolbox to these operators.

More precisely, 
let $\om\subset\reals^{3}$ be a bounded weak Lipschitz domain, 
see \cite[Definition 2.3]{bauerpaulyschomburgmaxcompweaklip} for details, with boundary $\ga:=\p\om$, 
which is divided into two relatively open weak Lipschitz subsets $\gat$ 
and $\gan:=\ga\setminus\ol{\gat}$ (its complement),
see \cite[Definition 2.5]{bauerpaulyschomburgmaxcompweaklip} for details.
We shall call $(\om,\gat)$ a bounded weak Lipschitz pair.
Moreover, if $(\om,\gat)$ is a bounded weak Lipschitz pair, so is $(\om,\gan)$.
Note that strong Lipschitz (graph of Lipschitz functions) implies weak Lipschitz (Lipschitz manifolds)
for the boundary as well as for the interface. 
We introduce the usual Lebesgue and Sobolev spaces by $\L{2}(\om)$ 
and $\H{k}{}(\om)$, $k\in\mathbb{N}_{0}$. For $k=1$ we also write
$$\H{1}{}(\om)
=\H{}{}(\grad{},\om)
:=\big\{u\in\L{2}(\om):\grad{}u\in\L{2}(\om)\big\}.$$
Homogeneous weak boundary conditions (in the strong sense) 
are defined by closure of respective test functions, i.e.,
\begin{align}
\label{Hgatdef}
\H{1}{\gat}(\om)
=\H{}{\gat}(\grad{},\om)
:=\ol{\C{\infty}{\gat}(\om)}^{\H{}{}(\grad{},\om)},
\end{align}
where
$$\C{\infty}{\gat}(\om)
:=\big\{u|_{\om}:u\in\C{\infty}{}(\reals^{3}),\,
\text{\rm supp}\,u\;\text{\rm  compact in }\reals^{3},\,
\text{\rm dist}(\text{\rm supp}\,u,\gat)>0\big\}.$$
Analogously we define (using test vector fields)
$$\H{}{}(\rot{},\om),\quad
\H{}{\gat}(\rot{},\om),\quad
\H{}{}(\div{},\om),\quad
\H{}{\gat}(\div{},\om).$$
All latter definitions extend to $\om\subset\reals^{N}$, $N\geq1$, 
in an obvious way, see \cite{bauerpaulyschomburgmaxcompweakliprNarxiv,bauerpaulyschomburgmaxcompweakliprN} for details.
Throughout this paper and until otherwise stated,
we shall assume the latter minimal regularity on $\om$ and $\gat$.

\begin{ass}
$(\om,\gat)$ is a bounded weak Lipschitz pair.
\end{ass}

As closures of the respective classical operators of vector analysis
defined on test functions/vector fields from $\C{\infty}{\gat}(\om)$,
we consider the densely defined and closed linear operators
\begin{align*}
\A_{0}:=\grad{\gat}:D(\grad{\gat})\subset\L{2}(\om)&\To\L{2}(\om);
&
u&\mapsto\grad{}u,\\
\A_{1}:=\rot{\gat}:D(\rot{\gat})\subset\L{2}(\om)&\To\L{2}(\om);
&
E&\mapsto\rot{}E,\\
\A_{2}:=\div{\gat}:D(\div{\gat})\subset\L{2}(\om)&\To\L{2}(\om);
&
H&\mapsto\div{}H,
\intertext{together with their adjoints, 
see \cite[Theorem 4.5, Section 5.2]{bauerpaulyschomburgmaxcompweaklip} and
\cite[Theorem 4.7, Section 5.2]{bauerpaulyschomburgmaxcompweakliprNarxiv,bauerpaulyschomburgmaxcompweakliprN},}
\A^{*}_{0}=\grad{\gat}^{*}=-\div{\gan}:D(\div{\gan})\subset\L{2}(\om)&\To\L{2}(\om);
&
H&\mapsto-\div{}H,\\
\A^{*}_{1}=\rot{\gat}^{*}=\rot{\gan}:D(\rot{\gan})\subset\L{2}(\om)&\To\L{2}(\om);
&
E&\mapsto\rot{}E,\\
\A^{*}_{2}=\div{\gat}^{*}=-\grad{\gan}:D(\grad{\gan})\subset\L{2}(\om)&\To\L{2}(\om);
&
u&\mapsto-\grad{}u.
\end{align*}
Note that
$$D(\grad{\gat})=\H{}{\gat}(\grad{},\om),\quad
D(\rot{\gat})=\H{}{\gat}(\rot{},\om),\quad
D(\div{\gat})=\H{}{\gat}(\div{},\om)$$
and that \eqref{derhamcomplex} is indeed a Hilbert complex.

Recently, in \cite{bauerpaulyschomburgmaxcompweaklip,bauerpaulyschomburgmaxcompweakliprNarxiv,bauerpaulyschomburgmaxcompweakliprN}, 
Weck's selection theorem, also known as the Maxwell compactness property, has been shown
to hold for such bounded weak Lipschitz domains and mixed boundary conditions.

\begin{theo}[Weck's selection theorem]
\label{weckst}
The embedding
$$\H{}{\gat}(\rot{},\om)\cap\H{}{\gan}(\div{},\om)\emb\L{2}(\om)$$
is compact.
\end{theo}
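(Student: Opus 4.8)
The plan is to prove the compactness directly, by a localization-and-flattening argument that reduces the global claim to a variable-coefficient model problem on a cube, where the gradient part of a Helmholtz decomposition becomes Rellich-compact. Let $(E_n)$ be a bounded sequence in $\H{}{\gat}(\rot{},\om)\cap\H{}{\gan}(\div{},\om)$; we must extract a subsequence converging in $\L{2}(\om)$. Since $\ol\om$ is compact, cover it by finitely many open sets $U_k$ with a subordinate partition of unity $\{\varphi_k\}\subset\C{\infty}{}(\reals^{3})$, $\sum_k\varphi_k=1$ on $\ol\om$, chosen so that each $U_k$ is either an interior chart with $\ol{U_k}\subset\om$, or a boundary chart admitting a bi-Lipschitz map $\Phi_k$ that sends $U_k\cap\om$ onto a half-cube $Q^-$ and the relevant piece of $\ga$ onto the flat face, with $\gat\cap U_k$ and $\gan\cap U_k$ mapped to complementary relatively open parts of that face separated by the flattened interface $\ol{\gat}\cap\ol{\gan}$. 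Writing $E_n=\sum_k\varphi_k E_n$, it suffices to produce a convergent subsequence of each $(\varphi_k E_n)_n$.

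Each localized field stays in the same class up to a bounded source. Indeed,
$$\rot{}(\varphi_k E_n)=\varphi_k\,\rot{}E_n+\nabla\varphi_k\times E_n,\qquad
\div{}(\varphi_k E_n)=\varphi_k\,\div{}E_n+\nabla\varphi_k\cdot E_n,$$
and since $(E_n)$, $(\rot{}E_n)$, $(\div{}E_n)$ are bounded in $\L{2}(\om)$ while $\nabla\varphi_k\in\L{\infty}(\om)$, both right-hand sides are bounded in $\L{2}(\om)$. Moreover $\varphi_k E_n$ inherits the homogeneous tangential boundary condition on $\gat\cap U_k$ and the homogeneous normal condition on $\gan\cap U_k$, while its support stays away from the complementary boundary part. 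Thus $\varphi_k E_n$ lies in the mixed-boundary space on a chart that is now transformable to a flat configuration.

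Transforming by $\Phi_k$ through the covariant pullback (for $\rot{}$) and the Piola transform (for $\div{}$) sends these spaces to the corresponding spaces on $Q^-$, with the Euclidean structure replaced by a symmetric, uniformly positive definite $\L{\infty}$ coefficient field $\epsilon_k$ built from the Jacobian of $\Phi_k$. Denote the transformed field by $F_n$. Using the now-flat homogeneous tangential/normal conditions, one extends $F_n$ across the flat face by the reflection whose parity is dictated by the boundary condition, obtaining a field on the full cube $Q$ with globally controlled $\rot{}F_n$ and $\div{}(\epsilon_k F_n)$ and no remaining boundary constraint. We are thereby reduced to the model assertion: a sequence $(F_n)$ bounded in $\L{2}(Q)$ with $(\rot{}F_n)$ and $(\div{}(\epsilon_k F_n))$ bounded in $\L{2}(Q)$ possesses an $\L{2}(Q)$-convergent subsequence.

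For the model problem one invokes the $\epsilon_k$-weighted Helmholtz decomposition $F_n=\nabla u_n+\theta_n$, where $u_n\in\H{1}{}(Q)$ solves $\scp{\epsilon_k\nabla u_n}{\nabla v}=\scp{\epsilon_k F_n}{\nabla v}$ for all admissible $v$; the data are controlled by $\div{}(\epsilon_k F_n)$, so $(u_n)$ is bounded in $\H{1}{}(Q)$ and $(\nabla u_n)$ is precompact in $\L{2}(Q)$ by the Rellich embedding $\H{1}{}(Q)\emb\L{2}(Q)$. The remainder $\theta_n$ satisfies $\div{}(\epsilon_k\theta_n)=0$ and $\rot{}\theta_n=\rot{}F_n$, and is to be represented as $\theta_n=\rot{}\psi_n$ with $\psi_n$ bounded in $\H{1}{}(Q)$, again precompact by Rellich; summing the finitely many charts and transforming back then yields the convergent subsequence in $\L{2}(\om)$. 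The main obstacle is precisely this last transversal part under only $\L{\infty}$ coefficients and Lipschitz geometry: the flat Gaffney/Weitzenböck identity
$$\norm{\nabla F}_{\L{2}(Q)}^2=\norm{\rot{}F}_{\L{2}(Q)}^2+\norm{\div{}F}_{\L{2}(Q)}^2,$$
which would give $\H{1}{}$-boundedness outright in the constant-coefficient case, is no longer available after flattening, so the vector potential $\psi_n$ must be constructed by the weighted div–curl theory, and the reflection must be made compatible with the mixed conditions exactly along the single Lipschitz interface $\ol{\gat}\cap\ol{\gan}$. This delicate variable-coefficient and weak-Lipschitz bookkeeping is the technical heart of the argument, carried out in \cite{bauerpaulyschomburgmaxcompweaklip,bauerpaulyschomburgmaxcompweakliprNarxiv,bauerpaulyschomburgmaxcompweakliprN}.
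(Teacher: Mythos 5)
The paper itself offers no proof of this theorem: it is imported from the literature, and the entire ``proof'' in the text is the citation to \cite{bauerpaulyschomburgmaxcompweaklip,bauerpaulyschomburgmaxcompweakliprNarxiv,bauerpaulyschomburgmaxcompweakliprN}. Your outline --- localization by a partition of unity, bi-Lipschitz flattening with the covariant pullback for $\rot{}$ and the Piola transform for $\div{}$ producing a uniformly positive definite $\L{\infty}$ coefficient $\epsilon_k$, and a weighted Helmholtz splitting on the model cube --- is indeed an accurate description of the strategy of those references, so as a road map it points in the right direction.

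Taken as a proof, however, it has a genuine gap, and it sits exactly where you flag it yourself. The assertion that the $\epsilon_k$-divergence-free remainder $\theta_n$ admits a vector potential $\psi_n$ with $\rot{}\psi_n=\theta_n$ and $(\psi_n)$ bounded in $\H{1}{}(Q)$ is, in the model situation, the entire content of the theorem; nothing in your argument produces it. With merely $\L{\infty}$ coefficients the Gaffney identity is unavailable, there is no elliptic regularity to upgrade control of $\rot{}\theta_n$ and $\div{}(\epsilon_k\theta_n)$ to control of $\nabla\theta_n$, and the reflection step cannot be carried out as described: on a flat face carrying the tangential condition on one relatively open part and the normal condition on the complementary part, no single parity of reflection is compatible with both, so the extension breaks down precisely along the interface $\ol{\gat}\cap\ol{\gan}$ --- which is why the mixed-boundary case required separate work (Jochmann for strong Lipschitz interfaces, the cited papers for weak Lipschitz ones). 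As written, your argument reduces the theorem to an equivalent statement on the model domain and then defers to the same references the authors cite; the decisive steps are asserted, not proven.
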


For a proof see \cite{bauerpaulyschomburgmaxcompweaklip,bauerpaulyschomburgmaxcompweakliprNarxiv,bauerpaulyschomburgmaxcompweakliprN}.
A short historical overview of Weck's selection theorem
is given in the introduction of \cite{bauerpaulyschomburgmaxcompweaklip},
see also the original paper \cite{weckmax} and 
\cite{picardcomimb,webercompmax,costabelremmaxlip,witschremmax,jochmanncompembmaxmixbc,leisbook,picardweckwitschxmas}.

Now, Theorem \ref{weckst} implies that the crucial assumption \eqref{cptembcrucial} holds
for the operators $\A_{n}$ of the de Rham complex \eqref{derhamcomplex}, 
cf. the general complex \eqref{genpdcomplex}.
More precisely, by Theorem \ref{weckst}
\begin{align*}
\boldsymbol{\mathrm{(a)}}&
&
D(\A_{1})\cap D(\A_{0}^{*})
&=\H{}{\gat}(\rot{},\om)\cap\H{}{\gan}(\div{},\om)
\emb\L{2}(\om)=\H{}{1},\\
\boldsymbol{\mathrm{(b)}}&
&
D(\A_{2})\cap D(\A_{1}^{*})
&=\H{}{\gat}(\div{},\om)\cap\H{}{\gan}(\rot{},\om)
\emb\L{2}(\om)=\H{}{2}
\end{align*}
are compact and, hence, \eqref{derhamcomplex} is a compact Hilbert complex.
Thus, by Theorem \ref{compembtheo} and Remark \ref{compembremtwo},
all ranges are closed, all corresponding Poincar\'e-Friedrichs type estimates hold, 
and all refined Helmholtz type decompositions \eqref{helmrefinedone}-\eqref{helmrefinedthree}
hold with closed ranges. In particular, denoting the corresponding constants by
\begin{align}
\nonumber
\frac{1}{\lambda_{0,\gat}}:=c_{0,\gat}
:=c_{\grad{\gat}}:=c_{\A_{0}}&=c_{\A_{0}^{*}}=c_{\div{\gan}}
=c_{2,\gan}=\frac{1}{\lambda_{2,\gan}},\\
\label{constdef3D}
\frac{1}{\lambda_{1,\gat}}:=c_{1,\gat}
:=c_{\rot{\gat}}:=c_{\A_{1}}&=c_{\A_{1}^{*}}=c_{\rot{\gan}}
=c_{1,\gan}=\frac{1}{\lambda_{1,\gan}},\\
\nonumber
\frac{1}{\lambda_{2,\gat}}:=c_{2,\gat}
:=c_{\div{\gat}}:=c_{\A_{2}}&=c_{\A_{2}^{*}}=c_{\grad{\gan}}
=c_{0,\gan}=\frac{1}{\lambda_{0,\gan}},
\end{align}
and introducing the (finite-dimensional) cohomology groups
\begin{align*}
\Harm{}{1}&:=N_{0,1}:=N(\A_{1})\cap N(\A_{0}^{*})=N(\rot{\gat})\cap N(\div{\gan}),\\
\Harm{}{2}&:=N_{1,2}:=N(\A_{2})\cap N(\A_{1}^{*})=N(\div{\gat})\cap N(\rot{\gan}),
\end{align*}
the so-called Dirichlet/Neumann fields,
we have by Theorem \ref{compembtheo} and Remark \ref{compembremtwo} the following inequalities:

\begin{theo}[Poincar\'e-Friedrichs type estimates]
\label{fpest}
It holds
\begin{align*}
\forall\,u&\in D(\cA_{0})=D(\grad{\gat})\cap R(\div{\gan})
&
\norm{u}_{\L{2}(\om)}&\leq c_{0,\gat}\norm{\grad{}u}_{\L{2}(\om)},\\
\forall\,E&\in D(\cA_{0}^{*})=D(\div{\gan})\cap R(\grad{\gat})
&
\norm{E}_{\L{2}(\om)}&\leq c_{0,\gat}\norm{\div{}E}_{\L{2}(\om)},\\
\forall\,E&\in D(\cA_{1})=D(\rot{\gat})\cap R(\rot{\gan})
&
\norm{E}_{\L{2}(\om)}&\leq c_{1,\gat}\norm{\rot{}E}_{\L{2}(\om)},\\
\forall\,H&\in D(\cA_{1}^{*})=D(\rot{\gan})\cap R(\rot{\gat})
&
\norm{H}_{\L{2}(\om)}&\leq c_{1,\gat}\norm{\rot{}H}_{\L{2}(\om)},
\end{align*}
and for all $E\in D(\A_{1})\cap D(\A_{0}^{*})\cap N_{0,1}^{\bot_{\H{}{1}}}
=D(\rot{\gat})\cap D(\div{\gan})\cap\Harm{\bot_{\L{2}(\om)}}{1}$ 
$$\norm{E}_{\L{2}(\om)}^2
\leq c_{1,\gat}^2\norm{\rot{}E}_{\L{2}(\om)}^2
+c_{0,\gat}^2\norm{\div{}E}_{\L{2}(\om)}^2,$$
where
\begin{align*}
R(\grad{\gat})&=N(\rot{\gat})\cap\Harm{\bot_{\L{2}(\om)}}{1},
&
R(\rot{\gan})&=N(\div{\gan})\cap\Harm{\bot_{\L{2}(\om)}}{1},\\
R(\grad{\gan})&=N(\rot{\gan})\cap\Harm{\bot_{\L{2}(\om)}}{2},
&
R(\rot{\gat})&=N(\div{\gat})\cap\Harm{\bot_{\L{2}(\om)}}{2}.
\end{align*}
\end{theo}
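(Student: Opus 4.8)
The plan is to obtain every assertion as a direct specialisation of the abstract fa-toolbox theorem (Theorem \ref{compembtheo}) together with Remark \ref{compembremtwo}, applied to the two consecutive pairs $(\A_{0},\A_{1})=(\grad{\gat},\rot{\gat})$ and $(\A_{1},\A_{2})=(\rot{\gat},\div{\gat})$ of the de Rham complex \eqref{derhamcomplex}. The only genuine input is compactness: Weck's selection theorem (Theorem \ref{weckst}) shows that the embeddings $\boldsymbol{\mathrm{(a)}}$ and $\boldsymbol{\mathrm{(b)}}$, that is $D(\A_{1})\cap D(\A_{0}^{*})\emb\H{}{1}$ and $D(\A_{2})\cap D(\A_{1}^{*})\emb\H{}{2}$, are compact. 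Hence the crucial assumption \eqref{cptembcrucial} holds for both pairs, and Remark \ref{compembremtwo} guarantees that all relevant ranges are closed.

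First I would apply Theorem \ref{compembtheo} to the pair $(\A_{0},\A_{1})$. This immediately yields the four Poincar\'e--Friedrichs estimates for $\cA_{0}$, $\cA_{0}^{*}$, $\cA_{1}$, and $\cA_{1}^{*}$ on their reduced domains $D(\cA_{0})=D(\A_{0})\cap R(\A_{0}^{*})$, $D(\cA_{0}^{*})=D(\A_{0}^{*})\cap R(\A_{0})$, and so on. It then remains to translate the abstract notation into the concrete operators: inserting $\A_{0}=\grad{\gat}$, $\A_{0}^{*}=-\div{\gan}$, $\A_{1}=\rot{\gat}$, $\A_{1}^{*}=\rot{\gan}$ gives the stated domain identities such as $D(\cA_{0})=D(\grad{\gat})\cap R(\div{\gan})$, and using $\norm{-\div{}E}_{\L{2}(\om)}=\norm{\div{}E}_{\L{2}(\om)}$ turns the abstract inequalities into the four displayed ones. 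The constants are matched via definition \eqref{constdef3D} and the constant lemma (Lemma \ref{lemconstants}): since $c_{\A_{0}}=c_{\A_{0}^{*}}=c_{0,\gat}$ and $c_{\A_{1}}=c_{\A_{1}^{*}}=c_{1,\gat}$, the first two estimates share $c_{0,\gat}$ and the last two share $c_{1,\gat}$.

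The combined estimate is precisely the final display of Theorem \ref{compembtheo} for the pair $(\A_{0},\A_{1})$, after identifying the cohomology group $N_{0,1}=\Harm{}{1}$, writing $\A_{1}E=\rot{}E$ and $\A_{0}^{*}E=-\div{}E$, and inserting the two constants as above. For the range characterisations I would use the first line of the refined Helmholtz type decomposition \eqref{helmrefinedone}, which now holds with closed ranges; rewriting $N(\A_{1})=R(\A_{0})\oplus_{\H{}{1}}N_{0,1}$ and $N(\A_{0}^{*})=R(\A_{1}^{*})\oplus_{\H{}{1}}N_{0,1}$ as orthogonal complements gives $R(\grad{\gat})=N(\rot{\gat})\cap\Harm{\bot_{\L{2}(\om)}}{1}$ and $R(\rot{\gan})=N(\div{\gan})\cap\Harm{\bot_{\L{2}(\om)}}{1}$. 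Applying the identical argument to the second pair $(\A_{1},\A_{2})$, whose cohomology group is $N_{1,2}=\Harm{}{2}$, produces the remaining two identities $R(\rot{\gat})=N(\div{\gat})\cap\Harm{\bot_{\L{2}(\om)}}{2}$ and $R(\grad{\gan})=N(\rot{\gan})\cap\Harm{\bot_{\L{2}(\om)}}{2}$.

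There is no real analytic obstacle here: the entire content has already been packaged into the abstract Hilbert-complex machinery, and once Weck's selection theorem supplies compactness, the statement is pure specialisation. The only point demanding care is the bookkeeping across the two overlapping complex structures $(\A_{0},\A_{1})$ and $(\A_{1},\A_{2})$ --- in particular keeping track of which cohomology group ($\Harm{}{1}$ versus $\Harm{}{2}$) and which constant ($c_{0,\gat}$ versus $c_{1,\gat}$) belongs to each operator --- together with the harmless sign and symmetry conventions $c_{\A}=c_{\A^{*}}$ and $\norm{-\div{}E}_{\L{2}(\om)}=\norm{\div{}E}_{\L{2}(\om)}$.
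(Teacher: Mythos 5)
Your proposal is correct and coincides with the paper's own argument: the theorem is stated there as an immediate consequence of Theorem \ref{compembtheo} and Remark \ref{compembremtwo} applied to the pairs $(\A_{0},\A_{1})$ and $(\A_{1},\A_{2})$ of the de Rham complex \eqref{derhamcomplex}, with compactness of the embeddings $\boldsymbol{\mathrm{(a)}}$ and $\boldsymbol{\mathrm{(b)}}$ supplied by Weck's selection theorem and the constants identified via \eqref{constdef3D} and Lemma \ref{lemconstants}. Your bookkeeping of the two cohomology groups $\Harm{}{1}$, $\Harm{}{2}$ and the range identities from \eqref{helmrefinedone} matches the paper exactly.
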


Let $c_{0,1,\gat}:=c_{\grad{\gat},\rot{\gat}}>0$ be defined by 
$$\frac{1}{c_{0,1,\gat}^2}
:=\inf
\frac{\norm{\rot{}E}_{\L{2}(\om)}^2+\norm{\div{}E}_{\L{2}(\om)}^2}
{\norm{E}_{\L{2}(\om)}^2},$$
where the infimum taken over all 
$0\neq E\in D(\rot{\gat})\cap D(\div{\gan})\cap\Harm{\bot_{\L{2}(\om)}}{1}$.

\begin{rem}
\label{fpestrem}
By Remark \ref{compembremone} it holds
$c_{0,1,\gat}=\max\{c_{0,\gat},c_{1,\gat}\}$.
\end{rem}

Note that by the symmetry of the de Rham complex the corresponding two estimates 
for $\cA_{2}$ and $\cA_{2}^{*}$, i.e., 
\begin{align*}
\forall\,H&\in D(\cA_{2})=D(\div{\gat})\cap R(\grad{\gan})
&
\norm{H}_{\L{2}(\om)}&\leq c_{2,\gat}\norm{\div{}H}_{\L{2}(\om)},\\
\forall\,u&\in D(\cA_{2}^{*})=D(\grad{\gan})\cap R(\div{\gat})
&
\norm{u}_{\L{2}(\om)}&\leq c_{2,\gat}\norm{\grad{}u}_{\L{2}(\om)},
\end{align*}
are redundant, as these are already included in the two estimates 
for $\cA_{0}$ and $\cA_{0}^{*}$ just by interchanging the boundary conditions on $\gat$ and $\gan$.
In other words, $c_{2,\gat}=c_{0,\gan}$.
Furthermore, 
\begin{align*}
N(\grad{\gat})
&\;=\begin{cases}\{0\}&\text{if }\gat\neq\emptyset,\\
\reals&\text{if }\gat=\emptyset,\end{cases}\\
R(\div{\gan})=N(\grad{\gat})^{\bot_{\L{2}(\om)}}=\L{2}_{\gan}(\om)
&:=\begin{cases}\L{2}(\om)&\text{if }\gan\neq\ga,\\
\L{2}(\om)\cap\reals^{\bot_{\L{2}(\om)}}&\text{if }\gan=\ga,\end{cases}
\end{align*}
where
$$\L{2}(\om)\cap\reals^{\bot_{\L{2}(\om)}}
=\big\{u\in\L{2}(\om):\scp{u}{1}_{\L{2}(\om)}=0\big\}
=\big\{u\in\L{2}(\om):\int_{\om}u=0\big\}.$$

Combinations of the latter operators give the well known operators 
from acoustics, Maxwell equations, Laplace equations, and the double rotation equations, i.e.,
\begin{align*}
\M_{0}&:=\begin{bmatrix}0&\A_{0}^{*}\\\A_{0}&0\end{bmatrix}
=\begin{bmatrix}0&\grad{\gat}^{*}\\\grad{\gat}&0\end{bmatrix}
=\begin{bmatrix}0&-\div{\gan}\\\grad{\gat}&0\end{bmatrix},\\
\M_{1}&:=\begin{bmatrix}0&\A_{1}^{*}\\\A_{1}&0\end{bmatrix}
=\begin{bmatrix}0&\rot{\gat}^{*}\\\rot{\gat}&0\end{bmatrix}
=\begin{bmatrix}0&\rot{\gan}\\\rot{\gat}&0\end{bmatrix},\\
\M_{2}&:=\begin{bmatrix}0&\A_{2}^{*}\\\A_{2}&0\end{bmatrix}
=\begin{bmatrix}0&\div{\gat}^{*}\\\div{\gat}&0\end{bmatrix}
=\begin{bmatrix}0&-\grad{\gan}\\\div{\gat}&0\end{bmatrix},
\end{align*}
and
\begin{align*}
\A_{0}^{*}\A_{0}&=\grad{\gat}^{*}\grad{\gat}
=-\div{\gan}\grad{\gat}
=:-\Delta_{\gat},
&
\A_{0}\A_{0}^{*}&=\grad{\gat}\grad{\gat}^{*}
=-\grad{\gat}\div{\gan}
=-\Diamond_{\gan},\\
\A_{1}^{*}\A_{1}&=\rot{\gat}^{*}\rot{\gat}
=\rot{\gan}\rot{\gat}
=:\Box_{\gat},
&
\A_{1}\A_{1}^{*}&=\rot{\gat}\rot{\gat}^{*}
=\rot{\gat}\rot{\gan}
=\square_{\gan},\\
\A_{2}^{*}\A_{2}&=\div{\gat}^{*}\div{\gat}
=-\grad{\gan}\div{\gat}
=:-\Diamond_{\gat},
&
\A_{2}\A_{2}^{*}&=\div{\gat}\div{\gat}^{*}
=-\div{\gat}\grad{\gan}
=-\Delta_{\gan}.
\end{align*}
Again, $\M_{2}$ and the operators involving $\A_{2}$, $\A_{2}^{*}$ are redundant
by interchanging the boundary conditions in $\M_{0}$ and $\A_{0}$, $\A_{0}^{*}$.
Hence, we may focus on $c_{0,\gat}$ and $c_{1,\gat}$. Section \ref{seccstgen} shows the following:

\begin{theo}[Poincar\'e-Friedrichs type constants]
\label{fpmconst}
The Poincar\'e-Friedrichs type constants can be computed 
by the four Rayleigh quotations
\begin{align*}
\frac{1}{c_{0,\gat}}=\lambda_{0,\gat}
&=\inf_{0\neq u\in D(\grad{\gat})\cap\L{2}_{\gan}(\om)}\frac{\norm{\grad{}u}_{\L{2}(\om)}}{\norm{u}_{\L{2}(\om)}}
=\inf_{0\neq E\in D(\div{\gan})\cap R(\grad{\gat})}\frac{\norm{\div{}E}_{\L{2}(\om)}}{\norm{E}_{\L{2}(\om)}},\\
\frac{1}{c_{1,\gat}}=\lambda_{1,\gat}
&=\inf_{0\neq E\in D(\rot{\gat})\cap R(\rot{\gan})}\frac{\norm{\rot{}E}_{\L{2}(\om)}}{\norm{E}_{\L{2}(\om)}}
=\inf_{0\neq H\in D(\rot{\gan})\cap R(\rot{\gat})}\frac{\norm{\rot{}H}_{\L{2}(\om)}}{\norm{H}_{\L{2}(\om)}}.
\end{align*}
Moreover, $\lambda_{0,\gat}$ is the smallest positive eigenvalue of
$$\begin{bmatrix}0&\A_{0}^{*}\\\A_{0}&0\end{bmatrix}
=\begin{bmatrix}0&-\div{\gan}\\\grad{\gat}&0\end{bmatrix}$$
and $\lambda_{0,\gat}^2$ is the smallest (positive) eigenvalue of
$$\A_{0}^{*}\A_{0}=-\div{\gan}\grad{\gat}=-\Delta_{\gat}
\quad\text{and}\quad
\A_{0}\A_{0}^{*}=-\grad{\gat}\div{\gan}=-\Diamond_{\gan}.$$
$\lambda_{1,\gat}$ is the smallest positive eigenvalue of
$$\begin{bmatrix}0&\A_{1}^{*}\\\A_{1}&0\end{bmatrix}
=\begin{bmatrix}0&\rot{\gan}\\\rot{\gat}&0\end{bmatrix}$$
and $\lambda_{1,\gat}^2$ is the smallest (positive) eigenvalue of
$$\A_{1}^{*}\A_{1}=\rot{\gan}\rot{\gat}=\square_{\gat}
\quad\text{and}\quad
\A_{1}\A_{1}^{*}=\rot{\gat}\rot{\gan}=\square_{\gan}.$$
\end{theo}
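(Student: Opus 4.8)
The plan is to read off each assertion by specialising the abstract fa-toolbox results to the two dual pairs $(\A_0,\A_0^*)=(\grad{\gat},-\div{\gan})$ and $(\A_1,\A_1^*)=(\rot{\gat},\rot{\gan})$ of the de Rham complex \eqref{derhamcomplex}. The only substantial input is compactness: Weck's selection theorem (Theorem \ref{weckst}) provides the compact embedding $D(\A_1)\cap D(\A_0^*)=\H{}{\gat}(\rot{},\om)\cap\H{}{\gan}(\div{},\om)\emb\L{2}(\om)$, so Lemma \ref{compemblem} (equivalently Remark \ref{compembremtwo}) yields that both reduced domains $D(\cA_0)\emb\L{2}(\om)$ and $D(\cA_1)\emb\L{2}(\om)$ embed compactly and that all ranges are closed. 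This places us precisely in the hypothesis $D(\cA)\emb\H{}{0}$ of Lemma \ref{lemconstev}, which I would invoke once for $\A=\A_0$ and once for $\A=\A_1$.

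For $\A_0=\grad{\gat}$, the definition \eqref{lambdaAdef} of the best constant directly gives $\lambda_{0,\gat}=1/c_{0,\gat}=1/c_{\A_0}$ as the first infimum over $D(\cA_0)=D(\A_0)\cap\ol{R(\A_0^*)}$. Using the kernel identity $\ol{R(\A_0^*)}=N(\grad{\gat})^{\bot_{\L{2}(\om)}}=\L{2}_{\gan}(\om)$ recorded above, this domain is exactly $D(\grad{\gat})\cap\L{2}_{\gan}(\om)$. The constant lemma (Lemma \ref{lemconstants}) then gives $c_{\A_0}=c_{\A_0^*}$, so the adjoint Rayleigh quotient in \eqref{lambdaAdef} reinterprets $\lambda_{0,\gat}$ as the second infimum over $D(\cA_0^*)=D(\div{\gan})\cap R(\grad{\gat})$, with $\A_0^*=-\div{\gan}$. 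The eigenvalue assertions are then Lemma \ref{lemconstev}(i)--(ii) verbatim: part (i) identifies $\lambda_{0,\gat}$ as the smallest positive eigenvalue of $\M_0=\begin{bmatrix}0&\A_0^*\\\A_0&0\end{bmatrix}$, and part (ii) identifies $\lambda_{0,\gat}^2$ as the smallest positive eigenvalue of $\A_0^*\A_0=-\Delta_{\gat}$ and of $\A_0\A_0^*=-\Diamond_{\gan}$.

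For $\A_1=\rot{\gat}$ I would repeat the identical argument. Here $\ol{R(\A_1^*)}=R(\rot{\gan})$ is closed, so $D(\cA_1)=D(\rot{\gat})\cap R(\rot{\gan})$, and Lemma \ref{lemconstants} again gives the second infimum over $D(\cA_1^*)=D(\rot{\gan})\cap R(\rot{\gat})$, producing the two curl Rayleigh quotients for $\lambda_{1,\gat}$. Lemma \ref{lemconstev}(i)--(ii) then deliver $\lambda_{1,\gat}$ as the smallest positive eigenvalue of $\M_1=\begin{bmatrix}0&\A_1^*\\\A_1&0\end{bmatrix}$ and $\lambda_{1,\gat}^2$ as the smallest positive eigenvalue of $\A_1^*\A_1=\square_{\gat}$ and of $\A_1\A_1^*=\square_{\gan}$. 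The redundant statements for $\A_2$ need not be treated, since $c_{2,\gat}=c_{0,\gan}$ by interchanging the boundary conditions, as already noted.

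I do not expect a deep obstacle once Weck's theorem is in hand; the proof is essentially bookkeeping built on Lemma \ref{lemconstev}. The only point requiring genuine care is the translation of the abstract objects $\ol{R(\A_n^*)}$, $R(\A_n)$, and $D(\cA_n)$ into the concrete vector-analytic spaces in the statement --- in particular verifying $\ol{R(\A_0^*)}=\L{2}_{\gan}(\om)$ and, crucially, that these ranges are genuinely closed rather than merely dense. The latter is exactly the closed-complex conclusion of Theorem \ref{compembtheo}, which in turn rests on the compactness supplied by Theorem \ref{weckst}.
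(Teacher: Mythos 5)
Your proposal is correct and follows essentially the same route as the paper: the paper gives no separate proof of Theorem \ref{fpmconst} but derives it exactly as you do, by feeding Weck's selection theorem (Theorem \ref{weckst}) into the fa-toolbox (closedness of ranges via Theorem \ref{compembtheo}/Remark \ref{compembremtwo}, then Lemma \ref{lemconstants} and Lemma \ref{lemconstev}(i)--(ii) applied to $\A_{0}=\grad{\gat}$ and $\A_{1}=\rot{\gat}$), together with the identifications $R(\div{\gan})=N(\grad{\gat})^{\bot_{\L{2}(\om)}}=\L{2}_{\gan}(\om)$ and \eqref{constdef3D}. No gaps.
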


\begin{rem}[variational formulations]
\label{fpmconstrem}
All infima in Theorem \ref{fpmconst} are minima
and the respective minimisers $u_{0,\gat}$, $E_{0,\gan}$, and $E_{1,\gat}$, $H_{1,\gan}$
are the eigenfunctions to the eigenvalues $\lambda_{0,\gat}^2$ and $\lambda_{1,\gat}^2$, i.e, 
$$\lambda_{0,\gat}
=\frac{\norm{\grad{}u_{0,\gat}}_{\L{2}(\om)}}{\norm{u_{0,\gat}}_{\L{2}(\om)}}
=\frac{\norm{\div{}E_{0,\gan}}_{\L{2}(\om)}}{\norm{E_{0,\gan}}_{\L{2}(\om)}},\qquad
\lambda_{1,\gat}
=\frac{\norm{\rot{}E_{1,\gat}}_{\L{2}(\om)}}{\norm{E_{1,\gat}}_{\L{2}(\om)}}
=\frac{\norm{\rot{}H_{1,\gan}}_{\L{2}(\om)}}{\norm{H_{1,\gan}}_{\L{2}(\om)}},$$
\begin{align*}
(-\Delta_{\gat}-\lambda_{0,\gat}^2)u_{0,\gat}&=0,
&
u_{0,\gat}&\in D(\Delta_{\gat})\cap\L{2}_{\gan}(\om)\subset D(\grad{\gat})\cap\L{2}_{\gan}(\om),\\
(-\Diamond_{\gan}-\lambda_{0,\gat}^2)E_{0,\gan}&=0,
&
E_{0,\gan}&\in D(\Diamond_{\gan})\cap R(\grad{\gat})\subset D(\div{\gan})\cap R(\grad{\gat}),\\
(\square_{\gat}-\lambda_{1,\gat}^2)E_{1,\gat}&=0,
&
E_{1,\gat}&\in D(\square_{\gat})\cap R(\rot{\gan})\subset D(\rot{\gat})\cap R(\rot{\gan}),\\
(\square_{\gan}-\lambda_{1,\gat}^2)H_{1,\gan}&=0,
&
H_{1,\gan}&\in D(\square_{\gan})\cap R(\rot{\gat})\subset D(\rot{\gan})\cap R(\rot{\gat}).
\end{align*}
Moreover, the eigenvectors satisfy the variational formulations
\begin{align*}
\forall\,\psi&\in D(\grad{\gat})
&
\scp{\grad{}u_{0,\gat}}{\grad{}\psi}_{\L{2}(\om)}
&=\lambda_{0,\gat}^2\scp{u_{0,\gat}}{\psi}_{\L{2}(\om)},\\
\forall\,\Psi&\in D(\div{\gan})
&
\scp{\div{}E_{0,\gan}}{\div{}\Psi}_{\L{2}(\om)}
&=\lambda_{0,\gat}^2\scp{E_{0,\gan}}{\Psi}_{\L{2}(\om)},\\
\forall\,\Phi&\in D(\rot{\gat})
&
\scp{\rot{}E_{1,\gat}}{\rot{}\Phi}_{\L{2}(\om)}
&=\lambda_{1,\gat}^2\scp{E_{1,\gat}}{\Phi}_{\L{2}(\om)},\\
\forall\,\Theta&\in D(\rot{\gan})
&
\scp{\rot{}H_{1,\gan}}{\rot{}\Theta}_{\L{2}(\om)}
&=\lambda_{1,\gat}^2\scp{H_{1,\gan}}{\Theta}_{\L{2}(\om)}.
\end{align*}
\end{rem}

\begin{rem}
We emphasise that Lemma \ref{lemconstevcomplex} provides results for the vector Laplacian
$$\A_{0}\A_{0}^{*}+\A_{1}^{*}\A_{1}
=-\Diamond_{\gan}+\square_{\gat}
=-\grad{\gat}\div{\gan}+\rot{\gan}\rot{\gat},$$
which has been recently discussed in, e.g., \cite{zbMATH07008886}.
\end{rem}

\subsubsection{Known Results for the Constants in 3D}

Let us summarise and cite some recent results from \cite{paulymaxconst0,paulymaxconst1,paulymaxconst2,paulymaxconst3}
about the Poincar\'e-Friedrichs type constants, i.e.,
about the Poincar\'e-Friedrichs constants $\lambda_{0,\gat}$
and the Maxwell constants $\lambda_{1,\gat}$.

\begin{theo}[Poincar\'e-Friedrichs/Maxwell constants in 3D]
\label{stateoftheartconstest}
For $c_{\ell,\gat}=1/\lambda_{\ell,\gat}$ the following holds:
\begin{itemize}
\item[\bf(i)]
The Poincar\'e-Friedrichs constants depend monotonically on the boundary conditions, i.e., 
$$\emptyset\neq\widetilde\ga_{\tau}\subset\gat
\qimpl c_{0,\gat}\leq c_{0,\widetilde\ga_{\tau}}.$$
\item[\bf(ii)]
The Friedrichs constant is always smaller than the Poincar\'e constant, i.e., 
$$c_{0,\ga}\leq c_{0,\emptyset},$$
where $c_{0,\ga}$ is the classical Friedrichs constant
and $c_{0,\emptyset}$ is the classical Poincar\'e constant.
Moreover, $\lambda_{0,\ga}$ is usually called the first Dirichlet-Laplace eigenvalue 
and $\lambda_{0,\emptyset}$ is usually called the second Neumann-Laplace eigenvalue.
\item[\bf(iii)]
$c_{0,\ga}\leq \text{\rm diam}(\om)/\pi$
\item[\bf(iv)]
$c_{0,\gat}=c_{2,\gan}$
\item[\bf(v)]
$c_{1,\gat}=c_{1,\gan}$
\item[\bf(vi)]
$c_{0,\ga}\leq c_{0,\gat}\leq c_{0,1,\gat}=\max\{c_{0,\gat},c_{1,\gat}\}$
\item[\bf(vii)]
If $\om$ is convex, then $c_{0,\ga}\leq c_{0,\emptyset}\leq \text{\rm diam}(\om)/\pi$.
\item[\bf(viii)]
If $\om$ is convex, then $c_{1,\ga}=c_{1,\emptyset}\leq c_{0,\emptyset}\leq \text{\rm diam}(\om)/\pi$.
\item[\bf(ix)]
If $\om$ is convex, then 
$c_{0,\ga}\leq c_{0,1,\ga}=\max\{c_{0,\ga},c_{1,\ga}\}\leq c_{0,\emptyset}\leq \text{\rm diam}(\om)/\pi$.
\item[\bf(ix')]
If $\om$ is convex, then 
$c_{0,\ga}\leq c_{0,1,\emptyset}=\max\{c_{0,\emptyset},c_{1,\emptyset}\}=c_{0,\emptyset}\leq \text{\rm diam}(\om)/\pi$.
\end{itemize}
\end{theo}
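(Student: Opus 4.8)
The plan is to separate the statement into three layers: the purely order-theoretic and duality facts that follow directly from the Rayleigh characterisations and the earlier lemmas; the two genuinely analytic eigenvalue comparisons (Dirichlet versus Neumann, and the diameter bound); and finally the convex estimates, which are assembled by combining these building blocks with the cited Payne--Weinberger and Maxwell-regularity results. I will treat the elementary parts first and isolate the single hard analytic input at the end.

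First I would establish the monotonicity (i). Since $\emptyset\neq\widetilde\ga_{\tau}\subset\gat$ forces both $\gat,\widetilde\ga_{\tau}\neq\emptyset$, the relevant ranges coincide, $R(\div{\gan})=R(\div{\widetilde\ga_{\nu}})=\L{2}(\om)$, whereas the form domain shrinks, $D(\grad{\gat})\subset D(\grad{\widetilde\ga_{\tau}})$. The Rayleigh characterisation in Theorem \ref{fpmconst} expresses $\lambda_{0,\gat}$ as the infimum of $\norm{\grad{}u}_{\L{2}(\om)}/\norm{u}_{\L{2}(\om)}$; taking this infimum over the smaller set can only increase it, so $\lambda_{0,\gat}\geq\lambda_{0,\widetilde\ga_{\tau}}$, i.e. $c_{0,\gat}\leq c_{0,\widetilde\ga_{\tau}}$. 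Parts (iv) and (v) are then immediate from the definitions \eqref{constdef3D} together with the constant lemma (Lemma \ref{lemconstants}): for $\A_{0}=\grad{\gat}$ with adjoint $-\div{\gan}$ one has $c_{\A_{0}}=c_{\A_{0}^{*}}$, that is $c_{0,\gat}=c_{2,\gan}$, and for $\A_{1}=\rot{\gat}$ with adjoint $\rot{\gan}$ one gets $c_{1,\gat}=c_{1,\gan}$, with no further analysis required.

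For (ii) and (iii) I would invoke the classical spectral facts. Reading $\lambda_{0,\ga}^2$ and $\lambda_{0,\emptyset}^2$ as the first Dirichlet-Laplace and the first nonzero Neumann-Laplace eigenvalue, as recorded after Theorem \ref{fpmconst}, the inequality $c_{0,\ga}\leq c_{0,\emptyset}$ is exactly $\lambda_{0,\ga}\geq\lambda_{0,\emptyset}$, which is Filonov's theorem \cite{filonovdirneulapeigen}. For (iii), enclosing $\om$ between two parallel hyperplanes at distance $\mathrm{diam}(\om)$ and applying the one-dimensional Friedrichs inequality to the zero-extension of $u\in\H{}{\ga}(\grad{},\om)$ gives $\lambda_{0,\ga}^2\geq\pi^2/\mathrm{diam}(\om)^2$, i.e. $c_{0,\ga}\leq\mathrm{diam}(\om)/\pi$. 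Part (vi) then follows: the first inequality $c_{0,\ga}\leq c_{0,\gat}$ is (i) applied with $\widetilde\ga_{\tau}=\gat\subset\ga$ when $\gat\neq\emptyset$ and is (ii) when $\gat=\emptyset$, while the identity $c_{0,1,\gat}=\max\{c_{0,\gat},c_{1,\gat}\}$ is precisely Remark \ref{fpestrem}.

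The convex statements (vii)--(ix') rest on two external inputs and the max-identity. The Payne--Weinberger inequality \cite{payneweinbergerpoincareconvex} gives $c_{0,\emptyset}\leq\mathrm{diam}(\om)/\pi$ for convex $\om$, which together with (ii) settles (vii). The hard part will be the Maxwell-to-Poincar\'e comparison $c_{1,\emptyset}\leq c_{0,\emptyset}$ of (viii): on a convex, hence contractible, domain the Dirichlet/Neumann fields $\Harm{}{1}$ vanish and the Maxwell fields satisfy $\H{}{}(\rot{},\om)\cap\H{}{\ga}(\div{},\om)\emb\H{1}{}(\om)$ with the Gaffney-type bound $\norm{\grad{}E}_{\L{2}(\om)}^2\leq\norm{\rot{}E}_{\L{2}(\om)}^2+\norm{\div{}E}_{\L{2}(\om)}^2$, in which convexity removes the boundary-curvature term; inserting the componentwise scalar Neumann--Poincar\'e inequality into the Rayleigh quotient of Theorem \ref{fpmconst} yields $\lambda_{1,\emptyset}\geq\lambda_{0,\emptyset}$, and for the detailed regularity estimate I would cite \cite{paulymaxconst1,paulymaxconst2}. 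Finally (ix) and (ix') are bookkeeping: (v) gives $c_{1,\ga}=c_{1,\emptyset}$, so $\max\{c_{0,\ga},c_{1,\ga}\}\leq c_{0,\emptyset}$ follows from (ii) and (viii); Remark \ref{fpestrem} supplies $c_{0,1,\ga}=\max\{c_{0,\ga},c_{1,\ga}\}$ and $c_{0,1,\emptyset}=\max\{c_{0,\emptyset},c_{1,\emptyset}\}=c_{0,\emptyset}$; and $c_{0,\emptyset}\leq\mathrm{diam}(\om)/\pi$ closes both chains.
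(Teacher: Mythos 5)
Your proposal is essentially correct. The paper itself gives no proof of Theorem \ref{stateoftheartconstest} --- it is explicitly a summary of results cited from \cite{paulymaxconst0,paulymaxconst1,paulymaxconst2,paulymaxconst3} --- so the only fair comparison is with the arguments in that literature, and yours reproduce them: (i) via shrinking form domains with unchanged range $\L{2}(\om)$ (the hypothesis $\widetilde\ga_{\tau}\neq\emptyset$ being exactly what keeps the mean-zero constraint from appearing), (iv)--(v) from Lemma \ref{lemconstants} and \eqref{constdef3D}, (ii) from Friedlander--Filonov, (iii) from the slab/zero-extension argument, (vi) from (i)--(ii) plus Remark \ref{fpestrem}, and the convex items from Payne--Weinberger together with the Gaffney-type estimate. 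The one place you gloss a step that is not purely formal is (viii): after $\norm{\grad{}E}^2\leq\norm{\rot{}E}^2+\norm{\div{}E}^2$ you apply the ``componentwise scalar Neumann--Poincar\'e inequality'', which requires each component $E_{j}$ to have vanishing mean; this does hold here, since $E\in R(\rot{\ga})$ forces $\div{}E=0$ and a vanishing normal trace, whence $\int_{\om}E_{j}=\int_{\om}E\cdot\grad{}x_{j}=-\int_{\om}(\div{}E)\,x_{j}=0$, but you should say so rather than fold it into the citation. With that line added, the argument is complete at the same level of rigour as the sources the theorem points to.
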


\begin{rem}
\label{stateoftheartconstestrem}
To the best of our knowledge, it is an open question whether or not
$$c_{0,\gat}\leq c_{1,\gat}
\qquad\text{or at least}\qquad
c_{0,\ga}\leq c_{1,\ga}$$
holds in general.
\end{rem}

\subsection{Other Complexes and Constants}
\label{ocplxcst}

So far, we have discussed the de Rham complex \eqref{derhamcomplex} in 3D.
While in higher dimensions $N\geq4$ the situation is
very similar to the 3D case (but the adjoint of $\rot{\gat}$
is no longer a rotation itself), the situations in
1D and 2D are much simpler.
Moreover, similar to the 3D-de Rham complex \eqref{derhamcomplex},
other important complexes of shape \eqref{genpdcomplex} fit nicely into our general fa-toolbox
and, therefore, can be handled with our theory,
see also \cite{paulyapostfirstordergen,paulydivcurl} for details.

\subsubsection{1D-de Rham Complex, Laplace and Maxwell Constants in 1D}
\label{ocplxcst1DdR}

In 1D the domain $\om$ is an interval and 
we have just one operator $\A_{0}=\grad{\gat}=(\,\cdot\,)_{\gat}'$
with adjoint $\A_{0}^{*}=-\div{\gan}=-(\,\cdot\,)_{\gan}'$, i.e., 
the complex \eqref{genpdcomplex}, compare to \eqref{derhamcomplex}, reads
\begin{align*}
\begin{CD}
\H{1}{\gat}(\om)=\H{}{\gat}(\grad{},\om) @> \A_{0}=\grad{\gat}=(\,\cdot\,)_{\gat}' >>
\L{2}(\om),
\end{CD}\\
\begin{CD}
\L{2}(\om) @< \A_{0}^{*}=-\div{\gan}=-(\,\cdot\,)_{\gan}' <<
\H{}{\gan}(\div{},\om)=\H{1}{\gan}(\om).
\end{CD}
\end{align*}
Hence, just the Laplacians $\Delta_{\gat}=\div{\gan}\grad{\gat}=(\,\cdot\,)_{\gat}''$ 
and $\Diamond_{\gan}=\grad{\gat}\div{\gan}=(\,\cdot\,)_{\gan}''$ 
exist and there are no Maxwell operators.
The crucial compact embedding \eqref{cptembcrucial}
is simply Rellich's selection theorem, compare to Theorem \ref{weckst}.
Moreover, here in the 1D case we have
\begin{align*}
\lambda_{0,\gat}
&=\inf_{0\neq u\in\H{1}{\gat}(\om)\cap\L{2}_{\gan}(\om)}\frac{\norm{\grad{}u}_{\L{2}(\om)}}{\norm{u}_{\L{2}(\om)}}
=\inf_{0\neq u\in\H{1}{\gat}(\om)\cap\L{2}_{\gan}(\om)}\frac{\norm{u'}_{\L{2}(\om)}}{\norm{u}_{\L{2}(\om)}}\\
&=\underbrace{\inf_{0\neq E\in \H{}{\gan}(\div{},\om)\cap R(\grad{\gat})}
\frac{\norm{\div{}E}_{\L{2}(\om)}}{\norm{E}_{\L{2}(\om)}}}_{=\lambda_{2,\gan}}
=\inf_{0\neq E\in\H{1}{\gan}(\om)\cap\L{2}_{\gat}(\om)}\frac{\norm{E'}_{\L{2}(\om)}}{\norm{E}_{\L{2}(\om)}}
=\lambda_{0,\gan},
\end{align*}
i.e., it is sufficient to compute the eigenvalues $\lambda_{0,\gat}$,
and we can also give a meaning to $\lambda_{2,\gan}$. Thus
$$\lambda_{0,\gat}
=\lambda_{0,\gan}
=\lambda_{2,\gan}
=\frac{1}{c_{2,\gan}}
=\frac{1}{c_{0,\gan}}
=\frac{1}{c_{0,\gat}}.$$
Note that 
\begin{align*}
\lambda_{0,\gat}
=\frac{\norm{\grad{}u_{0,\gat}}_{\L{2}(\om)}}{\norm{u_{0,\gat}}_{\L{2}(\om)}}
=\frac{\norm{u_{0,\gat}'}_{\L{2}(\om)}}{\norm{u_{0,\gat}}_{\L{2}(\om)}}
=\frac{\norm{\div{}E_{0,\gan}}_{\L{2}(\om)}}{\norm{E_{0,\gan}}_{\L{2}(\om)}}
=\frac{\norm{E_{0,\gan}'}_{\L{2}(\om)}}{\norm{E_{0,\gan}}_{\L{2}(\om)}}
=\lambda_{0,\gan}.
\end{align*}

Theorem \ref{stateoftheartconstest} turns to:

\begin{cor}[Poincar\'e-Friedrichs/Maxwell constants in 1D]
\label{stateoftheartconstestcor1D}
For $c_{\ell,\gat}=1/\lambda_{\ell,\gat}$ the following holds:
\begin{itemize}
\item[\bf(i)]
$\emptyset\neq\widetilde\ga_{\tau}\subset\gat
\;\impl\;c_{0,\gat}\leq c_{0,\widetilde\ga_{\tau}}$
\item[\bf(ii)]
$c_{0,\ga}=c_{0,\emptyset}\leq \text{\rm diam}(\om)/\pi$
\item[\bf(iii)]
$c_{0,\ga}\leq c_{0,\gat}=c_{0,\gan}$
\item[\bf(iv)]
There is no $c_{1,\gat}$, but $c_{2,\gan}=c_{0,\gan}=c_{0,\gat}$.
\end{itemize}
\end{cor}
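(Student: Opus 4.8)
The plan is to obtain each item as the one-dimensional specialisation of Theorem~\ref{stateoftheartconstest}, leaning on the fact that the chain $\lambda_{0,\gat}=\lambda_{0,\gan}=\lambda_{2,\gan}$, equivalently $c_{0,\gat}=c_{0,\gan}=c_{2,\gan}$, has just been derived above. This already delivers the equalities in \textbf{(iii)} and \textbf{(iv)}, so what remains is the monotonicity \textbf{(i)}, the inequalities in \textbf{(ii)} and \textbf{(iii)}, and the diameter bound. The structural input for \textbf{(iv)} is that in 1D the complex \eqref{genpdcomplex} reduces to the single operator $\A_{0}=\grad{\gat}=(\cdot)'_{\gat}$ with adjoint $\A_{0}^{*}=-\div{\gan}=-(\cdot)'_{\gan}$ and no second map $\A_{1}$; hence there is no rotation/Maxwell operator and no constant $c_{1,\gat}$. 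The one genuine identification is $c_{2,\gan}=c_{0,\gan}$: since $\grad{}$ and $\div{}$ both coincide with differentiation on an interval, the Rayleigh quotient defining $c_{2,\gan}$ over $D(\div{\gan})\cap R(\grad{\gat})=\H{1}{\gan}(\om)\cap\L{2}_{\gat}(\om)$ is literally the one defining $c_{0,\gan}$ (here $c_{0,\gat}=c_{2,\gan}$ is the constant lemma, Lemma~\ref{lemconstants}). This is exactly the computation carried out just before the statement.

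For \textbf{(i)} I would reproduce the argument of Theorem~\ref{stateoftheartconstest}\,\textbf{(i)}. When $\emptyset\neq\widetilde\ga_{\tau}\subset\gat$, both Dirichlet parts are nonempty, so $N(\grad{\gat})=N(\grad{\widetilde\ga_{\tau}})=\{0\}$ and the range constraints collapse, $\L{2}_{\gan}(\om)=\L{2}(\om)$; the two constants are then governed by the plain Rayleigh quotients of $(\cdot)'$ over $\H{1}{\gat}(\om)$ and $\H{1}{\widetilde\ga_{\tau}}(\om)$. Because imposing a Dirichlet condition on the larger set gives the smaller space, $\H{1}{\gat}(\om)\subset\H{1}{\widetilde\ga_{\tau}}(\om)$, the infimum over the smaller set is at least as large, i.e. $\lambda_{0,\gat}\geq\lambda_{0,\widetilde\ga_{\tau}}$, which is $c_{0,\gat}\leq c_{0,\widetilde\ga_{\tau}}$.

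The remaining inequalities then follow formally. Applying \textbf{(i)} with $\ga$ in the role of the larger set gives, for $\emptyset\neq\gat\subset\ga$, the bound $c_{0,\ga}\leq c_{0,\gat}$, which is the inequality in \textbf{(iii)}; the case $\gat=\emptyset$ is covered by the equality $c_{0,\ga}=c_{0,\emptyset}$. That equality in \textbf{(ii)} is precisely the duality $c_{0,\gat}=c_{0,\gan}$ read at $\gat=\ga$, $\gan=\emptyset$. Finally, $c_{0,\ga}\leq\text{\rm diam}(\om)/\pi$ is the specialisation of Theorem~\ref{stateoftheartconstest}\,\textbf{(iii)} (the bound only involves the gradient and is dimension independent); on an interval of length $\text{\rm diam}(\om)$ one checks directly that the first Dirichlet eigenvalue of $-(\cdot)''$ equals $(\pi/\text{\rm diam}(\om))^{2}$, so in fact $c_{0,\ga}=\text{\rm diam}(\om)/\pi$ with equality.

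I expect no analytic obstacle here; the only point that needs care is the bookkeeping that separates the 1D picture from the 3D one, namely that dropping $\A_{1}$ eliminates $c_{1,\gat}$ altogether and that the collapse $\grad{}=\div{}=(\cdot)'$ is what forces $c_{2,\gan}=c_{0,\gan}$. This is what turns the four-fold equality of constants available in 3D into the single Poincar\'e--Friedrichs constant $c_{0,\gat}=c_{0,\gan}$ seen here.
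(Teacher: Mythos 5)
Your proposal is correct and follows essentially the same route as the paper: the corollary is presented there as the direct $1$D specialisation of Theorem~\ref{stateoftheartconstest}, with the chain $\lambda_{0,\gat}=\lambda_{0,\gan}=\lambda_{2,\gan}$ and the absence of $\A_{1}$ established in the paragraph of Section~\ref{ocplxcst1DdR} immediately preceding the statement, exactly as you use them. Your explicit nested-spaces argument for the monotonicity in \textbf{(i)} and the observation that $c_{0,\ga}=\text{\rm diam}(\om)/\pi$ holds with equality in $1$D (consistent with \eqref{constval1D}) merely fill in details the paper delegates to the cited reference.
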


\subsubsection{2D-de Rham Complex, Laplace and Maxwell Constants in 2D}
\label{ocplxcst2DdR}

In 2D there are just the two operators 
$\A_{0}=\grad{\gat}$ and $\A_{1}=\rot{\gat}=\div{\gat}R$
with adjoints $\A_{0}^{*}=-\div{\gan}$ and $\A_{1}^{*}=\rotv{\gan}=R\grad{\gan}$, where 
$$\rot{}E=\div{}RE=\p_{1}E_{2}-\p_{2}E_{1},\qquad
\rotv{}u=R\grad{}u=\begin{bmatrix}\p_{2}u\\-\p_{1}u\end{bmatrix},\qquad
R=\begin{bmatrix}0&1\\-1&0\end{bmatrix},$$
and the complex \eqref{genpdcomplex}, compare to \eqref{derhamcomplex}, reads
\begin{align*}
\begin{CD}
\H{1}{\gat}(\om)=\H{}{\gat}(\grad{},\om) @> \A_{0}=\grad{\gat} >>
\H{}{\gat}(\rot{},\om)=\H{}{\gat}(\div{},\om)R @> \A_{1}=\rot{\gat} >>
\L{2}(\om),
\end{CD}\\
\begin{CD}
\L{2}(\om) @< \A_{0}^{*}=-\div{\gan} <<
\H{}{\gan}(\div{},\om) @< \A_{1}^{*}=\rotv{\gan} <<
\H{}{\gan}(\rotv{},\om)=\H{1}{\gan}(\om).
\end{CD}
\end{align*}
Hence, we have the Laplacian $\Delta_{\gat}=\div{\gan}\grad{\gat}$ and
$\Diamond_{\gan}=\grad{\gat}\div{\gan}$, as well as the second order Maxwell operators
(related to the 3D notations)
\begin{align*}
\square_{\gat}&=\rotv{\gan}\rot{\gat}=R\grad{\gan}\div{\gat}R=R\Diamond_{\gat}R,\\
\square_{\gan}&=\rot{\gat}\rotv{\gan}=\div{\gat}RR\grad{\gan}=-\div{\gat}\grad{\gan}=-\Delta_{\gan}.
\end{align*}
By Lemma \ref{compemblem}
the crucial compact embedding \eqref{cptembcrucial}
is just Rellich's selection theorem, compare to Theorem \ref{weckst}.
Moreover, here in the 2D case we have
\begin{align*}
\lambda_{0,\gat}
&=\inf_{0\neq u\in\H{1}{\gat}(\om)\cap\L{2}_{\gan}(\om)}\frac{\norm{\grad{}u}_{\L{2}(\om)}}{\norm{u}_{\L{2}(\om)}}
=\inf_{0\neq u\in\H{1}{\gat}(\om)\cap\L{2}_{\gan}(\om)}\frac{\norm{\rotv{}u}_{\L{2}(\om)}}{\norm{u}_{\L{2}(\om)}}\\
&=\underbrace{\inf_{0\neq E\in \H{}{\gan}(\div{},\om)\cap R(\grad{\gat})}
\frac{\norm{\div{}E}_{\L{2}(\om)}}{\norm{E}_{\L{2}(\om)}}}_{=\lambda_{2,\gan}}
\overset{E\leadsto RE}{=}
\inf_{0\neq E\in \H{}{\gan}(\rot{},\om)\cap R(\rotv{\gat})}\frac{\norm{\rot{}E}_{\L{2}(\om)}}{\norm{E}_{\L{2}(\om)}}
=\lambda_{1,\gan},
\end{align*}
i.e., it is sufficient to compute the eigenvalues $\lambda_{0,\gat}$,
and we can also give a meaning to $\lambda_{2,\gan}$. Thus
$$\lambda_{0,\gat}
=\lambda_{1,\gan}
=\lambda_{2,\gan}
=\frac{1}{c_{2,\gan}}
=\frac{1}{c_{1,\gan}}
=\frac{1}{c_{0,\gat}}.$$
Note that 
\begin{align*}
\lambda_{0,\gat}
=\frac{\norm{\grad{}u_{0,\gat}}_{\L{2}(\om)}}{\norm{u_{0,\gat}}_{\L{2}(\om)}}
=\frac{\norm{\rotv{}u_{0,\gat}}_{\L{2}(\om)}}{\norm{u_{0,\gat}}_{\L{2}(\om)}}
=\frac{\norm{\div{}E_{0,\gan}}_{\L{2}(\om)}}{\norm{E_{0,\gan}}_{\L{2}(\om)}}
=\frac{\norm{\rot{}E_{1,\gan}}_{\L{2}(\om)}}{\norm{E_{1,\gan}}_{\L{2}(\om)}}
=\lambda_{1,\gan},
\end{align*}
i.e., in our 3D-notation $H_{1,\gat}=u_{0,\gat}$ and $E_{0,\gan}=RE_{1,\gan}$.
Theorem \ref{stateoftheartconstest} turns to:

\begin{cor}[Poincar\'e-Friedrichs/Maxwell constants in 2D]
\label{stateoftheartconstestcor2D}
For $c_{\ell,\gat}=1/\lambda_{\ell,\gat}$ the following holds:
\begin{itemize}
\item[\bf(i)]
The Poincar\'e-Friedrichs constants depend monotonically on the boundary conditions, i.e., 
$$\emptyset\neq\widetilde\ga_{\tau}\subset\gat
\qimpl c_{0,\gat}\leq c_{0,\widetilde\ga_{\tau}}.$$
\item[\bf(ii)]
The Friedrichs constant is always smaller than the Poincar\'e constant, i.e., 
$c_{0,\ga}\leq c_{0,\emptyset}$.
\item[\bf(iii)]
$c_{0,\ga}\leq \text{\rm diam}(\om)/\pi$
\item[\bf(iv)]
$c_{0,\ga}\leq c_{0,\gat}=c_{1,\gan}=c_{2,\gan}\leq c_{0,1,\gat}
=\max\{c_{0,\gat},c_{1,\gat}\}=\max\{c_{0,\gat},c_{0,\gan}\}$
\item[\bf(v)]
If $\om$ is convex, then $c_{0,\ga}\leq c_{0,\emptyset}\leq \text{\rm diam}(\om)/\pi$.
\end{itemize}
\end{cor}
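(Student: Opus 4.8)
The plan is to reduce the entire corollary to the three-dimensional Theorem \ref{stateoftheartconstest} together with the identifications that are already contained in the 2D computation displayed immediately before the statement. The point is that in 2D there is essentially only \emph{one} independent constant: via the rotation $R$ all the relevant Rayleigh quotients collapse onto that of the scalar Laplacian.

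First I would dispose of items (i), (ii), (iii), and (v). Each of these concerns only the scalar Laplace constant $c_{0,\gat}$, equivalently the smallest positive eigenvalue $\lambda_{0,\gat}$ of $-\Delta_{\gat}=-\div{\gan}\grad{\gat}$. This object is built solely from the bottom operator $\A_{0}=\grad{\gat}$ of the de Rham complex and its adjoint $\A_{0}^{*}=-\div{\gan}$, and it is defined word for word as in 3D; the dimension never enters. Hence items (i), (ii), (iii), and (v) are literally Theorem \ref{stateoftheartconstest}(i), (ii), (iii), and (vii), and nothing new has to be shown.

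It remains to establish the chain in item (iv). I would split it as follows. The subchain $c_{0,\ga}\leq c_{0,\gat}\leq c_{0,1,\gat}=\max\{c_{0,\gat},c_{1,\gat}\}$ is exactly Theorem \ref{stateoftheartconstest}(vi), whose proof again only uses the bottom operator together with Remark \ref{fpestrem} (which supplies $c_{0,1,\gat}=\max\{c_{0,\gat},c_{1,\gat}\}$), so it is inherited directly. The genuinely two-dimensional content is then the pair of identifications $c_{0,\gat}=c_{1,\gan}=c_{2,\gan}$ and $c_{1,\gat}=c_{0,\gan}$. The first of these is precisely the equality $\lambda_{0,\gat}=\lambda_{1,\gan}=\lambda_{2,\gan}$ proved in the displayed computation just before the corollary: the substitution $E\leadsto RE$ with $R=\begin{bmatrix}0&1\\-1&0\end{bmatrix}$ turns $\grad{\gat}$ into $\rotv{\gat}$ and $\div{}$ into $\rot{}$, thereby identifying the range-reduced $\A_{0}$-Rayleigh quotient with the $\A_{1}^{*}$-Rayleigh quotient. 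For the second identification I would read the established equality $c_{0,\gat}=c_{1,\gan}$ with the roles of $\gat$ and $\gan$ interchanged; this is admissible because $(\om,\gan)$ is again a bounded weak Lipschitz pair and the 2D complex and the rotation argument are symmetric in the two boundary parts. Substituting $c_{1,\gat}=c_{0,\gan}$ into the maximum then yields the final equality $\max\{c_{0,\gat},c_{1,\gat}\}=\max\{c_{0,\gat},c_{0,\gan}\}$, completing (iv).

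Since the closedness of ranges and the relevant compact embedding rest on Weck's selection theorem (Theorem \ref{weckst}), which here degenerates to Rellich's theorem, there is no real analytic obstacle. The only step demanding care is the symmetry argument: I must verify that the rotation-based identity $c_{0,\gat}=c_{1,\gan}$ is genuinely invariant under $\gat\leftrightarrow\gan$, so that it may legitimately be reused to obtain $c_{1,\gat}=c_{0,\gan}$. Everything else is bookkeeping of boundary-condition swaps.
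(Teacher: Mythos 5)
Your proposal is correct and follows essentially the same route as the paper: items (i), (ii), (iii), (v) are the dimension-independent statements of Theorem \ref{stateoftheartconstest} (equivalently Theorem \ref{stateoftheartconstesttheoND}), and item (iv) is obtained from the rotation identity $\lambda_{0,\gat}=\lambda_{1,\gan}=\lambda_{2,\gan}$ established in the displayed computation of Section \ref{ocplxcst2DdR}, combined with Remark \ref{fpestrem} and the $\gat\leftrightarrow\gan$ swap giving $c_{1,\gat}=c_{0,\gan}$. The paper offers no separate proof beyond exactly this reduction, so nothing further is needed.
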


\subsubsection{ND-de Rham Complex, Laplace and Maxwell Constants in ND}
\label{ocplxcstNDdR}

In ND differential forms generalize suitably functions and vector fields used for $N=1,2,3$.
The de Rham complex of \eqref{genpdcomplex} in ND, compare to \eqref{derhamcomplex},
consists of $N$ differential operators $\A_{q}:=\A_{q,\gat}:=\ed{q,\gat}$, $q=0,\dots,N-1$,
with adjoints $\A_{q}^{*}=\A_{q,\gat}^{*}=-\cd{q+1,\gan}$ acting on alternating $q$ resp. $(q+1)$-forms, i.e.,
\begin{align*}
\begin{CD}
\cdots @> \cdots >>
\H{}{\gat}(\ed{q},\om) @> \A_{q}=\ed{q,\gat} >>
\H{}{\gat}(\ed{q+1},\om) @> \A_{q+1}=\ed{q+1,\gat} >>
\cdots,
\end{CD}\\
\begin{CD}
\cdots @< \A_{q-1}^{*}=-\cd{q,\gan} <<
\H{}{\gan}(\cd{q},\om) @< \A_{q}^{*}=-\cd{q+1,\gan} <<
\H{}{\gan}(\cd{q+1},\om) @< \cdots <<
\cdots,
\end{CD}
\end{align*}
see, e.g.,
\cite{zbMATH01329309,hiptmairfemmax,arnoldfalkwintherfemec,zbMATH05130995,zbMATH05696861,zbMATH07045589,zbMATH06825239,zbMATH06894075}
for details about the complex and numerical applications.
Hence, the second order ``Laplace'' and ``Maxwell'' operators are simply
$$\A_{q}^{*}\A_{q}=-\cd{q+1,\gan}\ed{q,\gat},\qquad
\A_{q}\A_{q}^{*}=-\ed{q,\gat}\cd{q+1,\gan},$$
and for the constants and eigenvalues $c_{q,\gat}=1/\lambda_{q,\gat}$ we have
\begin{align*}
\forall\,E&\in D(\cA_{q})=D(\ed{q,\gat})\cap R(\cd{q+1,\gan})
&
\norm{E}_{\L{2,q}(\om)}&\leq c_{q,\gat}\norm{\ed{q}E}_{\L{2,q+1}(\om)},\\
\forall\,H&\in D(\cA_{q}^{*})=D(\cd{q+1,\gan})\cap R(\ed{q,\gat})
&
\norm{H}_{\L{2,q+1}(\om)}&\leq c_{q,\gat}\norm{\cd{q+1}H}_{\L{2,q}(\om)}.
\end{align*}
The crucial compact embeddings \eqref{cptembcrucial} are given by the following theorem
from \cite[Theorem 4.9]{bauerpaulyschomburgmaxcompweakliprN}
or \cite[Theorem 4.8]{bauerpaulyschomburgmaxcompweakliprNarxiv}.

\begin{theo}[Weck's selection theorem]
\label{weckstdf}
The embeddings
$$D(\A_{q})\cap D(\A_{q-1}^{*})
=\H{}{\gat}(\ed{q},\om)\cap\H{}{\gan}(\cd{q},\om)
\emb\L{2,q}(\om)$$
are compact.
\end{theo}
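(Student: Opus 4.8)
The plan is to follow the localization strategy underlying the classical Picard--Weck--Weber--Witsch selection theorem, adapted to weak Lipschitz pairs by bi-Lipschitz charts, and to reduce the assertion to a model compactness statement on a flat reference configuration. Since the embedding is compact if and only if every bounded sequence $(E_{k})$ in $D(\A_{q})\cap D(\A_{q-1}^{*})=\H{}{\gat}(\ed{q},\om)\cap\H{}{\gan}(\cd{q},\om)$ possesses an $\L{2,q}(\om)$-convergent subsequence, first I would fix a finite open cover of $\ol{\om}$ together with a subordinate partition of unity $(\varphi_{j})$, chosen so that each patch is either interior or is flattened by a bi-Lipschitz chart $\Phi_{j}$ straightening $\ga$ (and, where it is met, the interface $\ol{\gat}\cap\ol{\gan}$) into a coordinate hyperplane. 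The Leibniz rule $\ed{q}(\varphi_{j}E_{k})=\opd\varphi_{j}\wedge E_{k}+\varphi_{j}\,\ed{q}E_{k}$ and its analogue for $\cd{q}$ keep each localized piece $\varphi_{j}E_{k}$ bounded in the intersection space, so it suffices to treat the patches separately and reassemble.

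For interior patches the forms have compact support in $\om$, the homogeneous boundary conditions are automatic, and the full-space Gaffney inequality bounds the $\H{1}{}$-norm by the graph norm of $\ed{q}\oplus\cd{q}$; Rellich's selection theorem then yields a convergent subsequence. For boundary patches I would transport the piece to the reference half-cube $Q$ via the transformation $\Phi_{j}^{*}$ induced by the chart. Here the decisive structural fact is that the exterior derivative is natural under such bi-Lipschitz identifications, $\Phi_{j}^{*}\ed{q}=\ed{q}\Phi_{j}^{*}$, so $\H{}{\gat}(\ed{q},\cdot)$ is mapped isomorphically with the tangential condition preserved, whereas the codifferential, being metric dependent, is intertwined with the codifferential $\cd{q}^{g}$ taken with respect to the pulled-back metric $g$, whose coefficients are merely bounded and measurable because $\Phi_{j}$ is only Lipschitz. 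Thus everything reduces to compactness of $\H{}{\gat}(\ed{q},Q)\cap\H{}{\gan}(\cd{q}^{g},Q)\emb\L{2,q}(Q)$ for a flat $Q$ but a general bounded measurable metric $g$.

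To establish this model compactness I would use metric-independent regular decompositions on the topologically trivial patch $Q$: every $E\in\H{}{\gat}(\ed{q},Q)$ splits as $E=E_{0}+\ed{q-1}p$ with $E_{0}\in\H{1}{}$ and $\ed{q-1}p\in\H{}{\gat}(\ed{q},Q)$, controlled by the graph norm and depending only on $\ed{}$. Applying this to a bounded sequence, the $\H{1}{}$-regular parts $E_{0,k}$ subconverge by Rellich, while the exact remainders $\ed{q-1}p_{k}=E_{k}-E_{0,k}$ are $\ed{}$-closed yet still carry bounded $\cd{q}^{g}$, so the problem descends to the $\ed{}$-closed (equivalently $\cd{q}^{g}$-coexact, modulo the finite-dimensional cohomology) forms, on which a dual decomposition for $\cd{q}^{g}$ together with a second application of Rellich closes the argument.

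The hard part will be exactly this model step under only bounded measurable metrics. Because $\Phi_{j}$ is bi-Lipschitz and not smooth, neither global elliptic regularity nor the Gaffney inequality is available in the transformed coordinates, so no global $\H{1}{}$-bound can be expected; indeed the intersection space is strictly larger than $\H{1}{}$ on non-smooth domains, which is precisely what makes the statement nontrivial. The whole difficulty is therefore concentrated in constructing the regular potentials and the dual $\cd{q}^{g}$-decomposition on the flat reference configuration with $g$ only in $L^{\infty}$, and in handling the patches that meet the interface, where the boundary condition switches from tangential to normal across the straightened hyperplane; this is the genuinely technical heart of the cited compactness results.
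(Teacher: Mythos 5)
First, a remark on the benchmark: the paper contains no proof of this statement at all — it is quoted from \cite[Theorem 4.9]{bauerpaulyschomburgmaxcompweakliprN} resp.\ \cite[Theorem 4.8]{bauerpaulyschomburgmaxcompweakliprNarxiv} — so your proposal can only be measured against the argument given there, which is indeed the Weck--Picard--Weber--Witsch localization adapted to weak Lipschitz pairs. Your reduction steps (partition of unity, bi-Lipschitz flattening, naturality of $\ed{}$ under pullback versus the transformation of $\cd{}$ into a codifferential $\cd{}^{g}$ for a merely bounded measurable metric $g$, Rellich on interior patches via the Gaffney identity) are correct in outline and consistent with that strategy.

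The genuine gap is that the single step carrying the entire content of the theorem — compactness of $\H{}{\gat}(\ed{q},Q)\cap\H{}{\gan}(\cd{q}^{g},Q)\emb\L{2,q}(Q)$ on the flattened patch with $g\in\L{\infty}$ and with the boundary condition switching across the straightened interface — is only named, not proved, and the way you propose to close it would fail as stated. After the metric-free regular splitting $E_{k}=E_{0,k}+\ed{q-1}p_{k}$ with $E_{0,k}\in\H{1}{}$ you are left with exact remainders, and you invoke ``a dual decomposition for $\cd{q}^{g}$ together with a second application of Rellich''; but for $g$ only bounded and measurable there is no $\H{1}{}$-regular decomposition adapted to $\cd{}^{g}$ — that would require precisely the elliptic regularity you correctly note is unavailable — so Rellich cannot be applied a second time. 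The cited proofs close this loop by a different device: an induction over the form rank $q$. For the exact parts one integrates by parts,
\begin{align*}
\bnorm{\ed{}(p_{k}-p_{l})}_{\L{2,q}_{g}(Q)}^{2}
=\bscp{\cd{}^{g}\ed{}(p_{k}-p_{l})}{p_{k}-p_{l}}_{\L{2,q-1}(Q)},
\end{align*}
the boundary terms vanishing by the mixed boundary conditions, and the right-hand side tends to zero along a subsequence because $\cd{}^{g}\ed{}p_{k}$ is bounded while the potentials $p_{k}$ are $(q-1)$-forms in the corresponding intersection space and hence subconverge in $\L{2,q-1}(Q)$ by the induction hypothesis (the base case $q=0$ being Rellich). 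Without this induction-plus-integration-by-parts mechanism, or an equivalent substitute, and without an explicit treatment of the patches meeting $\ol{\gat}\cap\ol{\gan}$, your text is a correct roadmap to the theorem rather than a proof of it.
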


The general theory, the definition $\cd{q+1,\gan}=\pm\star_{N-q}\ed{N-q-1,\gan}\star_{q+1}$,
where $\star_{q}$ is the Hodge star-operator, 
and the substitution $E=\star_{q+1}H$ show again a symmetry for the eigenvalues, i.e.,
\begin{align*}
\lambda_{q,\gat}
&=\inf_{0\neq E\in D(\cA_{q,\gat})=D(\ed{q,\gat})\cap R(\cd{q+1,\gan})}
\frac{\norm{\ed{q}E}_{\L{2,q+1}(\om)}}{\norm{E}_{\L{2,q}(\om)}}\\
&=\inf_{0\neq H\in D(\cA_{q,\gat}^{*})=D(\cd{q+1,\gan})\cap R(\ed{q,\gat})}
\frac{\norm{\cd{q+1}H}_{\L{2,q}(\om)}}{\norm{H}_{\L{2,q+1}(\om)}}\\
&=\inf_{0\neq E\in D(\cA_{N-q-1,\gan})=D(\ed{N-q-1,\gan})\cap R(\cd{N-q,\gat})}
\frac{\norm{\ed{N-q-1}E}_{\L{2,N-q}(\om)}}{\norm{E}_{\L{2,N-q-1}(\om)}}
=\lambda_{N-q-1,\gan}.
\end{align*}
Therefore, we obtain the relations
$$\frac{1}{c_{q,\gat}}
=\lambda_{q,\gat}
=\lambda_{N-q-1,\gan}
=\frac{1}{c_{N-q-1,\gan}},$$
which also confirm (for $N=1,2$) the results of Sections \ref{ocplxcst1DdR} and \ref{ocplxcst2DdR}.
Using the notations from the 3D case we define
$$\frac{1}{c_{q-1,q,\gat}^2}
:=\lambda_{q-1,q,\gat}^2
:=\inf\frac{\norm{\ed{q}E}_{\L{2,q+1}(\om)}^2+\norm{\cd{q}E}_{\L{2,q-1}(\om)}^2}{\norm{E}_{\L{2,q}(\om)}^2},$$
where the infimum is taken over all
$0\neq E\in D(\A_{q,\gat})\cap D(\A_{q-1,\gat}^{*})=D(\ed{q,\gat})\cap D(\cd{q,\gan})$
being perpendicular to the respective generalized Dirichlet-Neumann forms
$N(\A_{q,\gat})\cap N(\A_{q-1,\gat}^{*})$.
Theorem \ref{stateoftheartconstest} turns to:

\begin{theo}[Poincar\'e-Friedrichs/Maxwell constants in ND]
\label{stateoftheartconstesttheoND}
For $c_{q,\gat}=1/\lambda_{q,\gat}$ the following holds:
\begin{itemize}
\item[\bf(i)]
The Poincar\'e-Friedrichs constants depend monotonically on the boundary conditions, i.e., 
$$\emptyset\neq\widetilde\ga_{\tau}\subset\gat
\qimpl c_{0,\gat}\leq c_{0,\widetilde\ga_{\tau}}.$$
\item[\bf(ii)]
The Friedrichs constant is always smaller than the Poincar\'e constant, i.e., 
$c_{0,\ga}\leq c_{0,\emptyset}$.
\item[\bf(iii)]
$c_{0,\ga}\leq \text{\rm diam}(\om)/\pi$
\item[\bf(iv)]
$c_{q,\gat}=c_{N-q-1,\gan}$
\item[\bf(v)]
$c_{q-1,q,\gat}=\max\{c_{q-1,\gat},c_{q,\gat}\}$
\item[\bf(vi)]
If $\om$ is topologically trivial, then $c_{0,\ga}\leq c_{q-1,q,\gat}$.
\item[\bf(vii)]
If $\om$ is convex, then $c_{0,\ga}\leq c_{0,\emptyset}\leq \text{\rm diam}(\om)/\pi$.
\item[\bf(viii)]
If $\om$ is convex, then 
$c_{q,\ga}\,,\,c_{q,\emptyset}\leq c_{0,\emptyset}\leq \text{\rm diam}(\om)/\pi$.
\item[\bf(ix)]
If $\om$ is convex, then 
$c_{0,\ga}\leq c_{q-1,q,\ga}=\max\{c_{q-1,\ga},c_{q,\ga}\}\leq c_{0,\emptyset}\leq \text{\rm diam}(\om)/\pi$.
\item[\bf(ix')]
If $\om$ is convex, then 
$c_{0,\ga}\leq c_{q-1,q,\emptyset}=\max\{c_{q-1,\emptyset},c_{q,\emptyset}\}\leq c_{0,\emptyset}\leq \text{\rm diam}(\om)/\pi$.
\end{itemize}
\end{theo}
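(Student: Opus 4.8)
The plan is to mirror the $3$D Theorem~\ref{stateoftheartconstest} line by line, now reading $\grad{}$, $\rot{}$, $\div{}$ as the exterior derivatives $\ed{q,\gat}$ and their adjoints $\cd{q+1,\gan}$. First I would invoke Weck's selection theorem for forms (Theorem~\ref{weckstdf}) to obtain the crucial compact embeddings~\eqref{cptembcrucial} for every consecutive pair $\A_{q-1}=\ed{q-1,\gat}$, $\A_{q}=\ed{q,\gat}$. Theorem~\ref{compembtheo}, Remark~\ref{compembremtwo}, and Lemma~\ref{lemconstevcomplex} then apply at once: all ranges are closed, the Poincar\'e-Friedrichs type estimates hold, the cohomology groups $N_{q-1,q}=N(\ed{q,\gat})\cap N(\cd{q,\gan})$ are finite dimensional, and $\lambda_{q,\gat}$, $\lambda_{q,\gat}^2$ are the smallest positive eigenvalues of the associated first and second order operators. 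This provides the common framework for all items.

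Two items are then immediate. Item~\textbf{(iv)}, $c_{q,\gat}=c_{N-q-1,\gan}$, is the Hodge-star duality $\lambda_{q,\gat}=\lambda_{N-q-1,\gan}$ already derived in the display preceding the statement, via $\cd{q+1,\gan}=\pm\star_{N-q}\ed{N-q-1,\gan}\star_{q+1}$ and the substitution $E=\star_{q+1}H$. Item~\textbf{(v)}, $c_{q-1,q,\gat}=\max\{c_{q-1,\gat},c_{q,\gat}\}$, is Remark~\ref{compembremone} applied verbatim to the segment $\ed{q-1,\gat}$, $\ed{q,\gat}$. The items \textbf{(i)}, \textbf{(ii)}, \textbf{(iii)}, \textbf{(vii)} touch only the level-$0$ scalar Laplacian and are therefore dimension independent, so the $3$D proofs carry over unchanged: for \textbf{(i)}, when $\emptyset\neq\widetilde\ga_{\tau}\subset\gat$ both mean-value constraints disappear and $D(\grad{\gat})\subset D(\grad{\widetilde\ga_{\tau}})$, so the Rayleigh quotient~\eqref{lambdaAdef} is minimised over a larger set for $\widetilde\ga_{\tau}$, whence $\lambda_{0,\widetilde\ga_{\tau}}\leq\lambda_{0,\gat}$ and $c_{0,\gat}\leq c_{0,\widetilde\ga_{\tau}}$; for \textbf{(ii)}, $\lambda_{0,\ga}$ and $\lambda_{0,\emptyset}$ correspond to the first Dirichlet and the first nonzero Neumann Laplace eigenvalues and $\lambda_{0,\emptyset}\leq\lambda_{0,\ga}$ is the trial-function comparison of \cite{filonovdirneulapeigen}; for \textbf{(iii)}, I would enclose $\om$ in a slab of width $\text{\rm diam}(\om)$ and use domain monotonicity of the first Dirichlet eigenvalue against the slab value $(\pi/\text{\rm diam}(\om))^2$; and \textbf{(vii)} combines \textbf{(ii)} with the Payne-Weinberger bound \cite{payneweinbergerpoincareconvex}, which gives $\lambda_{0,\emptyset}\geq\pi/\text{\rm diam}(\om)$ on convex $\om$.

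The genuinely ND items \textbf{(viii)}, \textbf{(ix)}, \textbf{(ix$'$)} rest on one analytic input: a Gaffney type inequality on convex domains. For $\gat\in\{\emptyset,\ga\}$ I would establish the embedding $\H{}{\gat}(\ed{q},\om)\cap\H{}{\gan}(\cd{q},\om)\emb\H{1}{}(\om)$ together with $\norm{\nabla E}_{\L{2}(\om)}^2\leq\norm{\ed{q}E}_{\L{2}(\om)}^2+\norm{\cd{q}E}_{\L{2}(\om)}^2$ for $E$ orthogonal to the harmonic forms, the boundary term in the integration by parts being nonnegative precisely because $\om$ is convex. A componentwise application of the scalar Poincar\'e inequality then yields $\norm{E}_{\L{2}(\om)}\leq c_{0,\emptyset}\big(\norm{\ed{q}E}_{\L{2}(\om)}^2+\norm{\cd{q}E}_{\L{2}(\om)}^2\big)^{1/2}$, that is $c_{q-1,q,\ga},c_{q-1,q,\emptyset}\leq c_{0,\emptyset}$. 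By \textbf{(v)} this majorises each of $c_{q,\ga}$, $c_{q,\emptyset}$, proving \textbf{(viii)}, while $c_{0,\emptyset}\leq\text{\rm diam}(\om)/\pi$ from \cite{payneweinbergerpoincareconvex} closes the chains in \textbf{(viii)}, \textbf{(ix)}, \textbf{(ix$'$)}.

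It remains to supply the lower ends $c_{0,\ga}\leq c_{q-1,q,\ga}$ and $c_{0,\ga}\leq c_{q-1,q,\emptyset}$ appearing in \textbf{(ix)}, \textbf{(ix$'$)}, which are instances of \textbf{(vi)} (a convex $\om$ is in particular topologically trivial). For \textbf{(vi)}, triviality gives $N_{q-1,q}=\{0\}$, so the complex is exact; for $q=1$ the claim reduces to the monotonicity~\textbf{(i)} (respectively to~\textbf{(ii)} when $\gat=\emptyset$), while for $q\geq2$ the plan is to lift a near-minimiser of the combined Rayleigh quotient to a potential and propagate the estimate down the exact complex to the scalar level, comparing against the full Dirichlet constant $c_{0,\ga}$. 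I expect the main obstacle to be exactly this twofold technical core: establishing the convex Gaffney inequality with the correct boundary sign and the requisite $\H{1}{}$-regularity up to the boundary, and carrying out the exactness-based comparison \textbf{(vi)} for higher form degree $q\geq2$; every remaining item is either a dimension-independent scalar fact or a formal consequence of the fa-toolbox.
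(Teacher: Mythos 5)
Your framework and your handling of items (i)--(v) and (vii)--(ix$'$) are broadly consistent with the paper, which for all of these simply refers to \cite{paulymaxconst3}. The one item for which the paper supplies its own argument is \textbf{(vi)}, and that is precisely where your proposal has a genuine gap. For $q\geq2$ you offer only an unexecuted ``plan'' to lift a near-minimiser of the \emph{combined} Rayleigh quotient to a potential and propagate the estimate down the exact complex. This is aimed in the wrong direction: $c_{0,\ga}\leq c_{q-1,q,\gat}$ is equivalent to the upper bound $\lambda_{q-1,q,\gat}\leq\lambda_{0,\ga}$ on an infimum, so what is required is a competitor for the combined Rayleigh quotient manufactured from scalar data, not an analysis of its minimiser; and propagating along the complex can only relate $\lambda_{q-1,q,\gat}$ to the neighbouring constants $\lambda_{q-1,\gat}$, $\lambda_{q,\gat}$ -- which is already item (v) -- not to the scalar Dirichlet constant $\lambda_{0,\ga}$.

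The paper's argument for (vi) is a short explicit construction that your proposal is missing. Given $u\in\H{1}{\ga}(\om)$ and a multi-index $I$ of order $q$, set $E:=u\,\ed{}x^{I}$. Then $E\in\H{1,q}{\ga}(\om)\subset\H{}{\gat}(\ed{q},\om)\cap\H{}{\gan}(\cd{q},\om)$ for \emph{every} splitting of $\ga$, and topological triviality gives $N_{q-1,q}=\{0\}$, so $E$ is admissible in the combined estimate. By approximation with compactly supported forms one has the flat integration-by-parts identity
$$\norm{\ed{q}E}_{\L{2,q+1}(\om)}^2+\norm{\cd{q}E}_{\L{2,q-1}(\om)}^2
=\sum_{\ell=1}^{N}\norm{\p_{\ell}E}_{\L{2,q}(\om)}^2
=\norm{\grad{}u}_{\L{2}(\om)}^2,$$
while $\norm{E}_{\L{2,q}(\om)}=\norm{u}_{\L{2}(\om)}$; hence $\norm{u}_{\L{2}(\om)}\leq c_{q-1,q,\gat}\norm{\grad{}u}_{\L{2}(\om)}$ for all $u\in\H{1}{\ga}(\om)$, i.e. $c_{0,\ga}\leq c_{q-1,q,\gat}$, uniformly in $q$ and $\gat$. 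Your reduction of the case $q=1$ to (i), (ii), (v) is correct but does not extend to $q\geq2$, and since (ix) and (ix$'$) invoke (vi) for their left-most inequalities, the gap propagates there. A smaller point: the componentwise Poincar\'e step in your (viii) silently requires each component of $E$ to have vanishing mean; this follows from the normal boundary condition together with $\cd{q}E=0$, but it needs to be said.
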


For proofs and details see \cite{paulymaxconst3}. To show (vi),
for which an argument is missing in \cite{paulymaxconst3}, 
let $I$ be a multi-index of order $q$ and
let $u\in\H{1}{\ga}(\om)=\H{}{\ga}(\grad{},\om)$. Then
$$E:=u\ed{}x^{I}\in\H{1,q}{\ga}(\om)
\subset\H{}{\gat}(\ed{q},\om)\cap\H{}{\gan}(\cd{q},\om)$$
and we have by approximation and the triviality of Dirichlet-Neumann forms
\begin{align*}
\norm{u}_{\L{2}(\om)}
=\norm{E}_{\L{2,q}(\om)}
&\leq c_{q-1,q,\gat}\big(\norm{\ed{q}E}_{\L{2,q+1}(\om)}^2+\norm{\cd{q}E}_{\L{2,q-1}(\om)}^2\big)^{1/2}\\
&=c_{q-1,q,\gat}\big(\sum_{\ell=1}^{N}\norm{\p_{\ell}E}_{\L{2,q}(\om)}^2\big)^{1/2}
=c_{q-1,q,\gat}\norm{\grad{}u}_{\L{2}(\om)},
\end{align*}
showing $c_{0,\ga}\leq c_{q-1,q,\gat}$.

\subsubsection{3D-Elasticity Complex}
\label{ocplxcst3Dela}

The complex (involving vector as well as symmetric tensor fields)
\begin{align*}
\begin{CD}
\H{}{\gat}(\text{\rm Grad},\om) @> \A_{0}=\text{\rm sym\,Grad}_{\gat} >>
\H{}{\gat}(\text{\rm Curl\,Curl}^{\top},\mathbb{S},\om) @> \A_{1}=\text{\rm Curl\,Curl}^{\top}_{\gat} >>
\H{}{\gat}(\text{\rm Div},\mathbb{S},\om) @> \A_{2}=\text{\rm Div}_{\gat} >>
\L{2}(\om),
\end{CD}\\
\begin{CD}
\L{2}(\om) @< \A_{0}^{*}=-\text{\rm Div}_{\gan} <<
\H{}{\gan}(\text{\rm Div},\mathbb{S},\om) @< \A_{1}^{*}=\text{\rm Curl\,Curl}^{\top}_{\gan} <<
\H{}{\gan}(\text{\rm Curl\,Curl}^{\top},\mathbb{S},\om) @< \A_{2}^{*}=\text{\rm sym\,Grad}_{\gan} <<
\H{}{\gan}(\text{\rm Grad},\om),
\end{CD}
\end{align*}
is related to elasticity,
see, e.g., 
\cite{zbMATH05696861,zbMATH01837365,zbMATH02084083,zbMATH05190885,zbMATH05763606,zbMATH05761760,zbMATH06295983,zbMATH05492686,zbMATH06051824}
for details about the complex and numerical applications.
Note that, indeed, by Korn's inequality the regularity
$$D(\A_{0})=D(\text{\rm sym\,Grad}_{\gat})=\H{}{\gat}(\text{\rm sym\,Grad},\om)=\H{}{\gat}(\text{\rm Grad},\om)=\H{1}{\gat}(\om)$$
holds. The ``second order Laplace and Maxwell'' operators are given by
\begin{align*}
\A_{0}^{*}\A_{0}&=-\text{\rm Div}_{\gan}\text{\rm sym\,Grad}_{\gat},
&
\A_{0}\A_{0}^{*}&=-\text{\rm sym\,Grad}_{\gat}\text{\rm Div}_{\gan},\\
\A_{1}^{*}\A_{1}&=\text{\rm Curl\,Curl}^{\top}_{\gan}\text{\rm Curl\,Curl}^{\top}_{\gat},
&
\A_{1}\A_{1}^{*}&=\text{\rm Curl\,Curl}^{\top}_{\gat}\text{\rm Curl\,Curl}^{\top}_{\gan},
\end{align*}
and for the constants and eigenvalues $c_{\ell,\gat}^{\text{\sf ela}}=1/\lambda_{\ell,\gat}^{\text{\sf ela}}$ we have
\begin{align*}
\forall\,v&\in D(\cA_{0})=D(\text{\rm sym\,Grad}_{\gat})\cap R(\text{\rm Div}_{\gan})
&
\norm{v}_{\L{2}(\om)}&\leq c_{0,\gat}^{\text{\sf ela}}\norm{\text{\rm sym\,Grad\,}v}_{\L{2}(\mathbb{S},\om)},\\
\forall\,S&\in D(\cA_{0}^{*})=D(\text{\rm Div}_{\gan})\cap R(\text{\rm sym\,Grad}_{\gat})
&
\norm{S}_{\L{2}(\mathbb{S},\om)}&\leq c_{0,\gat}^{\text{\sf ela}}\norm{\text{\rm Div\,}S}_{\L{2}(\om)},\\
\forall\,S&\in D(\cA_{1})=D(\text{\rm Curl\,Curl}^{\top}_{\gat})\cap R(\text{\rm Curl\,Curl}^{\top}_{\gan})
&
\norm{S}_{\L{2}(\mathbb{S},\om)}&\leq c_{1,\gat}^{\text{\sf ela}}\norm{\text{\rm Curl\,Curl}^{\top}S}_{\L{2}(\mathbb{S},\om)},\\
\forall\,T&\in D(\cA_{1}^{*})=D(\text{\rm Curl\,Curl}^{\top}_{\gan})\cap R(\text{\rm Curl\,Curl}^{\top}_{\gat})
&
\norm{T}_{\L{2}(\mathbb{S},\om)}&\leq c_{1,\gat}^{\text{\sf ela}}\norm{\text{\rm Curl\,Curl}^{\top}T}_{\L{2}(\mathbb{S},\om)},\\
\forall\,T&\in D(\cA_{2})=D(\text{\rm Div}_{\gat})\cap R(\text{\rm sym\,Grad}_{\gan})
&
\norm{T}_{\L{2}(\mathbb{S},\om)}&\leq c_{2,\gat}^{\text{\sf ela}}\norm{\text{\rm Div\,}T}_{\L{2}(\om)},\\
\forall\,v&\in D(\cA_{2}^{*})=D(\text{\rm sym\,Grad}_{\gan})\cap R(\text{\rm Div}_{\gat})
&
\norm{v}_{\L{2}(\om)}&\leq c_{2,\gat}^{\text{\sf ela}}\norm{\text{\rm sym\,Grad\,}v}_{\L{2}(\mathbb{S},\om)}.
\end{align*}
As in the 3D Maxwell case the last two inequalities are already given by the first two.
Note that
\begin{align*}
N(\text{\rm sym\,Grad}_{\gat})
&=\begin{cases}\{0\}&\text{if }\gat\neq\emptyset,\\
\text{\sf RM}&\text{if }\gat=\emptyset,\end{cases}\\
R(\text{\rm Div}_{\gan})=N(\text{\rm sym\,Grad}_{\gat})^{\bot_{\L{2}(\om)}}
&=\begin{cases}\L{2}(\om)&\text{if }\gan\neq\ga,\\
\L{2}(\om)\cap\text{\sf RM}^{\bot_{\L{2}(\om)}}&\text{if }\gan=\ga,\end{cases}
\end{align*}
where $\text{\sf RM}$ denotes the space of global rigid motions.
The crucial compact embeddings \eqref{cptembcrucial} 
have recently been proved in \cite{paulyzulehnerela}.

\begin{theo}[selection theorems for elasticity]
\label{pzstela}
The embedding
$$D(\A_{1})\cap D(\A_{0}^{*})
=\H{}{\gat}(\text{\rm Curl\,Curl}^{\top},\mathbb{S},\om)\cap\H{}{\gan}(\text{\rm Div},\mathbb{S},\om)
\emb\L{2}(\mathbb{S},\om)$$
is compact.
\end{theo}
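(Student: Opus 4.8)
The plan is to deduce this compact embedding from \emph{Weck's selection theorem} for the de Rham complex (Theorem \ref{weckst}) together with \emph{regular decompositions} of symmetric tensor fields. The guiding principle is that the elasticity complex is algebraically tied to several copies of the de Rham complex via the Bernstein--Gelfand--Gelfand construction, so the Maxwell-type compactness already available at the level of $\rot{}$ and $\div{}$ can be transported to the tensorial operators $\text{\rm Curl\,Curl}^{\top}$ and $\text{\rm Div}$.

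First I would establish a regular decomposition: every $S\in\H{}{\gat}(\text{\rm Curl\,Curl}^{\top},\mathbb{S},\om)$ can be written as $S=\text{\rm sym\,Grad}_{\gat}\,v+\widetilde S$ with a vector potential $v\in\H{1}{\gat}(\om)$ and a more regular remainder $\widetilde S$, the splitting depending continuously on $S$ in the graph norm $\norm{S}_{\L{2}(\mathbb{S},\om)}+\norm{\text{\rm Curl\,Curl}^{\top}S}_{\L{2}(\mathbb{S},\om)}$. Such a decomposition is built by applying the known regular-potential results for $\rot{}$ (i.e.\ for the de Rham complex) row- and column-wise to $S$, exploiting that $\text{\rm Curl\,Curl}^{\top}S\in\L{2}(\mathbb{S},\om)$ and the complex property $\text{\rm Curl\,Curl}^{\top}\text{\rm sym\,Grad}=0$. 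Where full $\H{1}$-regularity of the remainder is not directly available, its entries are arranged to lie in first-order spaces of type $\H{}{\gat}(\rot{},\om)\cap\H{}{\gan}(\div{},\om)$, to which Weck's selection theorem (Theorem \ref{weckst}) applies.

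Second, given a bounded sequence $(S_{n})$ in $\H{}{\gat}(\text{\rm Curl\,Curl}^{\top},\mathbb{S},\om)\cap\H{}{\gan}(\text{\rm Div},\mathbb{S},\om)$, so that $S_{n}$, $\text{\rm Curl\,Curl}^{\top}S_{n}$ and $\text{\rm Div\,}S_{n}$ are all bounded in $\L{2}$, I would decompose $S_{n}=\text{\rm sym\,Grad}_{\gat}\,v_{n}+\widetilde S_{n}$. The remainders $\widetilde S_{n}$ are bounded in the regular space, hence a subsequence converges in $\L{2}(\mathbb{S},\om)$ either by Rellich's selection theorem or by Weck's selection theorem applied to their entries. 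For the potential part note that $\text{\rm Div\,}\text{\rm sym\,Grad}\,v_{n}=\text{\rm Div\,}S_{n}-\text{\rm Div\,}\widetilde S_{n}$ is bounded in $\L{2}(\om)$; since $\text{\rm Div\,}\text{\rm sym\,Grad}$ coincides with a Lam\'e-type second-order elliptic operator, Korn's inequality and the solution theory for the Lam\'e system with the inherited mixed boundary conditions on $\gat$ and $\gan$ --- composed with Rellich --- yield compactness of the map $v_{n}\mapsto\text{\rm sym\,Grad}\,v_{n}$, so a further subsequence of $\text{\rm sym\,Grad}_{\gat}\,v_{n}$ converges in $\L{2}$. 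Adding the two convergent pieces shows that $(S_{n})$ has a convergent subsequence in $\L{2}(\mathbb{S},\om)$, which is the asserted compactness.

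The main obstacle is the construction of the regular decomposition in the first step under the standing \emph{weak Lipschitz} hypothesis on the pair $(\om,\gat)$. It must simultaneously respect the pointwise symmetry of the tensor fields, be compatible with the mixed boundary conditions distributed over $\gat$ and $\gan$, and avoid any use of global $\H{1}$- or $\H{2}$-elliptic regularity, which fails at reentrant corners. Orchestrating the de Rham regular potentials through the Bernstein--Gelfand--Gelfand diagram so that the symmetry constraint and \emph{both} boundary conditions are preserved is the delicate analytic core; once this is in place, the compactness follows from Weck's and Rellich's selection theorems, Korn's inequality, and the Lam\'e solution theory as above.
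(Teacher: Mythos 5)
The paper itself offers no proof of Theorem \ref{pzstela}: it is quoted verbatim from \cite{paulyzulehnerela}, where the compact embedding for the elasticity complex is established. Your strategy --- transporting Weck's and Rellich's selection theorems to the tensorial operators by means of a regular decomposition of $\H{}{\gat}(\text{\rm Curl\,Curl}^{\top},\mathbb{S},\om)$ --- is in fact the strategy of that reference, so the plan points in the right direction. But as written it is not a proof, because the single step that carries all of the difficulty is only postulated. You flag it yourself as ``the delicate analytic core''; acknowledging the gap does not close it.

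Concretely: (i) applying the de Rham regular potentials ``row- and column-wise'' does not work as stated, since $\text{\rm Curl\,Curl}^{\top}$ is a \emph{second-order} operator whose kernel is tied to $\text{\rm sym\,Grad}$ rather than to row-wise gradients, and a row-wise construction destroys the pointwise symmetry of the corrector; restoring symmetry while simultaneously keeping the $\gat$-boundary condition, arranging the $\gan$-condition where needed, and retaining continuity in the graph norm on a merely weak Lipschitz pair is precisely the content of the missing lemma. (ii) Your treatment of the potential part needs more than you state: the estimate $\norm{\text{\rm sym\,Grad}(v_{n}-v_{m})}_{\L{2}(\mathbb{S},\om)}^2=-\scp{\text{\rm Div\,sym\,Grad}(v_{n}-v_{m})}{v_{n}-v_{m}}_{\L{2}(\om)}$ requires $\text{\rm sym\,Grad}\,v_{n}\in\H{}{\gan}(\text{\rm Div},\mathbb{S},\om)$, i.e.\ the remainder $\widetilde S_{n}$ must inherit the normal boundary condition on $\gan$ in addition to its extra regularity --- a constraint your decomposition does not secure. (iii) You must also avoid circularity: the splitting $S=\text{\rm sym\,Grad}_{\gat}v+\widetilde S$ cannot be taken from the Helmholtz decompositions of Theorem \ref{compembtheo}, because closedness of $R(\text{\rm sym\,Grad}_{\gat})$ is normally a \emph{consequence} of the compact embedding you are trying to prove; the potentials have to be constructed directly (extension operators, the de Rham theory on topologically simple pieces, and the BGG bookkeeping), which is exactly what \cite{paulyzulehnerela} supplies and what is absent here.
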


Note that by the latter theorem the embedding 
$$D(\A_{2})\cap D(\A_{1}^{*})
=\H{}{\gat}(\text{\rm Div},\mathbb{S},\om)\cap\H{}{\gan}(\text{\rm Curl\,Curl}^{\top},\mathbb{S},\om)
\emb\L{2}(\mathbb{S},\om)$$
is compact as well by interchanging $\gat$ and $\gan$.

Similar to the 3D Maxwell case we get the following theorem, cf. Theorem \ref{stateoftheartconstest}.

\begin{theo}[Poincar\'e-Friedrichs type constants for elasticity]
\label{stateoftheartconstestela}
For $c_{\ell,\gat}^{\text{\sf ela}}=1/\lambda_{\ell,\gat}^{\text{\sf ela}}$ the following holds:
\begin{itemize}
\item[\bf(i)]
The Poincar\'e-Friedrichs type constants depend monotonically on the boundary conditions, i.e., 
$$\emptyset\neq\widetilde\ga_{\tau}\subset\gat
\qimpl c_{0,\gat}^{\text{\sf ela}}\leq c_{0,\widetilde\ga_{\tau}}^{\text{\sf ela}}.$$
\item[\bf(ii)]
$c_{0,\gat}^{\text{\sf ela}}=c_{2,\gan}^{\text{\sf ela}}$ and $c_{1,\gat}^{\text{\sf ela}}=c_{1,\gan}^{\text{\sf ela}}$.
\end{itemize}
\end{theo}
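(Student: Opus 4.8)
The plan is to establish the two assertions by separate short arguments: (ii) will follow purely from the fa-toolbox, whereas (i) will rest on the monotone dependence of the spaces $\H{1}{\gat}(\om)$ on the boundary part $\gat$.

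For (ii) I would apply the constant lemma (Lemma \ref{lemconstants}) to the two relevant dual pairs of the elasticity complex. For $(\A_0,\A_0^{*})=(\text{\rm sym\,Grad}_{\gat},-\text{\rm Div}_{\gan})$ it gives $c_{\A_0}=c_{\A_0^{*}}$, and for $(\A_1,\A_1^{*})=(\text{\rm Curl\,Curl}^{\top}_{\gat},\text{\rm Curl\,Curl}^{\top}_{\gan})$ it gives $c_{\A_1}=c_{\A_1^{*}}$. It then remains to identify the two right-hand constants. Exactly as in the de Rham case (where $c_{2,\gat}=c_{0,\gan}$), the operators $\A_0^{*}=-\text{\rm Div}_{\gan}$ and $\A_1^{*}=\text{\rm Curl\,Curl}^{\top}_{\gan}$ are, up to an irrelevant sign, precisely the divergence resp.\ $\text{\rm Curl\,Curl}^{\top}$ operators of the complex with $\gat$ and $\gan$ interchanged, whose associated best constants are by definition $c_{2,\gan}^{\text{\sf ela}}$ and $c_{1,\gan}^{\text{\sf ela}}$. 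Since $\ol{R(\A_0)}=\ol{R(\text{\rm sym\,Grad}_{\gat})}$ and $\ol{R(\A_1)}=\ol{R(\text{\rm Curl\,Curl}^{\top}_{\gat})}$ are exactly the spaces onto which these interchanged operators are reduced, and the defining Rayleigh quotients involve only norms of the images, the reduced operators coincide and $c_{\A_0^{*}}=c_{2,\gan}^{\text{\sf ela}}$, $c_{\A_1^{*}}=c_{1,\gan}^{\text{\sf ela}}$. Together with the constant lemma this yields $c_{0,\gat}^{\text{\sf ela}}=c_{2,\gan}^{\text{\sf ela}}$ and $c_{1,\gat}^{\text{\sf ela}}=c_{1,\gan}^{\text{\sf ela}}$.

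For (i) I would first reduce the Rayleigh quotient to the full Sobolev space. Since $\emptyset\neq\widetilde\ga_{\tau}\subset\gat$ forces both boundary parts to be non-empty, the kernel formula $N(\text{\rm sym\,Grad}_{\gat})=\{0\}$ for $\gat\neq\emptyset$ gives $\ol{R(\text{\rm Div}_{\gan})}=N(\text{\rm sym\,Grad}_{\gat})^{\bot_{\L{2}(\om)}}=\L{2}(\om)$, whence $D(\cA_0)=D(\text{\rm sym\,Grad}_{\gat})=\H{1}{\gat}(\om)$ by Korn's inequality, and analogously for $\widetilde\ga_{\tau}$. Thus
$$\frac{1}{c_{0,\gat}^{\text{\sf ela}}}
=\inf_{0\neq v\in\H{1}{\gat}(\om)}\frac{\norm{\text{\rm sym\,Grad\,}v}_{\L{2}(\mathbb{S},\om)}}{\norm{v}_{\L{2}(\om)}},$$
and likewise for $\widetilde\ga_{\tau}$. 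The inclusion $\C{\infty}{\gat}(\om)\subset\C{\infty}{\widetilde\ga_{\tau}}(\om)$---test fields vanishing near the larger set $\gat$ in particular vanish near $\widetilde\ga_{\tau}$---passes to the closures and yields $\H{1}{\gat}(\om)\subset\H{1}{\widetilde\ga_{\tau}}(\om)$. Taking the infimum over the smaller set can only increase it, so $1/c_{0,\gat}^{\text{\sf ela}}\geq 1/c_{0,\widetilde\ga_{\tau}}^{\text{\sf ela}}$, i.e.\ $c_{0,\gat}^{\text{\sf ela}}\leq c_{0,\widetilde\ga_{\tau}}^{\text{\sf ela}}$.

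The only step requiring genuine care is the operator identification in (ii): one must verify that $\A_0^{*}$ and $\A_1^{*}$ really reduce to the same operators (hence share the same best constants) as the divergence and $\text{\rm Curl\,Curl}^{\top}$ operators of the interchanged complex, which amounts to checking the coincidence of the reducing ranges $\ol{R(\A_0)}$ and $\ol{R(\A_1)}$ with those of the interchanged adjoints. This is exactly the symmetry of the elasticity complex already used above to discard the redundant estimates for $\cA_2$ and $\cA_2^{*}$, so once it is made explicit both claims follow without further computation.
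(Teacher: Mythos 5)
Your proposal is correct and follows essentially the same route the paper (implicitly) takes: part (ii) is exactly the mechanism of Lemma \ref{lemconstants} combined with the symmetry of the elasticity complex, i.e.\ the elasticity analogue of the identification $c_{\A_{0}}=c_{\A_{0}^{*}}=c_{2,\gan}$ spelled out in \eqref{constdef3D} for the de Rham case, and part (i) is the standard domain-monotonicity argument $\C{\infty}{\gat}(\om)\subset\C{\infty}{\widetilde\ga_{\tau}}(\om)$, hence $D(\cA_{0})=\H{1}{\gat}(\om)\subset\H{1}{\widetilde\ga_{\tau}}(\om)$ for nonempty boundary parts, so the Rayleigh infimum can only decrease when the test space grows. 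No gaps.
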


\begin{rem}[Poincar\'e-Friedrichs type constants for elasticity]
\label{stateoftheartconstestelarem}
The Poincar\'e-Friedrichs type constants of the elasticity complex $c_{0,\gat}^{\text{\sf ela}}=c_{2,\gan}^{\text{\sf ela}}$
are related to the classical Poincar\'e-Friedrichs constants $c_{0,\gat}=c_{2,\gan}$ by Korn's inequality, i.e.,
\begin{align*}
\forall\,v&\in D(\cA_{0})=\underbrace{D(\text{\rm sym\,Grad}_{\gat})}_{=\H{1}{\gat}(\om)}\cap R(\text{\rm Div}_{\gan})
&
\norm{\text{\rm Grad\,}v}_{\L{2}(\om)}&\leq c_{\text{\sf k},\gat}\norm{\text{\rm sym\,Grad\,}v}_{\L{2}(\mathbb{S},\om)}.
\end{align*}
More precisely, 
$$c_{2,\gan}^{\text{\sf ela}}=c_{0,\gat}^{\text{\sf ela}}
\leq c_{\text{\sf k},\gat}c_{0,\gat}=c_{\text{\sf k},\gat}c_{2,\gan}$$
holds, as for all $v\in D(\cA_{0})$
$$\norm{v}_{\L{2}(\om)}
\leq c_{0,\gat}\norm{\text{\rm Grad\,}v}_{\L{2}(\om)}
\leq c_{\text{\sf k},\gat}c_{0,\gat}\norm{\text{\rm sym\,Grad\,}v}_{\L{2}(\mathbb{S},\om)}.$$
In particular, for $\gat=\ga$ we know $c_{\text{\sf k},\ga}\leq\sqrt{2}$, 
see \cite{bauerpaulytangnormkorn,zbMATH06663664}, which shows by Theorem \ref{stateoftheartconstest}
$$c_{2,\emptyset}^{\text{\sf ela}}
=c_{0,\ga}^{\text{\sf ela}}
\leq\sqrt{2}c_{0,\ga}\leq\sqrt{2}\min\{c_{0,\gat},c_{0,\emptyset},\frac{\text{\rm diam}(\om)}{\pi}\}
\leq\frac{\sqrt{2}}{\pi}\text{\rm diam}(\om).$$
\end{rem}

\subsubsection{3D-Biharmonic Complex (div\,Div-complex)}
\label{ocplxcst3Dbih}

The complex  (involving scalar as well as symmetric and deviatoric tensor fields)
\begin{align*}
\begin{CD}
\H{2}{\gat}(\om)=\H{}{\gat}(\text{\rm Grad\,grad},\om) @> \A_{0}=\text{\rm Grad\,grad}_{\gat} >>
\H{}{\gat}(\text{\rm Curl},\mathbb{S},\om) @> \A_{1}=\text{\rm Curl}_{\gat} >>
\H{}{\gat}(\text{\rm Div},\mathbb{T},\om) @> \A_{2}=\text{\rm Div}_{\gat} >>
\L{2}(\om),
\end{CD}\\
\begin{CD}
\L{2}(\om) @< \A_{0}^{*}=\text{\rm div\,Div}_{\gan} <<
\H{}{\gan}(\text{\rm div\,Div},\mathbb{S},\om) @< \A_{1}^{*}=\text{\rm sym\,Curl}_{\gan} <<
\H{}{\gan}(\text{\rm sym\,Curl},\mathbb{T},\om) @< \A_{2}^{*}=-\text{\rm dev\,Grad}_{\gan} <<
\H{}{\gan}(\text{\rm Grad},\om),
\end{CD}
\end{align*}
arises in general relativity and for the biharmonic equation,
see, e.g., \cite{paulyzulehnerbiharmonic} for details and, e.g.,
\cite{zbMATH06440388,zbMATH06614110,zbMATH06898486,zbMATH06804027,zbMATH06869171} 
for numerical applications.
Note that, indeed, similar to using Korn's inequality in the latter section, the regularity
$$D(\A_{2}^{*})=D(\text{\rm dev\,Grad}_{\gan})=\H{}{\gan}(\text{\rm dev\,Grad},\om)=\H{}{\gan}(\text{\rm Grad},\om)=\H{1}{\gan}(\om)$$
holds, cf. \cite[Lemma 3.2]{paulyzulehnerbiharmonic}. The ``second order Laplace and Maxwell'' operators are given by
\begin{align*}
\A_{0}^{*}\A_{0}&=\text{\rm div\,Div}_{\gan}\text{\rm Grad\,grad}_{\gat},
&
\A_{0}\A_{0}^{*}&=\text{\rm Grad\,grad}_{\gat}\text{\rm div\,Div}_{\gan},\\
\A_{1}^{*}\A_{1}&=\text{\rm sym\,Curl}_{\gan}\text{\rm Curl}_{\gat},
&
\A_{1}\A_{1}^{*}&=\text{\rm Curl}_{\gat}\text{\rm sym\,Curl}_{\gan},\\
\A_{2}^{*}\A_{2}&=-\text{\rm dev\,Grad}_{\gan}\text{\rm Div}_{\gat},
&
\A_{2}\A_{2}^{*}&=-\text{\rm Div}_{\gat}\text{\rm dev\,Grad}_{\gan},
\end{align*}
and for the constants and eigenvalues $c_{\ell,\gat}^{\text{\sf bih}}=1/\lambda_{\ell,\gat}^{\text{\sf bih}}$ we have
\begin{align*}
\forall\,u&\in D(\cA_{0})=D(\text{\rm Grad\,grad}_{\gat})\cap R(\text{\rm div\,Div}_{\gan})
&
\norm{u}_{\L{2}(\om)}&\leq c_{0,\gat}^{\text{\sf bih}}\norm{\text{\rm Grad\,grad\,}u}_{\L{2}(\mathbb{S},\om)},\\
\forall\,S&\in D(\cA_{0}^{*})=D(\text{\rm div\,Div}_{\gan})\cap R(\text{\rm Grad\,grad}_{\gat})
&
\norm{S}_{\L{2}(\mathbb{S},\om)}&\leq c_{0,\gat}^{\text{\sf bih}}\norm{\text{\rm div\,Div\,}S}_{\L{2}(\om)},\\
\forall\,S&\in D(\cA_{1})=D(\text{\rm Curl}_{\gat})\cap R(\text{\rm sym\,Curl}_{\gan})
&
\norm{S}_{\L{2}(\mathbb{S},\om)}&\leq c_{1,\gat}^{\text{\sf bih}}\norm{\text{\rm Curl\,}S}_{\L{2}(\mathbb{T},\om)},\\
\forall\,T&\in D(\cA_{1}^{*})=D(\text{\rm sym\,Curl}_{\gan})\cap R(\text{\rm Curl}_{\gat})
&
\norm{T}_{\L{2}(\mathbb{T},\om)}&\leq c_{1,\gat}^{\text{\sf bih}}\norm{\text{\rm sym\,Curl\,}T}_{\L{2}(\mathbb{S},\om)},\\
\forall\,T&\in D(\cA_{2})=D(\text{\rm Div}_{\gat})\cap R(\text{\rm dev\,Grad}_{\gan})
&
\norm{T}_{\L{2}(\mathbb{T},\om)}&\leq c_{2,\gat}^{\text{\sf bih}}\norm{\text{\rm Div\,}T}_{\L{2}(\om)},\\
\forall\,v&\in D(\cA_{2}^{*})=D(\text{\rm dev\,Grad}_{\gan})\cap R(\text{\rm Div}_{\gat})
&
\norm{v}_{\L{2}(\om)}&\leq c_{2,\gat}^{\text{\sf bih}}\norm{\text{\rm dev\,Grad\,}v}_{\L{2}(\mathbb{T},\om)}.
\end{align*}
We emphasise that this complex is the first non-symmetric one
and we get additional results for the operators involving $\A_{2}$.
Note that
\begin{align*}
N(\text{\rm Grad\,grad}_{\gat})
&=\begin{cases}\{0\}&\text{if }\gat\neq\emptyset,\\
\text{\sf P}^{1}&\text{if }\gat=\emptyset,\end{cases}\\
R(\text{\rm div\,Div}_{\gan})=N(\text{\rm Grad\,grad}_{\gat})^{\bot_{\L{2}(\om)}}
&=\begin{cases}\L{2}(\om)&\text{if }\gan\neq\ga,\\
\L{2}(\om)\cap(\text{\sf P}^{1})^{\bot_{\L{2}(\om)}}&\text{if }\gan=\ga,\end{cases}\\
N(\text{\rm dev\,Grad}_{\gan})
&=\begin{cases}\{0\}&\text{if }\gan\neq\emptyset,\\
\text{\sf RT}&\text{if }\gan=\emptyset,\end{cases}\\
R(\text{\rm Div}_{\gat})=N(\text{\rm dev\,Grad}_{\gan})^{\bot_{\L{2}(\om)}}
&=\begin{cases}\L{2}(\om)&\text{if }\gat\neq\ga,\\
\L{2}(\om)\cap\text{\sf RT}^{\bot_{\L{2}(\om)}}&\text{if }\gat=\ga,\end{cases}
\end{align*}
where $\text{\sf P}^{1}$ denotes the polynomials of order less then $1$
and $\text{\sf RT}$ the space of global Raviart-Thomas vector fields.
The crucial compact embeddings \eqref{cptembcrucial} 
have recently been proved in \cite[Lemma 3.22]{paulyzulehnerbiharmonic}.

\begin{theo}[selection theorems for the biharmonic complex]
\label{pzstbih}
The embeddings
\begin{align*}
D(\A_{1})\cap D(\A_{0}^{*})
=\H{}{\gat}(\text{\rm Curl},\mathbb{S},\om)\cap\H{}{\gan}(\text{\rm div\,Div},\mathbb{S},\om)
&\emb\L{2}(\mathbb{S},\om),\\
D(\A_{2})\cap D(\A_{1}^{*})
=\H{}{\gat}(\text{\rm Div},\mathbb{T},\om)\cap\H{}{\gan}(\text{\rm sym\,Curl},\mathbb{T},\om)
&\emb\L{2}(\mathbb{T},\om)
\end{align*}
are compact.
\end{theo}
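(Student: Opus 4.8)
The plan is to derive both compact embeddings from Rellich's selection theorem ($\H{1}{}(\om)\emb\L{2}(\om)$ is compact) and from the already established de Rham selection theorem (Theorem \ref{weckstdf}), using \emph{regular decompositions} tailored to the operators $\text{\rm Grad\,grad}$, $\text{\rm Curl}$, $\text{\rm sym\,Curl}$, and $\text{\rm Div}$ of the biharmonic complex. I would first treat the embedding at the symmetric tensors, $D(\A_1)\cap D(\A_0^{*})=\H{}{\gat}(\text{\rm Curl},\mathbb{S},\om)\cap\H{}{\gan}(\text{\rm div\,Div},\mathbb{S},\om)\emb\L{2}(\mathbb{S},\om)$, and then the embedding at the deviatoric tensors by the same scheme with the operators shifted one step along the complex. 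Since this is the first \emph{non-symmetric} complex, the two embeddings cannot be interchanged simply by swapping $\gat$ and $\gan$, so each requires its own regular potentials, but the skeleton of the argument is identical.

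For the first embedding I would take a bounded sequence $(S_{k})$ in $\H{}{\gat}(\text{\rm Curl},\mathbb{S},\om)\cap\H{}{\gan}(\text{\rm div\,Div},\mathbb{S},\om)$ and split each $S_{k}$ by a regular decomposition of $\text{\rm Curl}$-controlled symmetric tensor fields, namely $S_{k}=\text{\rm Grad\,grad}\,u_{k}+\widetilde{S}_{k}$ with $u_{k}\in\H{2}{\gat}(\om)$ and $\widetilde{S}_{k}\in\H{1}{}(\mathbb{S},\om)$, the decomposition being bounded in terms of $\norm{S_{k}}_{\L{2}(\mathbb{S},\om)}+\norm{\text{\rm Curl\,}S_{k}}_{\L{2}(\mathbb{T},\om)}$. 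The regular remainder $\widetilde{S}_{k}$ is then bounded in $\H{1}{}$, so Rellich yields an $\L{2}(\mathbb{S},\om)$-convergent subsequence. It remains to extract a convergent subsequence from the \emph{potential part} $\text{\rm Grad\,grad}\,u_{k}$, and this is where the second constraint $\text{\rm div\,Div}\,S_{k}\in\L{2}(\om)$ must be exploited.

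The same reduction, organised by Lemma \ref{compemblem} (fa-toolbox lemma 3), shows that compactness of the intersection is equivalent to compactness of the three reduced embeddings $D(\cA_{0})\emb\L{2}(\om)$, $D(\cA_{1})\emb\L{2}(\mathbb{S},\om)$, and $N_{0,1}\emb\L{2}(\mathbb{S},\om)$. The first is immediate, since $D(\text{\rm Grad\,grad}_{\gat})=\H{2}{\gat}(\om)$ carries a graph norm equivalent to the full $\H{2}{}$-norm and $\H{2}{}(\om)\emb\L{2}(\om)$ is compact by Rellich; the finite-dimensionality of the cohomology $N_{0,1}$, hence compactness of the third embedding, drops out once all ranges are seen to be closed. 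The deviatoric-tensor embedding $\H{}{\gat}(\text{\rm Div},\mathbb{T},\om)\cap\H{}{\gan}(\text{\rm sym\,Curl},\mathbb{T},\om)\emb\L{2}(\mathbb{T},\om)$ is handled analogously, splitting a $\text{\rm Div}$-controlled deviatoric field into an $\H{1}{}$-part plus a $\text{\rm Curl}$ of a potential and using the $\text{\rm sym\,Curl}$ control together with the Korn-type regularity $D(\text{\rm dev\,Grad}_{\gan})=\H{1}{\gan}(\om)$ recorded above.

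The main obstacle is the compactness of the potential part: from $u_{k}$ merely bounded in $\H{2}{\gat}(\om)$ one only gets, via Rellich, convergence of $u_{k}$ in $\H{1}{}$, i.e.\ of the first derivatives, whereas $\text{\rm Grad\,grad}\,u_{k}$ consists of the second derivatives. On a general bounded weak Lipschitz domain with mixed boundary conditions one cannot upgrade this by $\H{4}{}$-elliptic regularity for the biharmonic operator $\text{\rm div\,Div\,Grad\,grad}$, since such regularity fails. My intended resolution is therefore to avoid elliptic regularity entirely and instead to transfer the question to the de Rham complex: by a further regular decomposition the potential part is linked, via commuting (BGG-type) relations between $\text{\rm Grad\,grad}$, $\text{\rm Curl}$, $\text{\rm Div}$ and the classical $\grad{}$, $\rot{}$, $\div{}$, to fields to which Weck's selection theorem (Theorem \ref{weckstdf}) applies componentwise; the symmetry, respectively trace-freeness, of the tensors --- the feature distinguishing this from a plain tensor-valued de Rham complex --- is preserved by building the decompositions from the algebraic splittings into symmetric and deviatoric parts. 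Carrying these regular potential constructions out compatibly with the mixed boundary conditions on $\gat$ and $\gan$ is the technical heart of the proof.
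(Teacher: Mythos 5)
The paper does not prove this theorem in-house: it is quoted from \cite[Lemma 3.22]{paulyzulehnerbiharmonic}, and the proof there is indeed organised around regular decompositions of $\H{}{\gat}(\text{\rm Curl},\mathbb{S},\om)$ and $\H{}{\gat}(\text{\rm Div},\mathbb{T},\om)$ into an $\H{1}{}$-regular part plus a potential part, combined with Rellich's selection theorem. So the skeleton you set up is the right one. However, your proposal has a genuine gap precisely at the point you yourself identify as the main obstacle, and your proposed resolution of it is not the one that works. The compactness of the potential part $\text{\rm Grad\,grad}\,u_{k}$ does \emph{not} require any transfer to the de Rham complex, any BGG-type commuting relations, or componentwise application of Weck's theorem; it follows from an elementary duality argument that uses exactly the second constraint you mention. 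Writing $S_{k}=\widetilde S_{k}+\A_{0}u_{k}$ with $\widetilde S_{k}$ convergent in $\L{2}(\mathbb{S},\om)$ (Rellich) and $u_{k}$ bounded in $\H{2}{\gat}(\om)$, hence (after a further subsequence) convergent in $\L{2}(\om)$, one computes for the potential parts
\begin{align*}
\norm{\A_{0}(u_{k}-u_{l})}_{\L{2}(\mathbb{S},\om)}^2
&=\bscp{\A_{0}(u_{k}-u_{l})}{S_{k}-S_{l}}_{\L{2}(\mathbb{S},\om)}
-\bscp{\A_{0}(u_{k}-u_{l})}{\widetilde S_{k}-\widetilde S_{l}}_{\L{2}(\mathbb{S},\om)}\\
&=\bscp{u_{k}-u_{l}}{\A_{0}^{*}(S_{k}-S_{l})}_{\L{2}(\om)}
-\bscp{\A_{0}(u_{k}-u_{l})}{\widetilde S_{k}-\widetilde S_{l}}_{\L{2}(\mathbb{S},\om)},
\end{align*}
and both terms tend to zero since $\A_{0}^{*}(S_{k}-S_{l})=\text{\rm div\,Div}(S_{k}-S_{l})$ and $\A_{0}(u_{k}-u_{l})$ are bounded while $u_{k}-u_{l}\to0$ and $\widetilde S_{k}-\widetilde S_{l}\to0$ in $\L{2}$. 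Thus $\A_{0}u_{k}$ is Cauchy, and convergence of $u_{k}$ in $\H{1}{}$ versus $\H{2}{}$ is irrelevant. The genuinely hard part of the cited proof is the construction of the regular decomposition operators themselves, compatible with the mixed boundary conditions and with the symmetry/trace-freeness constraints (this is where the componentwise de Rham arguments enter), not the compactness of the potential part. As stated, your proof is incomplete because the step you defer to ``commuting BGG-type relations'' is left entirely unexecuted, and it is in any case the wrong tool for that step.

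A second, smaller but real, error: you assert that the finite dimensionality of $N_{0,1}$, and hence the compactness of $N_{0,1}\emb\L{2}(\mathbb{S},\om)$, ``drops out once all ranges are seen to be closed.'' This is circular in your architecture and also false as an implication: closedness of $R(\A_{0})$ and $R(\A_{1})$ does not force $\dim N_{0,1}<\infty$ (Theorem \ref{compembtheo} deliberately makes no such claim). In the fa-toolbox the logic runs the other way: one proves the compact embedding of the intersection directly (via the regular decomposition and the duality trick above), and then Lemma \ref{compemblem} yields $\dim N_{0,1}<\infty$ and the closed ranges as consequences. If you instead want to assemble the intersection embedding from the three reduced embeddings of Lemma \ref{compemblem}, you must supply an independent proof that $N_{0,1}$ is finite dimensional, which your proposal does not contain.
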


Similar to the 3D Maxwell case and the 3D elasticity case  
we get the following result, cf. Theorem \ref{stateoftheartconstest},
Theorem \ref{stateoftheartconstestela}, and Remark \ref{stateoftheartconstestelarem}.

\begin{rem}[Poincar\'e-Friedrichs type constants for the biharmonic complex]
\label{stateoftheartconstestbihrem}
For $c_{\ell,\gat}^{\text{\sf bit}}=1/\lambda_{\ell,\gat}^{\text{\sf bit}}$ the following holds:
\begin{itemize}
\item[\bf(i)]
The Poincar\'e-Friedrichs type constants depend monotonically on the boundary conditions, i.e., 
$$\emptyset\neq\widetilde\ga_{\tau}\subset\gat
\qimpl c_{0,\gat}^{\text{\sf bih}}\leq c_{0,\widetilde\ga_{\tau}}^{\text{\sf bih}}.$$
\item[\bf(ii)]
Due to the lack a symmetry in the biharmonic complex there are no further formulas 
relating $c_{0,\gat}^{\text{\sf bih}}$ to $c_{2,\gan}^{\text{\sf bih}}$ 
or $c_{1,\gat}^{\text{\sf bih}}$ to $c_{1,\gan}^{\text{\sf bih}}$.
\item[\bf(iii)]
As pointed out in Remark \ref{stateoftheartconstestelarem} for the elasticity complex,
there is a similar relation between 
the Poincar\'e-Friedrichs type constants of the biharmonic complex 
$c_{0,\gat}^{\text{\sf bih}}$ and $c_{2,\gat}^{\text{\sf bih}}$
and the classical Poincar\'e-Friedrichs constants $c_{0,\gat}=c_{2,\gan}$ 
by the classical Poincar\'e-Friedrichs estimate and a Korn like inequality, i.e.,
\begin{align*}
\forall\,v&\in D(\cA_{2}^{*})=\underbrace{D(\text{\rm dev\,Grad}_{\gan})}_{=\H{1}{\gan}(\om)}\cap R(\text{\rm Div}_{\gat})
&
\norm{\text{\rm Grad\,}v}_{\L{2}(\om)}&\leq c_{\text{\sf dev},\gan}\norm{\text{\rm dev\,Grad\,}v}_{\L{2}(\mathbb{T},\om)},
\end{align*}
cf. \cite[Lemma 3.2]{paulyzulehnerbiharmonic}. More precisely, 
$c_{0,\gat}^{\text{\sf bih}}\leq c_{0,\gat}^2$ and $c_{2,\gat}^{\text{\sf bih}}\leq c_{\text{\sf dev},\gan}c_{0,\gan}$
hold, as
$$\forall\,v\in D(\cA_{2}^{*})\qquad
\norm{v}_{\L{2}(\om)}
\leq c_{0,\gan}\norm{\text{\rm Grad\,}v}_{\L{2}(\om)}
\leq c_{\text{\sf dev},\gan}c_{0,\gan}\norm{\text{\rm dev\,Grad\,}v}_{\L{2}(\mathbb{T},\om)}.$$
\end{itemize}
\end{rem}

\section{Analytical Examples}
\label{anaex}

In the sequel we will compute all 
Poincar\'e-Friedrichs and Maxwell eigenvalues for the unit cube
in 1D, 2D, and 3D with mixed boundary conditions on
canonical boundary parts. We emphasise that the completeness 
of the respective eigensystems can be shown as in \cite{costabeldauge2019a}.

\subsection{1D}
\label{anaex1D}

Let $\om:=I:=(0,1)$, $\ga=\{0,1\}$,
and $\gat\in P\big(\{0,1\}\big)=\big\{\emptyset,\{0\},\{1\},\ga\big\}$,
and recall Section \ref{ocplxcst1DdR}.
From Appendix \ref{appanaex1D} we see
\begin{align}
\label{constval1D}
c_{0,\ga}=c_{0,\emptyset}=\frac{1}{\pi},\qquad
c_{0,\{0\}}=c_{0,\{1\}}=\frac{2}{\pi}.
\end{align}
Note that from $c_{0,\gat}=c_{0,\gan}$, see Corollary \ref{stateoftheartconstestcor1D}, we already know
$c_{0,\ga}=c_{0,\emptyset}$ and $c_{0,\{0\}}=c_{0,\{1\}}$.

\begin{rem}
\label{stateoftheartconstestrem1D}
Corollary \ref{stateoftheartconstestcor1D} may be verified by this example.
\begin{itemize}
\item[\bf(i)]
$\displaystyle\emptyset\neq\{0\},\{1\}\subset\ga
\qimpl c_{0,\ga}=\frac{1}{\pi}\leq\frac{2}{\pi}=c_{0,\{0\}}=c_{0,\{1\}}$
\item[\bf(ii)]
$\displaystyle c_{0,\ga}=c_{0,\emptyset}=\frac{1}{\pi}=\frac{\text{\rm diam}(\om)}{\pi}$
\end{itemize}
\end{rem}

\subsection{2D}
\label{anaex2D}

Let $\om:=I^2$, $I:=(0,1)$, $\ga=\ol{\ga_{b}\cup\ga_{t}\cup\ga_{l}\cup\ga_{r}}$,
where $\ga_{b}$, $\ga_{t}$, $\ga_{l}$, $\ga_{r}$ are the open 
bottom, top, left, and right boundary parts of $\ga$, respectively,
and $\gat\in P\big(\{\ga_{b},\ga_{t},\ga_{l},\ga_{r}\}\big)$,
and recall Section \ref{ocplxcst2DdR}.
We shall use canonical index notations such as
$$\ga_{b,l}:=\text{int}(\ol{\ga_{b}\cup\ga_{l}}),\quad
\ga_{b,l,t}:=\text{int}(\ol{\ga_{b}\cup\ga_{l}\cup\ga_{t}}).$$
From Appendix \ref{appanaex2D} we see
\begin{align}
\nonumber
c_{0,\emptyset}
&=\frac{1}{\pi},
&
c_{0,\ga_{b,l}}
=c_{0,\ga_{b,r}}
=c_{0,\ga_{t,l}}
=c_{0,\ga_{t,r}}
&=\frac{\sqrt{2}}{\pi},\\
\label{constval2D}
c_{0,\ga_{b}}
=c_{0,\ga_{t}}
=c_{0,\ga_{l}}
=c_{0,\ga_{r}}
&=\frac{2}{\pi},
&
c_{0,\ga_{b,l,r}}
=c_{0,\ga_{t,l,r}}
=c_{0,\ga_{b,t,l}}
=c_{0,\ga_{b,t,r}}
&=\frac{2}{\sqrt{5}\pi},\\
\nonumber
c_{0,\ga_{b,t}}
=c_{0,\ga_{l,r}}
&=\frac{1}{\pi},
&
c_{0,\ga}
&=\frac{1}{\sqrt{2}\pi}.
\end{align}

\begin{rem}
\label{stateoftheartconstestrem2D}
Corollary \ref{stateoftheartconstestcor2D} may be verified by this example.
\begin{itemize}
\item[\bf(i)]
$\displaystyle\emptyset\neq\ga_{b}
\subset\ga_{b,l}
\subset\ga_{b,l,r}
\subset\ga
\qimpl
c_{0,\ga}=\frac{1}{\sqrt{2}\pi}
\leq c_{0,\ga_{b,l,r}}=\frac{2}{\sqrt{5}\pi}
\leq c_{0,\ga_{b,l}}=\frac{2}{\sqrt{2}\pi}
\leq c_{0,\ga_{b}}=\frac{2}{\pi}$
\item[\bf(i')]
$\displaystyle\emptyset\neq\ga_{l}
\subset\ga_{l,r}
\subset\ga_{b,l,r}
\subset\ga
\qimpl
c_{0,\ga}=\frac{1}{\sqrt{2}\pi}
\leq c_{0,\ga_{b,l,r}}=\frac{2}{\sqrt{5}\pi}
\leq c_{0,\ga_{l,r}}=\frac{1}{\pi}
\leq c_{0,\ga_{l}}=\frac{2}{\pi}$
\item[\bf(ii)]
$\displaystyle c_{0,\ga}=\frac{1}{\sqrt{2}\pi}\leq\frac{1}{\pi}=c_{0,\emptyset}$
\item[\bf(iii)]
$\displaystyle c_{0,\ga}=\frac{1}{\sqrt{2}\pi}\leq\frac{\sqrt{2}}{\pi}
=\frac{\text{\rm diam}(\om)}{\pi}$
\item[\bf(iv)]
$\om$ is convex and 
$\displaystyle c_{0,\ga}=\frac{1}{\sqrt{2}\pi}\leq\frac{1}{\pi}
=c_{0,\emptyset}\leq\frac{\sqrt{2}}{\pi}=\frac{\text{\rm diam}(\om)}{\pi}$.
\end{itemize}
\end{rem}

\subsection{3D}
\label{anaex3D}

Let $\om:=\widehat\om\times I=I^3$, $\widehat\om:=I^2$, $I:=(0,1)$, 
$\ga=\ol{\ga_{b}\cup\ga_{t}\cup\ga_{l}\cup\ga_{r}\cup\ga_{f}\cup\ga_{\bk}}$,
where $\ga_{b}$, $\ga_{t}$, $\ga_{l}$, $\ga_{r}$, $\ga_{f}$, $\ga_{\bk}$ are the open 
bottom, top, left, right, front, and back boundary parts of $\ga$, respectively,
and $\gat\in P\big(\{\ga_{b},\ga_{t},\ga_{l},\ga_{r},\ga_{f},\ga_{\bk}\}\big)$,
and recall Section \ref{seclapmax3D}
as well as Theorem \ref{stateoftheartconstest}.
Again, we use canonical index notations such as
$$\ga_{b,r}:=\text{int}(\ol{\ga_{b}\cup\ga_{r}}),\quad
\ga_{b,r,\bk,f}:=\text{int}(\ol{\ga_{b}\cup\ga_{r}\cup\ga_{\bk}\cup\ga_{f}}).$$
From Appendix \ref{appanaex3D} we see for $c_{0,\gat}$
\begin{align}
\nonumber
c_{0,\emptyset}
&=\frac{1}{\pi},\\
\nonumber
c_{0,\ga_{b}}
=c_{0,\ga_{t}}
=c_{0,\ga_{l}}
=c_{0,\ga_{r}}
=c_{0,\ga_{f}}
=c_{0,\ga_{\bk}}
&=\frac{2}{\pi},\\
\nonumber
c_{0,\ga_{b,t}}
=c_{0,\ga_{l,r}}
=c_{0,\ga_{f,\bk}}
&=\frac{1}{\pi},\\
\nonumber
c_{0,\ga_{b,l}}
=c_{0,\ga_{b,r}}
=c_{0,\ga_{b,f}}
=c_{0,\ga_{b,\bk}}
\hspace*{57mm}&\\
\nonumber
=c_{0,\ga_{t,l}}
=c_{0,\ga_{t,r}}
=c_{0,\ga_{t,f}}
=c_{0,\ga_{t,\bk}}
=c_{0,\ga_{f,l}}
=c_{0,\ga_{f,r}}
=c_{0,\ga_{\bk,l}}
=c_{0,\ga_{\bk,r}}
&=\frac{\sqrt{2}}{\pi},\\
\nonumber
c_{0,\ga_{b,t,l}}
=c_{0,\ga_{b,t,r}}
=c_{0,\ga_{b,t,f}}
=c_{0,\ga_{b,t,\bk}}
=c_{0,\ga_{l,r,b}}
\hspace*{37mm}&\\
\label{constval3Da}
=c_{0,\ga_{l,r,t}}
=c_{0,\ga_{l,r,f}}
=c_{0,\ga_{l,r,\bk}}
=c_{0,\ga_{f,\bk,l}}
=c_{0,\ga_{f,\bk,r}}
=c_{0,\ga_{f,\bk,b}}
=c_{0,\ga_{f,\bk,t}}
&=\frac{2}{\sqrt{5}\pi},\\
\nonumber
c_{0,\ga_{b,\bk,l}}
=c_{0,\ga_{b,l,f}}
=c_{0,\ga_{b,f,r}}
=c_{0,\ga_{b,r,\bk}}
=c_{0,\ga_{t,\bk,l}}
=c_{0,\ga_{t,l,f}}
=c_{0,\ga_{t,f,r}}
=c_{0,\ga_{t,r,\bk}}
&=\frac{2}{\sqrt{3}\pi},\\
\nonumber
c_{0,\ga_{b,t,l,r}}
=c_{0,\ga_{b,t,f,\bk}}
=c_{0,\ga_{l,r,f,\bk}}
&=\frac{1}{\sqrt{2}\pi},\\
\nonumber
c_{0,\ga_{b,t,l,\bk}}
=c_{0,\ga_{b,t,f,l}}
=c_{0,\ga_{b,t,r,f}}
=c_{0,\ga_{b,t,r,\bk}}
=c_{0,\ga_{l,r,f,t}}
=c_{0,\ga_{l,r,f,b}}
\hspace*{6mm}&\\
\nonumber
=c_{0,\ga_{l,r,t,\bk}}
=c_{0,\ga_{l,r,b,\bk}}
=c_{0,\ga_{f,\bk,b,l}}
=c_{0,\ga_{f,\bk,t,l}}
=c_{0,\ga_{f,\bk,b,r}}
=c_{0,\ga_{f,\bk,r,r}}
&=\frac{2}{\sqrt{6}\pi},\\
\nonumber
c_{0,\ga_{b,t,l,r,\bk}}
=c_{0,\ga_{b,t,l,r,f}}
=c_{0,\ga_{b,t,l,f,\bk}}
=c_{0,\ga_{b,t,r,f,\bk}}
=c_{0,\ga_{b,l,r,f,\bk}}
=c_{0,\ga_{t,l,r,f,\bk}}
&=\frac{2}{3\pi},\\
\nonumber
c_{0,\ga}
&=\frac{1}{\sqrt{3}\pi},
\intertext{and for $c_{1,\gat}$}
\nonumber
c_{1,\emptyset}
=c_{1,\ga}
&=\frac{1}{\sqrt{2}\pi},\\
\nonumber
c_{1,\ga_{b}}
=c_{1,\ga_{t}}
=c_{1,\ga_{l}}
=c_{1,\ga_{r}}
=c_{1,\ga_{f}}
=c_{1,\ga_{\bk}}
&=\frac{2}{\sqrt{5}\pi},\\
\nonumber
c_{1,\ga_{l,r}}=c_{1,\ga_{b,t}}=c_{1,\ga_{f,\bk}}
&=\frac{1}{\pi},\\
\nonumber
c_{1,\ga_{b,l}}
=c_{1,\ga_{b,r}}
=c_{1,\ga_{b,f}}=c_{1,\ga_{b,\bk}}
\hspace*{59mm}&\\
\label{constval3Db}
=c_{1,\ga_{t,l}}=c_{1,\ga_{t,r}}
=c_{1,\ga_{t,f}}=c_{1,\ga_{t,\bk}}
=c_{1,\ga_{f,l}}=c_{1,\ga_{l,\bk}}
=c_{1,\ga_{\bk,r}}=c_{1,\ga_{f,r}}
&=\frac{\sqrt{2}}{\pi},\\
\nonumber
c_{1,\ga_{b,l,t}}=c_{1,\ga_{b,r,t}}
=c_{1,\ga_{b,f,t}}=c_{1,\ga_{b,\bk,t}}
=c_{1,\ga_{r,l,t}}
\hspace*{38mm}&\\
\nonumber
=c_{1,\ga_{r,l,b}}
=c_{1,\ga_{r,l,f}}=c_{1,\ga_{r,l,\bk}}
=c_{1,\ga_{f,\bk,l}}=c_{1,\ga_{f,\bk,r}}
=c_{1,\ga_{f,\bk,t}}=c_{1,\ga_{f,\bk,b}}
&=\frac{2}{\pi},\\
\nonumber
c_{1,\ga_{b,l,\bk}}=c_{1,\ga_{b,r,\bk}}
=c_{1,\ga_{b,l,f}}=c_{1,\ga_{b,r,f}}
=c_{1,\ga_{t,l,\bk}}=c_{1,\ga_{t,r,\bk}}
=c_{1,\ga_{t,l,f}}=c_{1,\ga_{t,r,f}}
&=\frac{2}{\sqrt{3\pi}},
\end{align}
and all the other remaining cases follow by $c_{1,\gan}=c_{1,\gat}$ as well as symmetry.

\begin{rem}
\label{stateoftheartconstestrem3D}
Theorem \ref{stateoftheartconstest} may be verified by these examples. E.g.:
\begin{itemize}
\item[\bf(i)]
$\emptyset\neq\ga_{b}
\subset\ga_{b,l}
\subset\ga_{b,l,r}
\subset\ga_{b,t,l,r}
\subset\ga_{b,t,l,r,f}
\subset\ga
\qimpl$
\begin{align*}
c_{0,\ga}=\frac{1}{\sqrt{3}\pi}
\leq c_{0,\ga_{b,t,l,r,f}}=\frac{2}{3\pi}
\leq c_{0,\ga_{b,t,l,r}}=\frac{1}{\sqrt{2}\pi}
\leq c_{0,\ga_{b,l,r}}=\frac{2}{\sqrt{5}\pi}
\leq c_{0,\ga_{b,l}}=\frac{2}{\sqrt{2}\pi}
\leq c_{0,\ga_{b}}=\frac{2}{\pi}
\end{align*}
\item[\bf(i')]
$\emptyset\neq\ga_{b}
\subset\ga_{b,l}
\subset\ga_{b,l,r}
\subset\ga_{b,f,l,r}
\subset\ga_{b,f,l,r,t}
\subset\ga
\qimpl$
\begin{align*}
c_{0,\ga}=\frac{1}{\sqrt{3}\pi}
\leq c_{0,\ga_{b,f,l,r,t}}=\frac{2}{3\pi}
\leq c_{0,\ga_{b,f,l,r}}=\frac{2}{\sqrt{6}\pi}
\leq c_{0,\ga_{b,l,r}}=\frac{2}{\sqrt{5}\pi}
\leq c_{0,\ga_{b,l}}=\frac{2}{\sqrt{2}\pi}
\leq c_{0,\ga_{b}}=\frac{2}{\pi}
\end{align*}
\item[\bf(i'')]
$\emptyset\neq\ga_{l}
\subset\ga_{l,r}
\subset\ga_{b,l,r}
\subset\ga_{b,f,l,r}
\subset\ga_{b,f,l,r,t}
\subset\ga
\qimpl$
\begin{align*}
c_{0,\ga}=\frac{1}{\sqrt{3}\pi}
\leq c_{0,\ga_{b,f,l,r,t}}=\frac{2}{3\pi}
\leq c_{0,\ga_{b,f,l,r}}=\frac{2}{\sqrt{6}\pi}
\leq c_{0,\ga_{b,l,r}}=\frac{2}{\sqrt{5}\pi}
\leq c_{0,\ga_{l,r}}=\frac{1}{\pi}
\leq c_{0,\ga_{l}}=\frac{2}{\pi}
\end{align*}
\item[\bf(i''')]
$\emptyset\neq\ga_{l}
\subset\ga_{b,l}
\subset\ga_{b,f,l}
\subset\ga_{b,f,l,r}
\subset\ga_{b,f,l,r,t}
\subset\ga
\qimpl$
\begin{align*}
c_{0,\ga}=\frac{1}{\sqrt{3}\pi}
\leq c_{0,\ga_{b,f,l,r,t}}=\frac{2}{3\pi}
\leq c_{0,\ga_{b,f,l,r}}=\frac{2}{\sqrt{6}\pi}
\leq c_{0,\ga_{b,f,l}}=\frac{2}{\sqrt{3}\pi}
\leq c_{0,\ga_{b,l}}=\frac{2}{\sqrt{2}\pi}
\leq c_{0,\ga_{l}}=\frac{2}{\pi}
\end{align*}
\item[\bf(ii)]
$\displaystyle c_{0,\ga}=\frac{1}{\sqrt{3}\pi}\leq\frac{1}{\pi}=c_{0,\emptyset}$
\item[\bf(iii)]
$\displaystyle c_{0,\ga}=\frac{1}{\sqrt{3}\pi}\leq\frac{\sqrt{3}}{\pi}
=\frac{\text{\rm diam}(\om)}{\pi}$
\item[\bf(iv)]
$\om$ is convex and 
$\displaystyle c_{0,\ga}=\frac{1}{\sqrt{3}\pi}\leq\frac{1}{\pi}
=c_{0,\emptyset}\leq\frac{\sqrt{3}}{\pi}=\frac{\text{\rm diam}(\om)}{\pi}$.
\item[\bf(v)]
$\om$ is convex and 
$\displaystyle c_{1,\ga}=c_{1,\emptyset}=\frac{1}{\sqrt{2}\pi}
\leq\frac{1}{\pi}=c_{0,\emptyset}\leq\frac{\sqrt{3}}{\pi}=\frac{\text{\rm diam}(\om)}{\pi}$.
\item[\bf(vi)]
$\om$ is convex and
\begin{align*}
c_{0,\ga}
&=\frac{1}{\sqrt{3}\pi}
\leq\frac{1}{\sqrt{2}\pi}
=c_{0,1,\ga}
=\max\{c_{0,\ga},c_{1,\ga}\}
=\max\{\frac{1}{\sqrt{3}\pi},\frac{1}{\sqrt{2}\pi}\}
\leq\frac{1}{\pi}
=c_{0,\emptyset}
\leq\frac{\sqrt{3}}{\pi}
=\frac{\text{\rm diam}(\om)}{\pi},\\
c_{0,\ga}
&=\frac{1}{\sqrt{3}\pi}
\leq\frac{1}{\pi}
=c_{0,1,\emptyset}
=\max\{c_{0,\emptyset},c_{1,\emptyset}\}
=\max\{\frac{1}{\pi},\frac{1}{\sqrt{2}\pi}\}
=c_{0,\emptyset}
\leq\frac{\sqrt{3}}{\pi}
=\frac{\text{\rm diam}(\om)}{\pi}.
\end{align*}
\end{itemize}
\end{rem}

\begin{rem}
In general, the Maxwell constants do not have monotonicity properties,
which can also be verified by the latter examples.
In fact, in our examples, the Maxwell constants are monotone increasing up
to a certain situation in the `middle', where the tangential and the normal boundary condition
are equally strong, and from there on the Maxwell constants are monotone decreasing. E.g.:

$\bullet\quad
\emptyset\neq\ga_{b}
\subset\ga_{b,l}
\subset\ga_{b,l,r}
\subset\ga_{b,t,l,r}
\subset\ga_{b,t,l,r,f}
\subset\ga$, but
\begin{align*}
c_{1,\ga}=\frac{1}{\sqrt{2}\pi}
&\overset{\text{\sf ok}}{\leq} c_{1,\ga_{b,t,l,r,f}}
=c_{1,\ga_{\bk}}=\frac{2}{\sqrt{5}\pi}
\overset{\text{\sf ok}}{\leq} c_{1,\ga_{b,t,l,r}}
=c_{1,\ga_{f,\bk}}=\frac{1}{\pi}\\
&\overset{\text{\sf ok}}{\leq} c_{1,\ga_{b,l,r}}=\frac{2}{\pi}
\underset{>}{\overset{\text{\sf not ok}}{\not\leq}} c_{1,\ga_{b,l}}=\frac{2}{\sqrt{2}\pi}
\underset{>}{\overset{\text{\sf not ok}}{\not\leq}} c_{1,\ga_{b}}=\frac{2}{\sqrt{5}\pi}.
\end{align*}

$\bullet\quad
\emptyset\neq\ga_{b}
\subset\ga_{b,l}
\subset\ga_{b,l,r}
\subset\ga_{b,f,l,r}
\subset\ga_{b,f,l,r,t}
\subset\ga$, but
\begin{align*}
c_{1,\ga}=\frac{1}{\sqrt{2}\pi}
&\overset{\text{\sf ok}}{\leq} c_{1,\ga_{b,t,l,r,f}}
=c_{1,\ga_{\bk}}=\frac{2}{\sqrt{5}\pi}
\overset{\text{\sf ok}}{\leq} c_{1,\ga_{b,f,l,r}}
=c_{1,\ga_{t,\bk}}=\frac{2}{\sqrt{2}\pi}\\
&\overset{\text{\sf ok}}{\leq} c_{1,\ga_{b,l,r}}=\frac{2}{\pi}
\underset{>}{\overset{\text{\sf not ok}}{\not\leq}} c_{1,\ga_{b,l}}=\frac{2}{\sqrt{2}\pi}
\underset{>}{\overset{\text{\sf not ok}}{\not\leq}} c_{1,\ga_{b}}=\frac{2}{\sqrt{5}\pi}.
\end{align*}

$\bullet\quad
\emptyset\neq\ga_{l}
\subset\ga_{l,r}
\subset\ga_{b,l,r}
\subset\ga_{b,f,l,r}
\subset\ga_{b,f,l,r,t}
\subset\ga$, but
\begin{align*}
c_{1,\ga}=\frac{1}{\sqrt{2}\pi}
&\overset{\text{\sf ok}}{\leq} c_{1,\ga_{b,t,l,r,f}}
=c_{1,\ga_{\bk}}=\frac{2}{\sqrt{5}\pi}
\overset{\text{\sf ok}}{\leq} c_{1,\ga_{b,f,l,r}}
=c_{1,\ga_{t,\bk}}=\frac{2}{\sqrt{2}\pi}\\
&\overset{\text{\sf ok}}{\leq} c_{1,\ga_{b,l,r}}=\frac{2}{\pi}
\underset{>}{\overset{\text{\sf not ok}}{\not\leq}} c_{1,\ga_{l,r}}=\frac{1}{\pi}
\underset{>}{\overset{\text{\sf not ok}}{\not\leq}} c_{1,\ga_{l}}=\frac{2}{\sqrt{5}\pi}.
\end{align*}

$\bullet\quad
\emptyset\neq\ga_{l}
\subset\ga_{b,l}
\subset\ga_{b,f,l}
\subset\ga_{b,f,l,r}
\subset\ga_{b,f,l,r,t}
\subset\ga$, but
\begin{align*}
c_{1,\ga}=\frac{1}{\sqrt{2}\pi}
&\overset{\text{\sf ok}}{\leq} c_{1,\ga_{b,t,l,r,f}}
=c_{1,\ga_{\bk}}=\frac{2}{\sqrt{5}\pi}
\overset{\text{\sf ok}}{\leq} c_{1,\ga_{b,f,l,r}}
=c_{1,\ga_{t,\bk}}=\frac{2}{\sqrt{2}\pi}\\
&\underset{>}{\overset{\text{\sf not ok}}{\not\leq}} c_{1,\ga_{b,l,f}}=\frac{2}{\sqrt{3}\pi}
\overset{\text{\sf ok}}{\leq} c_{1,\ga_{l,b}}=\frac{2}{\sqrt{2}\pi}
\underset{>}{\overset{\text{\sf not ok}}{\not\leq}} c_{1,\ga_{l}}=\frac{2}{\sqrt{5}\pi}.
\end{align*}
\end{rem}

\section{Numerical Examples}

The finite element method (FEM) is applied for evaluation of the Rayleigh quotients on finite dimensional subspaces. 
Constants are therefore approximated and convergence to their exact values is expected for higher dimensions. 
Assuming that $\om$ is discretised by a triangular (2D) or a tetrahedral (3D) mesh $\mathcal T$, 
we use only the lowest order finite elements available:
\begin{itemize}
\item[-] 
Linear Lagrange (\pone) nodal elements $\theta^\pone_i$
for approximations of $\H{}{\gat}(\grad{},\om) $ spaces,
\item[-] 
Linear N\'ed\'elec (\ned) edge elements $\Theta^\ned_i$
for approximations of $\H{}{\gat}(\rot{},\om)$ spaces,
\item[-] 
Linear Raviart-Thomas (\rt) face elements $\Theta^\rt_i$
for approximations of $\H{}{\gat}(\div{},\om)$ spaces.
\end{itemize}

\begin{figure}[h]
\begin{minipage}[c]{0.9\textwidth}
\center
\includegraphics[width=0.35\textwidth]{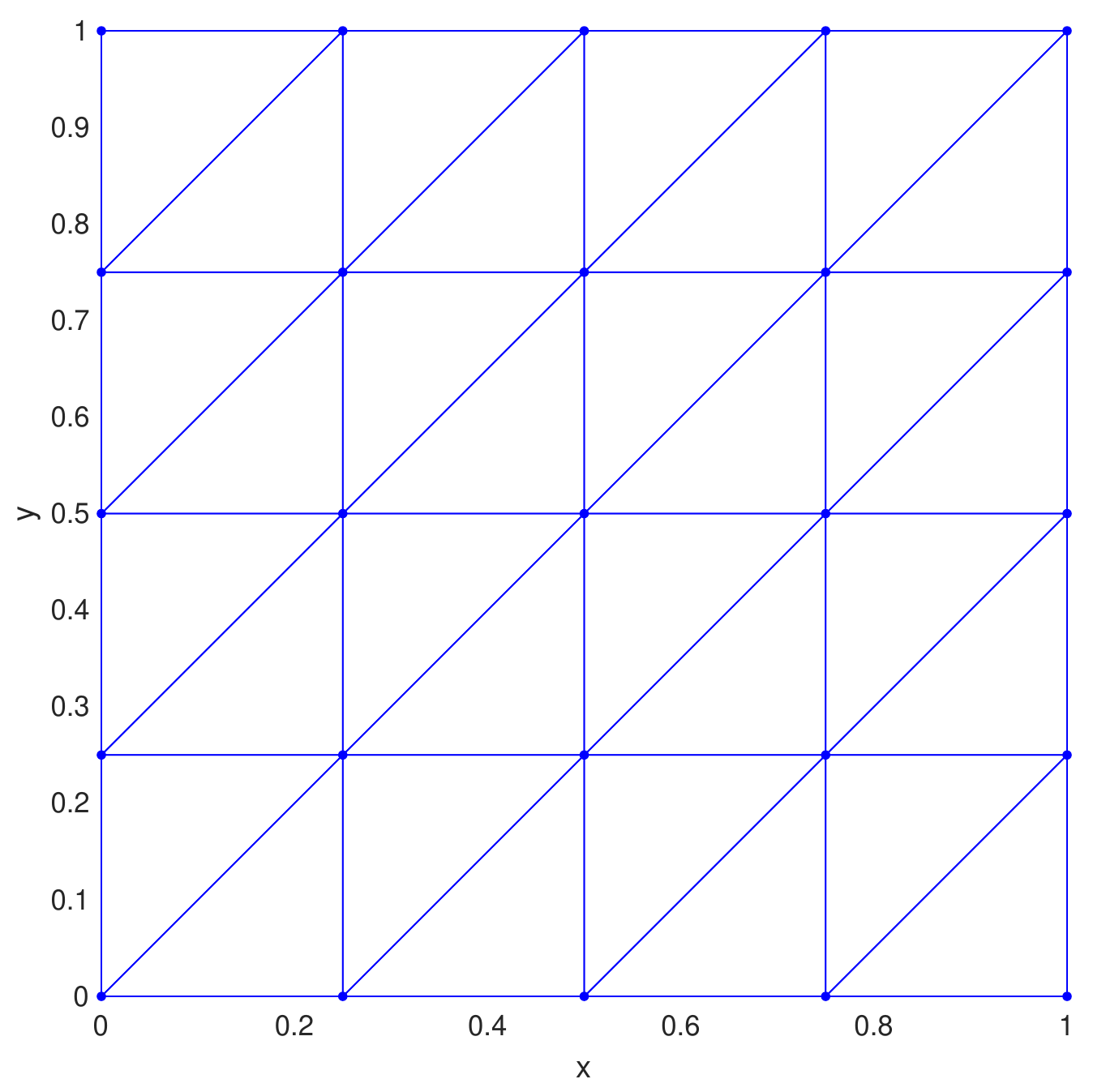}
\hspace{1cm}
\includegraphics[width=0.35\textwidth]{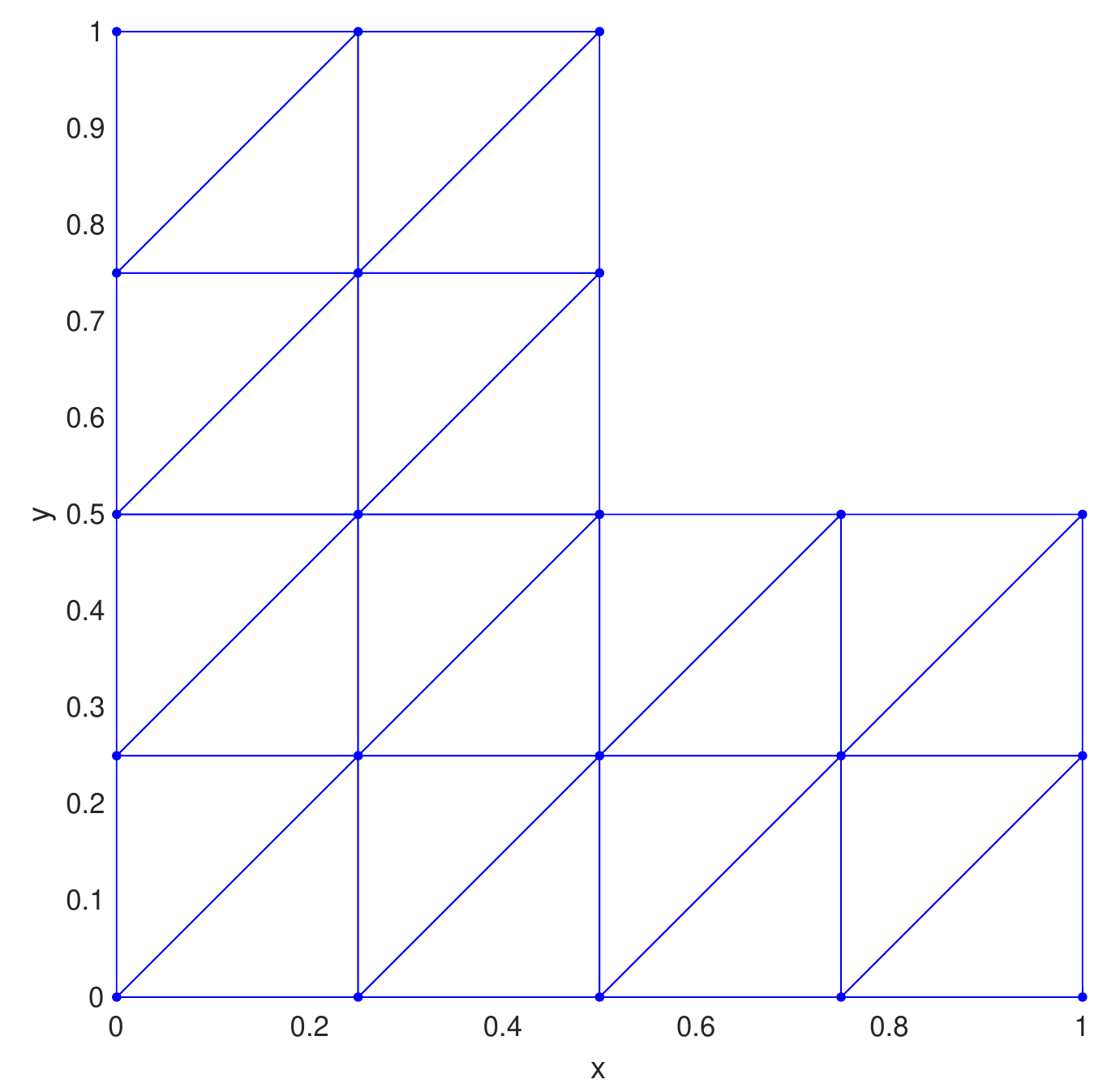}
\caption{Coarse (level 1) triangular meshes for the unit square 
and the L-shape domains.} \label{meshes2D}
\includegraphics[width=0.355\textwidth]{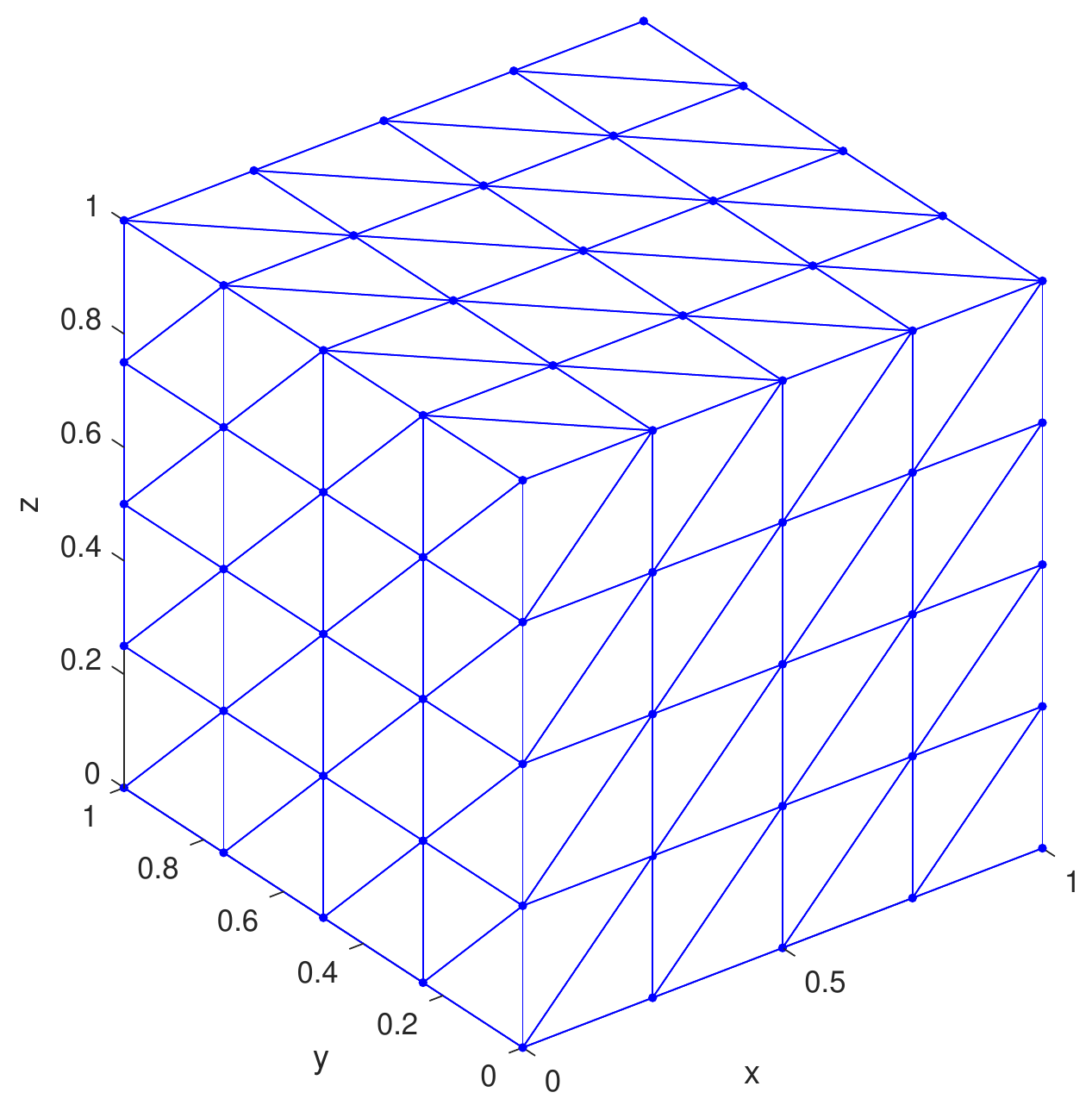}
\hspace{1cm}
\includegraphics[width=0.35\textwidth]{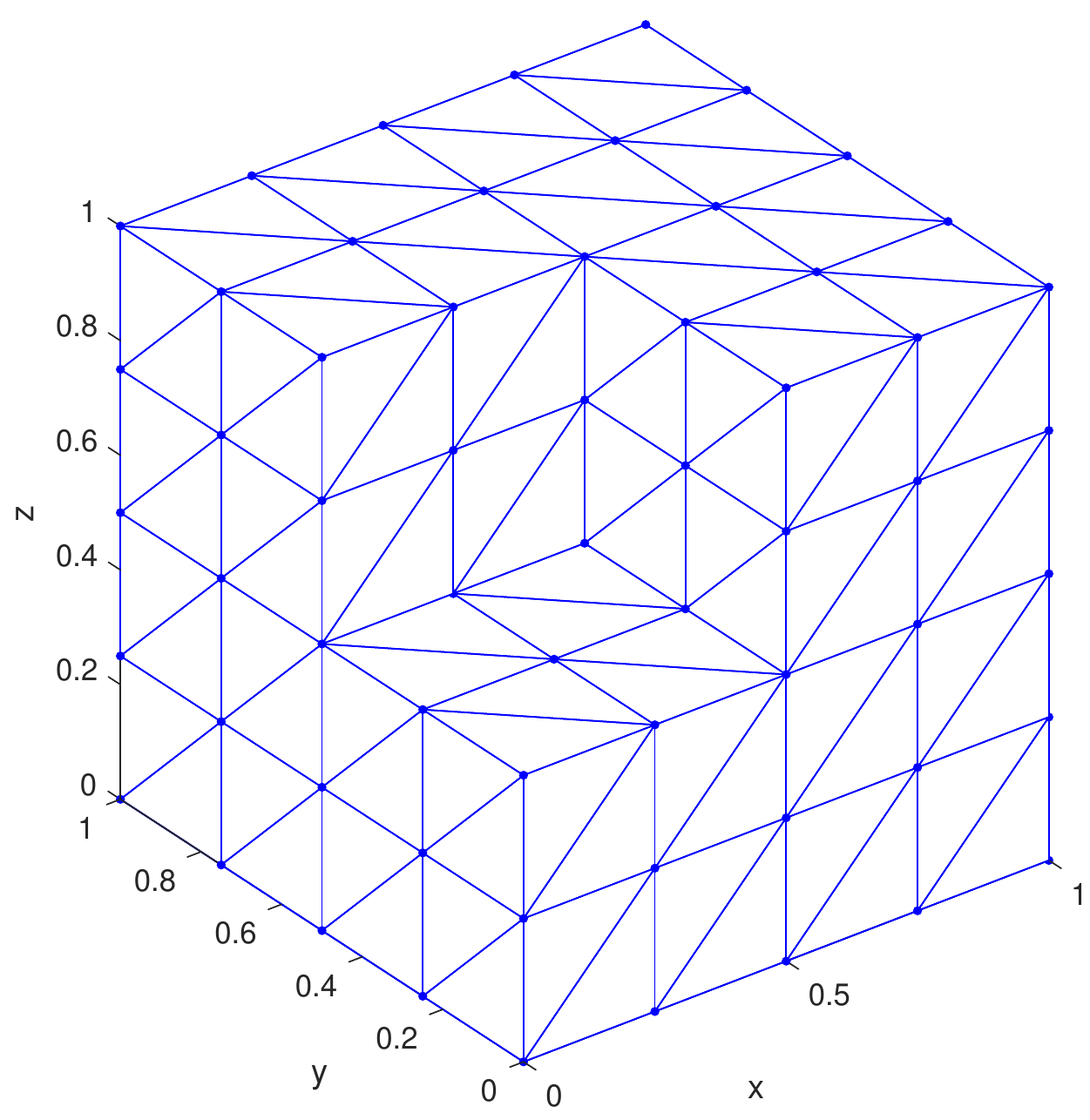}
\caption{Coarse (level 1) tetrahedral meshes for the unit cube and the Fichera corner domains.}
\label{meshes3D}
\end{minipage}
\end{figure}

We assemble the mass matrices $\mass^\pone$, $\mass^\ned$, $\mass^\rt$ and the stiffness matrices 
$\stiff^\pone, \stiff^\ned, \stiff^\rt$ defined by
\begin{align*}
\mass_{ij}^\pone & = \scp{\theta^\pone_i}{\theta^\pone_j}_{\L{2}(\om)}, 
&
\stiff_{ij}^\pone & = \scp{\grad{}\theta^\pone_i}{\grad{}\theta^\pone_j}_{\L{2}(\om)}, \\
\mass_{ij}^\ned & = \scp{\Theta^\ned_i}{\Theta^\ned_j}_{\L{2}(\om)}, 
&
\stiff_{ij}^\ned & = \scp{\rot{}\Theta^\ned_i}{\rot{}\Theta^\ned_j}_{\L{2}(\om)},\\
\mass_{ij}^\rt & = \scp{\Theta^\rt_i}{\Theta^\rt_j}_{\L{2}(\om)}, 
&
\stiff_{ij}^\rt & = \scp{\div{}\Theta^\rt_i}{\div{}\Theta^\rt_j}_{\L{2}(\om)}, 
\end{align*}
where the indices $i$, $j$ are the global numbers of the corresponding degrees of freedom, i.e., 
they are related to mesh nodes (for \pone\,elements) or mesh edges or faces 
(for \ned\,and \rt\,elements). By using the affine mappings (for \pone\,elements) 
or Piola mappings (for \ned\,and \rt\,elements) 
from reference elements we can assemble the local matrices. 
Detailed implementation of finite element assemblies is explained in 
\cite{anjamvaldmanfastmatlab, rahmanvaldmanfastmatlab}. 
Squares of terms from Theorem \ref{fpmconst} are easy to evaluate as quadratic forms with mass and stiffness matrices:
\begin{align*}
\norm{u}_{\L{2}(\om)}^2 = & \mass^\pone u^\pone \cdot u^\pone, 
& 
\norm{\grad{} u}_{\L{2}(\om)}^2 & = \stiff^\pone u^\pone \cdot u^\pone, \\
\norm{E}_{\L{2}(\om)}^2 = & \mass^\ned E^\ned \cdot E^\ned, 
& 
\norm{\rot{} E}_{\L{2}(\om)}^2 & = \stiff^\ned E^\ned \cdot E^\ned,\\
\norm{H}_{\L{2}(\om)}^2 = & \mass^\rt H^\rt \cdot H^\rt, 
& 
\norm{\div{} H}_{\L{2}(\om)}^2 & = \stiff^\rt H^\rt \cdot H^\rt, 
\end{align*}
where $u^\pone$, $E^\ned$, and $H^\rt$ represent (column) vectors of coefficients with respect 
to their finite element bases of \pone, \ned,\ and \rt, respectively.

\subsection{Poincar\'e-Friedrichs and Divergence Constants} 

The classical Friedrichs constant $c_{0,\ga}$ is approximated as
$$\frac{1}{c_{0,\ga,\pone}^2} 
= \lambda_{0,\ga,\pone}^2 
= \min_{\substack{0 \neq u^\pone,\\ u^\pone_{\ga}=0}} 
\frac{\stiff^\pone u^\pone \cdot u^\pone}{\mass^\pone u^\pone \cdot u^\pone},$$
where $u^\pone_{\ga}$ denotes a subvector of $u^\pone$ in indices corresponding to boundary nodes.
$\lambda_{0,\ga,\pone}^2$ is the minimal (positive) eigenvalue of the generalized eigenvalue problem
$$\stiff^\pone \, u^\pone 
= \lambda^2 \, \mass^\pone \, u^\pone,\qquad
u^\pone_{\ga}=0,$$
and may also be found by computing the minimal (positive) eigenvalue of
$$\stiff^\pone_{\sf int} \, u^\pone_{\sf int} 
= \lambda^2 \, \mass^\pone_{\sf int} \, u^\pone_{\sf int},$$
where $\stiff^\pone_{\sf int}$, $\mass^\pone_{\sf int}$,
and $u^\pone_{\sf int}$ are restrictions 
of the matrices $\stiff^\pone$, $\mass^\pone$,
and the vector $u^\pone$, respectively, 
to indices corresponding to internal mesh nodes only.
Note that $\stiff^\pone_{\sf int}$ is regular.

The classical Poincar\'e constant $c_{0,\emptyset}$ is approximated as
$$\frac{1}{c_{0,\emptyset,\pone}^2} 
= \lambda_{0,\emptyset,\pone}^2 
= \min_{\substack{0 \neq u^\pone,\\ u^\pone \cdot 1^\pone=0}} 
\frac{\stiff^\pone u^\pone\cdot u^\pone}{\mass^\pone u^\pone\cdot u^\pone},$$
where the constraint $u^\pone \cdot 1^\pone=0$ means that the vector $u^\pone$ 
has to be perpendicular to the constant vector of ones. 
$\lambda_{0,\emptyset,\pone}^2$ is the minimal positive eigenvalue of the generalized eigenvalue problem
$$\stiff^\pone \, u^\pone 
= \lambda^2 \, \mass^\pone \, u^\pone.$$
The minimal eigenvalue here is $\lambda^2=0$ and the corresponding eigenvector is the constant vector of ones. 
Analogously, the Poincar\'e-Friedrichs (Laplace) constants 
for mixed boundary conditions $c_{0,\gat}$ is approximated 
by using the same techniques and finite elements \pone.
More precisely, for $\gat\neq\emptyset$ we have 
\begin{align}
\label{FPconstapprox}
\frac{1}{c_{0,\gat,\pone}^2} 
= \lambda_{0,\gat,\pone}^2 
= \min_{\substack{0 \neq u^\pone,\\ u^\pone_{\gat}=0}} 
\frac{\stiff^\pone u^\pone \cdot u^\pone}{\mass^\pone u^\pone \cdot u^\pone},
\end{align}
where $u^\pone_{\gat}$ denotes a subvector of $u^\pone$ in indices corresponding to boundary nodes of $\gat$.
$\lambda_{0,\gat,\pone}^2$ is the minimal (positive) eigenvalue of the generalized eigenvalue problem
$$\stiff^\pone \, u^\pone 
= \lambda^2 \, \mass^\pone \, u^\pone,\qquad
u^\pone_{\gat}=0,$$
and may be computed again by solving a restricted problem (to internal nodes and some boundary nodes)
with a regular stiffness matrix $\stiff^\pone_{{\sf int},\gat}$, i.e.,
$$\stiff^\pone_{{\sf int},\gat}  \, u^\pone_{{\sf int},\gat} 
= \lambda^2 \, \mass^\pone_{{\sf int},\gat} \, u^\pone_{{\sf int},\gat}.$$

As in any dimension the Poincar\'e-Friedrichs constants can be computed 
either as a gradient or as a divergence constant, see Theorem \ref{fpmconst}, we can approximate 
$$c_{0,\gat}=c_{2,\gan}$$ 
either by \eqref{FPconstapprox} or by
$$\frac{1}{c_{2,\gan,\rt}^2} 
= \lambda_{2,\gan,\rt}^2 
= \min_{\substack{0 \neq H^\rt,\\ H^\rt_{\gan}=0,\\H^\rt\bot N(\stiff^\rt)}}
\frac{\stiff^\rt H^\rt \cdot H^\rt}{\mass^\rt H^\rt \cdot H^\rt},$$
where $H^\rt_{\gan}$ denotes a subvector of $H^\rt$ in indices corresponding to boundary faces of $\gan$
(boundary edges in 2D).
$\lambda_{2,\gan,\rt}^2$ is the minimal positive eigenvalue of the generalized eigenvalue problem
$$\stiff^\rt \, H^\rt
= \lambda ^2\, \mass^\rt \, H^\rt,\qquad
H^\rt_{\gan}=0,$$
respectively,
$$\stiff^\rt_{{\sf int},\gan} \, H^\rt_{{\sf int},\gan}
= \lambda ^2\, \mass^\rt_{{\sf int},\gan} \, H^\rt_{{\sf int},\gan},$$
where $\stiff^\rt_{{\sf int},\gan}$, $\mass^\rt_{{\sf int},\gan}$,
and $H^\rt_{{\sf int},\gan}$ are restrictions of the matrices 
$\stiff^\rt$, $\mass^\rt$, and the vector $H^\rt$ 
to indices corresponding to `free' mesh faces (edges in 2D) only. 
Note that there are a lot of first zero eigenvalues $\lambda^2=0$
as neither $\stiff^\rt$ nor $\stiff^\rt_{{\sf int},\gan}$ are regular
due to the existence of large kernels $N(\stiff^\rt)$ and $N(\stiff^\rt_{{\sf int},\gan})$
since all rotations belong to the kernel of the divergence.


\subsection{Maxwell Constants} 

While the computation of the Poincar\'e-Friedrichs constants $c_{0,\gat}=c_{2,\gan}$
is more or less independent of the dimension, 
the computation of the Maxwell constants is different in 2D and 3D, or generally, in ND.
By Remark \ref{stateoftheartconstestrem2D} (v) we have in 2D
$$c_{1,\gan}=c_{0,\gat},$$
and thus the Maxwell constants can simply be computed by the corresponding
Poincar\'e-Friedrichs (Laplace) constants.
In particular, for the tangential (electric) and normal (magnetic) Maxwell constants it holds
$$c_{1,\ga}=c_{0,\emptyset},\qquad
c_{1,\emptyset}=c_{0,\ga}.$$
By Remark \ref{stateoftheartconstestrem3D} (vii) we have in 3D
$$c_{1,\gat}=c_{1,\gan},$$
and thus this Maxwell constant has to be calculated separately, 
since the simple link to the Poincar\'e-Friedrichs (Laplace) constants is lost in higher dimensions.
In particular, for the tangential (electric) and normal (magnetic) Maxwell constants it holds
$$c_{1,\ga}=c_{1,\emptyset}.$$
The Maxwell constants are approximated as
$$\frac{1}{c_{1,\gat,\ned}^2} 
= \lambda_{1,\gat,\ned}^2 
= \min_{\substack{0\neq E^\ned,\\E^\ned_{\gat}=0,\\E^\ned\bot N(\stiff^\ned)}}
\frac{\stiff^\ned E^\ned\cdot E^\ned}{\mass^\ned E^\ned\cdot E^\ned},$$
where $E^\ned_{\gat}$ denotes a subvector of $E^\ned$ in indices corresponding to boundary edges of $\gat$.
$\lambda_{1,\gat,\ned}^2$ is the minimal positive eigenvalue of the generalized eigenvalue problem
$$\stiff^\ned \, E^\ned 
= \lambda ^2\, \mass^\ned \, E^\ned,\qquad
E^\ned_{\gat}=0,$$
respectively,
$$\stiff^\ned_{{\sf int},\gat} \, E^\ned_{{\sf int},\gat} 
= \lambda ^2\, \mass^\ned_{{\sf int},\gat} \, E^\ned_{{\sf int},\gat},$$
where $\stiff^\ned_{{\sf int},\gat}$, $\mass^\ned_{{\sf int},\gat}$, and $E^\ned_{{\sf int},\gat}$ 
are restrictions of the matrices 
$\stiff^\ned$, $\mass^\ned$, and the vector $E^\ned$ to indices 
corresponding to `free' mesh edges only. 
Note that similar to the computation of the divergence constants 
there are a lot of first zero eigenvalues $\lambda^2=0$
as neither $\stiff^\ned$ nor $\stiff^\ned_{{\sf int},\gat}$ are regular
due to the existence of large kernels $N(\stiff^\ned)$ and $N(\stiff^\ned_{{\sf int},\gat})$
since now all gradients belong to the kernel of the rotation.


We emphasise that the Maxwell constants are also approximated by
$$\frac{1}{c_{1,\gan,\ned}^2} 
= \lambda_{1,\gan,\ned}^2 
= \min_{\substack{0\neq E^\ned,\\E^\ned_{\gan}=0,\\E^\ned\bot N(\stiff^\ned)}}
\frac{\stiff^\ned E^\ned\cdot E^\ned}{\mass^\ned E^\ned\cdot E^\ned}.$$

\begin{figure}
\includegraphics[width=0.489\textwidth]{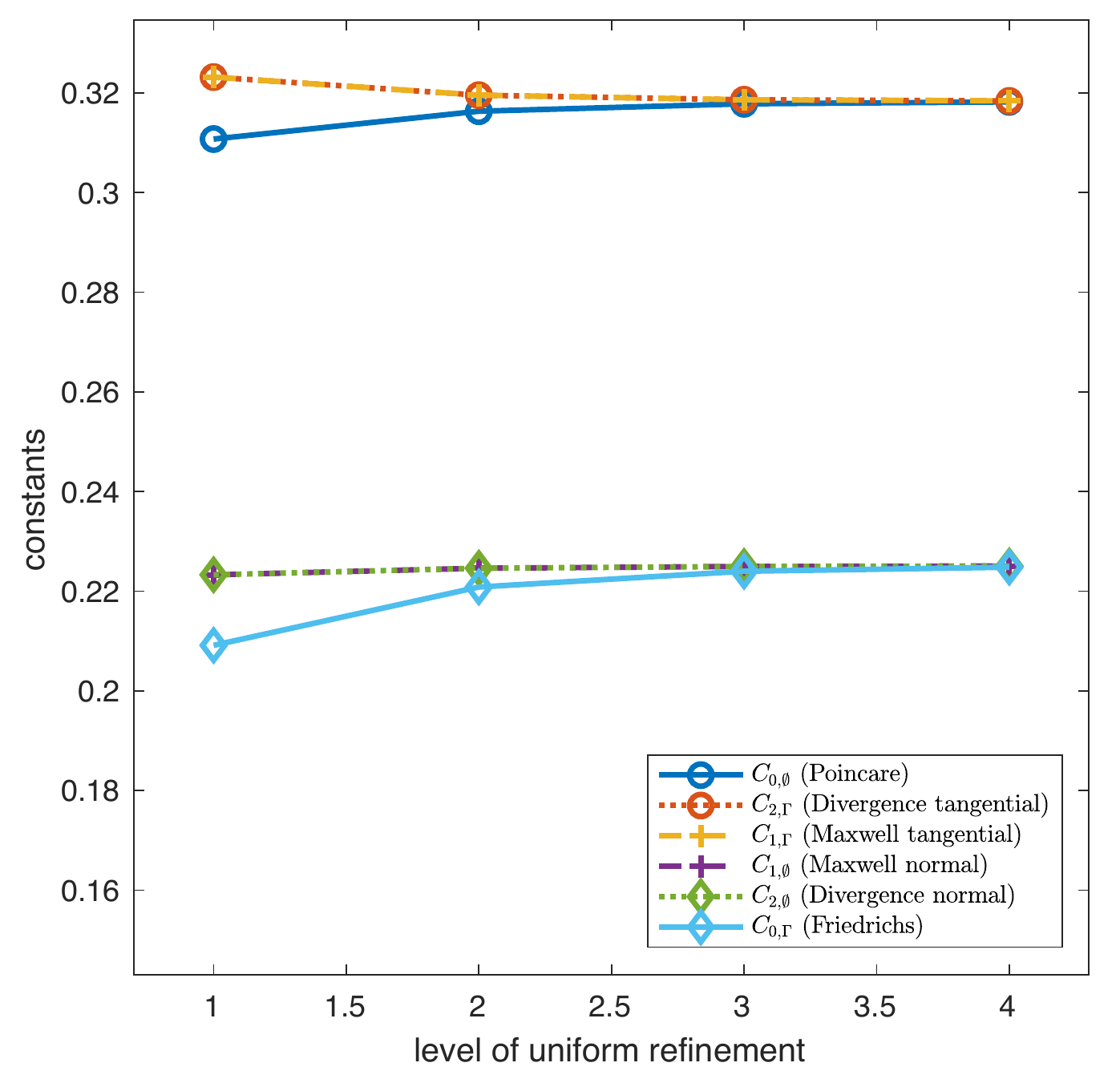}
\includegraphics[width=0.481\textwidth]{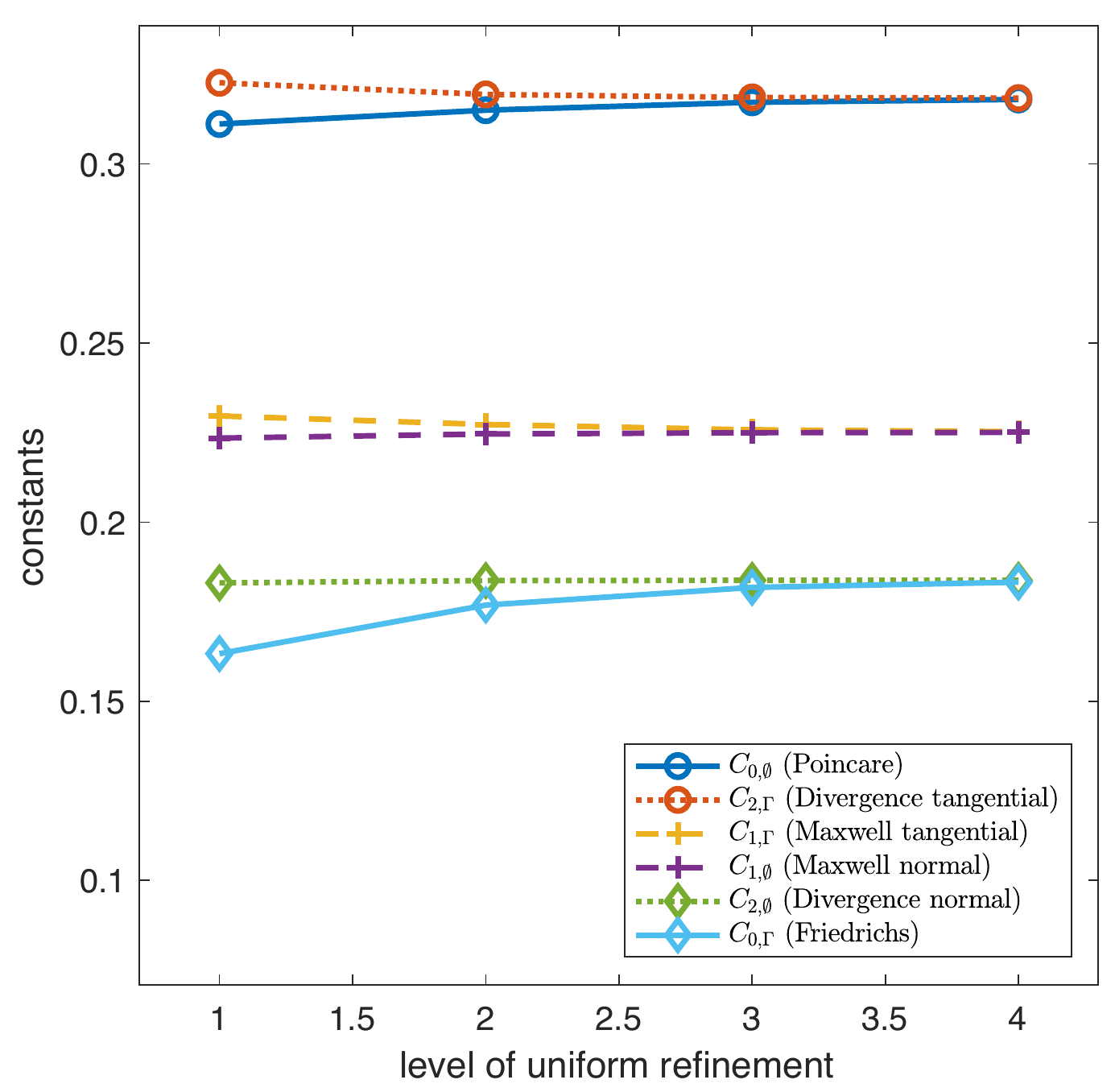}
\caption{Constants computed for the unit square domain (left) and the unit cube domain (right) with full boundary conditions.
Theoretically, it holds in 2D (left)
$$\hspace{-23mm}
c_{0,\ga}=c_{2,\emptyset}=c_{1,\emptyset}=\frac{1}{\sqrt{2}\pi}\approx0.225
<c_{0,\emptyset}=c_{2,\ga}=c_{1,\ga}=\frac{1}{\pi}\approx0.318$$ 
and in 3D (right)
$$\hspace{-23mm}
c_{0,\ga}=c_{2,\emptyset}=\frac{1}{\sqrt{3}\pi}\approx0.184
<c_{1,\ga}=c_{1,\emptyset}=\frac{1}{\sqrt{2}\pi}\approx0.225
<c_{0,\emptyset}=c_{2,\ga}=\frac{1}{\pi}\approx0.318.$$}
\label{constcompusuc}
\end{figure}

\subsection{2D Computations} 
\label{section:2D}

We demonstrate two benchmarks with the unit square and the L-shape domain, 
the first with known and the second with unknown values of the constants. 
Their coarse (level 1) meshes are displayed in Figure \ref{meshes2D}.
For the unit square we have by Remark \ref{stateoftheartconstestrem2D} exact values
$$c_{0,\ga}=c_{1,\emptyset}=\frac{1}{\sqrt{2}\pi}\approx0.22507907,\qquad 
c_{0,\emptyset}=c_{1,\ga}=\frac{1}{\pi}\approx0.31830988,$$
and our approximative values converge to them, see Table \ref{ta:unitsquare}. 
This extends results of \cite[Table 1]{Valdman}. 
For one case of mixed boundary conditions (with missing boundary part $\ga_{b}$) we have
by Remark \ref{stateoftheartconstestrem2D} and \eqref{constval2D} exact values
$$c_{0,\ga_{t,l,r}}=c_{1,\ga_{b}}=\frac{2}{\sqrt{5}\pi}\approx0.28470501,\qquad 
c_{0,\ga_{b}}=c_{1,\ga_{t,l,r}}=\frac{2}{\pi}\approx0.63661977,$$
and our approximative values converge again to them, see Table \ref{ta:unitsquare_mixed}. 
Approximative values for the L-shape domain are provided in Tables \ref{ta:Lshape} and \ref{ta:Lshape_mixed}. 
We notice a quadratic convergence of all constants with respect to the mesh size $h$. 
It means that an absolute error of any considered constant approximation is reduced 
by a factor of 4 after each uniform mesh refinement. Geometrical parameters of triangular meshes used for the unit square domain are given in Table \ref{ta:unitsquare_meshes}.  

\begin{table}[h]
\begin{tabularx}{\textwidth}{X | X X X X X X }
mesh level 
& $c_{0,\emptyset,\pone}$
& $c_{2,\ga,\rt}$
& $c_{1,\ga,\ned}$
& $c_{1,\emptyset,\ned}$
& $c_{2,\emptyset,\rt}$
& $c_{0,\ga,\pone}$\\
\hline
\hline
1 & 0.31072999 & 0.32316745 & 0.32316745 & 0.22328039 & 0.22328039 & 0.20912552 \\
2 & 0.31631302 & 0.31953907 & 0.31953907 & 0.22460517 & 0.22460517 & 0.22083319 \\
3 & 0.31780225 & 0.31861815 & 0.31861815 & 0.22495907 & 0.22495907 & 0.22400032 \\
4 & 0.31818232 & 0.31838701 & 0.31838701 & 0.22504898 & 0.22504898 & 0.22480828 \\
5 & 0.31827795 & 0.31832917 & 0.31832917 & 0.22507155 & 0.22507155 & 0.22501131 \\
6 & 0.31830190 & 0.31831471 & 0.31831471 & 0.22507720 & 0.22507720 & 0.22506213 \\
7 & 0.31830789 & 0.31831109 & 0.31831109 & 0.22507861 & 0.22507861 & 0.22507484 \\
\hline
$\infty$ & 0.31830988 & 0.31830988 & 0.31830988 & 0.22507907 & 0.22507907 & 0.22507907 \\
\end{tabularx}
\vspace{1mm}
\caption{Constants computed for the unit square domain and full boundary conditions.}
\label{ta:unitsquare}
\begin{tabularx}{\textwidth}{X | X X X X X X}
mesh level 
& $c_{0,\ga_{b},\pone}$ 
& $c_{2,\ga_{t,l,r},\rt}$ 
& $c_{1,\ga_{t,l,r},\ned}$  
& $c_{1,\ga_{b},\ned}$ 
& $c_{2,\ga_{b},\rt}$ 
& $c_{0,\ga_{t,l,r},\pone}$ \\
\hline
\hline
1 & 0.63267458 & 0.63798842 & 0.63798842 & 0.28486798 & 0.28486798 & 0.27318834 \\
2 & 0.63560893 & 0.63696095 & 0.63696095 & 0.28473767 & 0.28473767 & 0.28172459 \\
3 & 0.63636500 & 0.63670501 & 0.63670501 & 0.28471277 & 0.28471277 & 0.28395286 \\
4 & 0.63655592 & 0.63664108 & 0.63664108 & 0.28470693 & 0.28470693 & 0.28451652 \\
5 & 0.63660380 & 0.63662510 & 0.63662510 & 0.28470549 & 0.28470549 & 0.28465786 \\
6 & 0.63661578 & 0.63662110 & 0.63662110 & 0.28470514 & 0.28470514 & 0.28469323 \\
7 & 0.63661877 & 0.63662011 & 0.63662011 & 0.28470505 & 0.28470505 & 0.28470207 \\
\hline
$\infty$ & 0.63661977 & 0.63661977 & 0.63661977 & 0.28470501 & 0.28470501 & 0.28470501 \\
\end{tabularx}
\vspace{1mm}
\caption{Constants computed for the unit square domain and mixed boundary conditions.}
\label{ta:unitsquare_mixed}
\begin{tabularx}{\textwidth}{X | X X X X X X}
mesh level 
& $c_{0,\emptyset,\pone}$
& $c_{2,\ga,\rt}$
& $c_{1,\ga,\ned}$
& $c_{1,\emptyset,\ned}$
& $c_{2,\emptyset,\rt}$
& $c_{0,\ga,\pone}$\\
\hline
\hline
1 & 0.39156654 & 0.43611331 & 0.43611331 & 0.16795692 & 0.16795692 & 0.13325394 \\
2 & 0.40370423 & 0.42045050 & 0.42045050 & 0.16377267 & 0.16377267 & 0.15232573 \\
3 & 0.40850306 & 0.41492017 & 0.41492017 & 0.16214127 & 0.16214127 & 0.15838355 \\
4 & 0.41038725 & 0.41287500 & 0.41287500 & 0.16148392 & 0.16148392 & 0.16020361 \\
5 & 0.41112643 & 0.41209870 & 0.41209870 & 0.16121879 & 0.16121879 & 0.16076463 \\
6 & 0.41141712 & 0.41179918 & 0.41179918 & 0.16111230 & 0.16111230 & 0.16094566 \\
7 & 0.41153175 & 0.41168242 & 0.41168242 & 0.16106970 & 0.16106970 & 0.16100698 \\
\end{tabularx}
\vspace{1mm}
\caption{Constants computed for the L-shape domain and full boundary conditions.}
\label{ta:Lshape}
\begin{tabularx}{\textwidth}{X | X X X X X X}
mesh level 
& $c_{0,\ga_{b},\pone}$ 
& $c_{2,\ga_{t,l,r},\rt}$ 
& $c_{1,\ga_{t,l,r},\ned}$  
& $c_{1,\ga_{b},\ned}$ 
& $c_{2,\ga_{b},\rt}$ 
& $c_{0,\ga_{t,l,r},\pone}$ \\
\hline
\hline
1 & 0.55287499 & 0.58356116 & 0.58356116 & 0.24038804 & 0.24038804 & 0.21444362 \\
2 & 0.56332946 & 0.57483917 & 0.57483917 & 0.23765352 & 0.23765352 & 0.22916286 \\
3 & 0.56716139 & 0.57152377 & 0.57152377 & 0.23648111 & 0.23648111 & 0.23363643 \\
4 & 0.56857589 & 0.57025101 & 0.57025101 & 0.23597974 & 0.23597974 & 0.23498908 \\
5 & 0.56910703 & 0.56975715 & 0.56975715 & 0.23577100 & 0.23577100 & 0.23541318 \\
6 & 0.56930976 & 0.56956402 & 0.56956402 & 0.23568569 & 0.23568569 & 0.23555262 \\
7 & 0.56938813 & 0.56948808 & 0.56948808 & 0.23565122 & 0.23565122 & 0.23560066 \\
\end{tabularx}
\vspace{1mm}
\caption{Constants computed for the L-shape domain and mixed boundary conditions.}
\label{ta:Lshape_mixed}
\end{table}

\subsection{3D Computations}  
\label{section:3D}

We present two benchmarks with the unit cube and the Fichera corner domain, 
the first with known and the second with unknown values of the constants. 
Their coarse (level 1) meshes are displayed in Figure \ref{meshes3D}.
       
For the unit cube we have by Remark \ref{stateoftheartconstestrem3D}
exact values
$$c_{0,\ga}=\frac{1}{\sqrt{3}\pi}\approx0.18377629,\quad
c_{1,\ga}=c_{1,\emptyset}=\frac{1}{\sqrt{2}\pi}\approx0.22507907,\quad 
c_{0,\emptyset}=\frac{1}{\pi}\approx0.31830988,$$
and our approximative values converge to them, see Table \ref{ta:unitcube}. 
For one case of mixed boundary conditions (with missing boundary part $\ga_{b}$) we have
by Remark \ref{stateoftheartconstestrem3D} and \eqref{constval3Da}, \eqref{constval3Db} exact values
$$c_{0,\ga_{t,l,r,f,\bk}}=\frac{2}{3\pi}\approx0.21220659,\quad 
c_{1,\ga_{t,l,r,f,\bk}}=c_{1,\ga_{b}}=\frac{2}{\sqrt{5}\pi}\approx0.28470501,\quad
c_{0,\ga_{b}}=\frac{2}{\pi}\approx0.63661977,$$
and our approximative values converge again to them, 
see Table \ref{ta:unitcube_mixed}. 
Approximative values for the Fichera corner domain are provided 
in Tables \ref{ta:Fichera} and \ref{ta:Fichera_mixed}. 
We notice a slightly lower than quadratic convergence 
of all constants with respect to the mesh size $h$. 
Geometrical parameters of tetrahedral meshes used for the unit cube domain are given in Table \ref{ta:unitcube_meshes}.  

\begin{table}[h]
\begin{tabularx}{\textwidth}{X | X X X X X X}
mesh level 
& $c_{0,\emptyset,\pone}$
& $c_{2,\ga,\rt}$
& $c_{1,\ga,\ned}$
& $c_{1,\emptyset,\ned}$
& $c_{2,\emptyset,\rt}$
& $c_{0,\ga,\pone}$\\
\hline
\hline
1 & 0.31114284 & 0.32265677 & 0.22964649 & 0.22346361 & 0.18305860 & 0.16330104 \\
2 & 0.31500347 & 0.31939334 & 0.22727295 & 0.22461307 & 0.18369611 & 0.17685247 \\
3 & 0.31720303 & 0.31857551 & 0.22577016 & 0.22497862 & 0.18375776 & 0.18178558 \\
4 & 0.31799426 & 0.31837527 & 0.22526682 & 0.22505528 & 0.18377095 & 0.18324991 \\
\hline
$\infty$ & 0.31830988 & 0.31830988 & 0.22507907 & 0.22507907 & 0.18377629 & 0.18377629 \\
\end{tabularx}
\vspace{1mm}
\caption{Constants computed for the unit cube domain and full boundary conditions.}
\label{ta:unitcube}
\begin{tabularx}{\textwidth}{X | X X X X X X}
mesh level 
& $c_{0,\ga_{b},\pone}$ 
& $c_{2,\ga_{t,l,r,f,\bk},\rt}$  
& $c_{1,\ga_{t,l,r,f,\bk},\ned}$  
& $c_{1,\ga_{b},\ned}$ 
& $c_{2,\ga_{b},\rt}$ 
& $c_{0,\ga_{t,l,r,f,\bk},\pone}$ \\
\hline
\hline
1 & 0.63279353 & 0.63799454  & 0.28810408  & 0.28568645  & 0.21207495  & 0.19466267  \\
2 & 0.63563506 & 0.63694323  & 0.28621730  & 0.28506833  & 0.21221199  & 0.20590030  \\
3 & 0.63636820 & 0.63669754  & 0.28518535  & 0.28483451  & 0.21220585  & 0.21033840  \\
4 & 0.63655623 & 0.63663874  & 0.28483637  & 0.28474355  & 0.21220553  & 0.21170560  \\
\hline
$\infty$ & 0.63661977 & 0.63661977 & 0.28470501 & 0.28470501 & 0.21220659 & 0.21220659 \\
\end{tabularx}
\vspace{1mm}
\caption{Constants computed for the unit cube domain and mixed boundary conditions.}
\label{ta:unitcube_mixed}
\begin{tabularx}{\textwidth}{X | X X X X X X}
mesh level 
& $c_{0,\emptyset,\pone}$
& $c_{2,\ga,\rt}$
& $c_{1,\ga,\ned}$
& $c_{1,\emptyset,\ned}$
& $c_{2,\emptyset,\rt}$
& $c_{0,\ga,\pone}$\\
\hline
\hline
1 & 0.34328060 & 0.37118723  & 0.30375245  & 0.26905796  & 0.15938388  & 0.12490491  \\
2 & 0.35193318 & 0.36341148  & 0.28961049  & 0.27500043  & 0.15638922  & 0.14329827  \\
3 & 0.35628919 & 0.36072519  & 0.28329415  & 0.27728510  & 0.15500593  & 0.15042074  \\
4 & 0.35808207 & 0.35976508  & 0.28054899  & 0.27811443  & 0.15444291  & 0.15286199  \\
\end{tabularx}
\vspace{1mm}
\caption{Constants computed for the Fichera corner domain and full boundary conditions.}
\label{ta:Fichera}
\begin{tabularx}{\textwidth}{X | X X X X X X}
mesh level 
& $c_{0,\ga_{b},\pone}$ 
& $c_{2,\ga_{t,l,r,f,\bk},\rt}$  
& $c_{1,\ga_{t,l,r,f,\bk},\ned}$  
& $c_{1,\ga_{b},\ned}$ 
& $c_{2,\ga_{b},\rt}$ 
& $c_{0,\ga_{t,l,r,f,\bk},\pone}$ \\
\hline
\hline
1 & 0.59192242 & 0.60790729  & 0.32388929  & 0.30017867  & 0.19741476  & 0.17047720  \\
2 & 0.59806507 & 0.60397104  & 0.31363043  & 0.30335884  & 0.19584527  & 0.18566517  \\
3 & 0.60032617 & 0.60255178  & 0.30884202  & 0.30465706  & 0.19504508  & 0.19157456  \\
4 & 0.60117716 & 0.60202588  & 0.30682949  & 0.30515364  & 0.19471006  & 0.19355739  \\
\end{tabularx}
\vspace{1mm}
\caption{Constants computed for the Fichera corner domain and mixed boundary conditions.}
\label{ta:Fichera_mixed}
\end{table}

\subsection{Testing of the Monotonicity Properties}
\label{section:monotonicity}

We perform some monotonicity tests on the constants
depending on the respective boundary conditions, i.e., we display the mapping 
$$\gan\longmapsto(c_{0,\gat},c_{0,\gan},c_{1,\gat},c_{1,\gan},c_{2,\gat},c_{2,\gan})$$
for a monotone increasing sequence of $\gan$. 
Figures \ref{neumannsequence2D} and \ref{neumannsequence3D} depict examples of such sequences in 2D/3D. 
The boundary part $\gan$ is represented discretely as a set of Neumann edges in 2D
or a set of Neumann faces in 3D. 
Boundary faces or edges are checked for their connectivity and a breadth-first search (BFS) 
algorithm is applied to order them in a sequence.  
All constants are then evaluated for every element of the sequence 
and the results are displayed in Figures \ref{monoconstants2DusLs} and \ref{monoconstants3DucFc}.

\begin{figure}
\includegraphics[width=0.302\textwidth]{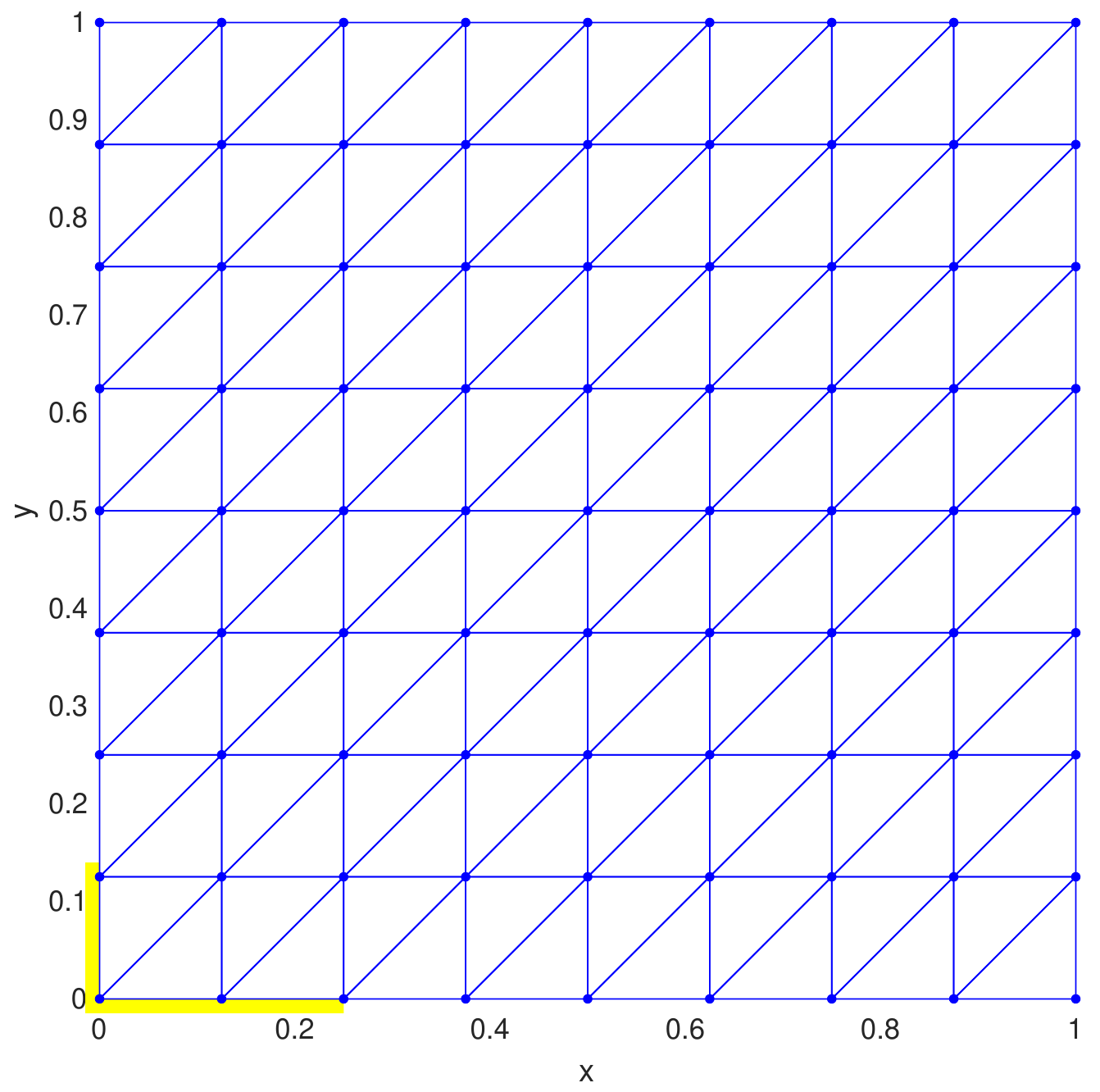}
\hspace{0.03\textwidth}
\includegraphics[width=0.3\textwidth]{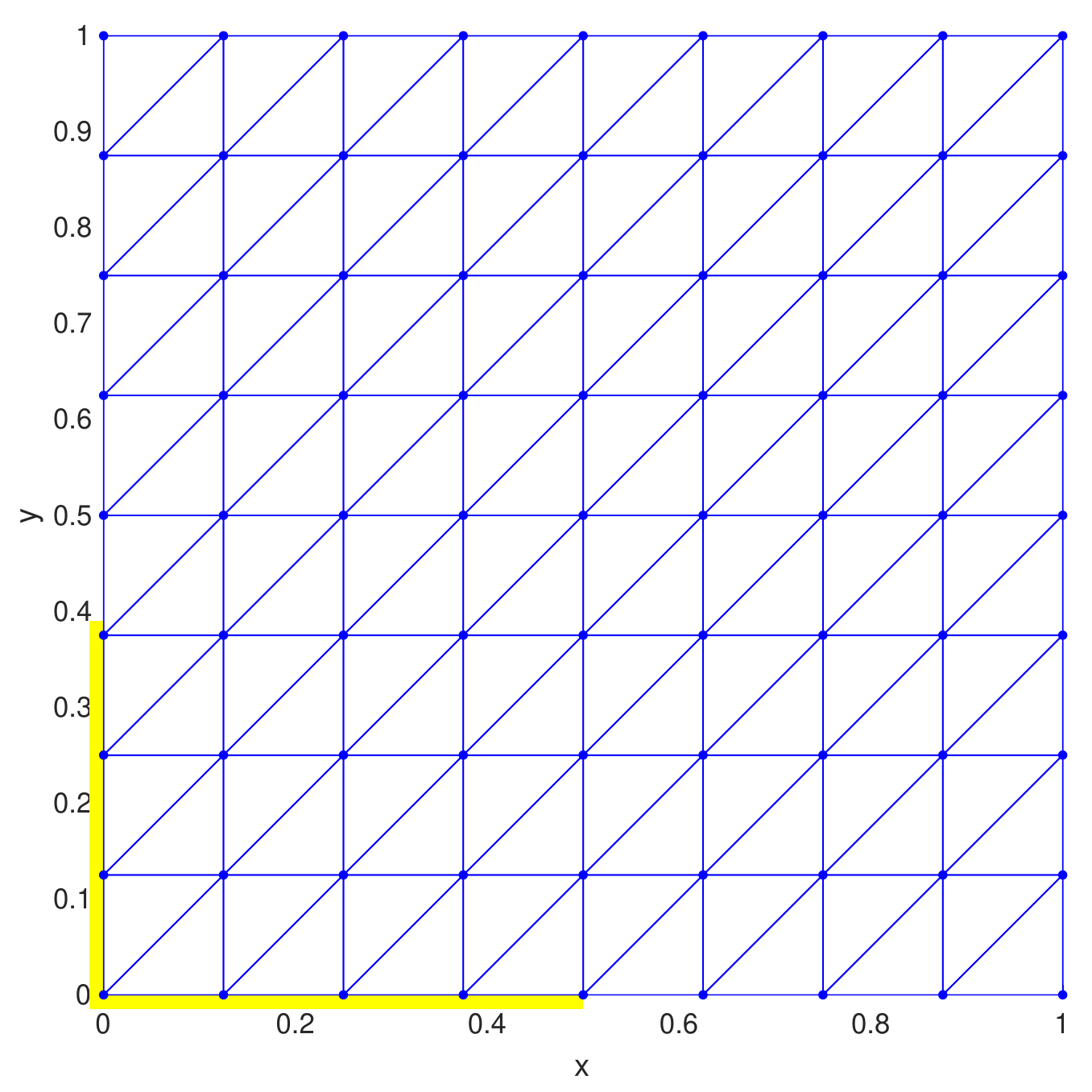}
\hspace{0.03\textwidth}
\includegraphics[width=0.3\textwidth]{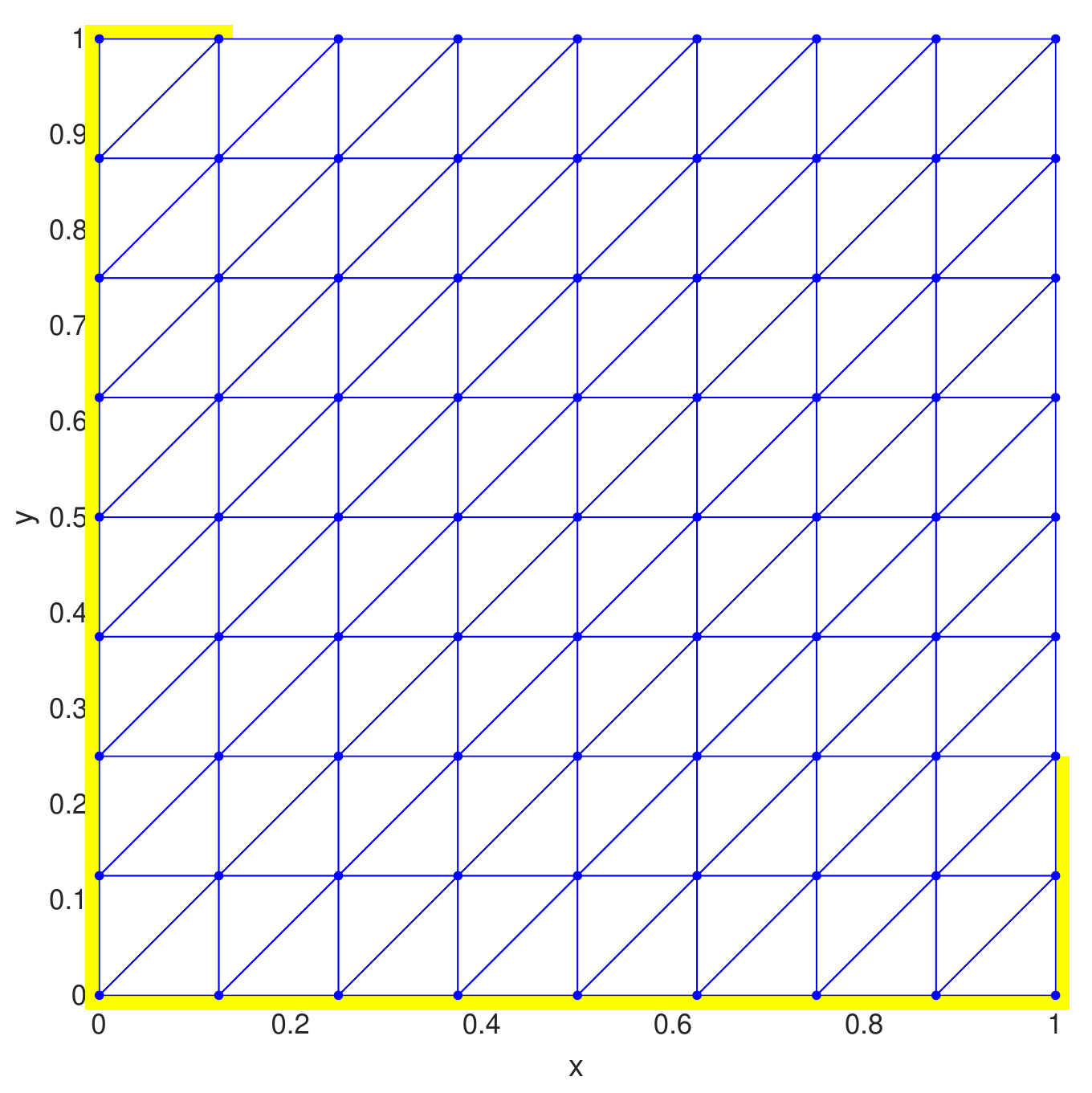}
\caption{Elements of a monotone increasing sequence of  Neumann edges $\gan$  with 3 (left), 7 (middle) and 19 (right) Neumann edges marked in yellow.
A full boundary $\Gamma$ of the considered level 2 mesh of the unit square domain consists of 32 edges.}
\label{neumannsequence2D}
\vspace{0.5cm}
\includegraphics[width=0.305\textwidth]{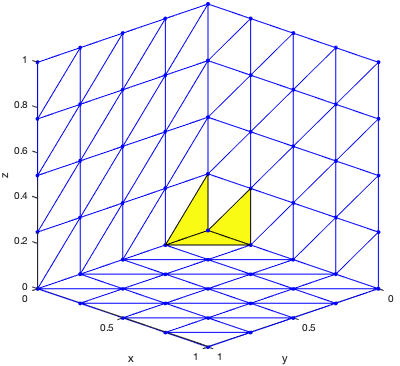}
\hspace{0.03\textwidth}
\includegraphics[width=0.3\textwidth]{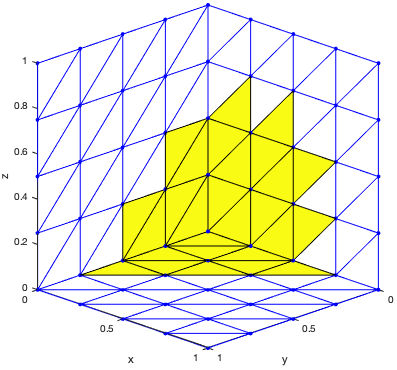}
\hspace{0.03\textwidth}
\includegraphics[width=0.305\textwidth]{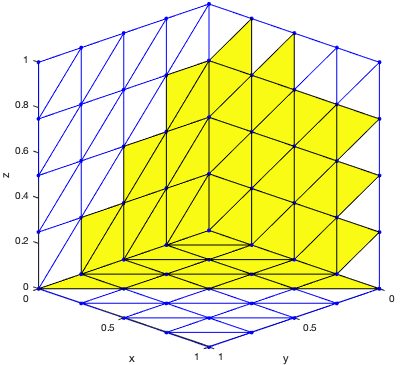}
\caption{Elements of a monotone increasing sequence of  Neumann faces $\gan$  with 3 (left), 27 (middle) and 51 (right) Neumann faces marked in yellow.
A full boundary $\Gamma$ of the considered level 1 mesh of the unit cube domain consists of 192 faces.}
\label{neumannsequence3D}
\end{figure}

\begin{figure}[h]
\includegraphics[width=0.49\textwidth]{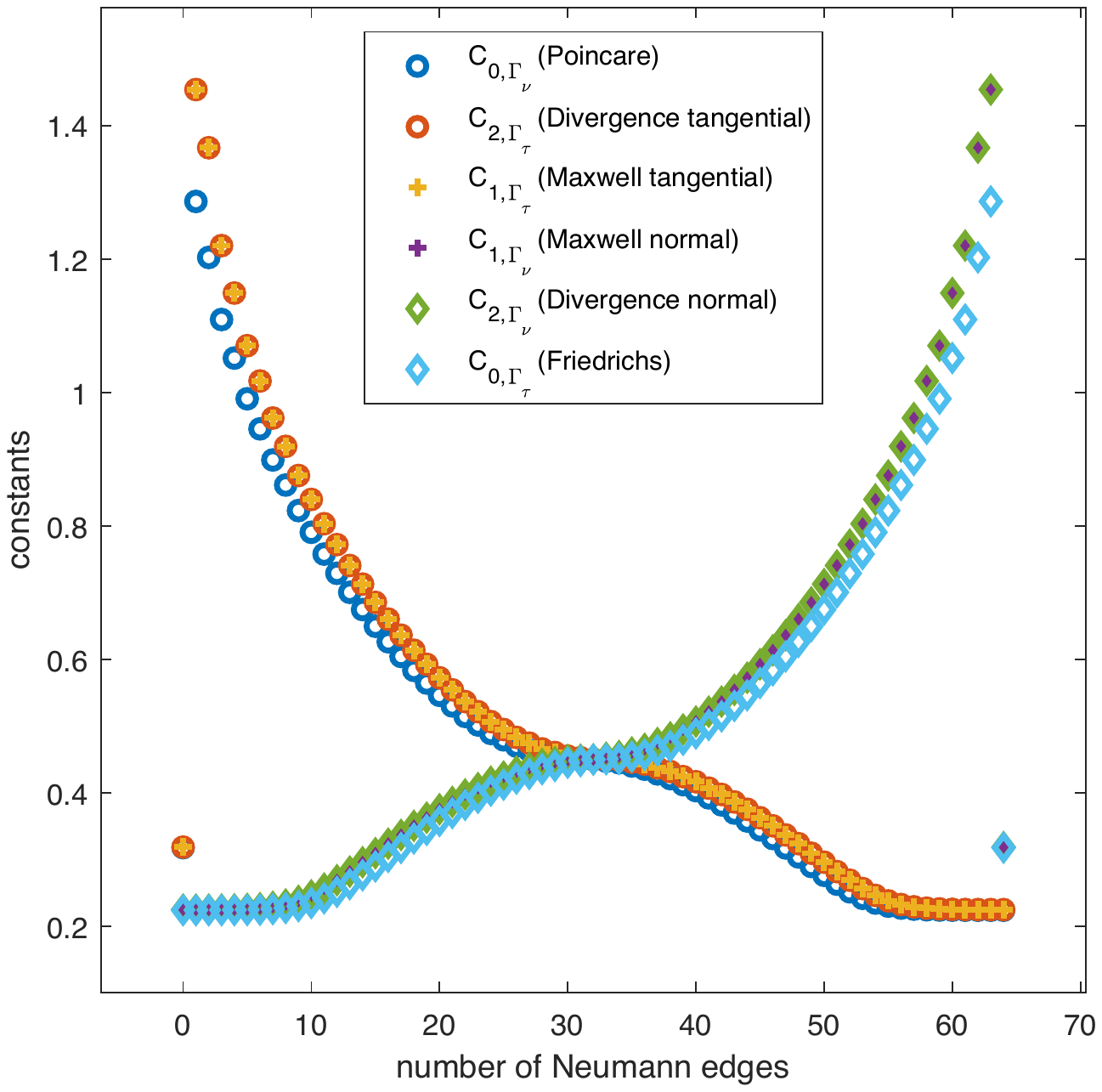}
\includegraphics[width=0.486\textwidth]{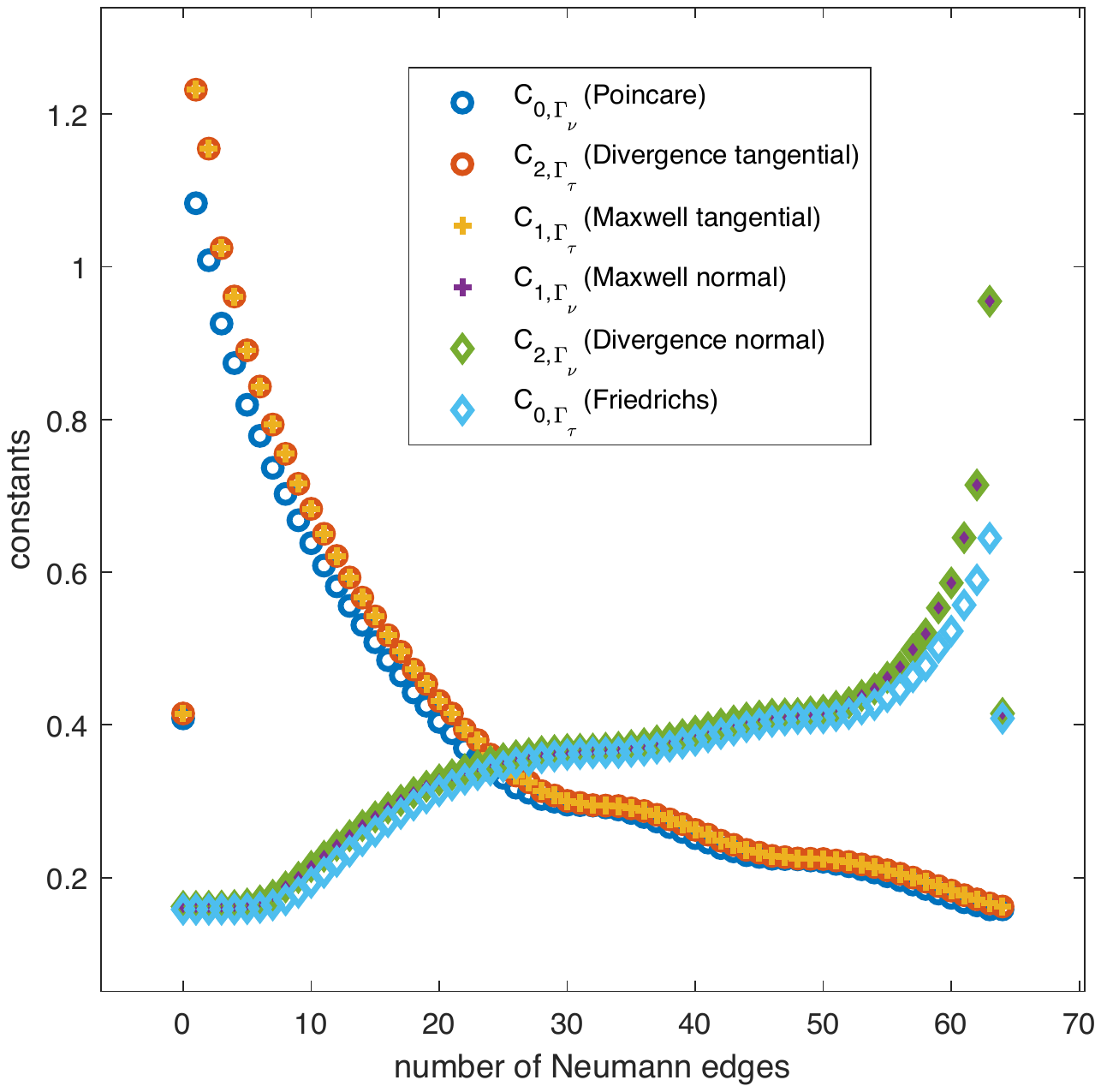}
\caption{Constants computed for the level 3 meshes of the unit square domain (left) and of the L-shape domain (right) 
- monotonicity test for a monotone increasing sequence of Neumann edges $\gan$.}
\label{monoconstants2DusLs}
\vspace*{5mm}
\includegraphics[width=0.49\textwidth]{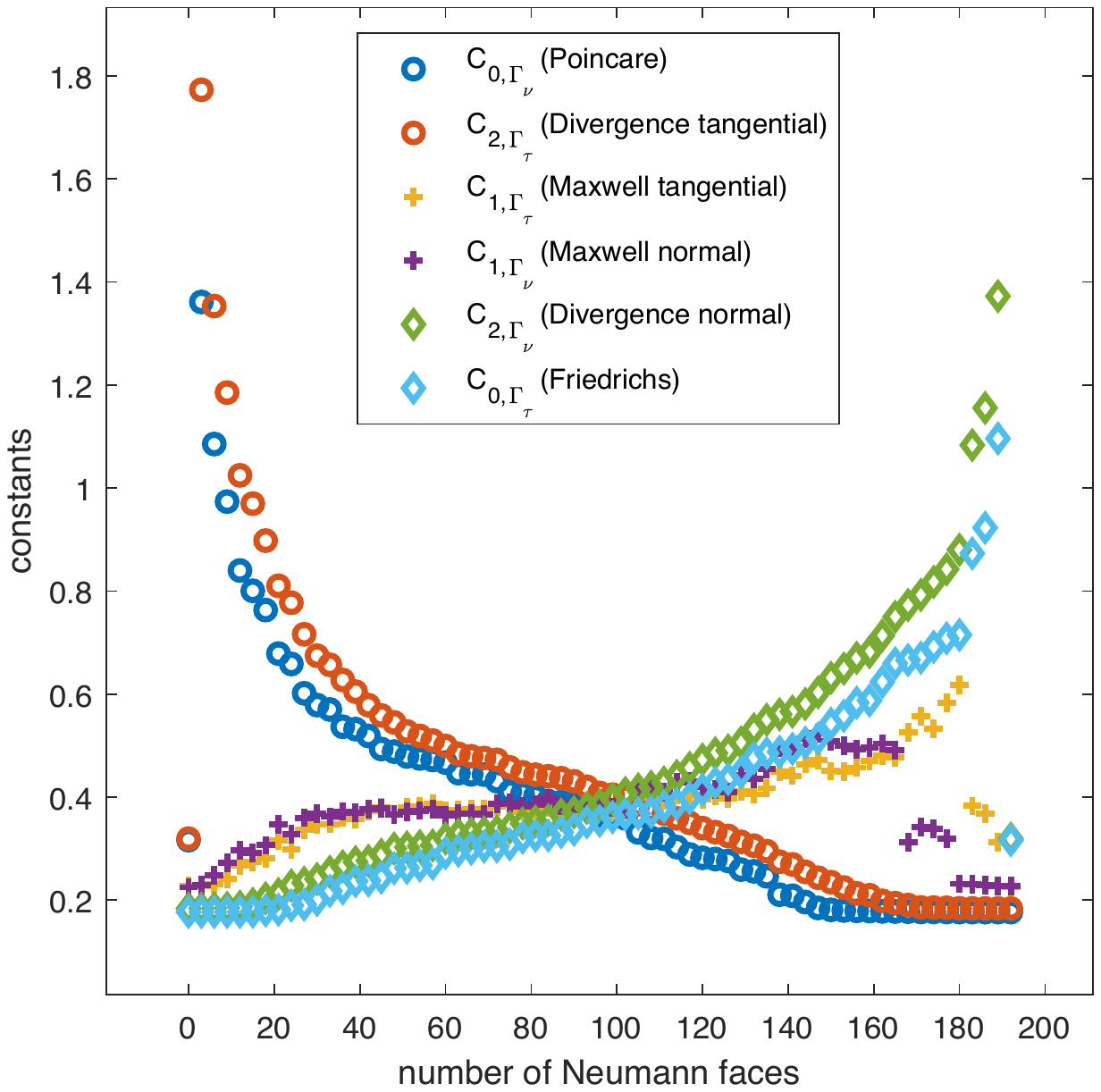}
\includegraphics[width=0.491\textwidth]{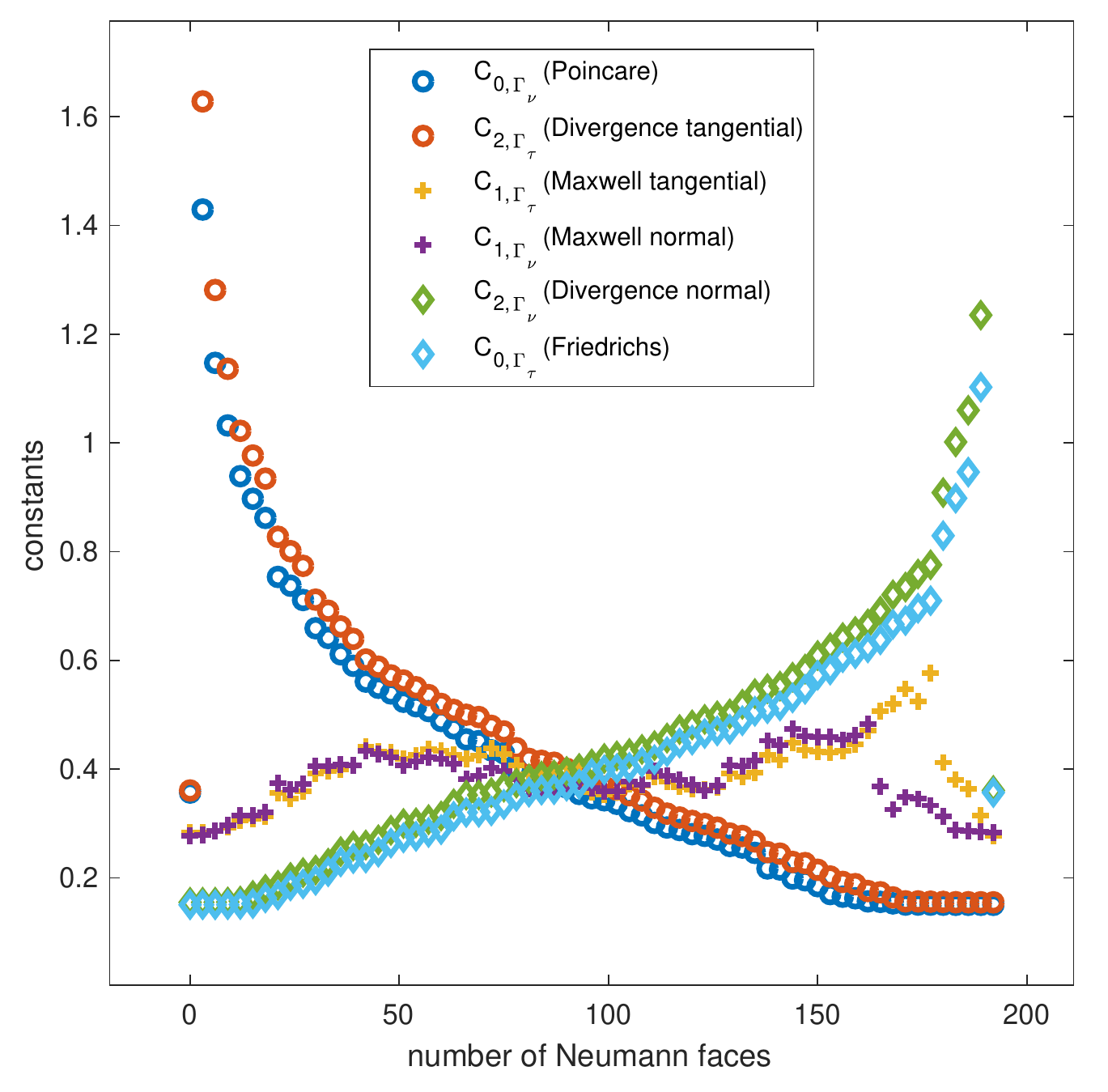}
\caption{Constants computed for the level 1 meshes of unit cube domain (left) and of the Fichera corner domain (right)
- monotonicity test for a monotone increasing sequence of Neumann faces $\gan$.}
\label{monoconstants3DucFc}
\end{figure}

\subsection{Computational Details and MATLAB Code}
It is more computationally demanding to evaluate divergence and Maxwell constants than Laplace constants, 
since the numbers of faces (in 3D) and edges are higher than the number of nodes, cf. eg. Table \ref{ta:unitsquare_meshes} and Table \ref{ta:unitcube_meshes}.

A generalized eigenvalue system
\begin{equation}
K v = \lambda^2 \, M v 
\label{generalized_EV}
\end{equation}
with a positive semidefinite and symmetric matrix $K\in\reals^{n \times n}$ 
and a positive definite and symmetric matrix $M\in\reals^{n \times n}$ 
is solved for a smallest positive eigenvalue $\lambda^2 > 0$. 
We apply two computational techniques.

\subsubsection{A nested iteration technique}
 An eigenvalue evaluated on a coarser mesh (eg. by the second technique explained below) 
is used as initial guess on a finer (uniformly refined) mesh, 
where an inbuilt MATLAB function \verb+eigs+ is applied for the search of the closest eigenvalue.
Without additional preconditioning (multigrid, domain decompositions) of eigenvalue solvers we can efficiently find smallest positive eigenvalues for all considered meshes.

However, it was noticed this technique did not converge for some cases of mixed boundary conditions in 3D because a sequence of corresponding Laplace constants did not form a monotone sequence in the monotonicity test.  Then, since the dimension of the kernel of a corresponding stiffness matrix is known ($0$ or $1$), we simply compute the smallest eigenvalue or two smallest eigenvalues with 0 being the smallest eigenvalue.  

\subsubsection{A projection to the range of $K$}
We apply the QR-decomposition of $K$ in the form 
\begin{equation}
K E=\tilde Q \tilde R,
\label{qr_decomposition}
\end{equation}
where $E \in\reals^{n \times n}$ is a permutation matrix, 
$\tilde Q \in\reals^{n \times n}$ is an orthogonal matrix 
and $\tilde R \in\reals^{n \times n}$ is an upper triangular matrix 
with diagonal entries ordered in decreasing order as 
\begin{equation*}
|\tilde R_{1,1}| \geq \dots \geq |\tilde R_{r,r}| \geq \dots \geq  |\tilde R_{n,n}|.
\label{sequence}
\end{equation*}
The number $r \leq n $ of nonzero entries of the sequence above then determines 
the range $K$ and all rows of $\tilde R$ with indices larger than $r$ are zero rows, cf. Figure \ref{qr_unit_cube_level_1} and  Figure \ref{qr_unit_cube_level_2}.
Therefore, we also have
\begin{equation}
K  E=Q  R
\label{qr_decomposition_restricted}
\end{equation}
where $Q \in\reals^{n \times r}$ is a restriction of $\tilde Q$ to its first $r$ columns 
and $R \in\reals^{r \times n}$ a restriction of $\tilde R$ to its first $r$ rows. 
Then, a mapping $v=Q z$ projects a (column) vector $z\in\reals^{r}$ to the range of $K$ 
and the generalized eigenvalue system \eqref{generalized_EV} 
to $$K Q z= \lambda^2 \, M Q z. $$
The multiplication of both sides by $Q^{\top} M^{-1} $ transforms the above relation  (since $Q^{\top}Q$ is an identity matrix of size $r \times r$) to a standard eigenvalue problem
\begin{equation}
Q^{\top} M^{-1} K Q z = \lambda^2 z.
\label{standard_EV}
\end{equation}
In view of \eqref{qr_decomposition_restricted}, the symmetry of $K$ and the orthogonality of the permutation matrix $E$, 
it holds $$K Q= (Q R E^{\top}) Q = (Q R E^{\top})^{\top} Q = E R^{\top} Q^{\top} Q = E R^{\top}$$ and this formula is applied in  our practical computations. A matrix $M^{-1}$ is full and expensive to compute, its memory storage is large and the multiplication with $Q^{\top} M^{-1}$ is costly. Therefore, this projection technique can only be applied for coarser meshes.
\begin{figure}[t]
\includegraphics[width=0.99\textwidth]{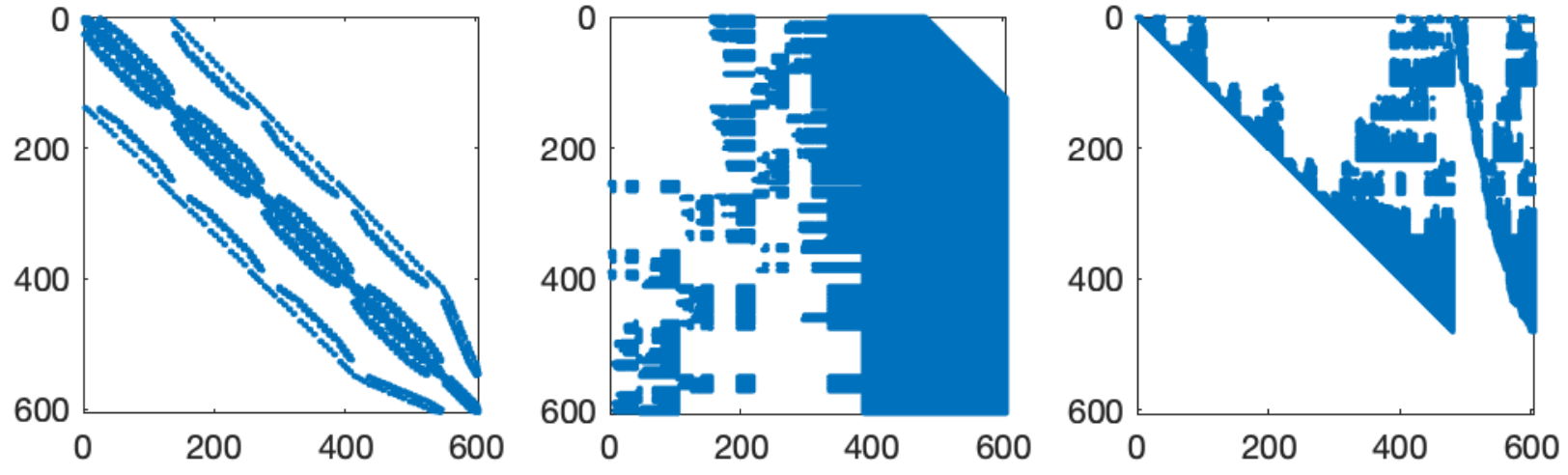}
\caption{An example of a positive semidefinite matrix $K\in\reals^{604 \times 604}$ (left) taken as a $\stiff^\ned$ matrix from level 1 mesh discretization of the unit cube domain and matrices $\tilde Q$ (middle) and $\tilde R$ (right) from the QR decomposition \eqref{qr_decomposition}.
The number of last zero rows of $\tilde R$ is 124 and it determines the dimension of the kernel of $K$. Therefore, the dimension of the range of $K$ is 480. 
}
\label{qr_unit_cube_level_1}
\end{figure}
\begin{figure}[t]
\includegraphics[width=0.99\textwidth]{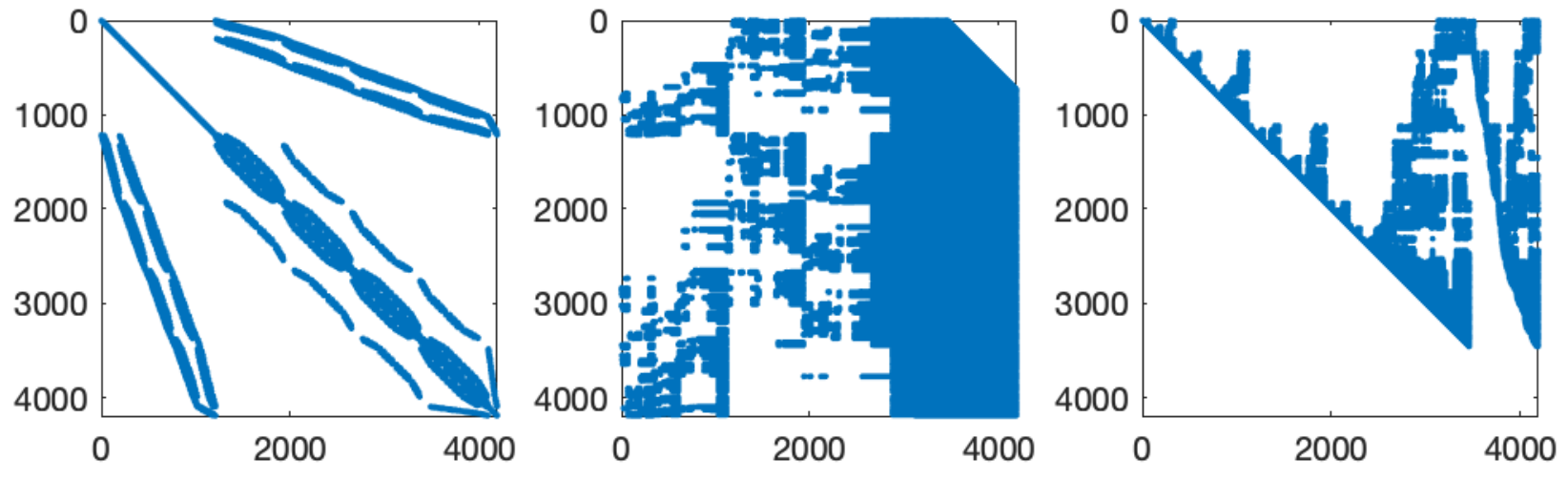}
\caption{An example of a positive semidefinite matrix $K\in\reals^{4184 \times 4184}$ (left) taken as a $\stiff^\ned$ matrix from level 2 mesh discretization of the unit cube domain and matrices $\tilde Q$ (middle) and $\tilde R$ (right) from the QR decomposition \eqref{qr_decomposition}.
The number of last zero rows of $\tilde R$ is 728 and it determines the dimension of the kernel of $K$. Therefore, the dimension of the range of $K$ is 3456.
}
\label{qr_unit_cube_level_2}
\end{figure}

\begin{table}[h]
\begin{tabulary}{\textwidth}{C | R R R  R}
mesh level 
& $\quad$ elements 
& $\qquad$nodes 
&$\qquad$  edges
& $\quad$ boundary edges
\\
\hline
\hline
1 & 32 & 25 & 56 & 16  $\qquad$\\
2 & 128 & 81 & 208 & 32  $\qquad$\\
3 & 512 & 289 & 800  & 64  $\qquad$\\
4 & 2.028 & 1.089 & 3.136 & 128  $\qquad$\\
5 & 8.192 & 4.225 & 12.416 & 256 $\qquad$ \\
6 & 32.768 & 16.641 & 49.408 & 512  $\qquad$\\
7 & 131.072 & 66.049 & 197.120 & 1.024  $\qquad$\\
\end{tabulary}
\vspace{1mm}
\caption{Discretization of the unit square domain by uniform triangular meshes.
}
\label{ta:unitsquare_meshes}
\begin{tabulary}{\textwidth}{C | R R R R  R}
mesh level 
& $\quad$ elements 
& $\qquad$nodes 
&$\qquad$  edges
& $\qquad$ faces 
& $\quad$ boundary faces 
\\
\hline
\hline
1 & 384 & 125 & 604 &864 & 192 $\qquad$\\
2 & 3.072 & 729 & 4.184 &6.528 & 768 $\qquad$\\
3 & 24.576 & 4.913 &31.024  &50.688 & 3.072 $\qquad$\\
4 & 196.608 & 35.937 & 238.688 &399.360 & 12.288 $\qquad$\\
\end{tabulary}
\vspace{1mm}
\caption{Discretization of the unit cube domain by uniform tetrahedral meshes.}
\label{ta:unitcube_meshes}
\end{table}

\subsubsection{A MATLAB code}
Numerical evaluations are based on finite element assemblies 
from \cite{anjamvaldmanfastmatlab, rahmanvaldmanfastmatlab} 
and also utilizes a 3D cube mesh and mesh visualizations from \cite{cermaksysalavaldman2019}. 
The code is freely available for download and testing at:
\begin{center}
\url{https://www.mathworks.com/matlabcentral/fileexchange/23991}
\end{center}
It can be easily modified to other domains and boundary conditions. The starting scripts for testing are  
\verb+start_2D+ and \verb+start_3D+.
To a given mesh, it automatically determines its boundary. In 2D, the code can also visualize eigenfunctions, 
see Figure  \ref{eigenfunctions} for the case of the L-shape domain. 

\section{Discussion of the Numerical Results and Conclusions}

Our numerical results, especially in 3D, 
did verify all the theoretical assertions of Theorem \ref{stateoftheartconstest},
see also Remark \ref{stateoftheartconstestrem3D} and Remark \ref{stateoftheartconstestrem2D}, in particular, 
\begin{itemize}
\item
the monotone dependence of the Poincar\'e-Friedrichs and divergence constants 
on the boundary conditions, i.e., the monotonicity of the mapping
$$\gan\mapsto c_{0,\gat}=c_{2,\gan},$$
\item
the `independence' of the Maxwell constants on the boundary conditions, i.e.,
$$\forall\,\gat\subset\ga\qquad
c_{1,\gat}=c_{1,\gan},$$
\item
as well as the boundedness of the full tangential and normal Maxwell constants 
by the Poincar\'e constant for convex $\om$, i.e.,
$$c_{1,\ga}=c_{1,\emptyset}\leq c_{0,\emptyset}=c_{2,\ga}.$$
\end{itemize}
While the first two assertions hold for general bounded Lipschitz domains
and Lipschitz interfaces,
the third assertion is analytically proved only for convex domains and the full boundary conditions.
In our numerical experiments, the unit cube served as a prototype for a convex domain, 
and we picked the Fichera corner domain as a typical example of a non-convex domain, 
see Figure \ref{meshes3D} for both initial meshes.

\subsection{Extended Inequalities}

To our surprise, even for mixed boundary conditions 
and for non-convex geometries, the \emph{extended inequalities}
\begin{align}
\label{PVconjPFM}
c_{0,\ga}
\leq\min\{c_{0,\gat},c_{0,\gan}\}
\leq c_{1,\gat}=c_{1,\gan}
\leq\max\{c_{0,\gat},c_{0,\gan}\}
\leq\sup_{\gat\neq\emptyset}c_{0,\gat}
=\sup_{\gan\neq\ga}c_{2,\gan}
\end{align}
seem to hold for our examples, see Figure \ref{monoconstants3DucFc}.
In these special cases the Maxwell constants are always in between the 
Poincar\'e-Friedrichs (Laplace) constants. 
We emphasise that our examples possess (piecewise) vanishing curvature.
It remains an open question if \eqref{PVconjPFM} is true
- at least partially - in general or, e.g., for polyhedra.
Moreover, if $\gat$ approaches $\emptyset$,
the Poincar\'e-Friedrichs constants $c_{0,\gat}$ seem to be bounded, i.e.,
the suprema in \eqref{PVconjPFM} appear to be bounded,
although a kernel of dimension $1$ (constants) is approximated.


\begin{figure}[h]
\includegraphics[scale=0.49]{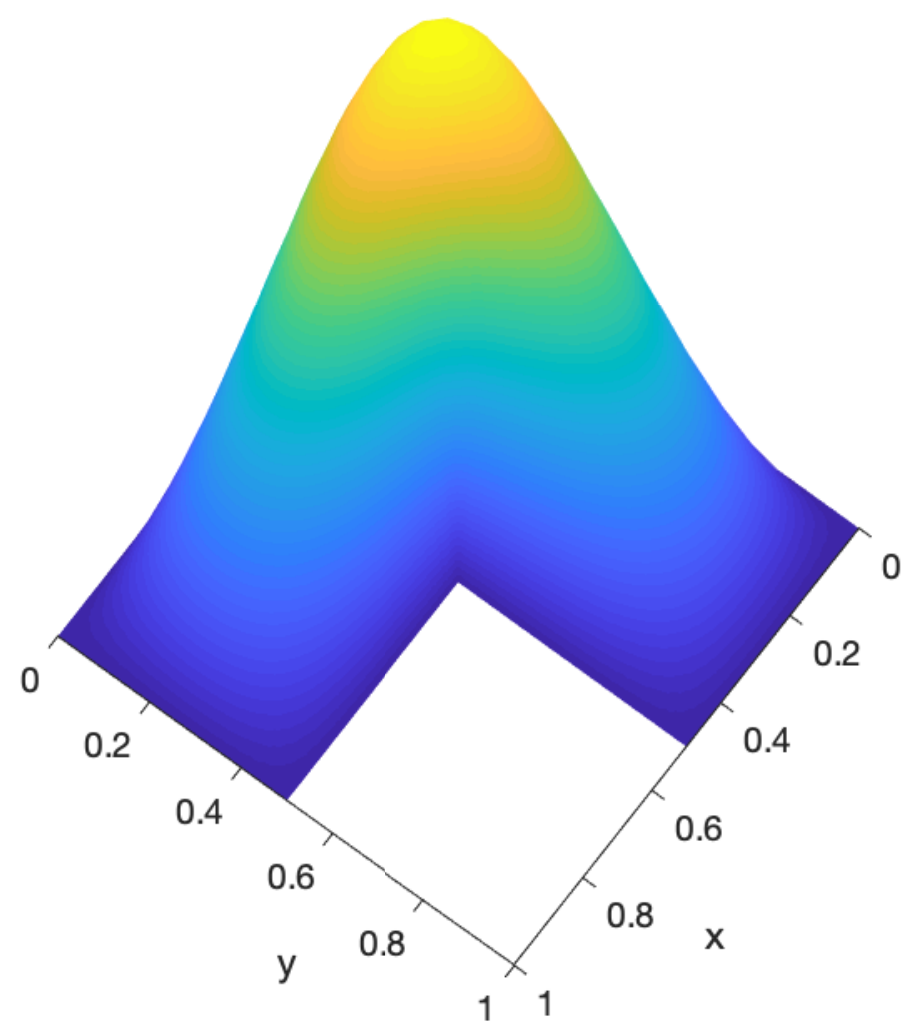}
\includegraphics[scale=0.692]{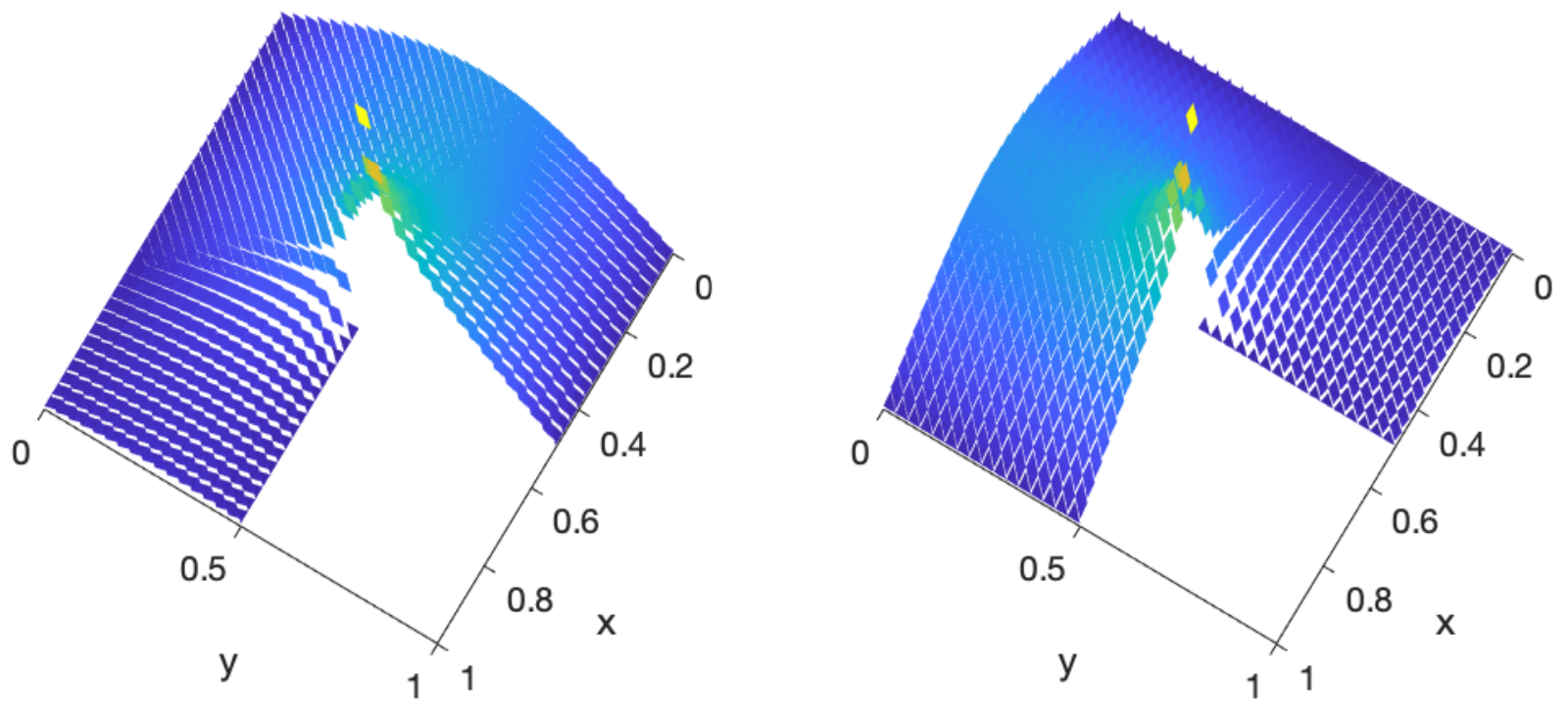}
\includegraphics[scale=0.647]{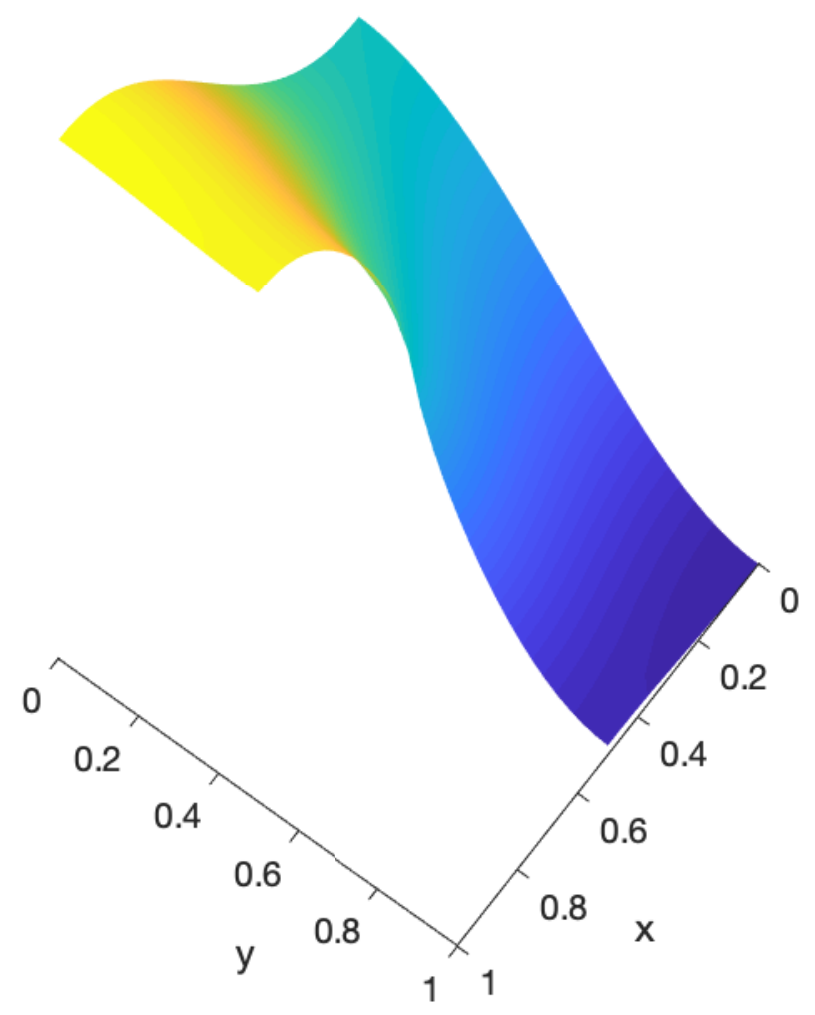}
\includegraphics[scale=0.637]{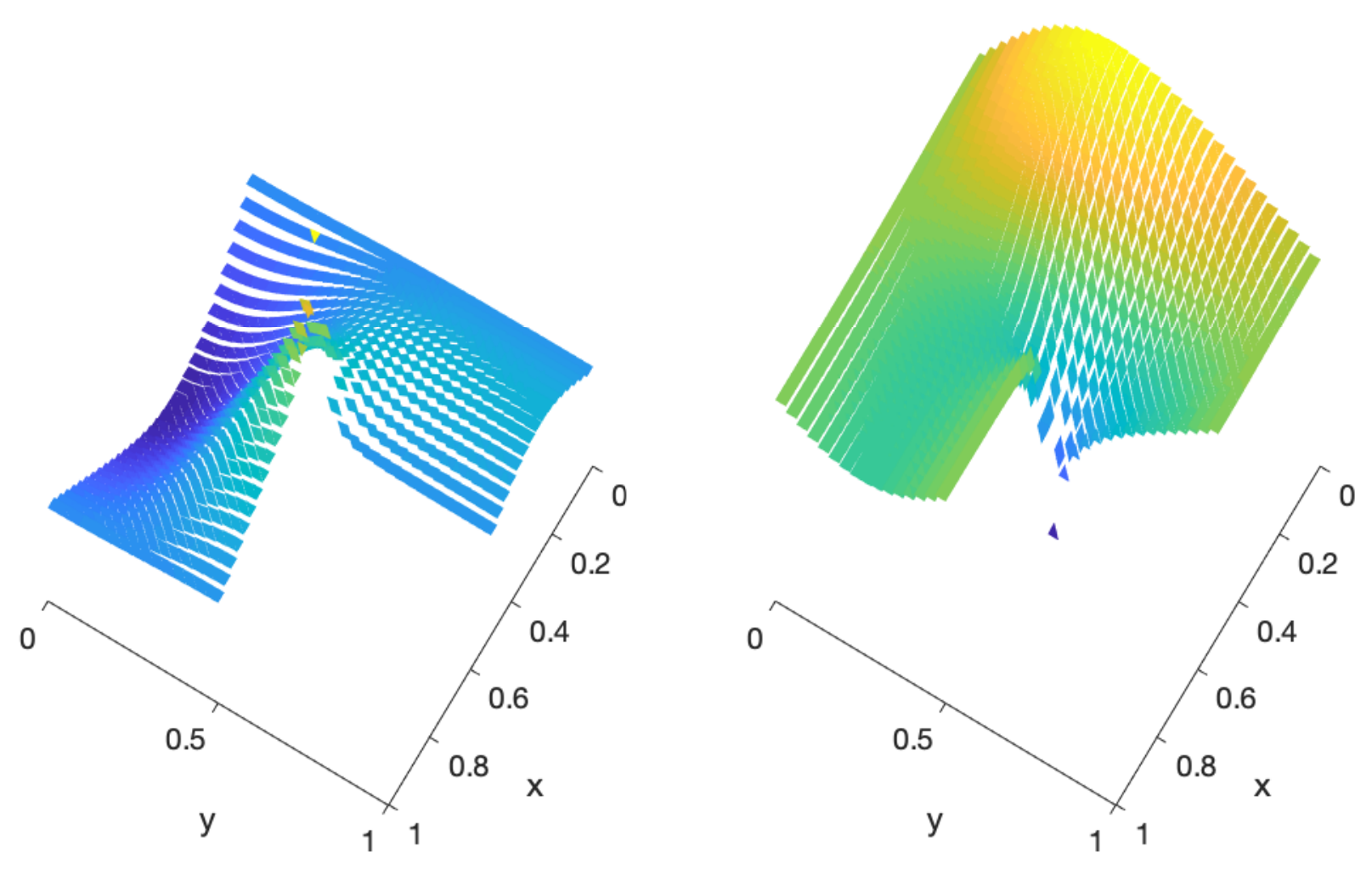}
\caption{Eigenfunctions - Friedrichs (top left), Poincar\'e (bottom left), tangential Maxwell (top middle and right), normal Maxwell (bottom middle and right) -
for the L-shape domain with full boundary conditions.} \label{eigenfunctions}
\end{figure}

\subsubsection{Hints for the Extended Inequalities}

We note the well known integration by parts formula
\begin{align}
\label{intpartgradrotdiv}
\norm{\grad{}E}_{\L{2}(\om)}^2
&=\norm{\rot{}E}_{\L{2}(\om)}^2
+\norm{\div{}E}_{\L{2}(\om)}^2,
\end{align}
being valid for all vector fields $E\in\H{}{\ga}(\grad{},\om)$, 
the closure of $\om$-compactly supported test fields, see \eqref{Hgatdef}.
Using a more sophisticated integration by parts formula from 
\cite[Corollary 6]{bauerpaulytangnormkorn},
which has been proved already in, e.g., \cite[Theorem 2.3]{costabeldaugemaxwelllameeigenvaluespolyhedra}
for the case of full boundary conditions,
we see that \eqref{intpartgradrotdiv} remains true 
for \emph{polyhedral} domains $\om$ and for vector fields 
\begin{align}
\label{addregE}
E\in\H{}{\gat,\gan}(\grad{},\om)
:=\ol{\C{\infty}{\gat,\gan}(\overline{\om})}^{\H{}{}(\grad{},\om)}
\subset\H{}{}(\grad{},\om)\cap\H{}{\gat}(\rot{},\om)\cap\H{}{\gan}(\div{},\om),
\end{align}
where
$$\C{\infty}{\gat,\gan}(\ol{\om})
:=\big\{\Phi|_{\om}:\Phi\in\C{\infty}{}(\reals^{3}),\,
\text{\rm supp}\,\Phi\text{\rm  compact in }\reals^{3},\,
n\times\Phi|_{\gat}=0,\,n\cdot\Phi|_{\gan}=0\big\}.$$
Note that these results at least go back to 
the book of Grisvard \cite[Theorem 3.1.1.2]{grisvardbook},
see also the book of Leis \cite[p. 156-157]{leisbook}.

A first hint for a possible explanation of \eqref{PVconjPFM} is then the following observation:
Let $E_{1,\gat}$  be the minimiser from Remark \ref{fpmconstrem}. Then 
$$E_{1,\gat}\in D(\rot{\gat})\cap R(\rot{\gan})
\subset\H{}{\gat}(\rot{},\om)\cap\H{}{\gan}(\div{},\om),\qquad
\div{}E_{1,\gat}=0.$$
Hence, if $\om$ is a \emph{polyhedron} and if $E_{1,\gat}$ 
is regular\footnote{The additional regularity
of the minimiser $E_{1,\gat}$ is not realistic.} 
enough, i.e.,
$E_{1,\gat}\in\H{}{\gat,\gan}(\grad{},\om)$, then by \eqref{intpartgradrotdiv}
and \eqref{addregE}
\begin{align*}
\lambda_{1,\gat}
&=\frac{\norm{\rot{}E_{1,\gat}}_{\L{2}(\om)}}{\norm{E_{1,\gat}}_{\L{2}(\om)}}
=\frac{\norm{\grad{}E_{1,\gat}}_{\L{2}(\om)}}{\norm{E_{1,\gat}}_{\L{2}(\om)}}.
\end{align*}
Moreover, if $E_{1,\gat}$ admits the additional regularity 
$E_{1,\gat}\in\H{}{\gat}(\grad{},\om)$, then 
\begin{align*}
\lambda_{1,\gat}
&\geq\inf_{0\neq E\in\H{}{\gat}(\grad{},\om)}
\frac{\norm{\grad{}E}_{\L{2}(\om)}}{\norm{E}_{\L{2}(\om)}}
=\lambda_{0,\gat}.
\end{align*}


\subsection*{Acknowledgment}
The research of J. Valdman was supported by the Czech Science Foundation (GA R), through the grant 19-29646L. His visit in Essen in 2019 was financed by the Erasmus+ programme of the European Union.

\bibliographystyle{plain} 
\bibliography{bib-pv}

\begin{thebibliography}{10}

\bibitem{anjamvaldmanfastmatlab}
I.~Anjam and J.~Valdman.
\newblock Fast {MATLAB }assembly of {FEM} matrices in {2D} and {3D}: edge
  elements.
\newblock {\em Appl. Math. Comput.}, 267:252--263, 2015.

\bibitem{zbMATH07045589}
D.N. Arnold.
\newblock {\em {Finite element exterior calculus.}}, volume~93.
\newblock Philadelphia, PA: Society for Industrial and Applied Mathematics
  (SIAM), 2018.

\bibitem{zbMATH05763606}
D.N. Arnold, G.~Awanou, and R.~Winther.
\newblock {Finite elements for symmetric tensors in three dimensions.}
\newblock {\em {Math. Comput.}}, 77(263):1229--1251, 2008.

\bibitem{zbMATH06295983}
D.N. Arnold, G.~Awanou, and R.~Winther.
\newblock {Nonconforming tetrahedral mixed finite elements for elasticity.}
\newblock {\em {Math. Models Methods Appl. Sci.}}, 24(4):783--796, 2014.

\bibitem{zbMATH05130995}
D.N. Arnold, R.S. Falk, and R.~Winther.
\newblock {Differential complexes and stability of finite element methods. I:
  The de Rham complex.}
\newblock In {\em {Compatible spatial discretizations. Papers presented at IMA
  hot topics workshop: compatible spatial discretizations for partial
  differential equations, Minneapolis, MN, USA, May 11--15, 2004.}}, pages
  23--46. New York, NY: Springer, 2006.

\bibitem{arnoldfalkwintherfemec}
D.N. Arnold, R.S. Falk, and R.~Winther.
\newblock Finite element exterior calculus, homological techniques, and
  applications.
\newblock {\em Acta Numer.}, 15:1--155, 2006.

\bibitem{zbMATH05190885}
D.N. Arnold, R.S. Falk, and R.~Winther.
\newblock {Mixed finite element methods for linear elasticity with weakly
  imposed symmetry.}
\newblock {\em {Math. Comput.}}, 76(260):1699--1723, 2007.

\bibitem{zbMATH05696861}
D.N. Arnold, R.S. Falk, and R.~Winther.
\newblock {Finite element exterior calculus: From Hodge theory to numerical
  stability.}
\newblock {\em {Bull. Am. Math. Soc., New Ser.}}, 47(2):281--354, 2010.

\bibitem{zbMATH01837365}
D.N. Arnold and R.~Winther.
\newblock {Mixed finite elements for elasticity.}
\newblock {\em {Numer. Math.}}, 92(3):401--419, 2002.

\bibitem{zbMATH02084083}
D.N. Arnold and R.~Winther.
\newblock {Nonconforming mixed elements for elasticity.}
\newblock {\em {Math. Models Methods Appl. Sci.}}, 13(3):295--307, 2003.

\bibitem{zbMATH06663664}
S.~Bauer and D.~Pauly.
\newblock {On Korn's first inequality for mixed tangential and normal boundary
  conditions on bounded Lipschitz domains in \(\mathbb R^N\).}
\newblock {\em {Ann. Univ. Ferrara, Sez. VII, Sci. Mat.}}, 62(2):173--188,
  2016.

\bibitem{bauerpaulytangnormkorn}
S.~Bauer and D.~Pauly.
\newblock On {K}orn's first inequality for tangential or normal boundary
  conditions with explicit constants.
\newblock {\em Math. Methods Appl. Sci.}, 39(18):5695--5704, 2016.

\bibitem{bauerpaulyschomburgmaxcompweaklip}
S.~Bauer, D.~Pauly, and M.~Schomburg.
\newblock The {M}axwell compactness property in bounded weak {L}ipschitz
  domains with mixed boundary conditions.
\newblock {\em SIAM J. Math. Anal.}, 48(4):2912--2943, 2016.

\bibitem{bauerpaulyschomburgmaxcompweakliprNarxiv}
S.~Bauer, D.~Pauly, and M.~Schomburg.
\newblock {W}eck's selection theorem: The {M}axwell compactness property for
  bounded weak {L}ipschitz domains with mixed boundary conditions in arbitrary
  dimensions.
\newblock {\em arXiv, https://arxiv.org/abs/1809.01192}, 2018.

\bibitem{bauerpaulyschomburgmaxcompweakliprN}
S.~Bauer, D.~Pauly, and M.~Schomburg.
\newblock {W}eck's selection theorem: The {M}axwell compactness property for
  bounded weak {L}ipschitz domains with mixed boundary conditions in arbitrary
  dimensions.
\newblock {\em Maxwell's Equations: Analysis and Numerics (Radon Series on
  Computational and Applied Mathematics, De Gruyter)}, 24:77--104, 2019.

\bibitem{zbMATH05492686}
D.~Boffi, F.~Brezzi, and M.~Fortin.
\newblock {Reduced symmetry elements in linear elasticity.}
\newblock {\em {Commun. Pure Appl. Anal.}}, 8(1):95--121, 2009.

\bibitem{boffigastaldi2019}
D.~Boffi and L.~Gastaldi.
\newblock Adaptive finite element method for the {M}axwell eigenvalue problem.
\newblock {\em SIAM J. Numer. Anal.}, 57(1):478--494, 2019.

\bibitem{boffigastaldirodriguezsebestova2019}
D.~Boffi, L.~Gastaldi, R.~Rodr\'{\i}guez, and I.~\v{S}ebestov\'a.
\newblock {A posteriori error estimates for Maxwell's eigenvalue problem.}
\newblock {\em {J. Sci. Comput.}}, 78(2):1250--1271, 2019.

\bibitem{boffikikuchischoeberl2006}
D.~Boffi, F.~Kikuchi, R.~Rodriguez, and J.~Sch\"oberl.
\newblock Edge element computation of {M}axwell's eigenvalues on general
  quadrilateral meshes.
\newblock {\em Math. Models Methods Appl. Sci.}, 16(2):265--273, 2006.

\bibitem{buffahoustonperugia2007}
A.~Buffa, P.~Houston, and I.~Perugia.
\newblock Discontinuous {G}alerkin computation of the {M}axwell eigenvalues on
  simplicial meshes.
\newblock {\em J. Comput. Appl. Math.}, 204(2):317--333, 2007.

\bibitem{zbMATH06261586}
C.~Carstensen and D.~Gallistl.
\newblock {Guaranteed lower eigenvalue bounds for the biharmonic equation.}
\newblock {\em {Numer. Math.}}, 126(1):33--51, 2014.

\bibitem{zbMATH06342370}
C.~Carstensen and J.~Gedicke.
\newblock {Guaranteed lower bounds for eigenvalues.}
\newblock {\em {Math. Comput.}}, 83(290):2605--2629, 2014.

\bibitem{zbMATH06825239}
S.H. Christiansen.
\newblock {On eigenmode approximation for Dirac equations: differential forms
  and fractional Sobolev spaces.}
\newblock {\em {Math. Comput.}}, 87(310):547--580, 2018.

\bibitem{zbMATH06894075}
S.H. Christiansen, J.~Hu, and K.~Hu.
\newblock {Nodal finite element de Rham complexes.}
\newblock {\em {Numer. Math.}}, 139(2):411--446, 2018.

\bibitem{zbMATH07008886}
X.~Claeys and R.~Hiptmair.
\newblock {First-kind boundary integral equations for the Hodge-Helmholtz
  operator.}
\newblock {\em {SIAM J. Math. Anal.}}, 51(1):197--227, 2019.

\bibitem{costabelremmaxlip}
M.~Costabel.
\newblock A remark on the regularity of solutions of {M}axwell's equations on
  {L}ipschitz domains.
\newblock {\em Math. Methods Appl. Sci.}, 12(4):365--368, 1990.

\bibitem{costabelcoercbilinMax}
M.~Costabel.
\newblock A coercive bilinear form for {M}axwell's equations.
\newblock {\em J. Math. Anal. Appl.}, 157(2):527--541, 1991.

\bibitem{costabeldaugemaxwelllameeigenvaluespolyhedra}
M.~Costabel and M.~Dauge.
\newblock Maxwell and {L}am\'e eigenvalues on polyhedra.
\newblock {\em Math. Methods Appl. Sci.}, 22(3):243--258, 1999.

\bibitem{costabeldauge2019a}
M.~Costabel and M.~Dauge.
\newblock {M}axwell eigenmodes in product domains.
\newblock {\em Maxwell's Equations: Analysis and Numerics (Radon Series on
  Computational and Applied Mathematics), De Gruyter}, 2019.

\bibitem{filonovdirneulapeigen}
N.~Filonov.
\newblock On an inequality for the eigenvalues of the {D}irichlet and {N}eumann
  problems for the {L}aplace operator.
\newblock {\em St. Petersburg Math. J.}, 16(2):413--416, 2005.

\bibitem{grisvardbook}
P.~Grisvard.
\newblock {\em Elliptic Problems in Nonsmooth Domains}.
\newblock Pitman (Advanced Publishing Program), Boston, 1985.

\bibitem{zbMATH01329309}
R.~{Hiptmair}.
\newblock {Canonical construction of finite elements.}
\newblock {\em {Math. Comput.}}, 68(228):1325--1346, 1999.

\bibitem{hiptmairfemmax}
R.~Hiptmair.
\newblock Finite elements in computational electromagnetism.
\newblock {\em Acta Numer.}, 11:237--339, 2002.

\bibitem{jochmanncompembmaxmixbc}
F.~Jochmann.
\newblock A compactness result for vector fields with divergence and curl in
  ${L}^q({\Omega})$ involving mixed boundary conditions.
\newblock {\em Appl. Anal.}, 66:189--203, 1997.

\bibitem{zbMATH06614110}
W.~Krendl, K.~Rafetseder, and W.~Zulehner.
\newblock {A decomposition result for biharmonic problems and the
  Hellan-Herrmann-Johnson method.}
\newblock {\em {ETNA, Electron. Trans. Numer. Anal.}}, 45:257--282, 2016.

\bibitem{leisbook}
R.~Leis.
\newblock {\em Initial Boundary Value Problems in Mathematical Physics}.
\newblock Teubner, Stuttgart, 1986.

\bibitem{paulymaxconst0}
D.~Pauly.
\newblock On constants in {M}axwell inequalities for bounded and convex
  domains.
\newblock {\em Zapiski POMI{\rm, 435:46-54, 2014}, \& J. Math. Sci. (N.Y.)},
  2014.

\bibitem{paulymaxconst1}
D.~Pauly.
\newblock On {M}axwell's and {P}oincar\'e's constants.
\newblock {\em Discrete Contin. Dyn. Syst. Ser. S}, 8(3):607--618, 2015.

\bibitem{paulymaxconst2}
D.~Pauly.
\newblock On the {M}axwell constants in 3{D}.
\newblock {\em Math. Methods Appl. Sci.}, 40(2):435--447, 2017.

\bibitem{paulydivcurl}
D.~Pauly.
\newblock A global div-curl-lemma for mixed boundary conditions in weak
  {L}ipschitz domains and a corresponding generalized
  ${A}_{0}^{*}$-${A}_{1}$-lemma in {H}ilbert spaces.
\newblock {\em Analysis (Munich)}, 39(2):33--58, 2019.

\bibitem{paulymaxconst3}
D.~Pauly.
\newblock On the {M}axwell and {F}riedrichs/{P}oincar\'e constants in {ND}.
\newblock {\em Math. Z.}, 293(3):957--987, 2019.

\bibitem{paulyapostfirstordergen}
D.~Pauly.
\newblock Solution theory, variational formulations, and functional a
  posteriori error estimates for general first order systems with applications
  to electro-magneto-statics and more.
\newblock {\em Numer. Funct. Anal. Optim.}, 41(1):16--112, 2020.

\bibitem{paulyzulehnerbiharmonic}
D.~Pauly and W.~Zulehner.
\newblock The div{D}iv-complex and applications to biharmonic equations.
\newblock {\em https://arxiv.org/abs/1609.05873, Appl. Anal.}, 2020.

\bibitem{paulyzulehnerela}
D.~Pauly and W.~Zulehner.
\newblock The elasticity complex.
\newblock {\em submitted}, 2020.

\bibitem{payneweinbergerpoincareconvex}
L.E. Payne and H.F. Weinberger.
\newblock An optimal {P}oincar\'e inequality for convex domains.
\newblock {\em Arch. Rational Mech. Anal.}, 5:286--292, 1960.

\bibitem{zbMATH06051824}
A.S. Pechstein and J.~Sch\"oberl.
\newblock {Anisotropic mixed finite elements for elasticity.}
\newblock {\em {Int. J. Numer. Methods Eng.}}, 90(2):196--217, 2012.

\bibitem{zbMATH06804027}
A.S. Pechstein and J.~Sch\"oberl.
\newblock {The TDNNS method for Reissner-Mindlin plates.}
\newblock {\em {Numer. Math.}}, 137(3):713--740, 2017.

\bibitem{zbMATH06869171}
A.S. Pechstein and J.~Sch\"oberl.
\newblock {An analysis of the TDNNS method using natural norms.}
\newblock {\em {Numer. Math.}}, 139(1):93--120, 2018.

\bibitem{picardcomimb}
R.~Picard.
\newblock An elementary proof for a compact imbedding result in generalized
  electromagnetic theory.
\newblock {\em Math. Z.}, 187:151--164, 1984.

\bibitem{picardweckwitschxmas}
R.~Picard, N.~Weck, and K.-J. Witsch.
\newblock Time-harmonic {M}axwell equations in the exterior of perfectly
  conducting, irregular obstacles.
\newblock {\em Analysis (Munich)}, 21:231--263, 2001.

\bibitem{zbMATH06898486}
K.~Rafetseder and W.~Zulehner.
\newblock {A decomposition result for Kirchhoff plate bending problems and a
  new discretization approach.}
\newblock {\em {SIAM J. Numer. Anal.}}, 56(3):1961--1986, 2018.

\bibitem{rahmanvaldmanfastmatlab}
T.~Rahman and J.~Valdman.
\newblock Fast {MATLAB }assembly of {FEM} matrices in {2D} and {3D}: nodal
  elements.
\newblock {\em Appl. Math. Comput.}, 219(13):7151--7158, 2013.

\bibitem{zbMATH05761760}
M.E. Rognes and R.~Winther.
\newblock {Mixed finite element methods for linear viscoelasticity using weak
  symmetry.}
\newblock {\em {Math. Models Methods Appl. Sci.}}, 20(6):955--985, 2010.

\bibitem{Valdman}
J.~Valdman.
\newblock Minimization of functional majorant in a posteriori error analysis
  based on h(div) multigrid-preconditioned cg method.
\newblock {\em Advances in Numerical Analysis}, 2009.

\bibitem{cermaksysalavaldman2019}
M.~\v{C}erm\'ak, Sysala S., and J.~Valdman.
\newblock Efficient and flexible matlab implementation of 2d and 3d
  elastoplastic problems.
\newblock {\em Applied Mathematics and Computation}, 355:595--614, 2019.

\bibitem{zbMATH06296642}
I.~\v{S}ebestov\'a and T.~Vejchodsk\'y.
\newblock {Two-sided bounds for eigenvalues of differential operators with
  applications to Friedrichs, Poincar\'e, trace, and similar constants.}
\newblock {\em {SIAM J. Numer. Anal.}}, 52(1):308--329, 2014.

\bibitem{webercompmax}
C.~Weber.
\newblock A local compactness theorem for {M}axwell's equations.
\newblock {\em Math. Methods Appl. Sci.}, 2:12--25, 1980.

\bibitem{weckmax}
N.~Weck.
\newblock {M}axwell's boundary value problems on {R}iemannian manifolds with
  nonsmooth boundaries.
\newblock {\em J. Math. Anal. Appl.}, 46:410--437, 1974.

\bibitem{witschremmax}
K.-J. Witsch.
\newblock A remark on a compactness result in electromagnetic theory.
\newblock {\em Math. Methods Appl. Sci.}, 16:123--129, 1993.

\bibitem{zbMATH06440388}
W.~Zulehner.
\newblock {The Ciarlet-Raviart method for biharmonic problems on general
  polygonal domains: mapping properties and preconditioning.}
\newblock {\em {SIAM J. Numer. Anal.}}, 53(2):984--1004, 2015.

\end{thebibliography}

\section{Appendix: Some Proofs}
\label{appproofs}

\begin{proof}[Proof of \eqref{saops}.]
To show that, e.g., $\A^{*}\A$ is self-adjoint,
we first observe that $\A^{*}\A$ is symmetric.
Hence, so is $\A^{*}\A+1$. By Riesz' representation theorem,
for any $f\in\H{}{0}$ there exists a unique $x\in D(\A)$ such that
$$\forall\,\varphi\in D(\A)\qquad
\scp{\A x}{\A\varphi}_{\H{}{1}}+\scp{x}{\varphi}_{\H{}{0}}=\scp{f}{\varphi}_{\H{}{0}}.$$
Thus, $\A x\in D(\A^{*})$ and $\A^{*}\A x=f-x$, i.e.,
$x\in D(\A^{*}\A)$ and $(\A^{*}\A+1)x=f$.
In other words, $\A^{*}\A+1$ is onto.
Therefore, $\A^{*}\A+1$ is self-adjoint and so is $\A^{*}\A$.
Note that we did not need the additional assumption that $R(\A)$ is closed
or that $\A$ resp. $\A^{*}$ is onto.

We also present an alternative proof of the self-adjointness of $\A^{*}\A$
in the case that $R(\A)$ is closed.
For this, let $y\in\H{}{0}$ such that
there exists $z\in\H{}{0}$ with
\begin{align}
\label{AsAsa}
\forall\,x\in D(\A^{*}\A)\qquad
\scp{\A^{*}\A x}{y}_{\H{}{0}}=\scp{x}{z}_{\H{}{0}}.
\end{align}
Picking $x\in N(\A)$ shows that $z\,\bot_{\H{}{0}}N(\A)$, i.e., 
we have $z\in R(\A^{*})$ by \eqref{helm}.
For $\varphi\in D(\A^{*})$ we note $\A^{*}\varphi\in R(\A^{*})=R(\cA^{*})$.
Thus there is 
$$\psi_{\varphi}:=(\cA^{*})^{-1}\A^{*}\varphi\in D(\cA^{*})\subset R(\A)=R(\cA)
\qtext{with}\A^{*}\psi_{\varphi}=\A^{*}\varphi.$$
Moreover, there exists $x_{\varphi}:=\cA^{-1}\psi_{\varphi}\in D(\cA)$ with $\A x_{\varphi}=\psi_{\varphi}$
and thus $x_{\varphi}\in D(\cA^{*}\cA)$. By \eqref{AsAsa} we see
\begin{align*}
\scp{\A^{*}\varphi}{y}_{\H{}{0}}
&=\scp{\A^{*}\A x_{\varphi}}{y}_{\H{}{0}}
=\scp{x_{\varphi}}{z}_{\H{}{0}}
=\bscp{x_{\varphi}}{\A^{*}(\cA^{*})^{-1}z}_{\H{}{0}}
=\bscp{\A x_{\varphi}}{(\cA^{*})^{-1}z}_{\H{}{1}}\\
&=\bscp{\varphi}{(\cA^{*})^{-1}z}_{\H{}{1}}
+\underbrace{\bscp{\psi_{\varphi}-\varphi}{(\cA^{*})^{-1}z}_{\H{}{1}}}_{=0},
\end{align*}
as $\psi_{\varphi}-\varphi\in N(\A^{*})\,\bot_{\H{}{1}}R(\A)\supset D(\cA^{*})\ni(\cA^{*})^{-1}z$.
Therefore, $y\in D(\A)$ and $\A y=(\cA^{*})^{-1}z\in D(\A^{*})$,
showing $y\in D(\A^{*}\A)$ and $\A^{*}\A y=z$. This proves $(\A^{*}\A)^{*}=\A^{*}\A$.
\end{proof}

\begin{proof}[Proof of Lemma \ref{lemconstev}.]
We show a few selected assertions of Lemma \ref{lemconstev}.

$\bullet$ 
For an eigenvalue $\lambda>0$ and an eigenvector $(x,y)$
of $\begin{bmatrix}0&\A^{*}\\\A&0\end{bmatrix}$ it holds
$\A^{*} y=\lambda x$ and $\A x=\lambda y$.
Note that $x=0$ implies $y=0$.
Thus $0\neq x\in D(\A^{*}\A)$ and $\A^{*}\A x=\lambda\A^{*} y=\lambda^2x$, i.e.,
$x$ is an eigenvector and $\lambda^2$ is an eigenvalue of $\A^{*}\A$.

$\bullet$ 
If $\lambda^2>0$ is an eigenvalue and $x$ is an eigenvector of $\A^{*}\A$,
then $y_{\pm}:=\pm\lambda^{-1}\A x\in D(\A^{*})$ 
and $\A^{*} y_{\pm}=\pm\lambda^{-1}\A^{*}\A x=\pm\lambda x$, i.e.,
$(x,y_{\pm})$ is an eigenvector and $\pm\lambda$ is an eigenvalue of $\begin{bmatrix}0&\A^{*}\\\A&0\end{bmatrix}$.
Note that $y_{\pm}\neq0$ as $y_{\pm}=0$ implies $x=0$.

$\bullet$ 
If $\lambda^2>0$ is an eigenvalue and $x$ is an eigenvector of $\A^{*}\A$,
then $y:=\A x\in D(\A^{*})$ and we have $\A^{*}y=\A^{*}\A x=\lambda^2x\in D(\A)$.
Hence $y\in D(\A\A^{*})$ and $\A\A^{*}y=\lambda^2\A x=\lambda^2y$, i.e.,
$\lambda^2$ is an eigenvalue and $y$ is an eigenvector of $\A\A^{*}$.
Note that $y\neq0$ as $y=0$ implies $x=0$.

$\bullet$ 
To show that indeed, e.g., $\lambda_{\A}^2$ is the smallest positive eigenvalue of $\A^{*}\A$,
let us consider a sequence $(\tilde x_{n})$ in $D(\cA)\setminus\{0\}$ with 
$$\frac{\norm{\A\tilde x_{n}}_{\H{}{1}}}{\norm{\tilde x_{n}}_{\H{}{0}}}
\to\inf_{0\neq x\in D(\cA)}\frac{\norm{\A x}_{\H{}{1}}}{\norm{x}_{\H{}{0}}}=\lambda_{\A}>0.$$
Then $(x_{n}):=(\tilde x_{n}/\norm{\tilde x_{n}}_{\H{}{0}})\subset D(\cA)$ with $\norm{x_{n}}_{\H{}{0}}=1$ and
$$\lambda_{\A}\leq\norm{\A x_{n}}_{\H{}{1}}\to\lambda_{\A}.$$
Hence $(x_{n})$ is bounded in $D(\cA)$, yielding a subsequence - again denoted by $(x_{n})$ - 
as well as $x_{\A}\in\H{}{0}$ and $y_{\A}\in\H{}{1}$
with $x_{n}\wto x_{\A}$ in $\H{}{0}$, $\A x_{n}\wto y_{\A}$ in $\H{}{1}$,
and $x_{n}\to x_{\A}$ in $\H{}{0}$.
Then $x_{\A}\in D(\A)$ and $\A x_{\A}=y_{\A}$ as for all $\psi\in D(\A^{*})$
$$\scp{y_{\A}}{\psi}_{\H{}{1}}
\ot\scp{\A x_{n}}{\psi}_{\H{}{1}}
=\scp{x_{n}}{\A^{*}\psi}_{\H{}{0}}
\to\scp{x_{\A}}{\A^{*}\psi}_{\H{}{0}}.$$
Note that $x_{\A}\in R(\A^{*})$
as $x_{n}\in R(\A^{*})=N(\A)^{\bot_{\H{}{0}}}$,
especially, $x_{\A}\in D(\cA)$. Moreover, $\norm{x_{\A}}_{\H{}{0}}=1$.

By elementary calculations\footnote{For all 
$\varphi,\phi\in D(\cA)$ and for all $\eps\in\reals$ it holds
$\lambda_{\A}\norm{\varphi+\eps\phi}_{\H{}{0}}\leq\norm{\A(\varphi+\eps\phi)}_{\H{}{1}}$,
i.e.,
$$0\leq\big(\underbrace{\norm{\A\varphi}_{\H{}{1}}^2-\lambda_{\A}^2\norm{\varphi}_{\H{}{0}}^2}_{=:\alpha\geq0}\big)
+2\eps\Re\big(\underbrace{\scp{\A\varphi}{\A\phi}_{\H{}{1}}-\lambda_{\A}^2\scp{\varphi}{\phi}_{\H{}{0}}}_{=:\delta}\big)
+\eps^2\big(\underbrace{\norm{\A\phi}_{\H{}{1}}^2-\lambda_{\A}^2\norm{\phi}_{\H{}{0}}^2}_{=:\gamma\geq0}\big).$$
Let $\beta:=\Re\delta$ and
$0\leq f(\eps):=\alpha+2\beta\eps+\gamma\eps^2$. If $\gamma=0$ then $\beta=0$.
For $\gamma>0$ the minimum of $f$ is attained at $\eps=-\beta/\gamma$
and thus $0\leq f(-\beta/\gamma)=\alpha-2\beta^2/\gamma+\beta^2/\gamma=\alpha-\beta^2/\gamma$
yielding $\beta^2\leq\alpha\gamma$. Replacing $\eps$ by $-i\eps$ 
shows the same inequality $\beta^2\leq\alpha\gamma$ for $\beta:=\Im\delta$.
Hence $|\delta|^2\leq2\alpha\gamma$.}
we obtain for all $\varphi,\phi\in D(\cA)$
$$\big|\scp{\A\varphi}{\A\phi}_{\H{}{1}}-\lambda_{\A}^2\scp{\varphi}{\phi}_{\H{}{0}}\big|^2
\leq2\big(\norm{\A\varphi}_{\H{}{1}}^2-\lambda_{\A}^2\norm{\varphi}_{\H{}{0}}^2\big)
\big(\norm{\A\phi}_{\H{}{1}}^2-\lambda_{\A}^2\norm{\phi}_{\H{}{0}}^2\big).$$
In particular, for $\varphi:=x_{n}$ we get for all $\phi\in D(\cA)$
\begin{align}
\label{crucialest}
\big|\scp{\A x_{n}}{\A\phi}_{\H{}{1}}-\lambda_{\A}^2\scp{x_{n}}{\phi}_{\H{}{0}}\big|^2
\leq2\big(\norm{\A x_{n}}_{\H{}{1}}^2-\lambda_{\A}^2\big)
\big(\norm{\A\phi}_{\H{}{1}}^2-\lambda_{\A}^2\norm{\phi}_{\H{}{0}}^2\big)\to0
\end{align}
and thus
\begin{align*}
&\qquad\big|\scp{\A x_{\A}}{\A\phi}_{\H{}{1}}-\lambda_{\A}^2\scp{x_{\A}}{\phi}_{\H{}{0}}\big|\\
&\leq\big|\scp{\A(x_{\A}-x_{n})}{\A\phi}_{\H{}{1}}\big|
+\lambda_{\A}^2\big|\scp{x_{\A}-x_{n}}{\phi}_{\H{}{0}}\big|
+\big|\scp{\A x_{n}}{\A\phi}_{\H{}{1}}-\lambda_{\A}^2\scp{x_{n}}{\phi}_{\H{}{0}}\big|\to0.
\end{align*}
Hence, for all $\phi\in D(\cA)$
\begin{align}
\label{eigeneqcA}
\scp{\A x_{\A}}{\A\phi}_{\H{}{1}}
=\lambda_{\A}^2\scp{x_{\A}}{\phi}_{\H{}{0}}.
\end{align}
For $\phi\in D(\A)=N(\A)\oplus_{\H{}{0}}D(\cA)$, see the Helmholtz type decomposition \eqref{helmdecoAcA},
we decompose
$$\phi=\phi_{N}+\phi_{\cA}\in N(\A)\oplus_{\H{}{0}}D(\cA)$$
and compute by using \eqref{eigeneqcA}, $\A\phi=\A\phi_{\cA}$, and $x_{\A}\in R(\A^{*})\bot_{\H{}{0}}N(\A)$
$$\scp{\A x_{\A}}{\A\phi}_{\H{}{1}}
=\scp{\A x_{\A}}{\A\phi_{\cA}}_{\H{}{1}}
=\lambda_{\A}^2\scp{x_{\A}}{\phi_{\cA}}_{\H{}{0}}
=\lambda_{\A}^2\scp{x_{\A}}{\phi}_{\H{}{0}}.$$
Therefore, \eqref{eigeneqcA} holds for all $\phi\in D(\A)$, i.e.,
\begin{align}
\label{eigeneqA}
\forall\,\phi\in D(\A)\qquad
\scp{\A x_{\A}}{\A\phi}_{\H{}{1}}
=\lambda_{\A}^2\scp{x_{\A}}{\phi}_{\H{}{0}}.
\end{align}
This implies $\A x_{\A}\in D(\A^{*})$, i.e.,
$x_{\A}\in D(\A^{*}\A)$ and $\A^{*}\A x_{\A}=\lambda_{\A}^2x_{\A}$.
We even have $x_{\A}\in D(\cA^{*}\cA)$.
Thus, $\lambda_{\A}^2$ is an eigenvalue and $x_{\A}$ is an eigenvector of $\A^{*}\A$.
Note that \eqref{eigeneqcA} or \eqref{eigeneqA} implies (for $\phi=x_{\A}$)
$\norm{\A x_{\A}}_{\H{}{1}}^2=\lambda_{\A}^2\norm{x_{\A}}_{\H{}{0}}^2$, i.e., 
$\norm{\A x_{\A}}_{\H{}{1}}=\lambda_{\A}$.

Finally, we show that $(x_{n})$ even converges strongly in $D(\cA)$,
i.e., $(\A x_{n})$ converges strongly in $\H{}{1}$ respectively in $R(\A)$.
For this, we get for all $\phi\in D(\cA)$ by \eqref{crucialest} and \eqref{eigeneqcA}
\begin{align*}
&\qquad\big|\scp{\A(x_{n}-x_{\A})}{\A\phi}_{\H{}{1}}-\lambda_{\A}^2\scp{x_{n}-x_{\A}}{\phi}_{\H{}{0}}\big|^2\\
&=\big|\scp{\A x_{n}}{\A\phi}_{\H{}{1}}-\lambda_{\A}^2\scp{x_{n}}{\phi}_{\H{}{0}}\big|^2
\leq2\big(\norm{\A x_{n}}_{\H{}{1}}^2-\lambda_{\A}^2\big)
\big(\norm{\A\phi}_{\H{}{1}}^2-\lambda_{\A}^2\norm{\phi}_{\H{}{0}}^2\big).
\end{align*}
In particular, for $\phi=x_{n}-x_{\A}$ we see
\begin{align*}
\big|\norm{\A(x_{n}-x_{\A})}_{\H{}{1}}^2-\lambda_{\A}^2\norm{x_{n}-x_{\A}}_{\H{}{0}}^2\big|^2
\leq c\big(\norm{\A x_{n}}_{\H{}{1}}^2-\lambda_{\A}^2\big)\to0,
\end{align*}
and hence
\begin{align*}
\norm{\A(x_{n}-x_{\A})}_{\H{}{1}}^2
\leq\lambda_{\A}^2\norm{x_{n}-x_{\A}}_{\H{}{0}}^2
+\big|\norm{\A(x_{n}-x_{\A})}_{\H{}{1}}^2-\lambda_{\A}^2\norm{x_{n}-x_{\A}}_{\H{}{0}}^2\big|
\to0.
\end{align*}

$\bullet$ 
For $0\neq\lambda\in\sigma(\A^{*}\A)$ we have $\A^{*}\A x=\lambda x$
for some $0\neq x\in D(\A^{*}\A)=D(\cA^{*}\A)$.
Hence $x\in R(\A^{*})$ and thus $x\in D(\cA)$, showing $0\neq x\in D(\cA^{*}\cA)$.
So $\lambda\in\sigma(\cA^{*}\cA)$.

$\bullet$ 
For $\displaystyle0\neq\lambda\in\sigma\Big(\begin{bmatrix}0&\A^{*}\\\A&0\end{bmatrix}\Big)\setminus\{0\}$
we have $\A^{*}y=\lambda x$ and $\A x=\lambda y$ 
for some $0\neq(x,y)\in D(\A)\times D(\A^{*})$.
Hence $(x,y)\in R(\A^{*})\times R(\A)$ and thus $(x,y)\in D(\cA)\times D(\cA^{*})$, 
showing $0\neq(x,y)\in D(\cA)\times D(\cA^{*})$.
So $\displaystyle\lambda\in\sigma\Big(\begin{bmatrix}0&\cA^{*}\\\cA&0\end{bmatrix}\Big)$.

$\bullet$ 
It holds
$$\bnorm{\cA^{-1}}_{R(\A),R(\A^{*})}
=\sup_{0\neq y\in D(\cA^{-1})}\frac{\norm{\cA^{-1}y}_{\H{}{0}}}{\norm{y}_{\H{}{1}}}
=\sup_{0\neq x\in D(\cA)}\frac{\norm{x}_{\H{}{0}}}{\norm{\A x}_{\H{}{1}}}
=\Big(\inf_{0\neq x\in D(\cA)}\frac{\norm{\A x}_{\H{}{1}}}{\norm{x}_{\H{}{0}}}\Big)^{-1}
=c_{\A}.$$

$\bullet$ 
Let $x_{\A^{*}\A}$ with $\norm{x_{\A^{*}\A}}_{\H{}{0}}=1$ 
be an eigenvector of $\A^{*}\A$ to the eigenvalue $\lambda_{\A}^2$. Then $x_{\A^{*}\A}\in R(\A^{*})$
and $\lambda_{\A}^2(\cA^{*}\cA)^{-1}x_{\A^{*}\A}=x_{\A^{*}\A}$. Thus
\begin{align*}
&\qquad\bnorm{(\cA^{*}\cA)^{-1}}_{R(\A^{*}),R(\A^{*})}
\leq\bnorm{\cA^{-1}}_{R(\A),R(\A^{*})}\bnorm{(\cA^{*})^{-1}}_{R(\A^{*}),R(\A)}=c_{\A}^2\\
&=\sup_{0\neq x\in D((\cA^{*}\cA)^{-1})}\frac{\bnorm{(\cA^{*}\cA)^{-1}x}_{\H{}{0}}}{\norm{x}_{\H{}{0}}}
\geq\bnorm{(\cA^{*}\cA)^{-1}x_{\A^{*}\A}}_{\H{}{0}}
=\frac{1}{\lambda_{\A}^2}
=c_{\A}^2.
\end{align*}

$\bullet$ 
For $x\in N(\A^{*}\A)$ we have $0=\scp{\A^{*}\A x}{x}_{\H{}{0}}=\norm{\A x}_{\H{}{1}}^2$, 
i.e., $x\in N(\A)$. Analogously, we see $N(\A\A^{*})=N(\A^{*})$.
For $x\in N(\A\A^{*}\A)$ we have $\A x\in N(\A\A^{*})=N(\A^{*})$, i.e.,
$x\in N(\A^{*}\A)=N(\A)$. The latter arguments can be repeated for any higher power.

$\bullet$ 
For $y\in R(\A)$ we see $x:=\cA^{-1}y\in D(\cA)\subset R(\A^{*})$
and $z:=(\cA^{*})^{-1}x\in D(\cA^{*})\subset R(\A)$.
Thus $z\in D(\cA\cA^{*})$ and $\A^{*}z=x$ and $\A x=y$
as well as $\A\A^{*}z=\A x=y\in R(\cA\cA^{*})=R(\A\A^{*})$.
The latter arguments can be repeated for any higher power,
completing the proof.
\end{proof}

\begin{proof}[Proof of Lemma \ref{lemconstevcomplex}.]
{\bf(i)}
By \eqref{saops} we just have to show that $\A_{0}\A_{0}^{*}+\A_{1}^{*}\A_{1}$ is self-adjoint.
For this, let $y\in\H{}{1}$ such that there exists $z\in\H{}{1}$ with
\begin{align}
\label{AsAzosa1}
\forall\,x\in D(\A_{0}\A_{0}^{*}+\A_{1}^{*}\A_{1})\qquad
\bscp{(\A_{0}\A_{0}^{*}+\A_{1}^{*}\A_{1})x}{y}_{\H{}{1}}=\scp{x}{z}_{\H{}{1}}.
\end{align}
Picking $x\in N_{0,1}$ shows that $z\,\bot_{\H{}{1}}N_{0,1}$
and hence, according to Theorem \ref{compembtheo}, $y$ and $z$ can be orthogonally decomposed into
\begin{align*}
y=y_{R(\A_{0})}+y_{R(\A_{1}^{*})}+y_{N_{0,1}}
&\in R(\A_{0})\oplus_{\H{}{1}}R(\A_{1}^{*})\oplus_{\H{}{1}}N_{0,1},\\
z=z_{R(\A_{0})}+z_{R(\A_{1}^{*})}
&\in R(\A_{0})\oplus_{\H{}{1}}R(\A_{1}^{*}).
\end{align*}
\eqref{AsAzosa1} implies for all $x\in D(\A_{0}\A_{0}^{*}+\A_{1}^{*}\A_{1})$
\begin{align}
\label{AsAzosa2}
\begin{split}
\scp{\A_{0}\A_{0}^{*} x}{y_{R(\A_{0})}}_{\H{}{1}}
+\scp{\A_{1}^{*}\A_{1} x}{y_{R(\A_{1}^{*})}}_{\H{}{1}}
&=\bscp{(\A_{0}\A_{0}^{*}+\A_{1}^{*}\A_{1})x}{y}_{\H{}{1}}\\
&=\scp{x}{z}_{\H{}{1}}
=\scp{x}{z_{R(\A_{0})}}_{\H{}{1}}
+\scp{x}{z_{R(\A_{1}^{*})}}_{\H{}{1}}.
\end{split}
\end{align}
For $x\in D(\cA_{1}^{*}\cA_{1})\subset R(\A_{1}^{*})\subset N(\A_{0}^{*})$ 
we see by \eqref{AsAzosa2} that
$\scp{\A_{1}^{*}\A_{1} x}{y_{R(\A_{1}^{*})}}_{\H{}{1}}=\scp{x}{z_{R(\A_{1}^{*})}}_{\H{}{1}}$ holds, 
yielding by \eqref{saops}, i.e., $\cA_{1}^{*}\cA_{1}$ is self-adjoint, that 
$y_{R(\A_{1}^{*})}\in D(\cA_{1}^{*}\cA_{1})\subset N(\A_{0}^{*})$ 
with $\A_{1}^{*}\A_{1}y_{R(\A_{1}^{*})}=z_{R(\A_{1}^{*})}$.
Analogously we see by using $x\in D(\cA_{0}\cA_{0}^{*})$ 
that $y_{R(\A_{0})}\in D(\cA_{0}\cA_{0}^{*})\subset N(\A_{1})$ 
with $\A_{0}\A_{0}^{*}y_{R(\A_{0})}=z_{R(\A_{0})}$.
Thus $y\in D(\A_{0}\A_{0}^{*}+\A_{1}^{*}\A_{1})$ with
$(\A_{0}\A_{0}^{*}+\A_{1}^{*}\A_{1})y=z_{R(\A_{0})}+z_{R(\A_{1}^{*})}=z$,
i.e., we have shown $(\A_{0}\A_{0}^{*}+\A_{1}^{*}\A_{1})^{*}=\A_{0}\A_{0}^{*}+\A_{1}^{*}\A_{1}$.

{\bf(v)}
Let $0\neq\lambda\in\sigma(\A_{0}\A_{0}^{*}+\A_{1}^{*}\A_{1})$
and let $0\neq x\in D(\A_{0}\A_{0}^{*}+\A_{1}^{*}\A_{1})$ be an eigenvector to the eigenvalue $\lambda$.
Then $y:=\A_{1}^{*}\A_{1}x=\lambda x-\A_{0}\A_{0}^{*}x\in D(\A_{1}^{*}\A_{1})$ and 
$$\A_{1}^{*}\A_{1}y=\lambda\A_{1}^{*}\A_{1}x=\lambda y.$$
Thus, as long as $y\neq0$, $\lambda$ is an eigenvalue of $\A_{1}^{*}\A_{1}$ with eigenvector $y$.
On the other hand, if $y=0$, then 
$z:=\A_{0}\A_{0}^{*}x=\lambda x\in D(\A_{0}\A_{0}^{*})\setminus\{0\}$ 
and $\A_{0}\A_{0}^{*}z=\lambda\A_{0}\A_{0}^{*}x=\lambda z$.
Hence $\lambda$ is an eigenvalue of $\A_{0}\A_{0}^{*}$ with eigenvector $z$.
This shows 
$$\sigma(\A_{0}\A_{0}^{*}+\A_{1}^{*}\A_{1})\setminus\{0\}
\subset\big(\sigma(\A_{0}\A_{0}^{*})\setminus\{0\}\big)
\cup\big(\sigma(\A_{1}^{*}\A_{1})\setminus\{0\}\big).$$
For the other inclusion, let, e.g., $0\neq\lambda\in\sigma(\A_{1}^{*}\A_{1})$
and let $0\neq x\in D(\A_{1}^{*}\A_{1})$ be an eigenvector to the eigenvalue $\lambda$.
Then $x\in R(\A_{1}^{*})\subset N(\A_{0}^{*})$ and thus 
$(\A_{0}\A_{0}^{*}+\A_{1}^{*}\A_{1})x=\A_{1}^{*}\A_{1}x=\lambda x$, i.e.,
$\lambda$ is an eigenvalue of $\A_{0}\A_{0}^{*}+\A_{1}^{*}\A_{1}$ with eigenvector $x$. Thus 
$$\sigma(\A_{1}^{*}\A_{1})\setminus\{0\}
\subset\sigma(\A_{0}\A_{0}^{*}+\A_{1}^{*}\A_{1})\setminus\{0\},$$
and analogously we show 
$\sigma(\A_{0}\A_{0}^{*})\setminus\{0\}
\subset\sigma(\A_{0}\A_{0}^{*}+\A_{1}^{*}\A_{1})\setminus\{0\}$.

{\bf(iii)}
Let $x\in N(\A_{0}\A_{0}^{*}+\A_{1}^{*}\A_{1})$. Then
$$0=\bscp{(\A_{0}\A_{0}^{*}+\A_{1}^{*}\A_{1})x}{x}_{\H{}{1}}
=\norm{\A_{0}^{*}x}_{\H{}{0}}^2+\norm{\A_{1}x}_{\H{}{2}}^2,$$
showing $x\in N_{0,1}$. 
As $D(\A_{1})\cap D(\A_{0}^{*})\emb\H{}{1}$ is compact, 
so is $D(\A_{0}\A_{0}^{*}+\A_{1}^{*}\A_{1})\emb\H{}{1}$,
showing that the range $R(\A_{0}\A_{0}^{*}+\A_{1}^{*}\A_{1})$
is closed by Remark \ref{fatbr} (ii). Thus 
$$R(\A_{0}\A_{0}^{*}+\A_{1}^{*}\A_{1})
=N(\A_{0}\A_{0}^{*}+\A_{1}^{*}\A_{1})^{\bot_{\H{}{1}}}
=N_{0,1}^{\bot_{\H{}{1}}},$$
finishing the proof.
\end{proof}

\section{Appendix: Analytical Calculations}
\label{appanaex}

We compute the exact eigenvalues and eigenfunctions 
of Section \ref{anaex} in detail. 

\subsection{1D}
\label{appanaex1D}

Recall the situation and notations from Section \ref{ocplxcst1DdR} and Section \ref{anaex1D}.
In particular, 
$$\frac{1}{c_{0,\gat}}
=\lambda_{0,\gat}
=\lambda_{0,\gan}
=\frac{1}{c_{0,\gan}}.$$
Let $u=u_{0,\gat}$ be the first eigenfunction 
for the eigenvalue $\lambda^2$
with $\lambda=\lambda_{0,\gat}>0$
of $-\Delta_{\gat}$. Hence, we have
$E_{0,\gan}=\grad{}u_{0,\gat}$ and
$$u\in D(\Delta_{\gat})\cap\L{2}_{\gan}(\om)
\subset\H{1}{\gat}(\om)\cap\L{2}_{\gan}(\om),\qquad
\grad{}u=u'\in D(\div{\gan})
=\H{1}{\gan}(\om),$$
as well as
$$(-\Delta-\lambda^2)u
=-u''-\lambda^2u=0.$$
Then 
$$u=\alpha\sin(\lambda x)+\beta\cos(\lambda x),\qquad
u'(x)=\alpha\lambda\cos(\lambda x)-\beta\lambda\sin(\lambda x).$$ 
For the different boundary conditions we get:

$\bullet$ $\gat=\emptyset$ and $\gan=\ga$, i.e.,
$u'(0)=u'(1)=0$: 
$\alpha=0$, $\lambda=n\pi$, $n\in\mathbb{N}_{0}$, i.e.,
$$\lambda_{0,\emptyset}=\pi,\qquad
u_{0,\emptyset}(x)=\beta\cos(\pi x).$$

Note that in this case the first eigenvalue is $\lambda=0$.

$\bullet$ $\gat=\{0\}$ and $\gan=\{1\}$, i.e.,
$u(0)=u'(1)=0$: 
$\beta=0$, $\lambda=(n-1/2)\pi$, $n\in\mathbb{N}$, i.e.,
$$\lambda_{0,\{0\}}=\frac{\pi}{2},\qquad
u_{0,\{0\}}(x)=\alpha\sin(\frac{\pi}{2}x).$$

$\bullet$ $\gat=\{1\}$ and $\gan=\{0\}$, i.e.,
$u'(0)=u(1)=0$: 
$\alpha=0$, $\lambda=(n-1/2)\pi$, $n\in\mathbb{N}$, i.e.,
$$\lambda_{0,\{1\}}=\frac{\pi}{2},\qquad
u_{0,\{1\}}(x)=\beta\cos(\frac{\pi}{2}x).$$

$\bullet$ $\gat=\ga$ and $\gan=\emptyset$, i.e.,
$u(0)=u(1)=0$: 
$\beta=0$, $\lambda=n\pi$, $n\in\mathbb{N}$, i.e.,
$$\lambda_{0,\ga}=\pi,\qquad
u_{0,\ga}(x)=\alpha\sin(\pi x).$$

Note that from $\lambda_{0,\gat}=\lambda_{0,\gan}$ we already know
$\lambda_{0,\ga}=\lambda_{0,\emptyset}$ and $\lambda_{0,\{0\}}=\lambda_{0,\{1\}}$, i.e.,
$$\lambda_{0,\ga}=\lambda_{0,\emptyset}=\pi,\qquad
\lambda_{0,\{0\}}=\lambda_{0,\{1\}}=\frac{\pi}{2}.$$



\subsection{2D}
\label{appanaex2D}

Recall the situation and notations from Section \ref{ocplxcst2DdR} and Section \ref{anaex2D}.
In particular, 
$$\frac{1}{c_{0,\gat}}=\lambda_{0,\gat}=\lambda_{1,\gan}=\frac{1}{c_{1,\gan}}.$$
Let $u=u_{0,\gat}$ be the first eigenfunction
for the eigenvalue $\lambda^2$
with $\lambda=\lambda_{0,\gat}>0$ of $-\Delta_{\gat}$. 
Hence, we have
$E_{0,\gan}=\grad{}u_{0,\gat}$ and\footnote{Note that 
\begin{align*}
E_{1,\gat}&\in D(\square_{\gat})\cap R(\rotv{\gan})
\subset\H{}{\gat}(\rot{},\om)\cap R(\rotv{\gan}),\\
H_{1,\gan}=\rot{}E_{1,\gat}&\in D(\rotv{\gan})\cap R(\rot{\gat})
=\H{}{\gan}(\rotv{},\om)\cap\L{2}_{\gat}(\om)
=\H{1}{\gan}(\om)\cap\L{2}_{\gat}(\om).
\end{align*}}
$$u\in D(\Delta_{\gat})\cap\L{2}_{\gan}(\om)
\subset\H{1}{\gat}(\om)\cap\L{2}_{\gan}(\om),\qquad
\grad{}u\in D(\div{\gan})=\H{}{\gan}(\div{},\om),$$
as well as 
$$(-\Delta-\lambda^2)u=0.$$
Separation of variables shows with $u(x)=u_{1}(x_{1})u_{2}(x_{2})$
and $\grad{}u(x)=\begin{bmatrix}u_{1}'(x_{1})u_{2}(x_{2})\\u_{1}(x_{1})u_{2}'(x_{2})\end{bmatrix}$
$$0=(-\Delta-\lambda^2)u(x)
=-u_{1}''(x_{1})u_{2}(x_{2})-u_{1}(x_{1})u_{2}''(x_{2})-\lambda^2u_{1}(x_{1})u_{2}(x_{2}).$$
For fixed $x_{2}$ with $u_{2}(x_{2})\neq0$ we get
$$-u_{1}''(x_{1})-\mu_{1}^2u_{1}(x_{1})=0,\qquad
\mu_{1}^2=\frac{u_{2}''(x_{2})}{u_{2}(x_{2})}+\lambda^2,$$
i.e.,
$$-u_{1}''(x_{1})-\mu_{1}^2u_{1}(x_{1})=0,\quad
-u_{2}''(x_{2})-\mu_{2}^2u_{2}(x_{2})=0,\quad
\lambda^2=\mu_{1}^2+\mu_{2}^2.$$
The Dirichlet boundary conditions, i.e.,
$$u=0\qtext{on}\gat,$$
reduce to Dirichlet boundary conditions for $u_{1}$ and $u_{2}$, respectively,
and the Neumann boundary conditions, i.e.,
$$n\cdot\grad{}u=0\qtext{on}\gan,$$
reduce to Dirichlet boundary conditions for $u_{1}'$ and $u_{2}'$, respectively.
More precisely, we have:

$\bullet$
$\ga_{l}$,
$n=-e^1$, $x_{1}=0$:
\begin{align*}
0=u|_{\ga_{l}}&=u_{1}u_{2}|_{\ga_{l}}
&
&\impl
&
u_{1}(0)&=0,\\
0=n\cdot\grad{}u|_{\ga_{l}}&=-u_{1}'u_{2}|_{\ga_{l}}
&
&\impl
&
u_{1}'(0)&=0.
\end{align*}

$\bullet$
$\ga_{r}$,
$n=e^1$, $x_{1}=1$:
\begin{align*}
0=u|_{\ga_{r}}&=u_{1}u_{2}|_{\ga_{r}}
&
&\impl
&
u_{1}(1)&=0,\\
0=n\cdot\grad{}u|_{\ga_{r}}&=u_{1}'u_{2}|_{\ga_{r}}
&
&\impl
&
u_{1}'(1)&=0.
\end{align*}

$\bullet$
$\ga_{b}$,
$n=-e^2$, $x_{2}=0$:
\begin{align*}
0=u|_{\ga_{b}}&=u_{1}u_{2}|_{\ga_{b}}
&
&\impl
&
u_{2}(0)&=0,\\
0=n\cdot\grad{}u|_{\ga_{b}}&=-u_{1}u_{2}'|_{\ga_{b}}
&
&\impl
&
u_{2}'(0)&=0.
\end{align*}

$\bullet$
$\ga_{t}$,
$n=e^2$, $x_{2}=1$:
\begin{align*}
0=u|_{\ga_{t}}&=u_{1}u_{2}|_{\ga_{t}}
&
&\impl
&
u_{2}(1)&=0,\\
0=n\cdot\grad{}u|_{\ga_{t}}&=u_{1}u_{2}'|_{\ga_{t}}
&
&\impl
&
u_{2}'(1)&=0.
\end{align*}

The 1D case shows for the different boundary conditions the following:

$\bullet$ 
$\gat=\emptyset$ and $\gan=\ga$, i.e.,
$u_{1}'(0)=u_{1}'(1)=u_{2}'(0)=u_{2}'(1)=0$:
$\mu_{1}=n\pi$, $\mu_{2}=m\pi$, i.e., 
$\lambda=\sqrt{n^2+m^2}\pi$, $n,m\in\mathbb{N}_{0}$, and 
$$\lambda_{0,\emptyset}=\pi,\qquad
u_{0,\emptyset}(x)=\alpha\cos(\pi x_{1})+\beta\cos(\pi x_{2}).$$

Note that in this case the first eigenvalue is $\lambda=0$.

$\bullet$ 
$\gat=\ga_{b}$ and $\gan=\ga_{t,l,r}$, i.e.,
$u_{1}'(0)=u_{1}'(1)=u_{2}(0)=u_{2}'(1)=0$: 
$\mu_{1}=n\pi$, $\mu_{2}=(m-1/2)\pi$, i.e., 
$\lambda=\sqrt{n^2+(m-1/2)^2}\pi$, $n\in\mathbb{N}_{0}$, $m\in\mathbb{N}$, and 
$$\lambda_{0,\ga_{b}}=\frac{1}{2}\pi,\qquad
u_{0,\ga_{b}}(x)=\alpha\sin(\frac{\pi}{2}x_{2}).$$

$\bullet$ 
$\gat=\ga_{b,t}$ and $\gan=\ga_{l,r}$, i.e.,
$u_{1}'(0)=u_{1}'(1)=u_{2}(0)=u_{2}(1)=0$: 
$\mu_{1}=n\pi$, $\mu_{2}=m\pi$, i.e., 
$\lambda=\sqrt{n^2+m^2}\pi$, $n\in\mathbb{N}_{0}$, $m\in\mathbb{N}$, and 
$$\lambda_{0,\ga_{b,t}}=\pi,\qquad
u_{0,\ga_{b,t}}(x)=\alpha\sin(\pi x_{2}).$$

$\bullet$ 
$\gat=\ga_{b,l}$ and $\gan=\ga_{t,r}$, i.e.,
$u_{1}(0)=u_{1}'(1)=u_{2}(0)=u_{2}'(1)=0$:
$\mu_{1}=(n-1/2)\pi$, $\mu_{2}=(m-1/2)\pi$, i.e.,
$\lambda=\sqrt{(n-1/2)^2+(m-1/2)^2}\pi$, $n,m\in\mathbb{N}$, and 
$$\lambda_{0,\ga_{b,l}}=\frac{\sqrt{2}}{2}\pi,\qquad
u_{0,\ga_{b,l}}(x)=\alpha\sin(\frac{\pi}{2}x_{1})\sin(\frac{\pi}{2}x_{2}).$$

$\bullet$ 
$\gat=\ga_{b,l,r}$ and $\gan=\ga_{t}$, i.e.,
$u_{1}(0)=u_{1}(1)=u_{2}(0)=u_{2}'(1)=0$:
$\mu_{1}=n\pi$, $\mu_{2}=(m-1/2)\pi$, i.e., 
$\lambda=\sqrt{n^2+(m-1/2)^2}\pi$, $n,m\in\mathbb{N}$, and 
$$\lambda_{0,\ga_{b,l,r}}=\frac{\sqrt{5}}{2}\pi,\qquad
u_{0,\ga_{b,l,r}}(x)=\alpha\sin(\pi x_{1})\sin(\frac{\pi}{2}x_{2}).$$

$\bullet$ 
$\gat=\ga$ and $\gan=\emptyset$, i.e.,
$u_{1}(0)=u_{1}(1)=u_{2}(0)=u_{2}(1)=0$:
$\mu_{1}=n\pi$, $\mu_{2}=m\pi$, i.e., 
$\lambda=\sqrt{n^2+m^2}\pi$, $n,m\in\mathbb{N}$, and 
$$\lambda_{0,\ga}=\sqrt{2}\pi,\qquad
u_{0,\ga}(x)=\alpha\sin(\pi x_{1})\sin(\pi x_{2}).$$

All other cases follow by symmetry, i.e.,
\begin{align*}
\lambda_{0,\emptyset}
&=\pi,
&
\lambda_{0,\ga_{b,l}}
=\lambda_{0,\ga_{b,r}}
=\lambda_{0,\ga_{t,l}}
=\lambda_{0,\ga_{t,r}}
&=\frac{\sqrt{2}}{2}\pi,\\
\lambda_{0,\ga_{b}}
=\lambda_{0,\ga_{t}}
=\lambda_{0,\ga_{l}}
=\lambda_{0,\ga_{r}}
&=\frac{1}{2}\pi,
&
\lambda_{0,\ga_{b,l,r}}
=\lambda_{0,\ga_{t,l,r}}
=\lambda_{0,\ga_{b,t,l}}
=\lambda_{0,\ga_{b,t,r}}
&=\frac{\sqrt{5}}{2}\pi,\\
\lambda_{0,\ga_{b,t}}
=\lambda_{0,\ga_{l,r}}
&=\pi,
&
\lambda_{0,\ga}
&=\sqrt{2}\pi.
\end{align*}



\subsection{3D}
\label{appanaex3D}

Recall the situation and notations from Section \ref{seclapmax3D}, Theorem \ref{stateoftheartconstest},
and Section \ref{anaex3D}.
In particular, 
$$\frac{1}{c_{0,\gat}}
=\lambda_{0,\gat}
=\lambda_{2,\gan}
=\frac{1}{c_{2,\gan}},\qquad
\frac{1}{c_{1,\gat}}
=\lambda_{1,\gat}
=\lambda_{1,\gan}
=\frac{1}{c_{1,\gan}}.$$
Let $u=u_{0,\gat}$ be the first eigenfunction
for the eigenvalue $\lambda^2$
with $\lambda=\lambda_{0,\gat}>0$ of $-\Delta_{\gat}$. 
Analogously, let $E=E_{1,\gat}$ be the first eigenfunction
for the eigenvalue $\widetilde\lambda^2$
with $\widetilde\lambda=\lambda_{1,\gat}>0$ of $\square_{\gat}$. Hence,
\begin{align*}
u&\in D(\Delta_{\gat})\cap\L{2}_{\gan}(\om)
\subset\H{1}{\gat}(\om)\cap\L{2}_{\gan}(\om),
&
E&\in D(\square_{\gat})\cap R(\rot{\gan})
\subset\H{}{\gat}(\rot{},\om)\cap R(\rot{\gan}),\\
\grad{}u&\in D(\div{\gan})=\H{}{\gan}(\div{},\om),
&
\rot{}E&\in D(\rot{\gan})\cap R(\rot{\gat})
=\H{}{\gan}(\rot{},\om)\cap R(\rot{\gat}),\\
\end{align*}
and we have by $-\Delta=\rot{}\rot{}-\grad{}\div{}=\square-\grad{}\div{}$
$$(-\Delta-\lambda^2)u=0,\qquad
(-\Delta-\widetilde\lambda^2)E=(\square-\widetilde\lambda^2)E=0,$$
as $\div{}E=0$.
Let us first discuss $u$.
Separation of variables shows with 
$$u(x)=\widehat u(\widehat x)u_{3}(x_{3})=u_{1}(x_{1})u_{2}(x_{2})u_{3}(x_{3}),\quad
\widehat u(\widehat x)=u_{1}(x_{1})u_{2}(x_{2}),\qquad
x=\begin{bmatrix}\widehat x\\x_{3}\end{bmatrix},\quad
\widehat x=\begin{bmatrix}x_{1}\\x_{2}\end{bmatrix}$$ 
and 
$$\grad{}u(x)
=\begin{bmatrix}u_{3}(x_{3})\grad{}\widehat u(\widehat x)\\
u_{3}'(x_{3})\widehat u(\widehat x)\end{bmatrix}
=\begin{bmatrix}u_{1}'(x_{1})u_{2}(x_{2})u_{3}(x_{3})\\
u_{1}(x_{1})u_{2}'(x_{2})u_{3}(x_{3})\\
u_{1}(x_{1})u_{2}(x_{2})u_{3}'(x_{3})\end{bmatrix},\quad
\grad{}\widehat u(\widehat x)=\begin{bmatrix}u_{1}'(x_{1})u_{2}(x_{2})\\
u_{1}(x_{1})u_{2}'(x_{2})\end{bmatrix}$$
that
$$0=(-\Delta-\lambda^2)u(x)
=-\Delta\widehat u(\widehat x)u_{3}(x_{3})
-\widehat u(\widehat x)u_{3}''(x_{3})
-\lambda^2\widehat u(\widehat x)u_{3}(x_{3}).$$
For fixed $x_{3}$ with $u_{3}(x_{3})\neq0$ we get
$$-\Delta\widehat u(\widehat x)-\widehat\mu^2\widehat u(\widehat x)=0,\qquad
\widehat\mu^2=\frac{u_{3}''(x_{3})}{u_{3}(x_{3})}+\lambda^2,$$
i.e.,
$$-\Delta\widehat u(\widehat x)-\widehat\mu^2\widehat u(\widehat x)=0,\quad
-u_{3}''(x_{3})-\mu_{3}^2u_{3}(x_{3})=0,\quad
\lambda^2=\widehat\mu^2+\mu_{3}^2.$$
From the 2D case we already know $\widehat\mu^2=\mu_{1}^2+\mu_{2}^2$
and the splitting of $\widehat u$, i.e.,
$$\lambda^2=\mu_{1}^2+\mu_{2}^2+\mu_{3}^2$$
and
$$-u_{1}''(x_{1})-\mu_{1}^2u_{1}(x_{1})=0,\quad
-u_{2}''(x_{2})-\mu_{2}^2u_{2}(x_{2})=0,\quad
-u_{3}''(x_{3})-\mu_{3}^2u_{3}(x_{3})=0.$$
The Dirichlet boundary conditions, i.e.,
$$u=0\qtext{on}\gat,$$
reduce to Dirichlet boundary conditions for $u_{1}$, $u_{2}$, and $u_{3}$, respectively,
and the Neumann boundary conditions, i.e.,
$$n\cdot\grad{}u=0\qtext{on}\gan,$$
reduce to Neumann boundary conditions for $\widehat u$ and $u_{3}$
and hence to Dirichlet boundary conditions for $u_{1}'$, $u_{2}'$, and $u_{3}'$, respectively.

$\bullet$
$\ga_{\bk}$,
$n=-e^1$, $x_{1}=0$:
\begin{align*}
0=u|_{\ga_{\bk}}&=u_{1}u_{2}u_{3}|_{\ga_{\bk}}
&
&\impl
&
u_{1}(0)&=0,\\
0=n\cdot\grad{}u|_{\ga_{\bk}}&=-u_{1}'u_{2}u_{3}|_{\ga_{\bk}}
&
&\impl
&
u_{1}'(0)&=0.
\end{align*}

$\bullet$
$\ga_{f}$,
$n=e^1$, $x_{1}=1$:
\begin{align*}
0=u|_{\ga_{f}}&=u_{1}u_{2}u_{3}|_{\ga_{f}}
&
&\impl
&
u_{1}(1)&=0,\\
0=n\cdot\grad{}u|_{\ga_{f}}&=u_{1}'u_{2}u_{3}|_{\ga_{f}}
&
&\impl
&
u_{1}'(1)&=0.
\end{align*}

$\bullet$
$\ga_{l}$,
$n=-e^2$, $x_{2}=0$:
\begin{align*}
0=u|_{\ga_{l}}&=u_{1}u_{2}u_{3}|_{\ga_{l}}
&
&\impl
&
u_{2}(0)&=0,\\
0=n\cdot\grad{}u|_{\ga_{l}}&=-u_{1}u_{2}'u_{3}|_{\ga_{l}}
&
&\impl
&
u_{2}'(0)&=0.
\end{align*}

$\bullet$
$\ga_{r}$,
$n=e^2$, $x_{2}=1$:
\begin{align*}
0=u|_{\ga_{r}}&=u_{1}u_{2}u_{3}|_{\ga_{r}}
&
&\impl
&
u_{2}(1)&=0,\\
0=n\cdot\grad{}u|_{\ga_{r}}&=u_{1}u_{2}'u_{3}|_{\ga_{r}}
&
&\impl
&
u_{2}'(1)&=0.
\end{align*}

$\bullet$
$\ga_{b}$,
$n=-e^3$, $x_{3}=0$:
\begin{align*}
0=u|_{\ga_{b}}&=u_{1}u_{2}u_{3}|_{\ga_{b}}
&
&\impl
&
u_{3}(0)&=0,\\
0=n\cdot\grad{}u|_{\ga_{b}}&=-u_{1}u_{2}u_{3}'|_{\ga_{b}}
&
&\impl
&
u_{3}'(0)&=0.
\end{align*}

$\bullet$
$\ga_{t}$,
$n=e^3$, $x_{3}=1$:
\begin{align*}
0=u|_{\ga_{t}}&=u_{1}u_{2}u_{3}|_{\ga_{t}}
&
&\impl
&
u_{3}(1)&=0,\\
0=n\cdot\grad{}u|_{\ga_{t}}&=u_{1}u_{2}u_{3}'|_{\ga_{t}}
&
&\impl
&
u_{3}'(1)&=0.
\end{align*}

The 1D case shows for the different boundary conditions the following:

$\bullet$ 
$\gat=\emptyset$ 
and $\gan=\ga$, i.e.,
$u_{1}'(0)=u_{1}'(1)=u_{2}'(0)=u_{2}'(1)=u_{3}'(0)=u_{3}'(1)=0$:
$\mu_{1}=n\pi$, $\mu_{2}=m\pi$, $\mu_{3}=k\pi$, i.e., 
$\lambda=\sqrt{n^2+m^2+k^2}\pi$, $n,m,k\in\mathbb{N}_{0}$, and 
$$\lambda_{0,\emptyset}=\pi,\qquad
u_{0,\emptyset}(x)=\alpha\cos(\pi x_{1})+\beta\cos(\pi x_{2})+\gamma\cos(\pi x_{3}).$$

Note that in this case the first eigenvalue is $\lambda=0$.

$\bullet$ 
$\gat=\ga_{b}$ 
and $\gan=\ga_{t,l,r,f,\bk}$, i.e.,
$u_{1}'(0)=u_{1}'(1)=u_{2}'(0)=u_{2}'(1)=u_{3}(0)=u_{3}'(1)=0$: 
$\mu_{1}=n\pi$, $\mu_{2}=m\pi$, $\mu_{3}=(k-1/2)\pi$, i.e., 
$\lambda=\sqrt{n^2+m^2+(k-1/2)^2}\pi$, $n,m\in\mathbb{N}_{0}$, $k\in\mathbb{N}$, and 
$$\lambda_{0,\ga_{b}}=\frac{1}{2}\pi,\qquad
u_{0,\ga_{b}}(x)=\alpha\sin(\frac{\pi}{2}x_{3}).$$

$\bullet$ 
$\gat=\ga_{b,t}$ 
and $\gan=\ga_{l,r,f,\bk}$, i.e.,
$u_{1}'(0)=u_{1}'(1)=u_{2}'(0)=u_{2}'(1)=u_{3}(0)=u_{3}(1)=0$: 
$\mu_{1}=n\pi$, $\mu_{2}=m\pi$, $\mu_{3}=k\pi$, i.e., 
$\lambda=\sqrt{n^2+m^2+k^2}\pi$, $n,m\in\mathbb{N}_{0}$, $k\in\mathbb{N}$, and 
$$\lambda_{0,\ga_{b,t}}=\pi,\qquad
u_{0,\ga_{b,t}}(x)=\alpha\sin(\pi x_{3}).$$

$\bullet$ 
$\gat=\ga_{b,l}$ 
and $\gan=\ga_{t,r,f,\bk}$, i.e.,
$u_{1}'(0)=u_{1}'(1)=u_{2}(0)=u_{2}'(1)=u_{3}(0)=u_{3}'(1)=0$:
$\mu_{1}=n\pi$, $\mu_{2}=(m-1/2)\pi$, $\mu_{3}=(k-1/2)\pi$, i.e.,
$\lambda=\sqrt{n^2+(m-1/2)^2+(k-1/2)^2}\pi$, $n\in\mathbb{N}_{0}$, $m,k\in\mathbb{N}$, and 
$$\lambda_{0,\ga_{b,l}}=\frac{\sqrt{2}}{2}\pi,\qquad
u_{0,\ga_{b,l}}(x)=\alpha\sin(\frac{\pi}{2}x_{2})\sin(\frac{\pi}{2}x_{3}).$$

$\bullet$ 
$\gat=\ga_{b,t,l}$ 
and $\gan=\ga_{r,f,\bk}$, i.e.,
$u_{1}'(0)=u_{1}'(1)=u_{2}(0)=u_{2}'(1)=u_{3}(0)=u_{3}(1)=0$:
$\mu_{1}=n\pi$, $\mu_{2}=(m-1/2)\pi$, $\mu_{3}=k\pi$, i.e.,
$\lambda=\sqrt{n^2+(m-1/2)^2+k^2}\pi$, $n\in\mathbb{N}_{0}$, $m,k\in\mathbb{N}$, and 
$$\lambda_{0,\ga_{b,t,l}}=\frac{\sqrt{5}}{2}\pi,\qquad
u_{0,\ga_{b,t,l}}(x)
=\alpha\sin(\frac{\pi}{2}x_{2})\sin(\pi x_{3}).$$

$\bullet$ 
$\gat=\ga_{b,l,\bk}$ 
and $\gan=\ga_{r,f,t}$, i.e.,
$u_{1}(0)=u_{1}'(1)=u_{2}(0)=u_{2}'(1)=u_{3}(0)=u_{3}'(1)=0$:
$\mu_{1}=(n-1/2)\pi$, $\mu_{2}=(m-1/2)\pi$, $\mu_{3}=(k-1/2)\pi$, i.e.,
$\lambda=\sqrt{(n-1/2)^2+(m-1/2)^2+(k-1/2)^2}\pi$, $n,m,k\in\mathbb{N}$, and 
$$\lambda_{0,\ga_{b,l,\bk}}=\frac{\sqrt{3}}{2}\pi,\qquad
u_{0,\ga_{b,l,\bk}}(x)
=\alpha\sin(\frac{\pi}{2}x_{1})\sin(\frac{\pi}{2}x_{2})\sin(\frac{\pi}{2}x_{3}).$$

$\bullet$ 
$\gat=\ga_{b,t,l,r}$ 
and $\gan=\ga_{f,\bk}$, i.e.,
$u_{1}'(0)=u_{1}'(1)=u_{2}(0)=u_{2}(1)=u_{3}(0)=u_{3}(1)=0$:
$\mu_{1}=n\pi$, $\mu_{2}=m\pi$, $\mu_{3}=k\pi$, i.e.,
$\lambda=\sqrt{n^2+m^2+k^2}\pi$, $n\in\mathbb{N}_{0}$, $m,k\in\mathbb{N}$, and 
$$\lambda_{0,\ga_{b,t,l,r}}=\sqrt{2}\pi,\qquad
u_{0,\ga_{b,t,l,r}}(x)
=\alpha\sin(\pi x_{2})\sin(\pi x_{3}).$$

$\bullet$ 
$\gat=\ga_{b,t,l,\bk}$ 
and $\gan=\ga_{f,r}$, i.e.,
$u_{1}(0)=u_{1}'(1)=u_{2}(0)=u_{2}'(1)=u_{3}(0)=u_{3}(1)=0$:
$\mu_{1}=(n-1/2)\pi$, $\mu_{2}=(m-1/2)\pi$, $\mu_{3}=k\pi$, i.e.,
$\lambda=\sqrt{(n-1/2)^2+(m-1/2)^2+k^2}\pi$, $n,m,k\in\mathbb{N}$, and 
$$\lambda_{0,\ga_{b,t,l,\bk}}=\frac{\sqrt{6}}{2}\pi,\qquad
u_{0,\ga_{b,t,l,\bk}}(x)
=\alpha\sin(\frac{\pi}{2}x_{1})\sin(\frac{\pi}{2}x_{2})\sin(\pi x_{3}).$$

$\bullet$ 
$\gat=\ga_{b,t,l,r,\bk}$ 
and $\gan=\ga_{f}$, i.e.,
$u_{1}(0)=u_{1}'(1)=u_{2}(0)=u_{2}(1)=u_{3}(0)=u_{3}(1)=0$:
$\mu_{1}=(n-1/2)\pi$, $\mu_{2}=m\pi$, $\mu_{3}=k\pi$, i.e.,
$\lambda=\sqrt{(n-1/2)^2+m^2+k^2}\pi$, $n,m,k\in\mathbb{N}$, and 
$$\lambda_{0,\ga_{b,t,l,r,\bk}}=\frac{3}{2}\pi,\qquad
u_{0,\ga_{b,t,l,r,\bk}}(x)
=\alpha\sin(\frac{\pi}{2}x_{1})\sin(\pi x_{2})\sin(\pi x_{3}).$$

$\bullet$ 
$\gat=\ga$ 
and $\gan=\emptyset$, i.e.,
$u_{1}(0)=u_{1}(1)=u_{2}(0)=u_{2}(1)=u_{3}(0)=u_{3}(1)=0$:
$\mu_{1}=n\pi$, $\mu_{2}=m\pi$, $\mu_{3}=k\pi$, i.e.,
$\lambda=\sqrt{n^2+m^2+k^2}\pi$, $n,m,k\in\mathbb{N}$, and 
$$\lambda_{0,\ga}=\sqrt{3}\pi,\qquad
u_{0,\ga}(x)
=\alpha\sin(\pi x_{1})\sin(\pi x_{2})\sin(\pi x_{3}).$$

All other cases follow by symmetry, i.e.,
\begin{align*}
\lambda_{0,\emptyset}
&=\pi,\\
\lambda_{0,\ga_{b}}
=\lambda_{0,\ga_{t}}
=\lambda_{0,\ga_{l}}
=\lambda_{0,\ga_{r}}
=\lambda_{0,\ga_{f}}
=\lambda_{0,\ga_{\bk}}
&=\frac{1}{2}\pi,\\
\lambda_{0,\ga_{b,t}}
=\lambda_{0,\ga_{l,r}}
=\lambda_{0,\ga_{f,\bk}}
&=\pi,\\
\lambda_{0,\ga_{b,l}}
=\lambda_{0,\ga_{b,r}}
=\lambda_{0,\ga_{b,f}}
=\lambda_{0,\ga_{b,\bk}}
\hspace*{59mm}&\\
=\lambda_{0,\ga_{t,l}}
=\lambda_{0,\ga_{t,r}}
=\lambda_{0,\ga_{t,f}}
=\lambda_{0,\ga_{t,\bk}}
=\lambda_{0,\ga_{f,l}}
=\lambda_{0,\ga_{f,r}}
=\lambda_{0,\ga_{\bk,l}}
=\lambda_{0,\ga_{\bk,r}}
&=\frac{\sqrt{2}}{2}\pi,\\
\lambda_{0,\ga_{b,t,l}}
=\lambda_{0,\ga_{b,t,r}}
=\lambda_{0,\ga_{b,t,f}}
=\lambda_{0,\ga_{b,t,\bk}}
=\lambda_{0,\ga_{l,r,b}}
\hspace*{38mm}&\\
=\lambda_{0,\ga_{l,r,t}}
=\lambda_{0,\ga_{l,r,f}}
=\lambda_{0,\ga_{l,r,\bk}}
=\lambda_{0,\ga_{f,\bk,l}}
=\lambda_{0,\ga_{f,\bk,r}}
=\lambda_{0,\ga_{f,\bk,b}}
=\lambda_{0,\ga_{f,\bk,t}}
&=\frac{\sqrt{5}}{2}\pi,\\
\lambda_{0,\ga_{b,\bk,l}}
=\lambda_{0,\ga_{b,l,f}}
=\lambda_{0,\ga_{b,f,r}}
=\lambda_{0,\ga_{b,r,\bk}}
=\lambda_{0,\ga_{t,\bk,l}}
=\lambda_{0,\ga_{t,l,f}}
=\lambda_{0,\ga_{t,f,r}}
=\lambda_{0,\ga_{t,r,\bk}}
&=\frac{\sqrt{3}}{2}\pi,\\
\lambda_{0,\ga_{b,t,l,r}}
=\lambda_{0,\ga_{b,t,f,\bk}}
=\lambda_{0,\ga_{l,r,f,\bk}}
&=\sqrt{2}\pi,\\
\lambda_{0,\ga_{b,t,l,\bk}}
=\lambda_{0,\ga_{b,t,f,l}}
=\lambda_{0,\ga_{b,t,r,f}}
=\lambda_{0,\ga_{b,t,r,\bk}}
=\lambda_{0,\ga_{l,r,f,t}}
=\lambda_{0,\ga_{l,r,f,b}}
\hspace*{6mm}&\\
=\lambda_{0,\ga_{l,r,t,\bk}}
=\lambda_{0,\ga_{l,r,b,\bk}}
=\lambda_{0,\ga_{f,\bk,b,l}}
=\lambda_{0,\ga_{f,\bk,t,l}}
=\lambda_{0,\ga_{f,\bk,b,r}}
=\lambda_{0,\ga_{f,\bk,r,r}}
&=\frac{\sqrt{6}}{2}\pi,\\
\lambda_{0,\ga_{b,t,l,r,\bk}}
=\lambda_{0,\ga_{b,t,l,r,f}}
=\lambda_{0,\ga_{b,t,l,f,\bk}}
=\lambda_{0,\ga_{b,t,r,f,\bk}}
=\lambda_{0,\ga_{b,l,r,f,\bk}}
=\lambda_{0,\ga_{t,l,r,f,\bk}}
&=\frac{3}{2}\pi,\\
\lambda_{0,\ga}
&=\sqrt{3}\pi.
\end{align*}

Now, we take care of $E$. As $\div{}E=0$ and 
$(-\Delta-\widetilde\lambda^2)E=0$, a simple ansatz is given by, e.g.,
$$E:=\rot{}U=\begin{bmatrix}\p_{2}u\\-\p_{1}u\\0\end{bmatrix},\quad
U(x):=u(x)\,e^3=u(x)\begin{bmatrix}0\\0\\1\end{bmatrix},$$ 
where $u$ is a solution of $(-\Delta-\widetilde\lambda^2)u=0$, i.e.,
$(-\Delta-\widetilde\lambda^2)U=0$.
Then $\div{}E=0$ and 
$$-\Delta E
=\rot{}\rot{}E
=\rot{}\rot{}\rot{}U
=-\rot{}\Delta U
=\widetilde\lambda^2\rot{}U
=\widetilde\lambda^2E.$$
As $u$ solves $(-\Delta-\widetilde\lambda^2)u=0$ we have again by separation of variables
$$u(x)=u_{1}(x_{1})u_{2}(x_{2})u_{3}(x_{3}),\quad
\grad{}u(x)
=\begin{bmatrix}u_{1}'(x_{1})u_{2}(x_{2})u_{3}(x_{3})\\
u_{1}(x_{1})u_{2}'(x_{2})u_{3}(x_{3})\\
u_{1}(x_{1})u_{2}(x_{2})u_{3}'(x_{3})\end{bmatrix}$$
as well as 
$$\widetilde\lambda^2=\mu_{1}^2+\mu_{2}^2+\mu_{3}^2$$
and
$$-u_{1}''(x_{1})-\mu_{1}^2u_{1}(x_{1})=0,\quad
-u_{2}''(x_{2})-\mu_{2}^2u_{2}(x_{2})=0,\quad
-u_{3}''(x_{3})-\mu_{3}^2u_{3}(x_{3})=0.$$
Moreover, by the complex property $R(\rot{\gan})\subset N(\div{\gan})$, $E$ must satisfy
\begin{align*}
E&\in D(\rot{\gat})\cap R(\rot{\gan})\subset D(\rot{\gat})\cap N(\div{\gan}),\\
\rot{}E&\in D(\rot{\gan})\cap R(\rot{\gat})\subset D(\rot{\gan})\cap N(\div{\gat}),
\end{align*}
i.e., in classical terms
$$n\times E|_{\gat}=0,\quad
n\cdot E|_{\gan}=0,\quad
n\times\rot{}E|_{\gan}=0,\quad
n\cdot\rot{}E|_{\gat}=0.$$
As the fourth boundary condition is implied by the first one
and the second boundary condition is implied by the third one
($n\times\rot{}E|_{\gan}=0$ $\impl$ 
$0=n\cdot\rot{}\rot{}E|_{\gan}=\widetilde\lambda^2n\cdot E|_{\gan}$),
the third and fourth ones are (almost) redundant, 
and we are (almost) left with the simple boundary conditions 
$$n\times E|_{\gat}=0,\quad
n\cdot E|_{\gan}=0,$$
except for some special cases, where also the third one
$$n\times\rot{}E|_{\gan}=0$$
is needed.
For the computations of the boundary conditions we note\footnote{Alternatively,
$\rot{}E
=\rot{}\rot{}U
=-\Delta U+\grad{}\div{}U
=-\Delta u\,e^3+\grad{}\p_{3}u
=\begin{bmatrix}\p_{1}\p_{3}u\\\p_{2}\p_{3}u\\-\p_{1}^2u-\p_{2}^2u\end{bmatrix}
=\begin{bmatrix}-\p_{3}E_{2}\\\p_{3}E_{1}\\\p_{1}E_{2}-\p_{2}E_{1}\end{bmatrix}$.} 
$$\rot{}E
=\begin{bmatrix}-\p_{3}E_{2}\\\p_{3}E_{1}\\\p_{1}E_{2}-\p_{2}E_{1}\end{bmatrix}
=\begin{bmatrix}\p_{1}\p_{3}u\\\p_{2}\p_{3}u\\-\p_{1}^2u-\p_{2}^2u\end{bmatrix},$$
and thus
\begin{align*}
E
&=\begin{bmatrix}E_{1}\\E_{2}\\0\end{bmatrix}
=\begin{bmatrix}\p_{2}u\\-\p_{1}u\\0\end{bmatrix},
&
e^1\times E
&=\begin{bmatrix}0\\0\\E_{2}\end{bmatrix},
&
e^2\times E
&=\begin{bmatrix}0\\0\\-E_{1}\end{bmatrix},
&
e^3\times E
&=\begin{bmatrix}-E_{2}\\E_{1}\\0\end{bmatrix},
\end{align*}
\begin{align*}
e^3\times\rot{}E
&=\begin{bmatrix}0\\0\\1\end{bmatrix}
\times\begin{bmatrix}-\p_{3}E_{2}\\\p_{3}E_{1}\\\p_{1}E_{2}-\p_{2}E_{1}\end{bmatrix}
=-\begin{bmatrix}\p_{3}E_{1}\\\p_{3}E_{2}\\0\end{bmatrix}
=\begin{bmatrix}-\p_{2}\p_{3}u\\\p_{1}\p_{3}u\\0\end{bmatrix}.
\end{align*}
As an alternative we can also set boundary conditions for $U$ directly. Since 
$$E=\rot{}U\in R(\rot{\gan}),$$
we get $n\times U|_{\gan}=u\,n\times e^3|_{\gan}=0$.

$\bullet$
$\ga_{\bk}$,
$n=-e^1$, $x_{1}=0$:
\begin{align*}
0=n\times E|_{\ga_{\bk}}&=-E_{2}e^3|_{\ga_{\bk}}
=\p_{1}u\,e^3|_{\ga_{\bk}}
=u_{1}'u_{2}u_{3}\,e^3|_{\ga_{\bk}}
&
&\impl
&
u_{1}'(0)&=0,\\
0=n\cdot E|_{\ga_{\bk}}&=-E_{1}|_{\ga_{\bk}}
=-\p_{2}u|_{\ga_{\bk}}
=-u_{1}u_{2}'u_{3}|_{\ga_{\bk}}
&
&\impl
&
u_{1}(0)&=0.
\intertext{Alternatively,}
0=u\,n\times e^3|_{\ga_{\bk}}&=u\,e^2|_{\ga_{\bk}}
=u_{1}u_{2}u_{3}\,e^2|_{\ga_{\bk}}
&
&\impl
&
u_{1}(0)&=0.
\end{align*}

$\bullet$
$\ga_{f}$,
$n=e^1$, $x_{1}=1$:
\begin{align*}
0=n\times E|_{\ga_{f}}&=E_{2}e^3|_{\ga_{f}}
=-\p_{1}u\,e^3|_{\ga_{f}}
=-u_{1}'u_{2}u_{3}\,e^3|_{\ga_{f}}
&
&\impl
&
u_{1}'(1)&=0,\\
0=n\cdot E|_{\ga_{f}}&=E_{1}|_{\ga_{f}}
=\p_{2}u|_{\ga_{f}}
=u_{1}u_{2}'u_{3}|_{\ga_{f}}
&
&\impl
&
u_{1}(1)&=0.
\intertext{Alternatively,}
0=u\,n\times e^3|_{\ga_{f}}&=-u\,e^2|_{\ga_{f}}
=-u_{1}u_{2}u_{3}\,e^2|_{\ga_{f}}
&
&\impl
&
u_{1}(1)&=0.
\end{align*}

$\bullet$
$\ga_{l}$,
$n=-e^2$, $x_{2}=0$:
\begin{align*}
0=n\times E|_{\ga_{l}}&=E_{1}e^3|_{\ga_{l}}
=\p_{2}u\,e^3|_{\ga_{l}}
=u_{1}u_{2}'u_{3}u\,e^3|_{\ga_{l}}
&
&\impl
&
u_{2}'(0)&=0,\\
0=n\cdot E|_{\ga_{l}}&=-E_{2}|_{\ga_{l}}
=\p_{1}u|_{\ga_{l}}
=u_{1}'u_{2}u_{3}|_{\ga_{l}}
&
&\impl
&
u_{2}(0)&=0.
\intertext{Alternatively,}
0=u\,n\times e^3|_{\ga_{l}}&=-u\,e^1|_{\ga_{l}}
=-u_{1}u_{2}u_{3}\,e^1|_{\ga_{l}}
&
&\impl
&
u_{2}(0)&=0.
\end{align*}

$\bullet$
$\ga_{r}$,
$n=e^2$, $x_{2}=1$:
\begin{align*}
0=n\times E|_{\ga_{r}}&=-E_{1}e^3|_{\ga_{r}}
=-\p_{2}u\,e^3|_{\ga_{r}}
=-u_{1}u_{2}'u_{3}\,e^3|_{\ga_{r}}
&
&\impl
&
u_{2}'(1)&=0,\\
0=n\cdot E|_{\ga_{r}}&=E_{2}|_{\ga_{r}}
=-\p_{1}u|_{\ga_{r}}
=-u_{1}'u_{2}u_{3}|_{\ga_{r}}
&
&\impl
&
u_{2}(1)&=0.
\intertext{Alternatively,}
0=u\,n\times e^3|_{\ga_{r}}&=u\,e^1|_{\ga_{r}}
=u_{1}u_{2}u_{3}\,e^1|_{\ga_{r}}
&
&\impl
&
u_{2}(1)&=0.
\end{align*}

$\bullet$
$\ga_{b}$,
$n=-e^3$, $x_{3}=0$:
\begin{align*}
0=n\times E|_{\ga_{b}}
&=\begin{bmatrix}E_{2}\\-E_{1}\\0\end{bmatrix}|_{\ga_{b}}
=-\begin{bmatrix}\p_{1}u\\\p_{2}u\\0\end{bmatrix}|_{\ga_{b}}
=-\begin{bmatrix}u_{1}'u_{2}u_{3}\\u_{1}u_{2}'u_{3}\\0\end{bmatrix}|_{\ga_{b}}
&
&\impl
&
u_{3}(0)&=0,\\
0=n\cdot E|_{\ga_{b}}&=0\quad(\text{no condition}),\\
0=n\times\rot{}E|_{\ga_{b}}
&=\begin{bmatrix}\p_{3}E_{1}\\\p_{3}E_{2}\\0\end{bmatrix}|_{\ga_{b}}
=\begin{bmatrix}\p_{2}\p_{3}u\\-\p_{1}\p_{3}u\\0\end{bmatrix}|_{\ga_{b}}
=\begin{bmatrix}u_{1}u_{2}'u_{3}'\\-u_{1}'u_{2}u_{3}'\\0\end{bmatrix}|_{\ga_{b}}
&
&\impl
&
u_{3}'(0)&=0.
\intertext{Alternatively,}
0=u\,n\times e^3|_{\ga_{b}}&=0\quad(\text{no condition}).
\end{align*}

$\bullet$
$\ga_{t}$,
$n=e^3$, $x_{3}=1$:
\begin{align*}
0=n\times E|_{\ga_{t}}
&=\begin{bmatrix}-E_{2}\\E_{1}\\0\end{bmatrix}|_{\ga_{t}}
=\begin{bmatrix}\p_{1}u\\\p_{2}u\\0\end{bmatrix}|_{\ga_{t}}
=\begin{bmatrix}u_{1}'u_{2}u_{3}\\u_{1}u_{2}'u_{3}\\0\end{bmatrix}|_{\ga_{t}}
&
&\impl
&
u_{3}(1)&=0,\\
0=n\cdot E|_{\ga_{t}}&=0\quad(\text{no condition}),\\
0=n\times\rot{}E|_{\ga_{t}}
&=-\begin{bmatrix}\p_{3}E_{1}\\\p_{3}E_{2}\\0\end{bmatrix}|_{\ga_{t}}
=\begin{bmatrix}-\p_{2}\p_{3}u\\\p_{1}\p_{3}u\\0\end{bmatrix}|_{\ga_{t}}
=\begin{bmatrix}-u_{1}u_{2}'u_{3}'\\u_{1}'u_{2}u_{3}'\\0\end{bmatrix}|_{\ga_{t}}
&
&\impl
&
u_{3}'(1)&=0.
\intertext{Alternatively,}
0=u\,n\times e^3|_{\ga_{t}}&=0\quad(\text{no condition}).
\end{align*}

By construction, i.e.,
$$E=\begin{bmatrix}\p_{2}u\\-\p_{1}u\\0\end{bmatrix},$$
$u$ can be constant in one variable $x_{n}$
and simultaneously in two variables $x_{1}$, $x_{3}$ and $x_{2}$, $x_{3}$,
respectively, but not simultaneously in the two variables $x_{1}$, $x_{2}$
since this implies $E=0$.
The 1D case shows for the different boundary conditions the following:

$\bullet$ 
$\gat=\emptyset$ 
and $\gan=\ga$, i.e.,
$u_{1}(0)=u_{1}(1)=u_{2}(0)=u_{2}(1)=u_{3}'(0)=u_{3}'(1)=0$:
$\mu_{1}=n\pi$, $\mu_{2}=m\pi$, $\mu_{3}=k\pi$, i.e., 
$\widetilde\lambda=\sqrt{n^2+m^2+k^2}\pi$, $n,m\in\mathbb{N}$, $k\in\mathbb{N}_{0}$, and 
$$\lambda_{1,\emptyset}=\sqrt{2}\pi,\qquad
u_{1,\emptyset}(x)
=\alpha\sin(\pi x_{1})\sin(\pi x_{2}),\qquad
E_{1,\emptyset}(x)
=\alpha\pi\begin{bmatrix}\sin(\pi x_{1})\cos(\pi x_{2})\\-\cos(\pi x_{1})\sin(\pi x_{2})\\0\end{bmatrix}.$$
Note that we already know from the theory that 
$$\lambda_{1,\ga}=\lambda_{1,\emptyset}=\sqrt{2}\pi.$$
In the particular computation we get 
$u_{1}'(0)=u_{1}'(1)=u_{2}'(0)=u_{2}'(1)=u_{3}(0)=u_{3}(1)=0$
for $\gat=\ga$ and $\gan=\emptyset$:
$\mu_{1}=n\pi$, $\mu_{2}=m\pi$, $\mu_{3}=k\pi$, i.e., 
$\widetilde\lambda=\sqrt{n^2+m^2+k^2}\pi$, $n,k\in\mathbb{N}$, $m\in\mathbb{N}_{0}$,
or $m,k\in\mathbb{N}$, $n\in\mathbb{N}_{0}$.
We emphasise that here the actual case $n=m=0$ is not allowed
as this would imply $E=0$, see our discussion above.
The eigenvectors are
\begin{align*}
u_{1,\ga}(x)
&=\alpha\cos(\pi x_{1})\sin(\pi x_{3})
+\beta\cos(\pi x_{2})\sin(\pi x_{3}),\\
E_{1,\ga}(x)
&=\alpha\pi\sin(\pi x_{1})\sin(\pi x_{3})\begin{bmatrix}0\\1\\0\end{bmatrix}
-\beta\pi\sin(\pi x_{2})\sin(\pi x_{3})\begin{bmatrix}1\\0\\0\end{bmatrix}.
\end{align*}

$\bullet$ 
$\gat=\ga_{b}$ and
$\gan=\ga_{t,l,r,f,\bk}$, i.e.,
$u_{1}(0)=u_{1}(1)=u_{2}(0)=u_{2}(1)=u_{3}(0)=u_{3}'(1)=0$:
$\mu_{1}=n\pi$, $\mu_{2}=m\pi$, $\mu_{3}=(k-1/2)\pi$, i.e., 
$\widetilde\lambda=\sqrt{n^2+m^2+(k-1/2)^2}\pi$, $n,m,k\in\mathbb{N}$, and 
the minimum and eigenvectors are
$$\widetilde\lambda=\frac{3}{2}\pi,\qquad
u(x)
=\alpha\sin(\pi x_{1})\sin(\pi x_{2})\sin(\frac{\pi}{2}x_{3}),\qquad
E(x)
=\alpha\pi\sin(\frac{\pi}{2}x_{3})\begin{bmatrix}\sin(\pi x_{1})\cos(\pi x_{2})\\
-\cos(\pi x_{1})\sin(\pi x_{2})\\0\end{bmatrix}.$$
If $\gat=\ga_{l}$ 
and $\gan=\ga_{t,b,r,f,\bk}$, i.e.,
$u_{1}(0)=u_{1}(1)=u_{2}'(0)=u_{2}(1)=u_{3}'(0)=u_{3}'(1)=0$:
$\mu_{1}=n\pi$, $\mu_{2}=(m-1/2)\pi$, $\mu_{3}=k\pi$, i.e., 
$\widetilde\lambda=\sqrt{n^2+(m-1/2)^2+k^2}\pi$, $n,m\in\mathbb{N}$, $k\in\mathbb{N}_{0}$, and 
the minimum and eigenvectors are
$$\lambda_{1,\ga_{l}}=\frac{\sqrt{5}}{2}\pi,\qquad
u_{1,\ga_{l}}(x)
=\alpha\sin(\pi x_{1})\cos(\frac{\pi}{2}x_{2}),\qquad
E_{1,\ga_{l}}(x)
=\alpha\frac{\pi}{2}\begin{bmatrix}-\sin(\pi x_{1})\sin(\frac{\pi}{2}x_{2})\\
2\cos(\pi x_{1})\cos(\frac{\pi}{2}x_{2})\\0\end{bmatrix}.$$
This shows that by replacing the ansatz for $E$ by, e.g.,
$$E:=\rot{}U=\begin{bmatrix}0\\\p_{3}u\\-\p_{2}u\end{bmatrix},\quad
U(x):=u(x)\,e^1=u(x)\begin{bmatrix}1\\0\\0\end{bmatrix},$$ 
we get also the smaller eigenvalue $\widetilde\lambda=(\sqrt{5}/2)\pi$ in the case $\gat=\ga_{b}$.
Hence, by symmetry
$$\lambda_{1,\ga_{b}}
=\lambda_{1,\ga_{t}}
=\lambda_{1,\ga_{l}}
=\lambda_{1,\ga_{r}}
=\lambda_{1,\ga_{f}}
=\lambda_{1,\ga_{\bk}}
=\frac{\sqrt{5}}{2}\pi.$$

$\bullet$ 
$\gat=\ga_{b,t}$ 
and $\gan=\ga_{l,r,f,\bk}$, i.e.,
$u_{1}(0)=u_{1}(1)=u_{2}(0)=u_{2}(1)=u_{3}(0)=u_{3}(1)=0$: 
$\mu_{1}=n\pi$, $\mu_{2}=m\pi$, $\mu_{3}=k\pi$, i.e., 
$\widetilde\lambda=\sqrt{n^2+m^2+k^2}\pi$, $n,m,k\in\mathbb{N}$, and 
the minimum and eigenvectors are
$$\widetilde\lambda=\sqrt{3}\pi,\qquad
u(x)
=\alpha\sin(\pi x_{1})\sin(\pi x_{2})\sin(\pi x_{3}),\qquad
E(x)
=\alpha\pi\sin(\pi x_{3})\begin{bmatrix}\sin(\pi x_{1})\cos(\pi x_{2})\\
-\cos(\pi x_{1})\sin(\pi x_{2})\\0\end{bmatrix}.$$
If $\gat=\ga_{l,r}$ 
and $\gan=\ga_{b,t,f,\bk}$, i.e.,
$u_{1}(0)=u_{1}(1)=u_{2}'(0)=u_{2}'(1)=u_{3}'(0)=u_{3}'(1)=0$: 
$\mu_{1}=n\pi$, $\mu_{2}=m\pi$, $\mu_{3}=k\pi$, i.e., 
$\widetilde\lambda=\sqrt{n^2+m^2+k^2}\pi$, $n\in\mathbb{N}$, $m,k\in\mathbb{N}_{0}$, and 
the minimum and eigenvectors are
$$\lambda_{1,\ga_{l,r}}=\pi,\qquad
u_{1,\ga_{l,r}}(x)
=\alpha\sin(\pi x_{1}),\qquad
E_{1,\ga_{l,r}}(x)
=-\alpha\pi\cos(\pi x_{1})\begin{bmatrix}0\\1\\0\end{bmatrix}.$$
Hence, again by changing the ansatz,
we get also the smaller eigenvalue $\widetilde\lambda=\pi$ 
in the case $\gat=\ga_{b,t}$.
Thus, by symmetry
$$\lambda_{1,\ga_{l,r}}=\lambda_{1,\ga_{b,t}}=\lambda_{1,\ga_{f,\bk}}=\pi.$$

$\bullet$ 
$\gat=\ga_{b,l}$ 
and $\gan=\ga_{t,r,f,\bk}$, i.e.,
$u_{1}(0)=u_{1}(1)=u_{2}'(0)=u_{2}(1)=u_{3}(0)=u_{3}'(1)=0$: 
$\mu_{1}=n\pi$, $\mu_{2}=(m-1/2)\pi$, $\mu_{3}=(k-1/2)\pi$, i.e., 
$\widetilde\lambda=\sqrt{n^2+(m-1/2)^2+(k-1/2)^2}\pi$ with $n,m,k\in\mathbb{N}$, and 
the minimum and eigenvectors are
$$\widetilde\lambda=\frac{\sqrt{6}}{2}\pi,\quad
u(x)
=\alpha\sin(\pi x_{1})\cos(\frac{\pi}{2}x_{2})\sin(\frac{\pi}{2}x_{3}),\quad
E(x)
=-\alpha\frac{\pi}{2}\sin(\frac{\pi}{2}x_{3})\begin{bmatrix}\sin(\pi x_{1})\sin(\frac{\pi}{2}x_{2})\\
2\cos(\pi x_{1})\cos(\frac{\pi}{2}x_{2})\\0\end{bmatrix}.$$
If $\gat=\ga_{f,l}$ 
and $\gan=\ga_{b,t,r,\bk}$, i.e.,
$u_{1}(0)=u_{1}'(1)=u_{2}'(0)=u_{2}(1)=u_{3}'(0)=u_{3}'(1)=0$: 
$\mu_{1}=(n-1/2)\pi$, $\mu_{2}=(m-1/2)\pi$, $\mu_{3}=k\pi$, i.e., 
$\widetilde\lambda=\sqrt{(n-1/2)^2+(m-1/2)^2+k^2}\pi$, $n,m\in\mathbb{N}$, $k\in\mathbb{N}_{0}$, and 
the minimum and eigenvectors are
\begin{align*}
\lambda_{1,\ga_{f,l}}
&=\frac{\sqrt{2}}{2}\pi,
&
u_{1,\ga_{f,l}}(x)
&=\alpha\sin(\frac{\pi}{2}x_{1})\cos(\frac{\pi}{2}x_{2}),
&
E_{1,\ga_{f,l}}(x)
&=-\alpha\frac{\pi}{2}\begin{bmatrix}\sin(\frac{\pi}{2}x_{1})\sin(\frac{\pi}{2}x_{2})\\
\cos(\frac{\pi}{2}x_{1})\cos(\frac{\pi}{2}x_{2})\\0\end{bmatrix}.
\end{align*}
Hence, again by changing the ansatz,
we obtain also the smaller eigenvalue $\widetilde\lambda=(\sqrt{2}/2)\pi$ 
in the case $\gat=\ga_{b,l}$.
Thus, by symmetry
\begin{align*}
&\hspace*{5mm}\lambda_{1,\ga_{b,l}}
=\lambda_{1,\ga_{b,r}}
=\lambda_{1,\ga_{b,f}}=\lambda_{1,\ga_{b,\bk}}\\
&=\lambda_{1,\ga_{t,l}}=\lambda_{1,\ga_{t,r}}
=\lambda_{1,\ga_{t,f}}=\lambda_{1,\ga_{t,\bk}}
=\lambda_{1,\ga_{f,l}}=\lambda_{1,\ga_{l,\bk}}
=\lambda_{1,\ga_{\bk,r}}=\lambda_{1,\ga_{f,r}}
=\frac{\sqrt{2}}{2}\pi.
\end{align*}

$\bullet$ 
$\gat=\ga_{b,l,t}$ 
and $\gan=\ga_{r,f,\bk}$, i.e.,
$u_{1}(0)=u_{1}(1)=u_{2}'(0)=u_{2}(1)=u_{3}(0)=u_{3}(1)=0$: 
$\mu_{1}=n\pi$, $\mu_{2}=(m-1/2)\pi$, $\mu_{3}=k\pi$, i.e., 
$\widetilde\lambda=\sqrt{n^2+(m-1/2)^2+k^2}\pi$, $n,m,k\in\mathbb{N}$, and 
the minimum and eigenvectors are
$$\widetilde\lambda=\frac{3}{2}\pi,\quad
u(x)
=\alpha\sin(\pi x_{1})\cos(\frac{\pi}{2}x_{2})\sin(\pi x_{3}),\quad
E(x)
=-\alpha\frac{\pi}{2}\sin(\pi x_{3})\begin{bmatrix}\sin(\pi x_{1})\sin(\frac{\pi}{2}x_{2})\\
2\cos(\pi x_{1})\cos(\frac{\pi}{2}x_{2})\\0\end{bmatrix}.$$
If $\gat=\ga_{f,l,\bk}$ 
and $\gan=\ga_{t,r,b}$, i.e.,
$u_{1}'(0)=u_{1}'(1)=u_{2}'(0)=u_{2}(1)=u_{3}'(0)=u_{3}'(1)=0$: 
$\mu_{1}=n\pi$, $\mu_{2}=(m-1/2)\pi$, $\mu_{3}=k\pi$, i.e., 
$\widetilde\lambda=\sqrt{n^2+(m-1/2)^2+k^2}\pi$, $n,k\in\mathbb{N}_{0}$, $m\in\mathbb{N}$, and 
the minimum and eigenvectors are
\begin{align*}
\lambda_{1,\ga_{f,l,\bk}}
&=\frac{1}{2}\pi,
&
u_{1,\ga_{f,l,\bk}}(x)
&=\alpha\cos(\frac{\pi}{2}x_{2}),
&
E_{1,\ga_{f,l,\bk}}(x)
&=-\alpha\frac{\pi}{2}\sin(\frac{\pi}{2}x_{2})\begin{bmatrix}1\\0\\0\end{bmatrix}.
\end{align*}
Hence, again by changing the ansatz,
we obtain also the smaller eigenvalue $\widetilde\lambda=(1/2)\pi$ in the case 
$\gat=\ga_{b,l,t}$.
Thus, by symmetry
\begin{align*}
&\hspace*{5mm}\lambda_{1,\ga_{b,l,t}}=\lambda_{1,\ga_{b,r,t}}
=\lambda_{1,\ga_{b,f,t}}=\lambda_{1,\ga_{b,\bk,t}}
=\lambda_{1,\ga_{r,l,t}}=\lambda_{1,\ga_{r,l,b}}\\
&=\lambda_{1,\ga_{r,l,f}}=\lambda_{1,\ga_{r,l,\bk}}
=\lambda_{1,\ga_{f,\bk,l}}=\lambda_{1,\ga_{f,\bk,r}}
=\lambda_{1,\ga_{f,\bk,t}}=\lambda_{1,\ga_{f,\bk,b}}
=\frac{1}{2}\pi.
\end{align*}

$\bullet$ 
$\gat=\ga_{b,l,\bk}$ 
and $\gan=\ga_{t,r,f}$, i.e.,
$u_{1}'(0)=u_{1}(1)=u_{2}'(0)=u_{2}(1)=u_{3}(0)=u_{3}'(1)=0$: 
$\mu_{1}=(n-1/2)\pi$, $\mu_{2}=(m-1/2)\pi$, $\mu_{3}=(k-1/2)\pi$, i.e., 
$\widetilde\lambda=\sqrt{(n-1/2)^2+(m-1/2)^2+(k-1/2)^2}\pi$, $n,m,k\in\mathbb{N}$, and 
the minimum and eigenvectors are
\begin{align*}
\lambda_{1,\ga_{b,l,\bk}}
&=\frac{\sqrt{3}}{2}\pi,\\
u_{1,\ga_{b,l,\bk}}(x)
&=\alpha\cos(\frac{\pi}{2}x_{1})\cos(\frac{\pi}{2}x_{2})\sin(\frac{\pi}{2}x_{3}),
&
E_{1,\ga_{b,l,\bk}}(x)
&=\alpha\frac{\pi}{2}\sin(\frac{\pi}{2}x_{3})\begin{bmatrix}-\cos(\frac{\pi}{2}x_{1})\sin(\frac{\pi}{2}x_{2})\\
\sin(\frac{\pi}{2}x_{1})\cos(\frac{\pi}{2}x_{2})\\0\end{bmatrix}.
\end{align*}
By symmetry
\begin{align*}
\lambda_{1,\ga_{b,l,\bk}}=\lambda_{1,\ga_{b,r,\bk}}
=\lambda_{1,\ga_{b,l,f}}=\lambda_{1,\ga_{b,r,f}}
=\lambda_{1,\ga_{t,l,\bk}}=\lambda_{1,\ga_{t,r,\bk}}
=\lambda_{1,\ga_{t,l,f}}=\lambda_{1,\ga_{t,r,f}}
=\frac{\sqrt{3}}{2}\pi.
\end{align*}

We summarise
\begin{align*}
\lambda_{1,\emptyset}
=\lambda_{1,\ga}
&=\sqrt{2}\pi,\\
\lambda_{1,\ga_{b}}
=\lambda_{1,\ga_{t}}
=\lambda_{1,\ga_{l}}
=\lambda_{1,\ga_{r}}
=\lambda_{1,\ga_{f}}
=\lambda_{1,\ga_{\bk}}
&=\frac{\sqrt{5}}{2}\pi,\\
\lambda_{1,\ga_{l,r}}=\lambda_{1,\ga_{b,t}}=\lambda_{1,\ga_{f,\bk}}
&=\pi,\\
\lambda_{1,\ga_{b,l}}
=\lambda_{1,\ga_{b,r}}
=\lambda_{1,\ga_{b,f}}=\lambda_{1,\ga_{b,\bk}}
\hspace*{59mm}&\\
=\lambda_{1,\ga_{t,l}}=\lambda_{1,\ga_{t,r}}
=\lambda_{1,\ga_{t,f}}=\lambda_{1,\ga_{t,\bk}}
=\lambda_{1,\ga_{f,l}}=\lambda_{1,\ga_{l,\bk}}
=\lambda_{1,\ga_{\bk,r}}=\lambda_{1,\ga_{f,r}}
&=\frac{\sqrt{2}}{2}\pi,\\
\lambda_{1,\ga_{b,l,t}}=\lambda_{1,\ga_{b,r,t}}
=\lambda_{1,\ga_{b,f,t}}=\lambda_{1,\ga_{b,\bk,t}}
=\lambda_{1,\ga_{r,l,t}}
\hspace*{38mm}&\\
=\lambda_{1,\ga_{r,l,b}}
=\lambda_{1,\ga_{r,l,f}}=\lambda_{1,\ga_{r,l,\bk}}
=\lambda_{1,\ga_{f,\bk,l}}=\lambda_{1,\ga_{f,\bk,r}}
=\lambda_{1,\ga_{f,\bk,t}}=\lambda_{1,\ga_{f,\bk,b}}
&=\frac{1}{2}\pi,\\
\lambda_{1,\ga_{b,l,\bk}}=\lambda_{1,\ga_{b,r,\bk}}
=\lambda_{1,\ga_{b,l,f}}=\lambda_{1,\ga_{b,r,f}}
=\lambda_{1,\ga_{t,l,\bk}}=\lambda_{1,\ga_{t,r,\bk}}
=\lambda_{1,\ga_{t,l,f}}=\lambda_{1,\ga_{t,r,f}}
&=\frac{\sqrt{3}}{2}\pi,
\end{align*}
and all other cases follow by $\lambda_{1,\gan}=\lambda_{1,\gat}$ as well as symmetry.


\vspace*{5mm}
\hrule
\vspace*{3mm}


\end{document}